\documentclass[11pt,reqno]{amsart}

\usepackage[a4paper,margin=25mm]{geometry}
\usepackage[T1]{fontenc}
\usepackage{lmodern}
\usepackage{amsmath,amssymb,amsthm,mathtools}
\usepackage{bm}
\usepackage{microtype}
\usepackage{enumitem}
\usepackage{booktabs}
\usepackage{graphicx}
\usepackage{placeins}
\usepackage{xcolor}
\usepackage{hyperref}
\usepackage[nameinlink,capitalize,noabbrev]{cleveref}

\colorlet{additiongreen}{black}
\newenvironment{analystcheck}{}{}
\newenvironment{revisioncheck}{}{}
\newenvironment{additioncheck}{}{}
\newenvironment{analystadditioncheck}{}{}
\newenvironment{hyperaddition}{}{}
\newenvironment{hyperreplacement}{}{}
\newcommand{\revisionmark}[1]{{#1}}
\newcommand{\additionmark}[1]{{#1}}
\newcommand{\hyperadd}[1]{{#1}}
\newcommand{\hyperrewrite}[1]{{#1}}

\hypersetup{
  hidelinks,
  pdfauthor={Bohan Hu, Jungang Li, Wen Sun},
  pdftitle={Nonconvex Carnot Limit Shapes for Branching Random Walks on the Heisenberg Group}
}

\numberwithin{equation}{section}

\newtheorem{theorem}{Theorem}[section]
\newtheorem{proposition}[theorem]{Proposition}
\newtheorem{corollary}[theorem]{Corollary}
\newtheorem{lemma}[theorem]{Lemma}

\theoremstyle{definition}

\theoremstyle{remark}
\newtheorem{remark}[theorem]{Remark}

\newcommand{\HH}{\mathbb H}
\newcommand{\ZZ}{\mathbb Z}
\newcommand{\RR}{\mathbb R}

\newcommand{\PP}{\mathbb P}
\newcommand{\EE}{\mathbb E}
\newcommand{\QQ}{\mathbb Q}
\newcommand{\cF}{\mathcal F}
\newcommand{\cI}{\mathcal I}
\newcommand{\cC}{\mathcal C}
\newcommand{\cD}{\mathcal D}
\newcommand{\cK}{\mathcal K}
\newcommand{\cQ}{\mathcal Q}
\newcommand{\cR}{\mathcal R}
\newcommand{\cS}{\mathcal S}
\newcommand{\cU}{\mathcal U}
\newcommand{\cV}{\mathcal V}
\newcommand{\cX}{\mathcal X}
\newcommand{\ac}{\mathrm{AC}}
\newcommand{\Var}{\operatorname{Var}}

\newcommand{\conv}{\operatorname{conv}}

\newcommand{\one}{\mathbf 1}
\newcommand{\dd}{\,\mathrm d}
\newcommand{\e}{\mathrm e}
\newcommand{\dH}{d_{\mathrm H}}
\newcommand{\norm}[1]{\lVert #1\rVert}
\newcommand{\abs}[1]{\lvert #1\rvert}
\newcommand{\ip}[2]{\langle #1,#2\rangle}
\newcommand{\R}{\mathsf R}
\newcommand{\Lft}{\mathsf L}
\newcommand{\U}{\mathsf U}
\newcommand{\Dwn}{\mathsf D}

\title[Nonconvex Carnot limit shapes for Heisenberg BRW]
{Nonconvex Carnot Limit Shapes for Branching Random Walks on the Heisenberg Group}

\author{Bohan Hu \and Jungang Li \and Wen Sun}
\address{School of Mathematical Sciences, University of Science and
Technology of China\\Jinzhai Road 96, Hefei 230026, China}
\email[Bohan Hu]{hbh995d8@mail.ustc.edu.cn}
\email[Jungang Li]{jungangli@ustc.edu.cn}
\email[Wen Sun]{wsun@ustc.edu.cn}
\date{October 16, 2026}

\subjclass[2020]{Primary 60J80, 60F10; Secondary 22E25, 49J15, 60B15}
\keywords{Branching random walk, Heisenberg group, Carnot scaling, sample path large deviations, shape theorem, Hausdorff convergence, stochastic area, nonconvex limit shape}

\begin{document}

\begin{abstract}
We study a supercritical branching random walk on the discrete Heisenberg
group driven by the four standard horizontal generators.  Horizontal
position has scale \(n\), whereas the central coordinate records discrete
area and has scale \(n^2\).  On the Galton--Watson survival event, we prove
almost sure Hausdorff convergence, under Carnot scaling, of both the
rescaled particle positions and the collections of enhanced ancestral
paths.  The path limit consists of area lifts of horizontal paths
\(\omega\) satisfying \(J_t(\omega)\le t\log\rho\) for every
\(t\in[0,1]\), where \(\rho\) is the offspring mean.

In contrast to the classical Euclidean case, the deterministic limit
shape is nonconvex for every \(\rho>1\).  We identify its horizontal
projection and interval vertical fibers, determine the saturated shape
for \(\rho\ge4\), and characterize the origin fiber below saturation by
Wulff duality.  A hyperbolic representation gives an elliptic integral
formula for the Wulff area.  The maximal central coordinate
\(\Gamma_\rho\) is nondecreasing and concave in \(\rho\), equals \(1/4\)
for \(\rho\ge4\), and satisfies
\(\Gamma_\rho=(\log\rho)/\pi-(\log\rho)^2/(4\pi)
+O((\log\rho)^3)\) as \(\rho\downarrow1\).
For fixed \(1<\rho<4\), we obtain
\(M_n^Z=\Gamma_\rho n^2+O(n^{3/2}\sqrt{\log n})\) almost surely.
Above saturation the first correction is a random linear term;
at \(\rho=4\), under a finite second moment assumption, the deficit
\(\Gamma_4n^2-M_n^Z\) has order \(n\log\log n\) in probability.
\end{abstract}

\maketitle

\section{Introduction}

What is the asymptotic shape of a supercritical branching population whose
positions lie in a noncommutative group?  We answer this question for a
branching random walk on the discrete Heisenberg group, where the central
coordinate records an area functional of the horizontal ancestral path.
The correct normalization is the anisotropic Carnot scaling
\((x,y,z)\mapsto(n^{-1}x,n^{-1}y,n^{-2}z)\).
Our primary result is an almost sure path space shape theorem on survival;
evaluating the limiting paths at time one yields the spatial limit shape.
Unlike the convex Cram\'er sets arising in classical Euclidean branching
random walks, this limit is nonconvex for every supercritical offspring
mean.  Figure~\ref{fig:rho4-cloud-envelopes} illustrates its saddle-like
geometry in the saturated case \(\rho=4\).

\begingroup
\setlength{\intextsep}{6pt}
\begin{figure}[!ht]
  \centering
  \includegraphics[width=0.85\textwidth]
    {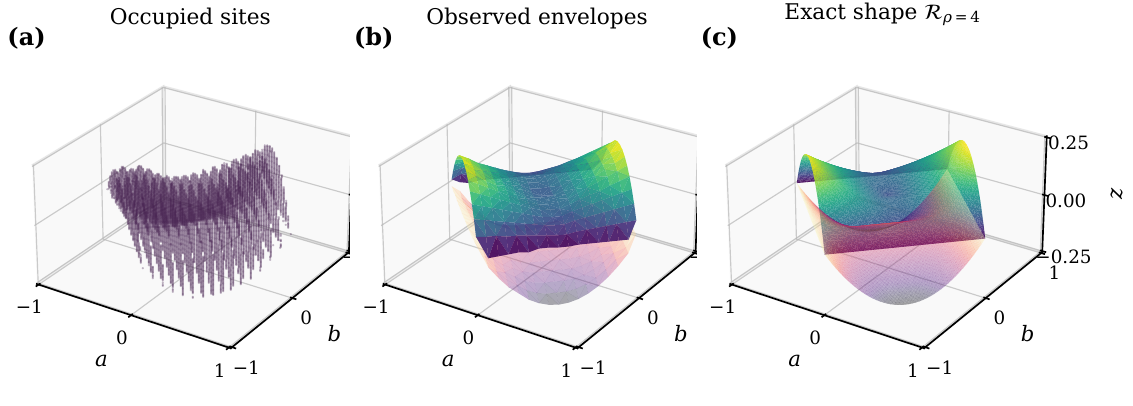}
  \setlength{\abovecaptionskip}{6pt}
  \caption{Particle positions and the limit shape at offspring mean
  \(\rho=4\).
  \textup{(a)} Distinct sites at generation \(n=16\).
  \textup{(b)} Empirical lower and upper fiber boundaries.
  \textup{(c)} Exact boundary surfaces.
  Horizontal and central coordinates are divided by \(n\) and \(n^2\),
  respectively.  Simulations use Poisson\((4)\) offspring.}
  \label{fig:rho4-cloud-envelopes}
\end{figure}
\endgroup

\subsection*{The Euclidean benchmark and the Heisenberg mechanism}

A branching random walk balances exponential population growth against the
large deviation cost of atypical trajectories.  In Euclidean space with the
same horizontal step law, positions are additive sums of increments, and the
limit shape is the convex Cram\'er set
\(\{v:L(v)\le\log\rho\}\), where \(L\) is the increment Cram\'er transform
\cite{Biggins1977Martingale,Biggins1977Chernoff,Biggins1978,Biggins2026Corrigendum,Shi2015}.

Our genealogy is a Galton--Watson tree with offspring number \(N\), mean
\(\rho:=\EE N>1\), and
\(\EE[N\log_+N]<\infty\).  The survival event
\(\cS\) is the event that the genealogy never becomes extinct.  Writing
\(\ZZ\) and \(\RR\) for the integers and reals, particle positions lie in
\[
  \HH(\ZZ)=\ZZ^3,
  \qquad
  (x,y,z)(a,b,c)=(x+a,y+b,z+c+xb).
\]
Each genealogical edge carries an independent increment chosen uniformly
from
\((1,0,0),\allowbreak(-1,0,0),\allowbreak(0,1,0),\allowbreak(0,-1,0)\).  If
\((X_k,Y_k,Z_k)\) denotes the successive positions along an ancestral line
starting at the identity, then the group law gives the discrete area identity
\begin{equation}\label{eq:intro-area}
  Z_n=\sum_{k=1}^nX_{k-1}\Delta Y_k,
  \qquad
  \Delta Y_k:=Y_k-Y_{k-1}.
\end{equation}
Under the Carnot rescaling
\((X,Y,Z)\mapsto(n^{-1}X,n^{-1}Y,n^{-2}Z)\), the limiting central coordinate
associated with an absolutely continuous horizontal curve
\(\omega=(x,y)\) is its area lift
\[
  A_t(\omega):=\int_0^t x(s)\dot y(s)\dd s.
\]

For an absolutely continuous curve, set
\(J_t(\omega):=\int_0^tL(\dot\omega(s))\dd s\), using the transform \(L\)
above.  At any fixed macroscopic time \(t\in(0,1]\), the generation
\(\lfloor nt\rfloor\) has exponential size
\(\exp\{nt\log\rho+o(n)\}\) almost surely on \(\cS\).  Comparison with the
sample path large deviation cost leads to the limiting admissibility
conditions
\begin{equation}\label{eq:intro-time-constraints}
  J_t(\omega)\le t\log\rho
  \qquad\text{for every }t\in[0,1].
\end{equation}
Let \(\cC_\rho\) be the absolutely continuous curves
\(\omega:[0,1]\to\RR^2\) with \(\omega(0)=0\) that satisfy
\eqref{eq:intro-time-constraints}.  A large action cost incurred early
cannot be offset by later motion: if the action exceeds the available
branching growth before time \(t\), later descendants cannot create the
required ancestral segments.  Since the central endpoint depends on the
complete horizontal path, these intermediate constraints cannot be replaced
by a terminal condition.

Our main results fall into three groups.

\subsection*{Main results (i): Carnot path and spatial shapes}

For each \(n\), let \(\cQ_n\) be the collection of the enhanced ancestral
paths of all particles in generation \(n\), with the horizontal and central
coordinates interpolated at scales \(n^{-1}\) and \(n^{-2}\), respectively.
The path space shape
theorem, \Cref{thm:path-shape}, states that, almost surely on \(\cS\),
\(\cQ_n\) converges in Hausdorff distance to the deterministic compact set
\[
  \cQ_\rho
  :=\{(\omega,A_\cdot(\omega)):\omega\in\cC_\rho\},
\]
where \(A_\cdot(\omega)\) denotes the path
\(t\mapsto A_t(\omega)\).  Let \(\cR_\rho\) be the set of terminal values of
the paths in \(\cQ_\rho\).  Evaluation at time one gives Hausdorff
convergence of the rescaled particle positions to \(\cR_\rho\)
(\Cref{cor:endpoint-shape}), and the corresponding directional support
functions converge uniformly (\Cref{cor:front-support}).

Write \(M_n^Z\) for the largest central coordinate among the particles in
generation \(n\), and let
\(\Gamma_\rho:=\max\{c:(a,b,c)\in\cR_\rho\}\).  Then, almost surely on
\(\cS\),
\[
  \frac{M_n^Z}{n^2}\longrightarrow\Gamma_\rho.
\]
The path space conclusion is stronger than a spatial shape theorem: it retains
precisely the ancestral information that produces the Heisenberg central
coordinate.

\subsection*{Main results (ii): Geometry of the limit shape and the central front}

The limit shape is not a Euclidean Cram\'er set.  Our second group of
results proves that \(\cR_\rho\) is nonconvex for every \(\rho>1\)
(\Cref{thm:nonconvex-main}), although its horizontal projection is the
classical Cram\'er set and every nonempty vertical fiber is a compact
interval.  Its support functions therefore determine only its Euclidean
convex hull.

The vertical geometry is organized around the saddle sheet \(c=ab/2\).
Straight horizontal paths place the part of this sheet above the horizontal
Cram\'er set inside \(\cR_\rho\).  Reversing the increments of the
unbranched walk makes the sheet the conditional symmetry center of the
central coordinate whenever the horizontal endpoint is reachable; the
annealed expected particle counts inherit the same symmetry.  Thus reversal
identifies the center of the annealed multiplicity profile, while
\(\cR_\rho\) describes the corresponding unweighted support.

The geometry changes at \(\rho=4\) (\Cref{thm:geometry-summary}).  At \(\rho=4\), the
branching exponent \(\log\rho\) reaches \(\log4\), the maximum finite value
of \(L\).
For every \(\rho\ge4\), the shape saturates at
\(\cR_4\), whose complete piecewise quadratic description is given in
\eqref{eq:complete-saturated-shape-main}.  If \(1<\rho<4\), \(\cR_\rho\)
contains the Carnot dilation of \(\cR_4\) by the factor
\(\sqrt{\log\rho/\log4}\) and is itself contained in \(\cR_4\).

Below saturation we obtain an exact Wulff characterization of the fiber over
the horizontal origin.  Write
\(\psi(\alpha,\beta):=\log((\cosh\alpha+\cosh\beta)/2)\) and, for \(h>0\),
let
\(K_h:=\{(\alpha,\beta)\in\RR^2:\psi(\alpha,\beta)\le h\}\);
\(|K_h|\) denotes its Euclidean area.  Then
\Cref{thm:origin-fiber-main} gives, for \(1<\rho<4\),
\begin{equation}\label{eq:intro-origin-fiber}
  \{c:(0,0,c)\in\cR_\rho\}=[-Z_0(\rho),Z_0(\rho)],
  \qquad
  Z_0(\rho)=\min_{h>0}\frac{(\log\rho+h)^2}{4|K_h|}.
\end{equation}
The minimizing parameter is unique, and the upper endpoint is attained by a
cyclically rooted and translated Wulff loop.  A half loop derived from this
optimizer provides the nonconvexity witness below saturation; the explicit
description of \(\cR_4\) proves nonconvexity at and above saturation.

\begin{hyperaddition}
The dual potential also admits a hyperbolic interpretation.  In suitable
Fermi coordinates on the hyperbolic plane of curvature \(-1\), it is
\(\log\cosh r\), where \(r\) is distance from a fixed point.  Thus
\(\partial K_h\) corresponds to a hyperbolic circle.  The Euclidean area
\(|K_h|\) becomes a weighted hyperbolic area, whose derivative is a complete
elliptic integral.  This formula gives both the small and large parameter
asymptotics used in the Wulff optimization
(\Cref{lem:ah-asymptotics}).
\end{hyperaddition}

The same pathwise variational structure governs the central front.  With
\(\Gamma_1:=0\), the map \(\rho\mapsto\Gamma_\rho\) is nondecreasing,
concave and continuous on \([1,\infty)\), and
\(\Gamma_\rho=1/4\) for \(\rho\ge4\)
(\Cref{thm:Gamma-global}).  The proof reduces the maximization to a convex
problem by a change to squared variables.
As \(\rho\downarrow1\),
\begin{equation}\label{eq:intro-weak-expansion}
  \Gamma_\rho
  =\frac{\log\rho}{\pi}-\frac{(\log\rho)^2}{4\pi}
   +O((\log\rho)^3).
\end{equation}
\Cref{thm:weak-branching} further shows that, for any choice of maximizing
paths as \(\rho\downarrow1\), the distance in \(L^2(0,1)\) of their velocity
profiles, normalized by \(\sqrt{\log\rho}\), from the two element set
\(\{t\mapsto\pm(\cos(\pi t/2),\sin(\pi t/2))\}\) tends to zero.  Moreover, for \(1<\rho<4\), retaining only the terminal
action constraint gives the larger value
\(4Z_0(\rho)>\Gamma_\rho\) (\Cref{prop:relaxation-main}), confirming that
the intermediate constraints remain active throughout the regime below
saturation.

\subsection*{Main results (iii): Finite generation corrections}

Finite generation behavior changes at the saturation threshold.  Below
saturation we prove a quantitative window, while at and above saturation we
identify the leading correction.
\Cref{thm:below-saturation-window} shows that for every fixed
\(1<\rho<4\), there is a deterministic \(C<\infty\), depending only on
\(\rho\) and the offspring law, such that, almost surely on \(\cS\),
\begin{equation}\label{eq:intro-subcritical-window}
  \abs{M_n^Z-\Gamma_\rho n^2}
  \le Cn^{3/2}\sqrt{\log n}
  \qquad\text{for all sufficiently large }n.
\end{equation}
At and above saturation, \Cref{thm:saturated-corrections-main} identifies the
leading loss exactly.  For \(\rho>4\), let \(\mathfrak d\) be the minimum
directional deficit along an ancestral segment leading to a vertex that
starts an infinite descendant subtree whose edges all point right or all
point left; see \eqref{eq:D-main}.  This random variable is finite almost
surely on \(\cS\).
If \(\rho=4\), assume in addition that \(\EE N^2<\infty\).  Then
\[
  \begin{aligned}
  \frac{\Gamma_\rho n^2-M_n^Z}{n}
    &\longrightarrow\frac{\mathfrak d}{2}
    &&\text{almost surely on }\cS\text{ if }\rho>4,\\
  \frac{\Gamma_4 n^2-M_n^Z}{n\log\log n}
    &\longrightarrow\frac1{2\log2}
    &&\text{in probability conditional on }\cS\text{ if }\rho=4.
  \end{aligned}
\]
The qualitative change in mechanism at \(\rho=4\) parallels the geometric
saturation transition.

\subsection*{Proof ideas}

The sample path ingredient is an enhanced Mogulskii principle
(\Cref{thm:enhanced-ldp}).  An exact discrete area identity places all
interpolated horizontal paths in a common bounded variation set on which the
area lift is continuous.  The contraction principle then yields a large
deviation principle for the horizontal path and its Heisenberg area
simultaneously.

The path shape theorem is proved through the two directed components of
Hausdorff convergence.  Outer containment uses the many to one formula: a
strict excess in action over the branching exponent leaves an exponentially
small and summable bound.  Inner coverage uses a time inhomogeneous Cram\'er
tilt and an additive branching martingale to track an admissible path with a
positive action margin.
The Kesten--Stigum theorem \cite{LyonsPemantlePeres1995} supplies many
independent subtree roots on \(\cS\), amplifying the tracking probability
to simultaneous realization.  When ancestral pieces are joined, the Heisenberg group law
produces a noncommutative cross term; the proof keeps this term explicit and
controls it at the Carnot scale.

The geometric arguments turn the path space limit into explicit information
about \(\cR_\rho\).  Cyclic rerooting reduces the origin fiber to a total
action problem, and Wulff duality with anisotropic isoperimetry gives
\eqref{eq:intro-origin-fiber}.
\hyperadd{The hyperbolic representation reduces the required Wulff area
calculation to a one dimensional elliptic integral.}
A change of variables in the action functional reveals the convex structure
behind \(\Gamma_\rho\), while the spectral gap of the Volterra operator
yields the weak branching expansion and convergence of the maximizing profiles.  The critical finite generation result uses the
survival event form of the
Dekking--Host minimal displacement theorem \cite{DekkingHost1991}.

\subsection*{Relation to earlier work}

Classical Euclidean shape theorems describe the asymptotic set of particle
positions through the competition between branching growth and the
Cram\'er rate function
\cite{Biggins1977Chernoff,Biggins1978,Biggins2026Corrigendum};
see also \cite{Shi2015}.
For the horizontal step law considered here, the limit is the convex set
\(\{v:L(v)\le\log\rho\}\), which is also the horizontal projection of our
spatial limit.  The difference lies in the central coordinate: it records
the ancestral area and produces a nonconvex three-dimensional set, not
determined by its support functions.

Pathwise growth laws for branching Brownian motion were studied by
Git \cite{Git1998} and Harris and Roberts \cite{HarrisRoberts2012}.
In particular, Harris and Roberts give almost sure bounds for population
counts in closed and open subsets of path space, with admissibility
depending on the action accumulated along every initial segment.
The constraints \(J_t(\omega)\le t\log\rho\) have the same genealogical
origin.  Here we establish a compact-set limit for discrete ancestral
paths together with their Heisenberg area lifts.  Evaluation at time one
then gives the spatial shape theorem.  Retaining the area is essential:
paths with the same horizontal terminal position can have different
central coordinates.

Related path constraints occur in branching random walks with absorption
and in time-inhomogeneous environments
\cite{Jaffuel2012,FangZeitouni2012,Mallein2015Interfaces,Mallein2015Inhomogeneous}.
Callegaro and Roberts \cite{CallegaroRoberts2025} obtain pathwise growth
and variational descriptions for a spatially dependent fragmentation
model, represented as a two-dimensional branching random walk.
In our model the branching and increment laws are homogeneous;
the path dependence enters through the noncommutative position
coordinate rather than through the environment.

For branching random walks on groups, existing work treats traces,
intersection properties and boundary geometry
\cite{BenjaminiMuller2012,CandelleroGilchMuller2012,Hutchcroft2020,DussauleWangYang2025}.
More directly related to spatial spread, Kaiser, Kl\"otzer, Kolesko and
Sava-Huss \cite{KaiserEtAl2026} identify maximal and minimal word-metric
speeds on free products of finite groups using the large deviation rate
function of the underlying walk.  Our concern is instead the full
anisotropically rescaled set of positions in a nilpotent group, including
the geometry lost when positions are reduced to a scalar distance.

The deterministic geometry at saturation belongs to the established
sub-Finsler description of Heisenberg asymptotic shapes
\cite{Pansu1983,DuchinMooney2014,Lokutsievskiy2021}.
More precisely, the saturated shape is the unit ball obtained from
horizontal \(\ell^1\) length.  We identify when the branching population
attains this reachable set and how the action constraints restrict the
shape below saturation.  The origin-fiber formula combines cyclic
rerooting with Wulff duality and anisotropic isoperimetry
\cite{EspositoFuscoTrombetti2005}.
These ballistic scales also differ from the diffusive nilpotent scales
in single-walk central and local limit theory
\cite{Alexopoulos2002,BenardBreuillard2025}.

Finally, the critical finite-generation correction is connected to
minimal-displacement results for branching random walks with
nonnegative increments \cite{Bramson1978,DekkingHost1991}.
At \(\rho=4\), the zero-defect directional subprocess is critical,
and the resulting \(\log\log n\) displacement scale becomes an
\(n\log\log n\) loss of central area.
Above this threshold, surviving one-direction descendant trees instead
give a random linear correction.  Thus the second-order behavior
reflects a directional branching transition within the same model.

\subsection*{Organization}

\Cref{sec:model} defines the model and states the principal shape and geometry
results, together with the finite generation results at and above saturation.
\Cref{sec:convex} develops the required convex analytic and path space
tools.  \Cref{sec:proof} proves the path space theorem and the finite
generation estimates, including the quantitative window below saturation.
\Cref{sec:geometry} establishes the geometry of the limit shape and the Wulff
description of the origin fiber.  Finally, \Cref{sec:central-front} studies
the dependence of the central front on the offspring mean.
\section{Model and main results}\label{sec:model}

This section introduces the Heisenberg branching random walk and the large
deviation cost of one ancestral path.  Its principal probabilistic result is
the almost sure Carnot shape theorem for the full enhanced path cloud.  We
then state the universal nonconvexity and saturation results for the limit
shape, analyze the central front as a function of the offspring mean, and
conclude with the sharp finite generation corrections at and above
saturation.

\subsection{The model and enhanced ancestral paths}

\subsubsection{Branching genealogy}

All random variables below are constructed on one complete probability
space.  Let \(\mathbb T\) be a Galton--Watson tree rooted at
\(\varnothing\).  We use the Ulam--Harris labeling, so the children of each
vertex are ordered.  For a vertex \(u\), \(\abs{u}\) denotes its generation, and \(u_k\) its
ancestor in generation \(k\) for \(0\le k\le\abs{u}\).  Each vertex has an offspring number, and these numbers are
independent copies of a nonnegative integer valued random variable \(N\).  We
write \(\PP\) and \(\EE\) for probability and expectation.  For \(j\ge0\),
let
\[
  \mathcal N_j:=\#\{u\in\mathbb T:\abs{u}=j\}
\]
be the number of particles in generation \(j\).  With
\(\log_+x:=\max\{\log x,0\}\) for \(x>0\), and with
\(N\log_+N\) defined to be zero on \(\{N=0\}\), we assume throughout that
\begin{equation}\label{eq:offspring-assumption}
  \rho:=\EE N>1,
  \qquad
  \EE[N\log_+N]<\infty.
\end{equation}
Since \(\rho>1\), the survival event
\[
  \cS:=\{\mathbb T\text{ is infinite}\}
\]
has positive probability.  The logarithmic moment condition is used only in
the amplification step through the Kesten--Stigum theorem.

\subsubsection{Heisenberg increments and coordinates}
\label{sec:heisenberg-increments}

The discrete Heisenberg group is
\[
  \HH(\ZZ)=\ZZ^3,
  \qquad
  (x,y,z)(a,b,c)=(x+a,y+b,z+c+xb).
\]
The map \((x,y,z)\mapsto(x,y)\) is the horizontal projection, so \(x\) and
\(y\) are the horizontal coordinates.  The center of \(\HH(\ZZ)\) is
\(\{(0,0,z):z\in\ZZ\}\), and \(z\) is the central coordinate.  If
\(g=(x,y,z):[0,T]\to\HH(\ZZ)\) is a path, then \((x,y)\) is its horizontal
path.

We denote the four horizontal generators by
\[
  \R=(1,0,0),\qquad \Lft=(-1,0,0),\qquad
  \U=(0,1,0),\qquad \Dwn=(0,-1,0).
\]

Let
\[
  S:=\{(1,0),(-1,0),(0,1),(0,-1)\},
\]
and let \(\mu\) be the uniform probability law on \(S\).  Independently of
\(\mathbb T\), attach to every edge \(e\) an independent increment \(\xi_e\)
with law \(\mu\); all offspring numbers and all edge increments are mutually
independent.  We
view \(\xi_e=(\xi_e^{(1)},\xi_e^{(2)})\) as the Heisenberg element
\((\xi_e^{(1)},\xi_e^{(2)},0)\), and multiply increments on the right.  For
\(u\in\mathbb T\), let
\[
  V(u)=(X_u,Y_u,Z_u)\in\HH(\ZZ)
\]
be the ordered product along the ancestral line from the root to \(u\), with
\(V(\varnothing)=(0,0,0)\).  For \(1\le j\le\abs{u}\), write
\(\xi_{u_j}\) for the increment on the edge from \(u_{j-1}\) to \(u_j\).
Then
\begin{equation}\label{eq:coordinate-recursion}
  X_{u_k}=\sum_{j=1}^k\xi_{u_j}^{(1)},
  \qquad
  Y_{u_k}=\sum_{j=1}^k\xi_{u_j}^{(2)},
  \qquad
  Z_{u_k}=\sum_{j=1}^kX_{u_{j-1}}\xi_{u_j}^{(2)}.
\end{equation}
The sequence \(((X_{u_k},Y_{u_k}))_{k=0}^{\abs{u}}\) is the horizontal
ancestral path of \(u\).  We call the last sum in
\eqref{eq:coordinate-recursion} the discrete Heisenberg area of this path up
to generation \(k\).  Thus \(Z_{u_k}\) is simultaneously the central
coordinate of \(V(u_k)\) and the discrete Heisenberg area of its horizontal
ancestral path segment through generation \(k\).  The continuous analogue of this functional is defined in \eqref{eq:action-area}.

\subsubsection{Enhanced ancestral paths}

For a particle \(u\) with \(\abs{u}=n\), the sequence
\((V(u_k))_{k=0}^n\) is its ancestral trajectory.  Define its rescaled
enhanced ancestral path by
\[
  Q_u^{(n)}=(\omega_u^{(n)},\zeta_u^{(n)})
  \in
  \cX:=C([0,1];\RR^2)\times C([0,1];\RR)
\]
as follows.  Both components are linear on each interval
\([k/n,(k+1)/n]\), for \(0\le k<n\), and their grid values are
\begin{align}
  \omega_u^{(n)}(k/n)
  &=\frac1n(X_{u_k},Y_{u_k}),\label{eq:horizontal-interp}\\
  \zeta_u^{(n)}(k/n)
  &=\frac1{n^2}Z_{u_k},\label{eq:central-interp}
\end{align}
for \(k=0,\ldots,n\).  Thus \(\omega_u^{(n)}\) is the rescaled interpolation
of the horizontal ancestral path, while \(\zeta_u^{(n)}\) is the rescaled
interpolation of its discrete Heisenberg area, equivalently its central
coordinate.  The factors \(n^{-1}\) and \(n^{-2}\) are the Carnot scaling factors of the
horizontal and central coordinates.

We equip \(\cX\) with
\begin{equation}\label{eq:path-metric}
  d_{\cX}((\omega,\zeta),(\widetilde\omega,\widetilde\zeta))
  :=\max\{\norm{\omega-\widetilde\omega}_\infty,
          \norm{\zeta-\widetilde\zeta}_\infty\}.
\end{equation}
For vectors in a finite dimensional Euclidean space, \(\norm{\cdot}_1\),
\(\norm{\cdot}_2\), and
\(\norm{\cdot}_\infty\) denote the usual \(\ell^1\), Euclidean, and maximum
norms.  For a vector valued continuous path \(f\), we use
\(\norm{f}_\infty:=\sup_t\norm{f(t)}_\infty\); for a scalar path this is the
usual sup norm.

\subsubsection{Random sets and Hausdorff distance}

The random enhanced path set and set of rescaled particle positions in
generation \(n\) are
\begin{equation}\label{eq:random-sets}
  \cQ_n:=\{Q_u^{(n)}:\abs{u}=n\},
  \qquad
  \cR_n:=\left\{
  \left(\frac{X_u}{n},\frac{Y_u}{n},\frac{Z_u}{n^2}\right):\abs{u}=n
  \right\}.
\end{equation}
The set \(\cQ_n\) records the complete enhanced ancestral paths, whereas
\(\cR_n\) records the rescaled positions of the particles in generation
\(n\).  On \(\cS\), both sets are nonempty and finite, hence compact, for
every \(n\).

For a point \(q\) and a nonempty set \(E\) in a metric space
\((\mathsf Y,d_\mathsf Y)\), write
\[
  d_\mathsf Y(q,E):=\inf_{r\in E}d_\mathsf Y(q,r).
\]
For nonempty compact sets \(E,F\subset\mathsf Y\), their Hausdorff distance is
\[
  \dH^\mathsf Y(E,F)
  :=\max\left\{\sup_{q\in E}d_\mathsf Y(q,F),
                 \sup_{r\in F}d_\mathsf Y(r,E)\right\}.
\]
For \(\mathsf Y=\cX\), this definition uses \(d_\cX\).  For subsets of a
finite dimensional Euclidean space, the Euclidean metric is used unless
stated otherwise.

\subsection{An enhanced Mogulskii principle for the Heisenberg walk}

This subsection formulates the sample path large deviation principle for one
unbranched walk.  Its rate functions quantify the costs of individual
ancestral paths in the branching model.

\subsubsection{The single walk and its Cram\'er transform}
\label{sec:single-walk}

Let \(\xi\) have law \(\mu\), and let \((\xi_j)_{j\ge1}\) be independent
copies of \(\xi\).  Starting from the identity, define the unbranched
Heisenberg walk as follows.  Set
\(V_0=(0,0,0)\) and
\(V_k=(X_k,Y_k,Z_k):=V_{k-1}(\xi_k^{(1)},\xi_k^{(2)},0)\) for \(k\ge1\).
For \(n\ge1\), let \(Q_n=(\omega_n,\zeta_n)\in\cX\) have components that
are linear on every interval \([k/n,(k+1)/n]\), for \(0\le k<n\), with
grid values
\begin{equation}\label{eq:Qn-grid-values}
  \begin{aligned}
  &\omega_n(0)=(0,0),\qquad \zeta_n(0)=0,\qquad\text{and}\qquad\\
  &\omega_n(k/n)=\frac1n\sum_{j=1}^k\xi_j
    =\frac1n(X_k,Y_k),\qquad
    \zeta_n(k/n)=\frac1{n^2}Z_k,\qquad 1\le k\le n.
  \end{aligned}
\end{equation}

For \(\theta=(\theta_1,\theta_2)\in\RR^2\), define the logarithmic moment
generating function of \(\xi\) by
\begin{equation}\label{eq:psi-def}
  \psi(\theta_1,\theta_2)
  :=\log\EE\exp\{\theta_1\xi^{(1)}+\theta_2\xi^{(2)}\}
  =\log\frac{\cosh\theta_1+\cosh\theta_2}{2}.
\end{equation}
We write \(\ip{\cdot}{\cdot}\) for the Euclidean inner product.
For \(v\in\RR^2\), define the Legendre transform of \(\psi\) by
\begin{equation}\label{eq:L-def}
  L(v):=\sup_{\theta\in\RR^2}\{\ip{\theta}{v}-\psi(\theta)\}.
\end{equation}
The effective domain of \(L\), namely the set on which \(L\) is finite, is
the diamond
\begin{equation}\label{eq:D-def}
  D:=\{v=(v_1,v_2):\abs{v_1}+\abs{v_2}\le1\}.
\end{equation}

\subsubsection{Path functionals and the enhanced Mogulskii principle}

We first fix the path notation.  For an interval \([a,b]\) with \(a<b\),
\(\ac([a,b];\RR^d)\) denotes the space of
absolutely continuous paths from \([a,b]\) to \(\RR^d\), and a dot denotes
the derivative defined almost everywhere.  For a path
\(\omega=(x,y)\in\ac([0,1];\RR^2)\) with \(\omega(0)=0\), and for
\(0\le t\le1\), define the horizontal action and continuous Heisenberg area
accumulated by time \(t\) by
\begin{equation}\label{eq:action-area}
  J_t(\omega):=\int_0^tL(\dot\omega(s))\dd s,
  \qquad
  A_t(\omega):=\int_0^t x(s)\dot y(s)\dd s.
\end{equation}
We write \(A(\omega):=A_1(\omega)\).  With \(A_\cdot(\omega)\) denoting the
path \(t\mapsto A_t(\omega)\), define the enhanced area lift of \(\omega\) by
\begin{equation}\label{eq:lift-def}
  q_\omega:=(\omega,A_\cdot(\omega))\in\cX.
\end{equation}
By \Cref{lem:exact-area}, \(Q_n=q_{\omega_n}\) exactly for the step set
\(S\).

The horizontal action and area lift determine the rate functions on the full
path space and after restriction to \([0,t]\) for each \(t\in(0,1]\).
For \(t\in(0,1]\), let
\begin{equation}\label{eq:restriction-map}
  \cX_t:=C([0,t];\RR^2)\times C([0,t];\RR),
  \qquad
  \pi_t(\omega,\zeta)
  :=\bigl(\omega|_{[0,t]},\zeta|_{[0,t]}\bigr).
\end{equation}
We equip \(\cX_t\) with the product uniform metric obtained by restricting
\eqref{eq:path-metric} to \([0,t]\); thus
\(\pi_t:\cX\to\cX_t\) is continuous.  For \(t\in(0,1]\), define
\(I_t:\cX_t\to[0,\infty]\) by
\begin{equation}\label{eq:It-rate}
  I_t(q)=
  \begin{cases}
    \displaystyle\int_0^tL(\dot\omega(s))\dd s,
      &\begin{gathered}
        q=(\omega,\zeta),\\
        \omega=(x,y)\in\ac([0,t];\RR^2),\quad \omega(0)=0,\\
        \displaystyle \zeta(r)=\int_0^r x(s)\dot y(s)\dd s
        \quad\text{for every }r\in[0,t]
       \end{gathered},\\[3mm]
    +\infty,&\text{otherwise}.
  \end{cases}
\end{equation}
Under the natural identification \(\cX_1=\cX\), write \(I:=I_1\).

\begin{theorem}[Enhanced Mogulskii principle]\label{thm:enhanced-ldp}
The sequence of laws of \(Q_n\) satisfies a good large deviation principle
on \(\cX\), with speed \(n\) and rate function \(I\).  For every fixed
\(t\in(0,1]\), the sequence of laws of \(\pi_tQ_n\) satisfies a good large
deviation principle on \(\cX_t\), again with speed \(n\), and rate function
\(I_t\).
\end{theorem}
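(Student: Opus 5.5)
The plan is to reduce everything to Mogulskii's theorem for the horizontal interpolated walk \(\omega_n\), followed by a contraction principle.

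\emph{Step 1: Mogulskii.} Since \(\mu\) has bounded support, Mogulskii's theorem gives a good LDP for the laws of \(\omega_n\) on \(C([0,1];\RR^2)\). The speed is \(n\) and the rate function is \(J_1(\omega)\) for absolutely continuous \(\omega\) with \(\omega(0)=0\), and \(+\infty\) otherwise. Every \(\omega_n\) is \(1\)-Lipschitz in \(\ell^1\), because each step has \(\ell^1\) length one. Hence \(\omega_n\) lies in the compact set
\[
  \cK:=\{\omega:\omega(0)=0,\ \norm{\omega(t)-\omega(s)}_1\le\abs{t-s}\}.
\]
Finiteness of \(J_1\) forces \(\dot\omega\in D\) almost everywhere, so the effective domain of the rate function is also contained in \(\cK\). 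The LDP therefore restricts to \(\cK\), which is closed and carries full measure.

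\emph{Step 2: exact area.} By \Cref{lem:exact-area}, \(Q_n=q_{\omega_n}\) exactly. This can be checked directly: on a step interval one coordinate of \(\omega_n\) is constant, so \(\int x\,\dd y\) over that interval equals \(x(k/n)\,\Delta y\) when \(\Delta x=0\), and equals \(0\) when \(\Delta y=0\). This matches \eqref{eq:coordinate-recursion} after rescaling.

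\emph{Step 3: continuity of the lift on \(\cK\).} This is the main point. I will show that \(F:\cK\to\cX\), \(F(\omega)=q_\omega\), is continuous for the uniform metric. Take \(\omega^m\to\omega\) uniformly in \(\cK\). Then
\[
  A_t(\omega^m)-A_t(\omega)=\int_0^t(x^m-x)\,\dot y^m\dd s+\int_0^t x\,(\dot y^m-\dot y)\dd s .
\]
The first term is at most \(\norm{x^m-x}_\infty\), since \(\abs{\dot y^m}\le1\). For the second term, integrate by parts using the absolute continuity of \(x\):
\[
  \int_0^t x\,(\dot y^m-\dot y)\dd s=x(t)\bigl(y^m(t)-y(t)\bigr)-\int_0^t \dot x\,(y^m-y)\dd s .
\]
This is at most \(2\norm{y^m-y}_\infty\), using \(\abs{x(t)}\le1\) and \(\abs{\dot x}\le1\). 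Both bounds are uniform in \(t\), so \(F\) is Lipschitz on \(\cK\), with constant about \(3\). The key observation is that the uniform Lipschitz bound, which comes from the bounded variation control, is what turns the otherwise discontinuous Lévy area map into a continuous one.

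\emph{Step 4: contraction.} The contraction principle applied with \(F\) gives a good LDP for \(Q_n=F(\omega_n)\) on \(\cX\). Its rate function is \(\inf\{J_1(\omega):q_\omega=q\}\). Since \(F\) is injective (the first component recovers \(\omega\)), this infimum equals \(I(q)\) in \eqref{eq:It-rate}.

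\emph{Step 5: restriction to \([0,t]\).} Apply the contraction principle again, now with the continuous map \(\pi_t\). This yields a good LDP for \(\pi_tQ_n\) with rate function \(\inf\{I(q):\pi_tq=p\}\). It remains to identify this infimum with \(I_t(p)\). If \(p\) is the lift of an absolutely continuous path on \([0,t]\), extend it by \(\dot\omega=0\) on \((t,1]\). This extension adds no cost because \(L(0)=-\inf\psi=0\), as \(\psi\ge0\) with equality at \(0\). It also keeps the area consistent, since \(\dot y=0\) after time \(t\). Conversely, any \(q\) with \(I(q)<\infty\) and \(\pi_tq=p\) makes \(p\) a lift on \([0,t]\), and its cost satisfies \(I(q)\ge I_t(p)\) because \(L\ge0\). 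Hence the infimum is exactly \(I_t\).

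Goodness of both rate functions is inherited through the continuous maps used in Steps 4 and 5.
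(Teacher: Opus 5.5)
Your proposal is correct and follows essentially the paper's route. Both arguments apply Mogulskii's theorem to the horizontal interpolation and restrict to a closed full-measure set containing the effective domain, on which the area lift is $3$-Lipschitz; both then use the exact area identity, the contraction principle, and extension by a constant path to identify $I_t$. The differences are cosmetic: you restrict to the compact set of $1$-Lipschitz ($\ell^1$) paths instead of the unit-variation set $\cV$, so your continuity estimate uses Lebesgue rather than Riemann--Stieltjes integration by parts, and you invoke Mogulskii directly for the polygonal path rather than passing from the step process by exponential equivalence.
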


\subsection{Almost sure Carnot shape theorems}

This subsection combines the branching growth rate with the single walk rate
function.  It first defines the path space and spatial limit sets and
states their almost sure Hausdorff limits.  It then derives uniform convergence
of the spatial support functions and the limit of the maximal central
coordinate.

\subsubsection{Path space and spatial Hausdorff limits}

At time \(t\), the exponential growth of the branching population is measured
by \(t\log\rho\).  Comparing this growth with the action \(J_t\) accumulated
on \([0,t]\) leads to the admissible horizontal path set
\begin{equation}\label{eq:C-rho}
  \cC_\rho
  :=\left\{
  \omega\in\ac([0,1];\RR^2):
  \omega(0)=0,\quad
  J_t(\omega)\le t\log\rho\text{ for every }t\in[0,1]
  \right\}.
\end{equation}
For \(\omega\in\cC_\rho\), write \(\omega=(x,y)\).  The
corresponding path space and spatial limit sets are
\begin{equation}\label{eq:Q-R-rho}
  \cQ_\rho:=\{q_\omega:\omega\in\cC_\rho\},
  \qquad
  \cR_\rho:=\{(x(1),y(1),A(\omega)):\omega\in\cC_\rho\}.
\end{equation}

The first result identifies \(\cQ_\rho\) as the almost sure Hausdorff limit
of the full enhanced ancestral path cloud.

\begin{theorem}[Path space Carnot shape theorem]\label{thm:path-shape}
Assume \eqref{eq:offspring-assumption}.  The deterministic set \(\cQ_\rho\)
is nonempty and compact.  On the survival event \(\cS\), almost surely,
\begin{equation}\label{eq:path-shape-conv}
  \dH^{\cX}(\cQ_n,\cQ_\rho)\longrightarrow0.
\end{equation}
\end{theorem}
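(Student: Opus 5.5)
We prove compactness of \(\cQ_\rho\) first, then the two directed Hausdorff bounds: outer containment \(\sup_{q\in\cQ_n}d_\cX(q,\cQ_\rho)\to0\) and inner coverage \(\sup_{r\in\cQ_\rho}d_\cX(r,\cQ_n)\to0\) almost surely on \(\cS\).

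\emph{Compactness and nonemptiness.} The zero path lies in \(\cC_\rho\) because \(L(0)=0\). Since \(L\) is finite only on the diamond \(D\), every \(\omega\in\cC_\rho\) is \(1\)-Lipschitz in \(\ell^1\). The area satisfies \(\abs{A_t(\omega)-A_s(\omega)}\le\abs{t-s}\), so \(\cQ_\rho\) is equicontinuous and bounded. For closedness, take \(q_{\omega_k}\to(\omega,\zeta)\). Lower semicontinuity of \(\omega\mapsto J_t(\omega)\) (convexity of \(L\), weak-\(*\) convergence of \(\dot\omega_k\) in \(L^\infty\)) gives \(J_t(\omega)\le t\log\rho\) for every \(t\). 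Continuity of the area lift along the \(1\)-Lipschitz class, where \(\int x_k\dot y_k\to\int x\dot y\) by uniform convergence of \(x_k\) and weak-\(*\) convergence of \(\dot y_k\), gives \(\zeta=A_\cdot(\omega)\). Equivalently, \(\cQ_\rho=\bigcap_t\{q:I_t(\pi_tq)\le t\log\rho\}\) is an intersection of closed sets inside a compact level set of the good rate function \(I\) from \Cref{thm:enhanced-ldp}.

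\emph{Outer containment.} Fix \(\varepsilon>0\) and let \(F_\varepsilon=\{q:d_\cX(q,\cQ_\rho)\ge\varepsilon\}\) (closed). We claim there are finitely many times \(t_1,\dots,t_m\) and a \(\delta>0\) such that \(F_\varepsilon\subset\bigcup_i\pi_{t_i}^{-1}(G_i)\), where \(G_i=\{I_{t_i}\ge t_i\log\rho+\delta\}\) up to closure. To see this, a compactness argument in the level set \(\{I\le\log 4+1\}\), together with continuity in \(t\) of \(J_t\), shows that every point of \(F_\varepsilon\) violates some constraint \(J_{t_i}\le t_i\log\rho\) by a uniform margin at some grid time. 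Points outside this level set violate it at \(t=1\).

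By the many to one formula, \(\EE\#\{\abs u=n:\pi_{t}Q_u^{(n)}\in \overline{G}\}\) is at most \(\rho^{\lfloor nt\rfloor}\,\PP(\pi_t Q_{\lfloor nt\rfloor}\text{-ancestor in }\overline G)\) times the descendant count. Here we count ancestors at generation \(\lfloor nt\rfloor\) and use \(\mathcal N_n\le \rho^{n}\cdot\)(martingale) only through ancestors. Concretely, we bound the number of generation-\(\lfloor nt_i\rfloor\) ancestors whose path lies in \(G_i\). The LDP upper bound, applied to the rescaled time-\(t_i\) restriction, gives expectation at most \(\exp\{-n\delta/2\}\). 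This is summable, so by Borel--Cantelli no ancestor lies in \(G_i\) eventually. Hence every \(Q_u^{(n)}\) lies within \(\varepsilon\) of \(\cQ_\rho\) for all large \(n\). A small technical point is that rescaling the generation-\(\lfloor nt\rfloor\) walk by \(n\) rather than \(\lfloor nt\rfloor\) is handled by Carnot dilation.

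\emph{Inner coverage; main obstacle.} By compactness it suffices to treat a finite \(\varepsilon\)-net of \(\cQ_\rho\). By a contraction \(\omega\mapsto\omega(\lambda\,\cdot)\)-type perturbation we may replace each net path by one with piecewise constant velocity in the interior of \(D\) and a strict margin \(J_t\le t(\log\rho-\eta)\) for all \(t\ge t_0\). Near \(t=0\) we instead use a short stretch of zero drift, for which \(L(0)=0\). For such a path \(\omega\) we show that a.s. on \(\cS\) some particle tracks it within \(\varepsilon\).

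First, Kesten--Stigum gives on \(\cS\) about \(e^{\kappa n}\) generation-\(\lfloor\kappa' n\rfloor\) vertices, all with small horizontal displacement \(o(n)\). Their central coordinates are \(O(n\cdot o(n))=o(n^2)\), and the cross term \(x_0\Delta y\) from concatenation is \(o(n^2)\) as well.

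From each such root, we run a time-inhomogeneous Cramér tilt \(\theta(s)\) matching \(\dot\omega(s)\) on each piece. The associated additive martingale \(W\) has, by \(\EE[N\log_+N]<\infty\) and Biggins' theorem, a positive limit with positive probability. We use a second moment, or truncation, argument segment by segment. The margin \(\eta\) ensures that the expected number of tracking descendants at each intermediate time grows like \(e^{n\eta t}\), so the probability that one root has a tracking descendant is bounded below by a constant \(p>0\). Independence across the \(e^{\kappa n}\) roots then gives failure probability \((1-p)^{e^{\kappa n}}\), which is summable.

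This tracking step is the main difficulty. It requires controlling the area uniformly along the path, not only the horizontal endpoint. I would handle it by tracking the horizontal path within \(\varepsilon'\) in sup norm and invoking the exact area identity (\Cref{lem:exact-area}) together with continuity of the lift on Lipschitz paths, applied to the tracked discrete path, to transfer horizontal closeness to closeness of \(\zeta\).
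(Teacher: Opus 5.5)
Your compactness argument and the outline of outer containment match the paper: \(\cQ_\rho=\bigcap_t\{I_t\circ\pi_t\le t\log\rho\}\), a finite cover of the compact set \(\{q\in\cK_*:d_\cX(q,\cQ_\rho)\ge\varepsilon\}\) by time-\(t_i\) violation sets, the many to one formula at generation \(\lceil nt_i\rceil\) (not \(\lfloor nt_i\rfloor\), since the restriction to \([0,t_i]\) involves the next step), the LDP for \(\pi_{t_i}Q_n\), and Borel--Cantelli. One step there needs repair. A superlevel set \(\{I_{t_i}\ge c\}\) of a lower semicontinuous function is not closed. Its closure can contain paths of much smaller action, since oscillating paths converge uniformly to straight ones. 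So ``up to closure'' gives no LDP upper bound. Instead, cover the compact set by the open sets \(\{I_{t_i}\circ\pi_{t_i}>t_i\log\rho+3\delta\}\) and shrink them to closed, hence compact, pieces on which the rate attains its infimum, as in \Cref{lem:finite-violation-cover}.

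The genuine gap is your claim that one root has a tracking descendant with probability at least a constant \(p>0\). Biggins' theorem concerns a fixed tilt. Here the tilt depends on \(k/n\), so the martingale depends on \(n\) and has no limit to invoke. Applying Biggins segment by segment also fails, because strict admissibility is only an integrated constraint. For example, with \(\rho<2\), a strict path may rest on \([0,1/2]\) and then move with action density close to \(2\log\rho\). On that segment the homogeneous martingale is degenerate, and the segment is sustained only by the exponentially many particles produced during the slack period. This cannot be perturbed away. Paths with pointwise \(L(\dot\omega)<\log\rho\) are not dense in \(\cQ_\rho\): by Jensen, any path uniformly close to the example above needs action about \(\log\rho\) on \([1/2,1]\).

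Your observation that expected tracking counts grow like \(\e^{n\eta t}\) is the right hypothesis, but it does not by itself give positivity. With only \(\EE[N\log_+N]<\infty\), second moments may be infinite. Even after truncation, the full weighted martingale has no uniform \(L^2\) bound, because \(\psi(2\theta)-2\psi(\theta)\approx2L(\nabla\psi(\theta))\) for small \(\theta\). The missing ingredient is \Cref{lem:Wp}:
\begin{enumerate}
\item truncate to \(N\wedge K\) with \(\log\rho_K\ge\log\rho-\eta/2\);
\item sum \(\psi(p\theta)-p\psi(\theta)\le(p-1)L(\nabla\psi(\theta))+C(p-1)^2\) against the integrated margin, giving geometrically decaying \(p\)th moment factors for \(p\) near one;
\item apply von Bahr--Esseen twice to get \(\sup_n\EE W_{n,n}^p<\infty\);
\item conclude by H\"older, using \(\EE Y_n^\varepsilon=\QQ_n(Q_n\in G^\varepsilon)\to1\).
\end{enumerate}

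There is an alternative route closer to your own setup. Your linear-depth amplification supplies about \(\rho^{\kappa'n}\) independent roots, so a per-root success probability \(\e^{-cn}\) with \(c<\kappa'\log\rho\) would already suffice. A tube-restricted second moment under bounded offspring, with losses of order \(\e^{C\varepsilon'n+o(n)}\) for tube width \(\varepsilon'\), can deliver this, but you would have to carry it out. You would also need \(\varepsilon'\ll\kappa'\ll\varepsilon\): generation-\(\kappa'n\) roots have horizontal displacement up to \(\kappa'n\), not \(o(n)\), so the splice errors are \(O(\kappa')\). Finally, the area control you single out as the main obstacle is the easy part. \Cref{lem:exact-area} and \eqref{eq:area-lipschitz} give \(d_\cX(Q_n,q_\omega)\le3\norm{\omega_n-\omega}_\infty\) directly.
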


Outer containment is a first moment large deviation bound; inner coverage is the
branching part and uses the Kesten--Stigum theorem.

Evaluation at time \(1\) is Lipschitz and sends \(\cQ_n\) and
\(\cQ_\rho\) to \(\cR_n\) and \(\cR_\rho\), respectively.  Thus the path
space theorem immediately yields the spatial shape theorem.

\begin{corollary}[Spatial Carnot shape theorem]\label{cor:endpoint-shape}
Under the assumptions of \Cref{thm:path-shape}, the set \(\cR_\rho\) is a
nonempty compact subset of \(\RR^3\).  On \(\cS\), almost surely,
\begin{equation}\label{eq:endpoint-shape-conv}
  \dH^{\RR^3}(\cR_n,\cR_\rho)\longrightarrow0.
\end{equation}
\end{corollary}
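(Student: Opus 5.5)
The corollary follows from \Cref{thm:path-shape} by pushing everything forward under the evaluation map
\[
  e_1:\cX\to\RR^3,\qquad e_1(\omega,\zeta):=(\omega(1),\zeta(1)).
\]
I will check three things in turn: that \(e_1\) is Lipschitz, that it carries \(\cQ_n\) onto \(\cR_n\) and \(\cQ_\rho\) onto \(\cR_\rho\), and that Lipschitz maps do not increase Hausdorff distance.

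\emph{Lipschitz bound.} For \(q=(\omega,\zeta)\) and \(\widetilde q=(\widetilde\omega,\widetilde\zeta)\) in \(\cX\), each of the three coordinates of \(e_1(q)-e_1(\widetilde q)\) is at most \(d_\cX(q,\widetilde q)\) in absolute value. This uses the maximum norm convention for \(\norm{\omega-\widetilde\omega}_\infty\). Hence
\[
  \norm{e_1(q)-e_1(\widetilde q)}_2\le\sqrt3\,d_\cX(q,\widetilde q).
\]

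\emph{Images of the sets.}
\begin{itemize}
  \item For \(\abs{u}=n\), the grid values \eqref{eq:horizontal-interp} and \eqref{eq:central-interp} at \(k=n\) give \(e_1(Q_u^{(n)})=(X_u/n,Y_u/n,Z_u/n^2)\). Thus \(e_1(\cQ_n)=\cR_n\) by \eqref{eq:random-sets}.
  \item For \(\omega\in\cC_\rho\), we have \(e_1(q_\omega)=(x(1),y(1),A_1(\omega))\). Thus \(e_1(\cQ_\rho)=\cR_\rho\) by \eqref{eq:Q-R-rho}.
\end{itemize}
\Cref{thm:path-shape} says \(\cQ_\rho\) is nonempty and compact. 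Since \(e_1\) is continuous, \(\cR_\rho\) is nonempty and compact.

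\emph{Hausdorff distance under a Lipschitz map.} Let \(f\) be \(K\)-Lipschitz and \(E,F\) nonempty compact. For \(q\in E\) and any \(r\in F\),
\[
  d(f(q),f(F))\le \norm{f(q)-f(r)}\le K\,d(q,r).
\]
Taking the infimum over \(r\in F\) gives \(d(f(q),f(F))\le K\,d(q,F)\). Taking the supremum over \(q\in E\) and symmetrizing in \(E,F\) yields
\[
  \dH(f(E),f(F))\le K\,\dH(E,F).
\]
Apply this with \(f=e_1\), \(K=\sqrt3\), \(E=\cQ_n\) and \(F=\cQ_\rho\). On \(\cS\) the set \(\cQ_n\) is nonempty and compact for every \(n\), and we get
\[
  \dH^{\RR^3}(\cR_n,\cR_\rho)\le\sqrt3\,\dH^{\cX}(\cQ_n,\cQ_\rho).
\]
By \eqref{eq:path-shape-conv}, the right side tends to \(0\) almost surely on \(\cS\), which proves \eqref{eq:endpoint-shape-conv}.

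There is no real obstacle here; all the substance is in \Cref{thm:path-shape}. The only point to watch is the bookkeeping above, namely that the time-one values of the interpolated paths are exactly the rescaled particle positions. This holds because the interpolation of \(\zeta_u^{(n)}\) is exact at grid points, in particular at \(k=n\).
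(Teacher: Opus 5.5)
Your proposal is correct and matches the paper's proof: the paper also pushes the path-space convergence forward under the \(\sqrt3\)-Lipschitz evaluation map \(e_1\), using \(e_1(\cQ_n)=\cR_n\) and \(e_1(\cQ_\rho)=\cR_\rho\). You additionally write out the Lipschitz estimate, the Hausdorff contraction inequality, and the compactness of \(\cR_\rho\) as a continuous image, all of which the paper states in a line.
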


\subsubsection{Support functions and the central maximum}

We next derive convergence of the directional support functions and the
maximal central displacement from spatial convergence.  Define the central
front, the largest central coordinate in \(\cR_\rho\), by
\begin{equation}\label{eq:Gamma-def}
  \Gamma_\rho
  :=\max_{(a,b,c)\in\cR_\rho}c
  =\sup_{\omega\in\cC_\rho}A(\omega),
\end{equation}
and the generation \(n\) maximum
\[
  M_n^Z:=\max_{\abs{u}=n}Z_u.
\]
Here and below, a maximum over an empty generation is assigned the value
\(-\infty\).  All limit assertions involving these maxima are made on
\(\cS\), where the generations are nonempty.

For \(d\ge1\) and a nonempty compact set \(E\subset\RR^d\), define its
directional support function by
\begin{equation}\label{eq:support-function-def}
  h_E(\lambda):=\sup_{z\in E}\ip{\lambda}{z},
  \qquad \lambda\in\RR^d.
\end{equation}
On nonempty generations, write
\(h_n:=h_{\cR_n}\) and \(h_\rho:=h_{\cR_\rho}\).  Explicitly,
\begin{equation}
  h_n(\lambda)
  :=\max_{\abs{u}=n}\left\{
  \lambda_1\frac{X_u}{n}
  +\lambda_2\frac{Y_u}{n}
  +\lambda_3\frac{Z_u}{n^2}
  \right\}.
\end{equation}
We use \(\conv(E)\) for the Euclidean convex hull of a set \(E\).

Hausdorff convergence controls the support functions uniformly over the
Euclidean unit ball.  Taking the vertical direction \(e_3=(0,0,1)\) then
gives the central front limit.

\begin{corollary}[Support functions and the central maximum]\label{cor:front-support}
Under \eqref{eq:offspring-assumption}, on \(\cS\), almost surely,
\begin{equation}\label{eq:support-uniform}
  \sup_{\norm{\lambda}_2\le1}\abs{h_n(\lambda)-h_\rho(\lambda)}
  \longrightarrow0.
\end{equation}
In particular,
\begin{equation}\label{eq:front-law}
  \frac{M_n^Z}{n^2}\longrightarrow\Gamma_\rho.
\end{equation}
\end{corollary}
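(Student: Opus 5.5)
The plan is to derive the corollary directly from the spatial Carnot shape theorem, \Cref{cor:endpoint-shape}, using only the fact that the support function of a compact set is a Lipschitz function of that set in Hausdorff distance. Fix a realization in the almost sure event of \Cref{cor:endpoint-shape} intersected with \(\cS\). On this event \(\cR_n\) is nonempty for every \(n\) and \(\delta_n:=\dH^{\RR^3}(\cR_n,\cR_\rho)\to0\). So it suffices to prove a deterministic inequality: for nonempty compact \(E,F\subset\RR^d\) and every \(\lambda\),
\[
  \abs{h_E(\lambda)-h_F(\lambda)}\le\norm{\lambda}_2\,\dH^{\RR^d}(E,F).
\]

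To prove this inequality, let \(z\in E\) attain \(h_E(\lambda)\); the maximum exists by compactness. Pick \(w\in F\) with \(\norm{z-w}_2\le\dH(E,F)\), which is possible since \(F\) is compact. By Cauchy--Schwarz,
\[
  h_E(\lambda)=\ip{\lambda}{z}\le\ip{\lambda}{w}+\norm{\lambda}_2\norm{z-w}_2\le h_F(\lambda)+\norm{\lambda}_2\dH(E,F).
\]
Exchanging the roles of \(E\) and \(F\) gives the reverse bound. Taking the supremum over \(\norm{\lambda}_2\le1\) then yields
\[
  \sup_{\norm{\lambda}_2\le1}\abs{h_n(\lambda)-h_\rho(\lambda)}\le\delta_n\to0,
\]
which is \eqref{eq:support-uniform}.

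For \eqref{eq:front-law}, take \(\lambda=e_3\). Then \(h_n(e_3)=\max_{\abs{u}=n}Z_u/n^2=M_n^Z/n^2\) on \(\cS\), and \(h_\rho(e_3)=\max_{(a,b,c)\in\cR_\rho}c=\Gamma_\rho\) by \eqref{eq:Gamma-def}. The maximum defining \(\Gamma_\rho\) is attained because \(\cR_\rho\) is compact and nonempty, again by \Cref{cor:endpoint-shape}. The general bound above then gives \(\abs{M_n^Z/n^2-\Gamma_\rho}\le\delta_n\to0\).

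There is no real obstacle here. The only points needing care are that the maxima are attained, which follows from compactness, and that the almost sure statement is made on \(\cS\), where the generations are nonempty and so the convention \(-\infty\) for an empty generation never enters. All of the probabilistic content is already contained in \Cref{cor:endpoint-shape}.
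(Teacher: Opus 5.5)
Your proposal is correct and follows essentially the same route as the paper. Both work on the almost sure event of \Cref{cor:endpoint-shape}, bound \(\abs{h_n(\lambda)-h_\rho(\lambda)}\) by \(\delta_n=\dH^{\RR^3}(\cR_n,\cR_\rho)\) via Cauchy--Schwarz applied to nearby points in the other set, and then specialize to \(\lambda=e_3\); the only cosmetic difference is that you isolate the deterministic Lipschitz inequality for general compact sets, whereas the paper argues directly with \(\cR_n\) and \(\cR_\rho\).
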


Applying \Cref{lem:support-reconstruction} to \(\cR_\rho\) gives
\begin{equation}\label{eq:support-halfspace-reconstruction}
  \conv(\cR_\rho)
  =\bigcap_{\lambda\in\RR^3}
  \left\{z\in\RR^3:\ip{\lambda}{z}\le h_\rho(\lambda)\right\}.
\end{equation}
Thus \(h_\rho\) determines \(\conv(\cR_\rho)\).  Since
\(
h_{\conv(\cR_\rho)}=h_\rho
\), support functions cannot distinguish \(\cR_\rho\) from its convex hull.

This loss of information occurs for every supercritical offspring mean and
is a principal departure from classical Euclidean branching random walk
shape theory, where the limit shape is a convex Cram\'er sublevel set
\cite{Biggins1977Chernoff,Biggins1978,Biggins2026Corrigendum,Shi2015}.

\begin{analystcheck}
\begin{theorem}[Universal nonconvexity]\label{thm:nonconvex-main}
\begin{revisioncheck}
For every \(\rho>1\), the limit shape \(\cR_\rho\) is nonconvex as a
subset of \(\RR^3\).
\end{revisioncheck}
\end{theorem}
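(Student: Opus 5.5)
The plan rests on the central symmetry of the shape. The map \(\sigma(x,y,z)=(-x,-y,z)\) preserves \(\cR_\rho\). Indeed, if \(\omega\in\cC_\rho\), then \(-\omega\in\cC_\rho\), because \(L(-v)=L(v)\) by the symmetry of \(\mu\) and hence \(J_t(-\omega)=J_t(\omega)\) for all \(t\). Moreover \(A(-\omega)=\int_0^1(-x)(-\dot y)\dd s=A(\omega)\). So if \((a,b,c)\in\cR_\rho\), then \((-a,-b,c)\in\cR_\rho\), and their midpoint is \((0,0,c)\). If \(\cR_\rho\) were convex, every \(c\) with \((a,b,c)\in\cR_\rho\) would lie in the origin fiber. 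In particular \(\Gamma_\rho\) would equal the maximal central coordinate over the horizontal origin. The theorem therefore reduces to the strict inequality
\[
  \Gamma_\rho>\max\{c:(0,0,c)\in\cR_\rho\},
\]
which I would prove separately at and above saturation and below it.

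\emph{Case \(\rho\ge4\).} Here \(\log\rho\ge\log4=\max_D L\), so \(\cC_\rho\) is exactly the set of paths with \(\omega(0)=0\) and \(\dot\omega\in D\) almost everywhere, that is, paths of \(\ell^1\) speed at most one. The straight path \(\omega(t)=(t/2,t/2)\) already gives area \(\int_0^1(s/2)(1/2)\dd s=1/8\). The staircase that first moves right with speed one for time \(1/2\) and then up gives \(1/4\), so \((1/2,1/2,1/4)\in\cR_\rho\). On the other side, a point \((0,0,c)\in\cR_\rho\) comes from a closed loop of \(\ell^1\) length at most one with \(\int x\dd y=c\), i.e.\ signed enclosed area \(c\). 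The anisotropic isoperimetric inequality for the \(\ell^1\) perimeter \cite{EspositoFuscoTrombetti2005} bounds the area enclosed by such a loop by that of the Wulff shape. This shape is the dual \(\ell^\infty\) ball, an axis-parallel square of side \(1/4\), so the bound is \(c\le1/16\). Since \(1/4>1/16\), nonconvexity follows (equivalently, this can be read off the explicit description of \(\cR_4\) in \eqref{eq:complete-saturated-shape-main}).

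\emph{Case \(1<\rho<4\).} By the origin fiber theorem (\Cref{thm:origin-fiber-main}), the origin fiber is \([-Z_0(\rho),Z_0(\rho)]\). Its maximum is attained by a cyclically rerooted, translated Wulff loop \(\gamma\) traversing \(\partial(\lambda K_{h_*}^{\circ})\)-type boundary, where \(h_*\) is the unique minimizer. It therefore suffices to exhibit one \(\omega\in\cC_\rho\) with \(A(\omega)>Z_0(\rho)\).

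I would build a half loop. Cut the Wulff curve, which is centrally symmetric because \(\psi\) is even, at two antipodal points \(\pm p\). Run the half arc from \(-p\) to \(p\) over the whole time interval \([0,1]\), translated so that it starts at the origin. The intended area comparison has two ingredients:
\begin{itemize}[nosep]
\item The area \(\int x\dd y\) of the translated half arc equals half the Wulff area plus a boundary (chord) term that can be made positive by choosing the cut direction.
\item Running at half the speed lowers the instantaneous action. Since \(L\) is convex with \(L(0)=0\), the velocity \(v/2\) costs at most \(L(v)/2\), which leaves a strict action margin.
\end{itemize}
The margin can then be spent by dilating the path (Carnot scaling \(a\mapsto\lambda a\), area \(\mapsto\lambda^2\) area, with action controlled through the Wulff duality \(\ip{\theta}{v}-\psi(\theta)\) along the optimal dual curve). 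The goal is an area strictly beyond \(Z_0(\rho)\), which is itself a quarter of \((\log\rho+h_*)^2/|K_{h_*}|\).

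The main obstacle is verifying the intermediate constraints \(J_t\le t\log\rho\) for this half loop together with the strict area gain. I would first try the rooting used for \(\gamma\), which makes \(t\mapsto J_t-t\log\rho\) nonpositive, and check that restricting to half the loop and rescaling time preserves this via convexity of \(L\). If the explicit computation is unwieldy, the fallback is a first-variation argument: the endpoint of any maximizer for \(\Gamma_\rho\) is nonzero (the weak-branching profile of \Cref{thm:weak-branching} ends away from the origin), so some admissible perturbation of the Wulff loop strictly increases area while breaking closure. That gives \(\Gamma_\rho>Z_0(\rho)\) and, by the symmetry reduction, nonconvexity.
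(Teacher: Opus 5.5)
Your symmetry reduction is correct, and it repackages the paper's midpoint argument cleanly. Since \((a,b,c)\mapsto(-a,-b,c)\) preserves \(\cR_\rho\), convexity would force \(\Gamma_\rho=\max\{c:(0,0,c)\in\cR_\rho\}\), so one point of \(\cR_\rho\) above the origin fiber suffices. The saturated case is also fine; for self-intersecting loops use \Cref{lem:anisotropic-loop}, or simply read \(\Delta_4(0,0)=1/16\) off \eqref{eq:complete-saturated-shape-main}.

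\textbf{The gap is below saturation}, exactly at the step you call the main obstacle. Running the half arc at half speed and then dilating by \(2\) gives the path \(\omega^\sharp(t)=2\gamma(t/2)\). This path has the same velocities as the loop, so no action margin is left to spend. Its intermediate constraints amount to \(J_u(\gamma)\le uB\) for \(u\le1/2\), measured from the cut point, since \(J_t(\omega^\sharp)=2J_{t/2}(\gamma)\); the rooting of \Cref{lem:cyclic-rooting} is what provides this. Its area is \(2Z_0(\rho)+2\chi_1\chi_2\), where \(\chi\) is the chord from the cut point to its antipode.

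So the cut direction is not at your disposal: the admissible root is dictated by the action density. If \(\chi_1\chi_2<0\) there, the area need not exceed \(Z_0(\rho)\); for the nearly square Wulff loops near saturation, \(\abs{\chi_1\chi_2}\) can be close to \(Z_0(\rho)\). The missing idea is quarter-turn equivariance.
\begin{itemize}
\item Since \(\psi(\mathsf J\theta)=\psi(\theta)\), the Hamiltonian field commutes with \(\mathsf J\). After phasing at an axis crossing, \(\theta_{h_B}(t+1/4)=\mathsf J\theta_{h_B}(t)\).
\item The action density is therefore \(1/4\)-periodic, so all four times \(t_0+k/4\) are admissible roots.
\item Their chords are successive quarter turns of one another, so the products \(\chi_1\chi_2\) alternate in sign.
\item Choosing a root with \(\chi_1\chi_2\ge0\) gives area at least \(2Z_0(\rho)>Z_0(\rho)\). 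This is how the paper closes the argument.
\end{itemize}

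Your fallback does not repair this. \Cref{thm:weak-branching} is an asymptotic statement as \(\rho\downarrow1\) and says nothing at a fixed \(\rho\in(1,4)\). The structure of maximizers there is open, and no admissible area-increasing perturbation of the Wulff loop is actually constructed.

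\textbf{Your reduction in fact makes the half loop unnecessary.}
\begin{itemize}
\item \emph{Witness points.} With \(v=(1/2,1/2)\) one has \(L(v)=\phi(1)=\log2\). By \eqref{eq:quadratic-scaling-L}, the point \((s,s)\) with \(s=r/2\) and \(r^2=\min\{1,\log\rho/\log2\}\) lies in \(\cD_\rho\). By \eqref{eq:saddle-sheet-main} and \eqref{eq:symmetries}, both \((s,s,s^2/2)\) and \((-s,-s,s^2/2)\) lie in \(\cR_\rho\).
\item \emph{Bound on the origin fiber.} Every \(\omega\in\cC_\rho\) with \(\omega(1)=0\) satisfies \(\int_0^1\norm{\dot\omega}_2^2\dd t\le\log\rho\) by \eqref{eq:L-energy-lower}. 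Hurwitz's Fourier form of the isoperimetric inequality for closed curves, \(\abs{\int_0^1x\dot y\dd t}\le(4\pi)^{-1}\int_0^1\norm{\dot\omega}_2^2\dd t\), then gives \(Z_0(\rho)\le\log\rho/(4\pi)\).
\item \emph{Comparison.} We have \(s^2/2=\min\{1/8,\log\rho/(8\log2)\}\). Since \(8\log2<4\pi\) and \(1/8>\log4/(4\pi)\), the midpoint \((0,0,s^2/2)\) lies strictly above the origin fiber for every \(1<\rho<4\). The same two points work for \(\rho\ge4\), where the origin fiber tops out at \(1/16\).
\end{itemize}
This elementary witness sidesteps both Wulff duality and the root-selection issue.
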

\end{analystcheck}

The next subsection determines the saturated limit shape and the exact
origin fiber below saturation, which together provide the geometric
mechanism behind this theorem.

\subsection{\texorpdfstring{Saturation and universal nonconvexity of
\(\cR_\rho\)}{Saturation and universal nonconvexity of R rho}}

\begin{additioncheck}
For the unbranched walk \(V_n=(X_n,Y_n,Z_n)\) from
\Cref{sec:single-walk}, reversing the increment order preserves the
probability and horizontal endpoint of every step sequence and sends
\(Z_n\) to \(X_nY_n-Z_n\).  Hence, whenever
\(\PP((X_n,Y_n)=(x,y))>0\),
\begin{align}
  \operatorname{Law}(Z_n\mid X_n=x,Y_n=y)
  &=\operatorname{Law}(xy-Z_n\mid X_n=x,Y_n=y),
  \label{eq:conditional-saddle-symmetry}\\
  \EE[Z_n\mid X_n=x,Y_n=y]
  &=\frac{xy}{2}.
  \label{eq:conditional-saddle-mean}
\end{align}
Since the expected branching random walk count at a site is \(\rho^n\) times
the corresponding single walk probability, the same reflection symmetry
holds for the expected vertical particle counts.  Under Carnot scaling its
fixed point becomes \(c=ab/2\).  We call this surface the \emph{saddle sheet}.
\end{additioncheck}

\subsubsection{Geometry relative to the saddle sheet}

To describe the part of the saddle sheet \(c=ab/2\) relevant to the limiting
shape \(\cR_\rho\), let
\begin{equation}\label{eq:horizontal-projection-def}
  \cD_\rho:=\{(a,b)\in\RR^2:(a,b,c)\in\cR_\rho
  \text{ for some }c\in\RR\}
\end{equation}
be the horizontal projection of \(\cR_\rho\).  For each
\((a,b)\in\cD_\rho\), define the vertical fiber above \((a,b)\) by
\begin{equation}\label{eq:fiber-def}
  \cI_\rho(a,b):=\{c\in\RR:(a,b,c)\in\cR_\rho\}.
\end{equation}
For \(r\ge0\), the step two Carnot dilation by factor \(r\) is
\begin{equation}\label{eq:dilation}
  \delta_r(a,b,c):=(ra,rb,r^2c).
\end{equation}
For a set \(E\subset\RR^3\), write
\(\delta_rE:=\{\delta_r(p):p\in E\}\).

\begin{additioncheck}
To state the saturated shape, for \(u,v\ge0\) with \(u+v\le1\), define the
fiber half width
\begin{equation}\label{eq:Delta4-def}
  \Delta_4(u,v):=
  \begin{cases}
    \displaystyle\frac{(1+u+v)^2-8uv}{16},
      &3u-v\le1\ \text{and}\ 3v-u\le1,\\[2mm]
    \displaystyle\frac{u(1-u)}2,
      &3u-v\ge1,\\[2mm]
    \displaystyle\frac{v(1-v)}2,
      &3v-u\ge1.
  \end{cases}
\end{equation}
The expressions agree on their common boundaries, so \(\Delta_4\) is
well defined and continuous on the horizontal diamond in the first quadrant.
\end{additioncheck}

The geometry of the limit shape changes at \(\rho=4\).  At and above this threshold
the next theorem determines the complete shape.  Below the threshold it
compares \(\cR_\rho\) quantitatively with the saturated body.

\begin{analystcheck}
\begin{theorem}[Limit shape across saturation]\label{thm:geometry-summary}
\begin{enumerate}[label=\textup{(\roman*)},leftmargin=2.1em]
\item If \(\rho\ge4\), the limit shape is explicit:
\begin{analystadditioncheck}
\begin{equation}\label{eq:complete-saturated-shape-main}
  \cR_\rho
  =
  \left\{
  (a,b,c)\in\RR^3:
  \abs{a}+\abs{b}\le1,\quad
  \abs{c-\frac{ab}{2}}\le\Delta_4(\abs{a},\abs{b})
  \right\}.
\end{equation}
\end{analystadditioncheck}
\begin{revisioncheck}
Consequently, the limit shape saturates:
\begin{equation}\label{eq:saturation}
  \cR_\rho=\cR_4.
\end{equation}
\end{revisioncheck}

\item If \(1<\rho<4\), put
\revisionmark{\(r_\rho:=\sqrt{\log\rho/\log4}\).}
\begin{analystadditioncheck}
Then
\begin{align}
  \delta_{r_\rho}\cR_4&\subset\cR_\rho\subset\cR_4,
  \label{eq:saturation-sandwich-main}\\
  \dH^{\RR^3}(\cR_\rho,\cR_4)
  &\le
  \left\{(1-r_\rho)^2+\frac{(1-r_\rho^2)^2}{16}\right\}^{1/2}.
  \label{eq:saturation-Hausdorff-main}
\end{align}
\end{analystadditioncheck}
\end{enumerate}
\end{theorem}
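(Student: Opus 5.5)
The proof has three parts: first the inclusion \(\cR_\rho\subset\cR_4\) together with a reduction of the saturated shape to an \(\ell^1\) sub-Finsler reachable set; then the explicit fibres of that set; and finally the dilation inclusion and the Hausdorff bound. Throughout I use that \(L\) is convex, \(L(0)=0\), \(L<\infty\) exactly on \(D\), and \(\max_D L=\log4\), attained at the four vertices, where \(L(\pm e_i)=\log4\) is the cost of a deterministic step.

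\emph{Step 1: reduction.} If \(\rho\ge4\), every \(\omega\in\ac\) with \(\omega(0)=0\) and \(\dot\omega\in D\) a.e.\ satisfies \(J_t(\omega)\le t\log4\le t\log\rho\). Conversely, \(J_t<\infty\) forces \(\dot\omega\in D\) a.e. Hence \(\cC_\rho=\cC_4\) is the set of \(\ell^1\)-\(1\)-Lipschitz paths from \(0\), which gives \eqref{eq:saturation} once \eqref{eq:complete-saturated-shape-main} is known for \(\rho=4\). The same monotonicity \(t\log\rho\le t\log4\) gives \(\cC_\rho\subset\cC_4\), and so \(\cR_\rho\subset\cR_4\), for \(1<\rho<4\).

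\emph{Step 2: the saturated fibres.} Fix an endpoint \((a,b)\) with \(\abs a+\abs b\le1\). The set of paths in \(\cC_4\) ending at \((a,b)\) is convex and nonempty, and \(A\) is continuous on it, so the fibre is a compact interval. Reversal \(\widetilde\omega(t)=\omega(1)-\omega(1-t)\) preserves \(\cC_4\) and sends \(A\) to \(ab-A\), so the fibre is symmetric about \(ab/2\). Reflections \(x\mapsto-x\) and \(y\mapsto-y\), together with the swap \(x\leftrightarrow y\) composed with the corresponding reflection, reduce everything to \(a=u\ge0\), \(b=v\ge0\) and the upper endpoint.

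By Green's formula, \(A(\omega)-ab/2\) is the signed area enclosed by \(\omega\) followed by the chord from \((a,b)\) back to \(0\). The task is therefore a Dido-type problem: maximize the enclosed area over curves of \(\ell^1\)-length \(\le1\) with prescribed chord. The isoperimetrix for \(\ell^1\) is the axis-parallel square. I would take as candidate optimizers the following curves:
\begin{itemize}
\item[(a)] in the central regime, a path tracing three sides of an axis-parallel rectangle around the chord, whose slack optimization gives \(((1+u+v)^2-8uv)/16\);
\item[(b)] in the regime \(3u-v\ge1\), a path along a square-like arc that spends the excess horizontal budget, giving \(u(1-u)/2\);
\item[(c)] the regime \(3v-u\ge1\), symmetric to (b).
\end{itemize}
These supply the lower bound. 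For the upper bound, the enclosed region lies in its bounding box, and its area is at most the box area minus the corner triangles cut off by the chord. The free \(\ell^1\)-length must cover the box perimeter minus the chord's \(\ell^1\)-length \(u+v\). Optimizing the box dimensions under this budget recovers \(\Delta_4\) piecewise.

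I expect this upper bound, and the correct identification of the switching lines \(3u-v=1\) and \(3v-u=1\), to be the main obstacle. If the direct box argument is not clean, I would instead invoke the known \(\ell^1\) Heisenberg sphere description \cite{DuchinMooney2014,Lokutsievskiy2021}.

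\emph{Step 3: dilation and Hausdorff bound for \(1<\rho<4\).} For \(\omega\in\cC_4\) put \(\omega_r:=r\omega\) with \(r=r_\rho\). Its endpoint scales by \(r\) and \(A_t(\omega_r)=r^2A_t(\omega)\), so it suffices to prove the key inequality
\[
L(w)\le\norm{w}_1^2\log4\qquad\text{for } w\in D,
\]
since then \(J_t(\omega_r)\le r^2t\log4=t\log\rho\).

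By duality it is enough that \(\psi(\theta)\ge\sup_{s\in[0,1]}(s m-s^2\log4)\), where \(m=\norm\theta_\infty\). Note that \(\psi(\theta)\ge\log((\cosh m+1)/2)=2\log\cosh(m/2)\). Two ranges of \(m\) then arise:
\begin{itemize}
\item for \(m\ge2\log4\), one needs \(2\log\cosh(m/2)\ge m-\log4\), which holds because \(\cosh x\ge\e^x/2\);
\item for \(m\le2\log4\), one needs \(2\log\cosh(m/2)\ge m^2/(4\log4)\), an elementary one-variable inequality that I would verify by comparing derivatives on this compact range.
\end{itemize}
This gives \(\delta_{r}\cR_4\subset\cR_\rho\).

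For \eqref{eq:saturation-Hausdorff-main}: since \(\cR_\rho\subset\cR_4\), only \(\sup_{p\in\cR_4}d(p,\cR_\rho)\) matters, and for \(p=(a,b,c)\in\cR_4\) I would use \(\abs{p-\delta_rp}^2=(1-r)^2(a^2+b^2)+(1-r^2)^2c^2\). The bound follows from \(a^2+b^2\le1\) on the diamond and \(\abs c\le1/4\) on \(\cR_4\). The latter follows from \eqref{eq:complete-saturated-shape-main} by maximizing \(uv/2+\Delta_4(u,v)\) over the quarter diamond, with maximum \(1/4\) at \(u=v=1/2\).
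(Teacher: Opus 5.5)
\paragraph{The gap: the upper bound in Step 2.} You bound $A(\omega)-uv/2$ by the area enclosed by $\omega$ and the chord. You then bound that by the bounding-box area minus a corner triangle, and charge the length budget only for the box perimeter. That intermediate inequality is false on $\cC_4$.

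First, $\omega$ followed by the chord need not be a Jordan curve. In that case $A(\omega)-uv/2$ is the integral of the winding number, which can exceed the box area. For example, take $u=v=0$ and go twice counterclockwise around $[0,s]^2$ with $s=1/8$: then $A=2s^2$, while the box has area $s^2$. Second, even for Jordan curves, the region need not avoid the corner triangle when $0$ or $(u,v)$ lies inside the box. The whole box minus a thin channel from the upper side of the chord to the boundary is such a region.

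Your final optimization, $\max\{WH-uv/2:\ W+H\le(1+u+v)/2,\ W\ge u,\ H\ge v\}$, is exactly the right one, and it does produce the switching lines $3u-v=1$, $3v-u=1$. What is missing is an inequality $A(\omega)\le W'H'$ valid for every path, with $W',H'$ obeying those constraints. The paper gets this from virtual dimensions built from variations rather than from the actual box. Put $H_x=\int\abs{\dot x}$, $H_y=\int\abs{\dot y}$, $H_y^\pm=(H_y\pm v)/2$, $m_x=\min x\le0$ and $M_x=\max x$. The Jordan decomposition of $\dd y$ gives
$\int x\,\dd y\le M_xH_y^+-m_xH_y^-=(M_x-m_x)H_y^++m_xv\le(M_x-m_x)H_y^+$.
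Checking whether $\omega$ visits $m_x$ or $M_x$ first gives $M_x-m_x\le(H_x+u)/2$. Hence $A(\omega)\le W'H'$ with $W'=(H_x+u)/2\ge u$, $H'=(H_y+v)/2\ge v$ and $W'+H'\le(1+u+v)/2$, regardless of self-intersections or winding multiplicity. Citing the sub-Finsler sphere description instead would be a substitute, but it is not a proof of this step as written.

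\paragraph{Candidate paths.} Your two descriptions are interchanged. In the central regime the extremal is the square-type path with a horizontal overshoot: down $(1+u-3v)/4$, right $(1+u+v)/4$, up $(1+u+v)/4$, then left $(1+v-3u)/4$. For $3u-v\ge1$ it is monotone in $x$ (down $(1-u-v)/2$, right $u$, up $(1-u+v)/2$), so the excess budget is spent vertically.

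\paragraph{Step 3.} This is a valid route that differs from the paper's. Your bound $L(w)\le\norm{w}_1^2\log4$ on $D$ follows from the duality you describe. The one-variable case closes because $\log\cosh x/x^2$ is decreasing, and at $x=\log4$ it equals $\log(17/8)/(\log4)^2\approx0.39$, which exceeds $1/(2\log4)\approx0.36$.

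The paper instead uses the quadratic scaling $L(rv)\le r^2L(v)$, read off from the series $L(v)=\sum_{k\ge1}\{(v_1+v_2)^{2k}+(v_1-v_2)^{2k}\}/(2k(2k-1))$. That yields $\delta_{\sqrt{\log\rho_1/\log\rho_2}}\cR_{\rho_2}\subset\cR_{\rho_1}$ for every pair $\rho_1\le\rho_2$. Your inequality is its special case $v=w/\norm{w}_1$: it is tailored to comparison with $\rho=4$ but avoids the series. Steps 1, the fibre-interval and symmetry reductions, and the Hausdorff estimate agree with the paper.
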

\end{analystcheck}

These results are established in \Cref{sec:geometry}.

\subsubsection{The origin fiber and Wulff duality}

For \(1<\rho<4\), define the upper endpoint of the fiber above the
horizontal origin by
\begin{equation}\label{eq:Z0-main}
  Z_0(\rho):=\max\{c:(0,0,c)\in\cR_\rho\}.
\end{equation}
At the horizontal origin, every admissible path is a closed loop.  The cyclic
rooting lemma in \Cref{lem:cyclic-rooting} shows that a closed loop with total
action at
most \(\log\rho\) can be cyclically shifted and translated to satisfy every
intermediate action constraint, without changing its total action or signed
area.  The origin fiber is therefore governed by a single total action
constraint, to which the planar Wulff inequality applies.

For \(h>0\), let \(K_h\) be the log Laplace sublevel set and let \(a(h)\) be
its Euclidean area:
\begin{equation}\label{eq:Kh-ah-main}
  K_h:=\{\theta\in\RR^2:\psi(\theta)\le h\},
  \qquad
  a(h):=|K_h|.
\end{equation}
\begin{hyperaddition}
Let \(\mathbf K\) denote the complete elliptic integral of the first kind,
with modulus \(k\):
\[
  \mathbf K(k):=\int_0^{\pi/2}
      \frac{\dd\varphi}{\sqrt{1-k^2\sin^2\varphi}},
  \qquad 0\le k<1.
\]
The hyperbolic representation in \Cref{lem:ah-asymptotics} gives
\[
  a'(h)=8\mathbf K\!\left(\sqrt{1-\e^{-2h}}\right),
  \qquad
  a(h)=8\int_0^h\mathbf K\!\left(\sqrt{1-\e^{-2s}}\right)\dd s.
\]
\end{hyperaddition}
\hyperrewrite{In particular, \(a\) is smooth and \(a'(h)>0\) for \(h>0\).}
The following theorem evaluates the fiber and exhibits an extremizing
Wulff loop.

\begin{analystcheck}
\begin{theorem}[Exact origin fiber below saturation]\label{thm:origin-fiber-main}
\begin{revisioncheck}
For every \(1<\rho<4\), \additionmark{the origin fiber is}
\begin{equation}\label{eq:origin-fiber-interval-main}
  \additionmark{\cI_\rho(0,0)=[-Z_0(\rho),Z_0(\rho)]}.
\end{equation}
Its upper endpoint satisfies
\begin{equation}\label{eq:origin-Wulff-main}
  Z_0(\rho)=\min_{h>0}\frac{(\log\rho+h)^2}{4a(h)}.
\end{equation}
The minimizer \additionmark{\(h_\rho^\star\in(0,\infty)\)} is unique.  It is
characterized by the first identity below, while the second gives the
optimal value:
\begin{equation}\label{eq:origin-critical-main}
  \log\rho=\frac{2a(\additionmark{h_\rho^\star})}
    {a'(\additionmark{h_\rho^\star})}
    -\additionmark{h_\rho^\star},
  \qquad
  Z_0(\rho)=\frac{a(\additionmark{h_\rho^\star})}
    {a'(\additionmark{h_\rho^\star})^2}.
\end{equation}
The upper endpoint is attained by a cyclically rooted and translated Wulff
loop associated with \(K_{\additionmark{h_\rho^\star}}\).
\end{revisioncheck}
\end{theorem}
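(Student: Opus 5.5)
The proof proceeds in three steps: reduce the origin fiber to a problem with a single total action constraint, solve that problem by Wulff duality, and then optimize over the scaling parameter.

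\emph{Reduction.} Every \(\omega\in\cC_\rho\) with \(\omega(1)=0\) is a closed loop with \(J_1(\omega)\le\log\rho\). Conversely, by the cyclic rooting lemma (\Cref{lem:cyclic-rooting}), any closed absolutely continuous loop with total action at most \(\log\rho\) can be cyclically shifted and translated into an element of \(\cC_\rho\). This operation preserves the total action and the signed area \(A(\omega)\), which for a closed loop equals the enclosed area \(\tfrac12\oint(x\,\dd y-y\,\dd x)\). Hence
\[
Z_0(\rho)=\sup\{A(\omega):\omega(0)=\omega(1),\ J_1(\omega)\le\log\rho\}.
\]
Reversing time sends \(A\mapsto -A\) and preserves \(J_1\), because \(L\) is even. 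The fiber is therefore symmetric about \(0\).

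The fiber is an interval. It contains \(0\) via the constant path, and it is connected because the closed-loop set is star-shaped under \(\omega\mapsto s\,\omega\), \(s\in[0,1]\). Indeed, \(L(s v)\le sL(v)\) by convexity and \(L(0)=0\), so the rescaled loop stays admissible, while its area \(s^2A\) varies continuously from \(0\) to \(A\). Compactness of \(\cR_\rho\) gives attainment of the maximum.

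\emph{Wulff bound.} For a loop traversed with velocity \(\dot\omega\), I use the Fenchel inequality \(L(v)\ge\langle\theta,v\rangle-\psi(\theta)\) with \(\theta\) chosen as the outward normal dual vector. Concretely, for any \(h>0\), the support function \(h_{K_h}\) of \(K_h\) defines an anisotropic length satisfying
\[
L(v)+h\ge h_{K_h}(v^\perp),
\]
where \(v^\perp\) is the rotation of \(v\) by \(90^\circ\). This holds because \(h_{K_h}(w)=\sup\{\langle\theta,w\rangle:\psi(\theta)\le h\}\le L(w)+h\), and \(L\) is invariant under the rotation since \(\psi\) is symmetric in its two arguments. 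Integrating over \([0,1]\) gives
\[
\log\rho+h\ge\text{Per}_{K_h}(\omega),
\]
the anisotropic perimeter with Wulff shape \(K_h\) rotated by \(90^\circ\). The anisotropic isoperimetric inequality (cf.\ \cite{EspositoFuscoTrombetti2005}), applied with winding-number multiplicity for non-simple curves, gives \(\text{Per}^2\ge4|K_h|\,|\text{area}|\). Hence \(A\le(\log\rho+h)^2/(4a(h))\) for every \(h\), which is the upper bound in \eqref{eq:origin-Wulff-main}.

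\emph{Attainment and the critical equations.} For the matching lower bound, I take the loop that traverses \(\lambda\,\partial K_h^{\perp}\) at unit time. Its velocity is \(v(\theta)\propto\nabla\psi(\theta)\) at \(\theta\in\partial K_h\), and with the speed chosen so that \(v=\nabla\psi(\theta)\), equality holds in Fenchel. The total action is then \(\oint(\langle\theta,\nabla\psi\rangle-h)\,\dd t\), and the area is \(|K_h|\) times a scaling factor.

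The computation runs as follows. Parametrize \(\partial K_h\) by \(\dot\theta=J\nabla\psi(\theta)\), where \(J\) is the rotation by \(90^\circ\). The period is \(T(h)=a'(h)\), by the coarea formula. The loop \(\omega=J^{-1}\theta\) (up to sign) has \(\dot\omega=\nabla\psi\), enclosed area \(a(h)\), and action \(\int_0^{T}(\langle\theta,\nabla\psi\rangle-h)\,\dd t=2a(h)-hT\). Rescaling time to \([0,1]\) by the factor \(T\) gives velocities multiplied by \(T\), horizontal size divided by \(T\), area \(a/T^2\), and action \((2a-hT)/T\).

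Setting the action equal to \(\log\rho\) gives \(\log\rho=2a/a'-h\) and \(Z_0\ge a/a'^2\). The critical-point identity for \(f(h)=(\log\rho+h)^2/(4a)\), namely \(2a=(\log\rho+h)a'\), shows that these formulas coincide with the minimum value \(f(h^\star)=a/a'^2\).

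\emph{Main obstacle.} The hardest step is existence and uniqueness of \(h_\rho^\star\), i.e.\ that \(g(h)=2a(h)/a'(h)-h\) is strictly monotone from \(0\) to \(\log4\) as \(h\) runs over \((0,\infty)\). This is exactly the condition for solvability when \(1<\rho<4\). Both limits should follow from the asymptotics in \Cref{lem:ah-asymptotics}:
\[
a\sim 4\pi h\ \text{as}\ h\to0,
\qquad
a'(h)=8\mathbf K\bigl(\sqrt{1-\e^{-2h}}\bigr)\sim 8h+8\log 4\ \text{as}\ h\to\infty,
\]
which give \(a\sim4h^2+\dots\). I would attempt monotonicity through log-concavity-type properties of \(a'\) derived from the elliptic integral. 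Alternatively, uniqueness of the minimizer follows if \(\log f\) is strictly quasi-convex, which amounts to \(2a/a'-h\) being increasing, i.e.\ \(a''a\le a'^2/2\).
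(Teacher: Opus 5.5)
\textbf{What matches the paper.} Your reduction by cyclic rooting, the Fenchel bound \(h_{K_h}\le L+h\) fed into the anisotropic isoperimetric inequality, and the Hamiltonian loop on \(\partial K_h\) (period \(a'(h)\), rescaled action \(2a/a'-h\), area \(a/a'^2\)) are exactly the paper's argument. Your interval argument via \(\omega\mapsto s\omega\) is a valid shortcut for the paper's appeal to \Cref{prop:fibers}.

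There is one small slip in the rescaling. The rescaled loop \(T^{-1}\omega(Ts)\) has the \emph{same} velocity \(\nabla\psi(\theta(Ts))\), not \(T\) times it. That is what preserves Fenchel equality, and your final formulas are correct.

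Existence of \(h^\star\) needs no monotonicity. The map \(\mathfrak m(h):=2a(h)/a'(h)-h\) is continuous with limits \(0\) and \(\log4\) by \Cref{lem:ah-asymptotics}, so the intermediate value theorem gives a root \(h^\star\) of \(\mathfrak m=\log\rho\). Writing \(\Phi_B(h):=(B+h)^2/(4a(h))\) with \(B=\log\rho\), your bounds give \(Z_0\le\inf_h\Phi_B(h)\le\Phi_B(h^\star)=a/a'^2\le Z_0\). This already proves \eqref{eq:origin-Wulff-main} with the minimum attained.

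\textbf{The gap: uniqueness of the minimizer.} What is missing is uniqueness, and hence the claim that \(h^\star_\rho\) is \emph{characterized} by the first identity in \eqref{eq:origin-critical-main}. Since \(\Phi_B'(h)=(B+h)a'(h)\{\mathfrak m(h)-B\}/(4a(h)^2)\), you need \(\mathfrak m\) to be \emph{strictly} increasing.

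Your criterion \(2aa''\le a'^2\) is precisely concavity of \(\sqrt a\), because \((\sqrt a)''=(2aa''-a'^2)/(4a^{3/2})\). But it has three problems:
\begin{enumerate}
\item You do not prove it.
\item Its non-strict form only makes \(\mathfrak m\) nondecreasing, which still allows an interval of minimizers.
\item The route you propose cannot work. Log-concavity of \(a'\) does not imply \(2aa''\le a'^2\): take \(a'=\e^{ch}\) and \(a=(\e^{ch}-1)/c\), which violates it once \(\e^{ch}>2\). The obvious sufficient condition, concavity of \(a'\), fails here, since the initial slope is \(2\pi\) while \(a'(h)\sim8h\).
\end{enumerate}

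\textbf{The missing idea.} The argument is geometric rather than special-function. Convexity of \(\psi\) gives \((1-\lambda)K_{h_0}+\lambda K_{h_1}\subset K_{(1-\lambda)h_0+\lambda h_1}\), so the planar Brunn--Minkowski inequality makes \(\sqrt a\) concave. Strict convexity of \(\psi\) makes this inclusion miss a nonempty open set; to see this, compare the support points of the three sets in a fixed direction. That upgrades the conclusion to strict concavity of \(\sqrt a\), hence strict monotonicity of \(\mathfrak m\), which yields the unique minimizer and the characterization.
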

\end{analystcheck}

The Wulff loop supplies the nonconvexity witness below saturation, while the
explicit saturated shape and the identity \(\cR_\rho=\cR_4\) cover
\(\rho\ge4\).  These are the two mechanisms behind
\Cref{thm:nonconvex-main}.

\begin{additioncheck}
Figure~\ref{fig:rho4-cloud-envelopes} compares the exact boundary sheets
\(c=ab/2\pm\Delta_4(\abs{a},\abs{b})\) in
\eqref{eq:complete-saturated-shape-main} with simulated particle positions
and empirical fiber boundaries.
The simulations use Poisson\((4)\) offspring.
Panel~\textup{(a)} shows the distinct sites in \(\cR_{n=16}\) from one
realization with \(\mathcal N_{16}>0\), suppressing particle multiplicities.
Panel~\textup{(b)} shows the observed lower and upper fiber boundaries from
\(20\) realizations with \(\mathcal N_{16}>0\), retaining horizontal sites
seen in at least five realizations.
Panel~\textup{(c)} shows the exact boundary sheets, with \(\cR_4\) lying
between them.
The plot uses \(z\) for the normalized central coordinate, denoted by \(c\)
in the shape formulas.
All panels use the same coordinates, viewing angle, and vertical scale.
\end{additioncheck}

\subsection{The central front across offspring means}

\begin{revisioncheck}
\Cref{cor:front-support} identifies \(\Gamma_\rho\) as the almost sure limit
of the normalized maximal central coordinate, while the preceding subsection
places it within the geometry of \(\cR_\rho\).  We now study
\(\Gamma_\rho\) as a deterministic function of the offspring mean.  We first
give its global parameter dependence, then derive its weak branching
expansion.  A further estimate near saturation is given in
\Cref{prop:near-four}.
\end{revisioncheck}

\subsubsection{Global concavity and saturation}

\begin{revisioncheck}
The next theorem describes the dependence of the central front on the
offspring mean throughout the full parameter range.
\end{revisioncheck}

\begin{analystcheck}
\begin{theorem}[Global concavity of the central front]
\label{thm:Gamma-global}
\begin{revisioncheck}
With \(\Gamma_1:=0\), the map \(\rho\mapsto\Gamma_\rho\) is
nondecreasing, concave, and continuous on \([1,\infty)\).  Moreover,
\(\Gamma_\rho=1/4\) for every \(\rho\ge4\).
\end{revisioncheck}
\end{theorem}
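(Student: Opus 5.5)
We argue directly from the variational formula \eqref{eq:Gamma-def}, \(\Gamma_\rho=\sup_{\omega\in\cC_\rho}A(\omega)\), with \(\lambda:=\log\rho\ge0\) as the parameter. The constraint sets \(\cC_\rho\) increase in \(\rho\), so monotonicity is immediate. At \(\rho=1\) every \(\omega\in\cC_1\) has \(J_t(\omega)=0\) for all \(t\). Since \(L\ge0\) vanishes only at \(0\), this forces \(\dot\omega=0\) a.e., hence \(A=0\), consistent with the convention \(\Gamma_1=0\).

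For saturation, \(L\le\log4\) on the diamond \(D\), so for \(\rho\ge4\) we have \(\cC_\rho=\{\omega:\omega(0)=0,\ \dot\omega\in D\text{ a.e.}\}\). This set does not depend on \(\rho\). By \Cref{thm:geometry-summary}(i), the maximum of \(c\) over \eqref{eq:complete-saturated-shape-main} is \(\max(ab/2+\Delta_4(\abs a,\abs b))\). Alternatively one can compute it directly. Let \(T\) be the time spent with \(\dot x\) large. The best path first moves horizontally for time \(1/2\) to \(x=1/2\) and then moves vertically for time \(1/2\), giving \(A\le\int_0^1 x\abs{\dot y}\le\frac12\cdot\frac12\). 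In general the constraint \(\abs{\dot x}+\abs{\dot y}\le1\) gives \(x(s)\le s-\int_0^s\abs{\dot y}\), and a bathtub argument bounds the area by \(\max_T T(1-T)=1/4\). Either route gives \(\Gamma_\rho=1/4\).

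Concavity is the core of the proof. Following the hint in the introduction, I would change to squared variables and make the problem jointly convex in \((\text{path},\lambda)\). First, by symmetry \((x,y)\mapsto(x,-y)\) and reflection one may restrict to paths with \(x\ge0\) and \(\dot y\ge0\), and even \(\dot x\ge0\): replacing \(\dot x\) by \(\abs{\dot x}\) does not change \(L\) (by symmetry of \(\psi\)) and can only increase \(x\), hence \(A\). Next write \(\dot x=p\), \(\dot y=q\) with \(p,q\ge0\). I would then seek a substitution turning \(\int_0^1 x\,q\) into a linear or concave functional of new variables, with constraints \(\int_0^t L(\cdot)\le t\lambda\) that are jointly convex. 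A natural first try is to parametrize so that the area becomes \(\int\!\!\int_{s<r}p(s)q(r)\), a bilinear form. Taking a geometric mean, \(\Gamma=\sup\bigl(\int\!\!\int_{s<r}p(s)q(r)\bigr)\), and viewing it as \(\sup\) of a function whose square root is concave (\(\sqrt{uv}\) is concave on the positive quadrant), one could work with variables \(P=\sqrt{\cdot}\) in which the objective is concave and the constraint stays convex. Once the problem is of the form ``maximize a concave functional \(F(w)\) subject to \(G_t(w)\le t\lambda\) for all \(t\), with \(G_t\) convex'', the value function \(V(\lambda)\) is concave in \(\lambda\). This follows by the standard argument: given feasible \(w_0,w_1\) for \(\lambda_0,\lambda_1\), the convex combination is feasible for \(\theta\lambda_0+(1-\theta)\lambda_1\). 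This gives concavity in \(\lambda=\log\rho\).

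We need concavity in \(\rho\), not in \(\log\rho\), so one more step is required. \(V\) is concave and nondecreasing in \(\lambda\), and \(\lambda=\log\rho\) is concave in \(\rho\). The composition of a concave nondecreasing function with a concave function is concave, so \(\rho\mapsto\Gamma_\rho\) is concave. Concavity gives continuity on \((1,\infty)\). At \(\rho=1\), continuity follows from \(\Gamma_\rho\le\) an upper bound of order \(\lambda\): since \(L(v)\ge c\norm v^2\), Cauchy--Schwarz gives \(\int\abs{\dot\omega}^2\lesssim\lambda\) and hence \(A\lesssim\lambda\). The main obstacle is the change of variables: we need one under which the area objective is concave while every intermediate constraint \(J_t\le t\lambda\) remains convex. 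I would first try the square-root substitution \(p=P^2\), \(q=Q^2\), checking that \(L\) composed with squares keeps the needed convexity. If it fails, I would replace the objective by its square root, which suffices because the square root of a nonnegative value function is monotone-equivalent.
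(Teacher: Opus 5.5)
Your treatment of monotonicity, the convention at \(\rho=1\), saturation, continuity at \(\rho=1\) from \(\Gamma_\rho\lesssim\log\rho\), and the final composition with \(\log\) is fine and agrees with the paper. The gap is the core step, concavity of \(B\mapsto\Gamma_{\e^B}\). You reduce it to finding variables in which the area is concave while every constraint \(J_t\le tB\) stays convex. You never exhibit such variables, and both candidates you name fail.

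\emph{Squared velocities.} With \(\dot x=P^2\), \(\dot y=Q^2\), the constraints do stay convex, because \(L\) is convex and coordinatewise nondecreasing on the positive quadrant. But the objective becomes \(\iint_{s<r}P(s)^2Q(r)^2\dd s\dd r\). This is a nonnegative form, homogeneous of degree four, so it is convex rather than concave along rays.

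\emph{Square root of the area.} The square root of the bilinear area is not concave either; such geometric means are concave only for rank one forms. Concretely, let \(\mathsf C_1,\mathsf C_2,\mathsf C_3\) be the thirds of \([0,1]\). The controls \((\one_{\mathsf C_1},\one_{\mathsf C_2})\) and \((\one_{\mathsf C_2},\one_{\mathsf C_3})\) both have area \(1/9\), while their average has area \(7/72\). So \(\sqrt{\mathfrak A}\) falls below its chord. Moreover, even a concave \(\sqrt{\mathfrak A}\) would only make \(\sqrt{\Gamma_{\e^B}}\) concave, and that does not imply concavity of \(\Gamma_{\e^B}\). A monotone transformation of the objective preserves maximizers, not concavity of the value function.

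This step carries the whole content of the theorem. The form \(\mathfrak A\) is indefinite (its spectrum contains \(\pm1/\pi\)), so \(\Gamma_\rho\) is the value of a nonconvex program, and there is no a priori reason for it to be concave. The paper does not make the primal problem jointly convex in the path and \(B\). Instead it proves that the Lagrangian relaxation is exact.

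On a grid with a velocity box, it sets \(P=(p+x)/2\) and \(Q=(p-x)/2\), where \(p(t)=\int_t^1\dot y\) is the remaining vertical displacement. Then the area is \(\int_0^1(Q\dot P-P\dot Q)\dd t\), and \(L=\phi(r)+\phi(s)\) with \(s=2\dot P\), \(r=-2\dot Q\). Fenchel's inequality in \(r\) eliminates \(Q\), and the remaining functional \(\mathcal J_\vartheta(P)\) is convex in the squares \(U_i=P_i^2\) (\Cref{lem:grid-calibration}). Hence a nonnegative KKT point, with tail weights \(\vartheta_i=\sum_{k\ge i}\lambda_k\) built from the Slater multipliers, globally maximizes \(\mathfrak A_m-\Delta\sum_i\vartheta_iL(w_i)\). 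This gives a supporting line \(\mathfrak G_{m,\alpha}(B')\le\mathfrak G_{m,\alpha}(B)+\beta(B'-B)\) at every \(B\). Dyadic refinement with \(\alpha\uparrow1\) then transfers concavity to continuous time.

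The idea your proposal lacks is this kind of hidden convexity: squares of a mixture of the position \(x\) and the remaining displacement \(p\), not squares of the velocities.
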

\end{analystcheck}

\subsubsection{Weak branching asymptotics}

\begin{revisioncheck}
Near \(\rho=1\), the velocities of maximizing paths are small and the
quadratic part of \(L\) governs the leading variational problem.
\end{revisioncheck}

Define the rotating quadratic extremal \(v_0:[0,1]\to\RR^2\) by
\begin{equation}\label{eq:v0-main}
  v_0(t):=\left(\cos\frac{\pi t}{2},\sin\frac{\pi t}{2}\right).
\end{equation}
The next theorem gives the weak branching expansion and shows that the
normalized velocity of every maximizing path approaches one of
\(\pm v_0\).  Such maximizing paths exist because \(\cR_\rho\) is compact.

\begin{analystcheck}
\begin{theorem}[Sharp weak branching asymptotics]
\label{thm:weak-branching}
\begin{revisioncheck}
\begin{enumerate}[label=\textup{(\roman*)},leftmargin=2.1em]
\item As \(\rho\downarrow1\),
\begin{equation}\label{eq:weak-two-sided}
  \Gamma_\rho
  =\frac{\log\rho}{\pi}-\frac{(\log\rho)^2}{4\pi}
    +\additionmark{O\bigl((\log\rho)^3\bigr)}.
\end{equation}
\item For every choice of maximizing paths
\(\omega_\rho\in\cC_\rho\), \(\rho>1\), satisfying
\(A(\omega_\rho)=\Gamma_\rho\), one has, as \(\rho\downarrow1\),
\begin{equation}\label{eq:weak-rigidity-main}
  \inf_{\sigma\in\{-1,1\}}
  \left\|\frac{\dot\omega_\rho}{\sqrt{\log\rho}}-\sigma v_0\right\|_{L^2(0,1)}
  \longrightarrow0.
\end{equation}
\end{enumerate}
\end{revisioncheck}
\end{theorem}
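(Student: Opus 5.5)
Set \(\varepsilon:=\log\rho\), rescale \(\omega=\sqrt{\varepsilon}\,\eta\), and expand \(L\) near the origin.

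\emph{Expansion of \(L\).} Since \(\psi(\theta)=\tfrac14\norm{\theta}_2^2-\tfrac1{96}(\theta_1^4+\theta_2^4)-\tfrac1{16}\theta_1^2\theta_2^2+O(\norm{\theta}^6)\), Legendre duality gives
\[
L(v)=\norm{v}_2^2+Q_4(v)+O(\norm{v}^6),
\]
with an explicit quartic \(Q_4\). Then
\[
J_t(\omega)=\varepsilon\Bigl(\int_0^t\norm{\dot\eta}^2+\varepsilon\int_0^tQ_4(\dot\eta)+O(\varepsilon^2)\Bigr),
\qquad
A(\omega)=\varepsilon A(\eta).
\]
The constraint becomes \(\int_0^t\norm{\dot\eta}^2+\varepsilon\int_0^tQ_4(\dot\eta)\le t+O(\varepsilon^2)\) for all \(t\). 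This requires a priori bounds on \(\dot\eta\).

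\emph{A priori bounds.} An \(L^2\) bound is immediate from the \(t=1\) constraint. An \(L^\infty\)-type bound is needed to control the remainder; I would get it from Theorem~\ref{thm:Gamma-global}'s convex reformulation in squared variables. That structure should show the optimizer saturates every constraint, \(L(\dot\omega(t))=\varepsilon\) a.e. Hence \(\norm{\dot\omega}\lesssim\sqrt\varepsilon\) uniformly, since \(L\) is coercive near \(0\) on \(D\).

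\emph{Leading problem.} Consider \(\gamma_0:=\sup\{A(\eta):\int_0^t\norm{\dot\eta}^2\le t\ \forall t\}\). Write \(\dot\eta=r(t)(\cos\phi,\sin\phi)\) and use the substitution \(u=\dot\eta\). Then \(A(\eta)=\int_0^1\!\int_0^s u_1(\tau)u_2(s)\,d\tau\,ds\), a quadratic form given by the Volterra operator \(V\) acting between components. The constraints force \(\norm{u}\le1\) in a cumulative sense, and at the optimum \(\norm{u(t)}=1\) a.e. With \(u=(\cos\phi,\sin\phi)\), the first variation gives a pendulum-type Euler–Lagrange equation whose optimum is \(\phi(t)=\pi t/2\) (or its negative-reflected copy). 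This yields
\[
A=\int_0^1\Bigl(\int_0^s\cos\tfrac{\pi\tau}2\Bigr)\cos\tfrac{\pi s}2\,ds=\tfrac1\pi,
\]
so \(\gamma_0=1/\pi\).

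More cleanly, the \(L^2\) relaxation is the norm of the Volterra operator, \(\norm{V}=2/\pi\), with singular vector \(\cos(\pi t/2)\). Its spectral gap then gives the quantitative stability bound
\[
\tfrac1\pi-A(\eta)\ge c\,\inf_\sigma\norm{\dot\eta-\sigma v_0}_{L^2}^2
\]
whenever \(\norm{\dot\eta}_{L^2}\le1\). This stability bound proves (ii) once (i) is known to order \(o(1)\).

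\emph{Second-order term.} Perturbing, \(\Gamma_\rho/\varepsilon=\tfrac1\pi+\varepsilon\gamma_1+O(\varepsilon^2)\). Here \(\gamma_1\) comes from first-order perturbation theory: the optimizer is \(v_0\) rescaled pointwise so that \(\norm{u}^2+\varepsilon Q_4(u)=1\), i.e. \(\norm{u}\approx1-\tfrac\varepsilon2Q_4(v_0(t))\). Thus
\[
\gamma_1=-\int_0^1\tfrac12Q_4(v_0(t))\,w(t)\,dt,
\]
where \(w\) is the sensitivity of \(A\) to radial scaling at time \(t\), namely \(w(t)=2\cdot\tfrac1\pi\)-density by symmetry of \(v_0\). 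Evaluating should give \(-1/(4\pi)\). This requires \(Q_4\) averaged along the rotating circle, with the angle dependence of \(Q_4\) (a \(\cos4\phi\) term) integrating against the uniform rotation over \([0,\pi/2]\).

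\emph{Rigorous two-sided bounds.} For the lower bound, use the explicit admissible path \(\dot\omega=\sqrt{\varepsilon}\,s_\varepsilon(t)v_0(t)\), with \(s_\varepsilon\) solving \(L(\sqrt\varepsilon s v_0)=\varepsilon\). For the upper bound, apply the stability estimate to the maximizer, then bound the first-order correction. Deviations from \(v_0\) contribute only at second order by the spectral gap, giving an \(O(\varepsilon^3)\) error overall.

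The main obstacle is the upper bound at order \(\varepsilon^2\): controlling the quartic correction uniformly requires pointwise saturation and sup bounds on the maximizer, and I would rely on the convexity from Theorem~\ref{thm:Gamma-global} to supply them.
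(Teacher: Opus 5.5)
Your lower bound, the pointwise rescaled path \(s_Bv_0\) with \(L(s_Bv_0)=B\), is exactly the paper's test path. Your derivation of (ii) is also sound. Indeed (ii) needs only \(\Gamma_\rho=B/\pi+o(B)\), with \(B=\log\rho\): then \(\norm{\dot\omega_\rho}_{L^2}^2\le B\) and the spectral gap of \(\mathsf S\) force \(\dot\omega_\rho/\sqrt B\) toward \(\pm v_0\).

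\textbf{The gap} is the upper bound at order \(B^2\). Your route controls the remainder in the rescaled action only under a bound \(\norm{\dot\omega}_\infty\lesssim\sqrt B\). You would get that bound from a.e.\ saturation \(L(\dot\omega(t))=B\) of a maximizer, and nothing in \Cref{thm:Gamma-global} provides it. The squared-variable calibration shows that a KKT point globally maximizes the penalized functional, which yields concavity in \(B\). It says nothing about where the multipliers are supported.

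The paper in fact lists the contact sets \(\{t:J_t(\omega)=t\log\rho\}\) of maximizers as an open problem. Formally, on any interval with no active constraint, the first-order conditions move the dual variable along a level set of \(\psi\), where the action density is nonconstant (compare \eqref{eq:action-density-monotonicity}). So a.e.\ saturation should not be expected without further argument. General admissible paths have no such sup bound either, since accumulated slack can be spent on short bursts of large velocity. As written, the \(O(B^3)\) upper bound is therefore not established.

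\textbf{The fix.} No pointwise information is needed: the upper bound uses only a one-sided inequality for \(L\), and that inequality is exact. The paper keeps only the terminal constraint, so \(\Gamma_\rho\le\overline\Gamma_\rho\). It then computes \(\overline\Gamma_\rho=\min_{h>0}(B+h)^2/a(h)\) by Wulff duality and expands with \(a(h)=4\pi h+\pi h^2+O(h^3)\).

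A route closer to yours also closes the gap, using two facts. On all of \(D\), \(L(v)\ge\norm{v}_2^2+Q_4(v)\), because the higher terms of \eqref{eq:phi-series} are nonnegative. And \(Q_4(v)=\{(v_1+v_2)^4+(v_1-v_2)^4\}/12\) is convex.
\begin{itemize}
\item Write \(v=\dot\omega=\lambda v_0+w\) with \(w\perp v_0\). Then \(\mathfrak A(v)=\lambda^2/\pi+\ip{\mathsf Sw}{w}\le\lambda^2/\pi+\norm{w}_{L^2}^2/(3\pi)\).
\item The constraint at time one gives \(\lambda^2+\norm{w}_{L^2}^2+\int_0^1Q_4(v)\le B\). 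Hence \(\mathfrak A(v)\le\{B-\int_0^1Q_4(v)\}/\pi-\tfrac{2}{3\pi}\norm{w}_{L^2}^2\).
\item Convexity gives \(\int_0^1Q_4(v)\ge\lambda^4/4-C\abs{\lambda}^3\norm{w}_{L^2}\). Absorbing the term linear in \(\norm{w}_{L^2}\) into the negative quadratic term yields \(\mathfrak A(v)\le B/\pi-\lambda^4/(4\pi)+C'\lambda^6\).
\item If \(\lambda^2\ge B-\tfrac38B^2\), this is \(B/\pi-B^2/(4\pi)+O(B^3)\), since \(\lambda^2\le B\).
\item Otherwise, \(\norm{w}_{L^2}^2\le B-\lambda^2\) gives directly \(\mathfrak A(v)\le(B+2\lambda^2)/(3\pi)<B/\pi-B^2/(4\pi)\).
\end{itemize}
Either way the \(O(B^3)\) upper bound follows without saturation or sup bounds.

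A minor slip: in your evaluation of the area of \(v_0\), the second factor should be \(\sin(\pi s/2)\), not \(\cos(\pi s/2)\).
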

\end{analystcheck}

\begin{revisioncheck}
At weak branching, velocities have scale \(\sqrt{\log\rho}\); the quadratic
area energy problem selects \(\pm v_0\), while the quartic part of \(L\)
gives the second order correction.
\end{revisioncheck}

For comparison, \Cref{prop:relaxation-main} shows that retaining only the
action constraint at time one gives the value \(4Z_0(\rho)\), strictly larger
than \(\Gamma_\rho\) for every \(1<\rho<4\).  Thus the intermediate time
constraints remain active throughout the regime below saturation.

\subsection{Sharp finite generation corrections at and above saturation}

At and above saturation, \(\Gamma_\rho=1/4\), and retained descendant trees
in extremal horizontal directions determine the first correction to
\(\Gamma_\rho n^2\).  To state the result, recall the right and left
horizontal generators
\(\R\) and \(\Lft\) from
\Cref{sec:heisenberg-increments}.  For a vertex \(u\), let
\(\mathbb T_u^{\R}\) be the descendant tree obtained by retaining only edges
labelled \(\R\), and define \(\mathbb T_u^{\Lft}\) analogously.  When
\(\rho>4\), use the convention that a minimum over the empty set equals
\(+\infty\), and define the right and left defects by
\begin{align}
  \mathfrak d^+&:=\min\{\abs{u}-X_u:\mathbb T_u^{\R}\text{ is infinite}\},
    \label{eq:Dplus-main}\\
  \mathfrak d^-&:=\min\{\abs{u}+X_u:\mathbb T_u^{\Lft}\text{ is infinite}\},
    \label{eq:Dminus-main}\\
  \mathfrak d&:=\mathfrak d^+\wedge\mathfrak d^- .\label{eq:D-main}
\end{align}
Both minima are finite almost surely on \(\cS\).  Thus \(\mathfrak d\)
measures the smallest ancestral defect relative to one of the two extremal
horizontal directions.  The following theorem identifies the first
correction to \(\Gamma_\rho n^2\) in the two saturated regimes.

\begin{theorem}[Sharp saturated corrections]\label{thm:saturated-corrections-main}
Assume \eqref{eq:offspring-assumption}.
\begin{enumerate}[label=\textup{(\roman*)},leftmargin=2.1em]
\item If \(\rho>4\), then on \(\cS\), almost surely,
\begin{equation}\label{eq:super-saturated-main}
  \frac{\Gamma_\rho n^2-M_n^Z}{n}
  \longrightarrow\frac{\mathfrak d}{2}.
\end{equation}
\item Let \(\rho=4\) and assume in addition that \(\EE N^2<\infty\).
Conditionally on \(\cS\),
\begin{equation}\label{eq:critical-saturated-main}
  \frac{\Gamma_\rho n^2-M_n^Z}{n\log\log n}
  \xrightarrow{\PP}\frac{1}{2\log2}.
\end{equation}
\end{enumerate}
\end{theorem}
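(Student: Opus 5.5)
The argument rests on one deterministic identity for the discrete area; both regimes then reduce to estimating a horizontal defect. For a lattice path of length \(n\) with steps in \(S\), write \(Z_n=\sum_k X_{k-1}\Delta Y_k\). The maximum \(n^2/4\) is attained by taking \(n/2\) steps \(\R\) followed by \(n/2\) steps \(\U\). I would first prove a sharp combinatorial bound: if the ancestral path of \(u\) has \(m\) steps not in the ``optimal pattern'', then \(Z_u\le n^2/4-\tfrac12(\text{defect})n+O(\text{defect}^2)\). Here the defect is the number of positions \(k\) such that, at the moment the path switches to vertical motion, \(X_k\) falls short of \(k\). Concretely, I would show that
\[
  \max\{Z_n:\text{path whose first }j\text{ steps have }j-X_j=d\}=\frac{n^2}{4}-\frac{dn}{2}+O(d^2+1),
\]
by writing \(Z_n\le \max_k X_k\cdot(\#\U\text{-steps})\) and optimizing. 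Symmetrically, starting with \(\Lft\) and then \(\Dwn\) gives the same area. This explains why both \(\mathfrak d^\pm\) appear.

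\emph{Case \(\rho>4\).} For the lower bound on \(M_n^Z\), pick a vertex \(u\) realizing \(\mathfrak d^+\) with \(\abs{u}=j\) fixed. Its \(\R\)-subtree is a Galton--Watson tree with mean \(\rho/4>1\), so on its survival it reaches generation \(\approx n/2\). From each of its vertices, the \(\U\)-subtree is again supercritical, and survives with positive probability. Hence among the exponentially many \(\R\)-subtree vertices at generation \(n/2-O(1)\), almost surely one has an infinite \(\U\)-tree. Following it gives \(Z\ge (n/2-\mathfrak d/2)(n/2)-O(1)\), and adjusting the switch time gives \(n^2/4-\mathfrak d n/2+O(1)\). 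For the upper bound, I would use a first-moment (many-to-one) estimate. The expected number of generation-\(n\) particles whose paths have area \(\ge n^2/4-(\mathfrak d'+\epsilon)n/2\) and whose initial segment does not enter a surviving \(\R\)-tree with defect \(\le \mathfrak d'\) must be summable. The idea is that a path near-optimal at this precision must consist of a prefix with defect \(d\) followed by an almost pure \(\R\)-run of length \(\approx n/2\). Such a run forces a long \(\R\)-path in the subtree, and the subtree of \(u\) is finite (extinct) only with the standard exponential tail for long paths in an extinct supercritical-restricted tree. The hard part is the conditioning: the event is ``an infinite \(\R\)-tree'', while we only see finite depth. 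I would handle this via the fact that a finite \(\R\)-tree of a supercritical GW process, conditioned on extinction, is subcritical, so its height has an exponential tail. A union bound over the polynomially many defects and prefixes then applies, with Borel--Cantelli.

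\emph{Case \(\rho=4\).} The \(\R\)-subtree is critical. The deficit is therefore governed by the minimal number of non-\(\R\) steps needed to travel distance \(\approx n/2\) in the tree. Equivalently, this is a branching random walk with increment \(0\) (on \(\R\) edges) and \(\ge1\) otherwise, i.e.\ nonnegative increments with \(\PP(\text{increment}=0)\cdot\rho=1\). This is exactly the Dekking--Host critical case, where the minimal displacement at time \(m\) grows like \(\log\log m/\log 2\) in probability on survival, with \(\EE N^2<\infty\) needed there. Applying it to the first half (the \(\R\)-phase) and, by symmetry, to the second (\(\U\)) phase, each defect costs \(n/2\) area per unit. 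Combining them, via the identity above, should yield \(\Gamma_4n^2-M_n^Z\sim n\log\log n/(2\log2)\).

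The main obstacle is matching the constant: showing that the two phases do not both contribute a full \(\log\log n/\log2\), and that optimal switching absorbs the second phase's cost to lower order. Here I would use the recursion that at the switching vertex the \(\U\)-tree is again critical, but its defect only reduces the vertical run, which costs \(X\approx n/2\) per step, the same factor. I would therefore need to verify that the correct total is a single Dekking--Host defect of the combined \(\R\)-then-\(\U\) walk, with increment counted as a unit cost, and check the normalization against the survival-event version of Dekking--Host.
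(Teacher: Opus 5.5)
Your lower bound on $M_n^Z$ in (i) is the paper's construction. It uses a vertex $u_*$ realizing $\mathfrak d^+$, exponential growth of its surviving $\R$-tree, and Borel--Cantelli for an infinite $\U$-tree at the right $\R$-depth; since $u_*$ is random, this must be made simultaneous over countably many candidates. The upper bound in (i), however, has a genuine gap.

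\emph{The first-moment estimate is not summable in general.} Take $\mathfrak d'=0$ in your event and let $n$ be even. By the spine form of the many-to-one formula, the expected number of generation $n$ particles with label word $\R^{n/2}\U^{n/2}$ (area $n^2/4$) whose root $\R$-tree is finite is $(\rho/4)^n(f'(s)/\rho)^{n/2+1}$. Here $f(t)=\EE t^N$, $s=(3+q_{\R})/4$, and $q_{\R}$ is the extinction probability of the $\R$-retained tree. This is of order $(\rho f'(s)/16)^{n/2}$. For $N\in\{5,100\}$ with probabilities $0.9,0.1$ one gets $\rho f'(s)\approx 34>16$, so the expectation grows exponentially.

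\emph{The structural claim is also inaccurate.} A non-$\R$ step at time $k$ costs only about $n/2-k$: the word $\R^{j-1}\U\R^h\U^{n-j-h}$ reaches area $j-1+\lfloor (n-1)^2/4\rfloor$. So your displayed identity is off by $\Theta(dj)$. Only the one-sided bound $Z_n\le Z_j+\lfloor (n-j+\abs{X_j})^2/4\rfloor$ of \Cref{lem:future-area} holds.

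\emph{The missing idea is that no moment estimate is needed.} The integer sequence $\mathfrak d_m^+=\min_{\abs{u}=m}(m-X_u)$ is nondecreasing, because every edge adds a nonnegative defect. It is at most $\mathfrak d^+$ for $m\ge\abs{u_*}$, so it eventually equals some $d\le\mathfrak d^+$. The descendants of generation $m_0$ that keep defect $d$ form an infinite, locally finite tree. K\"onig's lemma then gives an infinite ray of constant defect, i.e.\ an all-$\R$ ray, so $d=\mathfrak d^+$ (\Cref{prop:oriented-defect}). Consequently every generation $m_0$ particle has $\abs{X_u}\le m_0-\mathfrak d$, and \Cref{lem:future-area} gives $M_n^Z\le m_0^2/4+\lfloor (n-\mathfrak d)^2/4\rfloor$ deterministically.

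For (ii) you identify the critical Dekking--Host mechanism, but you leave open exactly the step that fixes the constant. The paper never adds two separate Dekking--Host defects. Choose $j_n\to\infty$ with $j_n=o(\log\log n)$, and take the first vertex $w_n$ at generation $j_n$ whose $\R/\U$-retained subtree survives. By \Cref{lem:first-success-tree}, this subtree has the retained law conditioned on survival. On it, minimize the two-block cost: the number of $\U$ labels among the first $\lfloor \bar n/2\rfloor$ generations plus the number of $\R$ labels afterwards.

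Swapping $\R\leftrightarrow\U$ on all edges after the midpoint preserves the retained tree law and its survival event. It turns the two-block cost into the homogeneous cost ($0$ for $\R$, $1$ for $\U$). This is a critical branching random walk in which both displacement means equal one. A single application of \Cref{prop:DH-theorem} therefore gives a minimum of order $\log\log n/\log 2$. \Cref{lem:monotone-two-block} converts a two-block defect $d$ into an area loss of at most $\frac n2 d+2d^2+2d$. The swap is only an equality in law for each fixed $n$, so this half gives convergence in probability; that is why (ii) is not an almost sure statement.

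For the other half, read the defect at an early generation such as $m=\lfloor n^{1/3}\rfloor$, not over ``the first half.'' By the $\Theta(dj)$ effect above, defects acquired near time $n/2$ are nearly free. By contrast, $\log\log m\sim\log\log n$ and the error $m^2/4$ is $o(n\log\log n)$. So $\mathfrak d_m\sim\log\log n/\log 2$, from \Cref{prop:DH-theorem} applied to the plus and minus defects, already forces the full loss $\frac n2\mathfrak d_m$.
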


\section{Convex and path space preliminaries}\label{sec:convex}

This section establishes the analytic tools used throughout the paper: convex
duality for the horizontal Cram\'er transform, reconstruction of convex hulls
from support functions, continuity of the area lift on bounded variation
paths, a compact ambient path set, and density of strict admissible paths.

\begin{analystcheck}

\subsection{The horizontal Cram\'er transform}

Recall the horizontal step set \(S\) and its uniform law \(\mu\) from
\Cref{sec:heisenberg-increments}.

\begin{revisioncheck}
Define the quartic polynomial \(Q_4:\RR^2\to\RR\) by
\[
  Q_4(v):=\frac16(v_1^4+v_2^4)+v_1^2v_2^2.
\]
The next lemma states the properties of \(\psi\) and \(L\) used throughout.
\end{revisioncheck}

\begin{lemma}\label{lem:convex-facts}
The following statements hold.
\begin{enumerate}[label=\textup{(\roman*)},leftmargin=2.1em]
\item \(\psi\) is real analytic and strictly convex.  Its gradient is a diffeomorphism from \(\RR^2\) onto \(\operatorname{int}D\).
\item \(L\) is lower semicontinuous on \(\RR^2\), finite and continuous on \(D\), and infinite outside \(D\).  For \(v\in D\), minimizing over probability weights \((p_s)_{s\in S}\) and using the convention \(0\log0=0\),
\begin{equation}\label{eq:entropy-L}
  L(v)=\min\left\{
  \sum_{s\in S}p_s\log(4p_s):
  p_s\ge0,\ \sum_{s\in S}p_s=1,\
  \sum_{s\in S}p_s s=v
  \right\}.
\end{equation}
In particular,
\begin{equation}\label{eq:L-range}
  0\le L(v)\le\log4,
  \qquad v\in D.
\end{equation}
\item
\begin{revisioncheck}
Let \(\phi:\RR\to[0,\infty]\) be the one dimensional Rademacher Cram\'er
transform,
\begin{equation}\label{eq:phi-def}
  \begin{aligned}
  \phi(z)&:=\sup_{s\in\RR}\{sz-\log\cosh s\}\\
  &=
    \begin{cases}
      \displaystyle\frac12\bigl\{(1+z)\log(1+z)
        +(1-z)\log(1-z)\bigr\},&\abs z\le1,\\[1mm]
      +\infty,&\abs z>1.
    \end{cases}
  \end{aligned}
\end{equation}
where \(0\log0:=0\).  For \(\abs z\le1\),
\begin{equation}\label{eq:phi-series}
  \phi(z)=\sum_{k\ge1}\frac{z^{2k}}{2k(2k-1)}.
\end{equation}
For every \(v=(v_1,v_2)\in\RR^2\),
\begin{equation}\label{eq:L-diagonal-decomposition}
  L(v)=\phi(v_1+v_2)+\phi(v_1-v_2).
\end{equation}
Consequently, \(L\) is invariant under sign changes and exchange of the two
coordinates.  For \(v\in D\) and \(0\le r\le1\),
\begin{equation}\label{eq:quadratic-scaling-L}
  L(rv)\le r^2L(v)\le rL(v).
\end{equation}
\end{revisioncheck}
\item For every \(v\in\RR^2\),
\begin{equation}\label{eq:L-energy-lower}
  L(v)\ge\norm{v}_2^2.
\end{equation}
There is \(c_0>0\) such that, for every \(v\in D\),
\begin{equation}\label{eq:L-global-quartic}
  L(v)\ge\norm{v}_2^2+c_0\norm{v}_2^4.
\end{equation}
Moreover, as \(v\to0\),
\begin{equation}\label{eq:L-local-expansion}
  L(v)=\norm{v}_2^2+Q_4(v)+O(\norm{v}_2^6).
\end{equation}
\begin{revisioncheck}
The remainder in \eqref{eq:L-local-expansion} is uniform in direction.
\end{revisioncheck}
\end{enumerate}
\end{lemma}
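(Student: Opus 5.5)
The plan rests on one observation: the step law factorizes after the rotation \(u=x+y\), \(w=x-y\). Each step in \(S\) changes \((u,w)\) by \((\pm1,\pm1)\). Under \(\mu\) the four sign pairs are uniformly distributed, so the two rotated coordinates are independent Rademacher variables. This gives
\[
\cosh\theta_1+\cosh\theta_2=2\cosh\tfrac{\theta_1+\theta_2}{2}\cosh\tfrac{\theta_1-\theta_2}{2},
\]
hence \(\psi(\theta)=\log\cosh s_1+\log\cosh s_2\) with \(s_{1,2}=(\theta_1\pm\theta_2)/2\).

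\textbf{Part (iii).} Substituting \(\ip{\theta}{v}=s_1(v_1+v_2)+s_2(v_1-v_2)\) into \eqref{eq:L-def}, the supremum separates. This yields \eqref{eq:L-diagonal-decomposition} directly, with \(\phi\) the Rademacher transform. The closed form of \(\phi\) is standard: the maximizer is \(s=\operatorname{artanh}z\). Differentiating twice gives \(\phi''(z)=1/(1-z^2)=\sum_k z^{2k}\), and integrating twice from \(\phi(0)=\phi'(0)=0\) gives \eqref{eq:phi-series}. The symmetries follow because \(\phi\) is even and swapping \(v_1,v_2\) or changing a sign permutes or negates the arguments \(v_1\pm v_2\). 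For \eqref{eq:quadratic-scaling-L}, the series has nonnegative coefficients and only powers of degree at least two, so \(\phi(rz)\le r^2\phi(z)\); summing the two terms gives the first inequality, and \(r^2\le r\) gives the second.

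\textbf{Part (i).} Analyticity of \(\psi\) is clear. Strict convexity follows because \(\log\cosh\) is strictly convex in each rotated variable and the rotation is linear and invertible. In rotated coordinates \(\nabla\psi\) acts as \((\tanh s_1,\tanh s_2)\), up to the linear change of variables. This is a diffeomorphism onto \((-1,1)^2\), and \((-1,1)^2\) corresponds exactly to \(\operatorname{int}D\), since \(|v_1\pm v_2|<1\) for both signs is equivalent to \(|v_1|+|v_2|<1\).

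\textbf{Part (ii).} Lower semicontinuity holds because \(L\) is a supremum of affine functions. Finiteness exactly on \(D\) and continuity on \(D\) follow from the decomposition, since \(\phi\) is finite and continuous on \([-1,1]\) and infinite outside. For the entropy formula \eqref{eq:entropy-L}, I would use the standard finite-support Cramér/Sanov duality: \(L(v)\) equals the minimal relative entropy \(H(p\mid\mu)=\sum_s p_s\log(4p_s)\) over laws \(p\) on \(S\) with mean \(v\). Weak duality is Jensen/Gibbs: \(\ip{\theta}{v}-\psi(\theta)\le H(p\mid\mu)\) for every such \(p\). For \(v\in\operatorname{int}D\), the tilted law \(p_s\propto\e^{\ip{\theta}{s}}\) with \(\nabla\psi(\theta)=v\) attains equality. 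On \(\partial D\), both sides are continuous along segments toward \(v\): \(L\) is continuous on \(D\), and the entropy minimum is a convex lsc function attained on a compact set, so it agrees with \(L\) by convexity along rays. For \eqref{eq:L-range}, nonnegativity of relative entropy gives \(L\ge0\), and \(\sum_s p_s\log(4p_s)\le\log4\) since each \(p_s\le1\).

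\textbf{Part (iv).} Keeping only the \(k=1\) term of the series gives \(\phi(z)\ge z^2/2\), so \(L(v)\ge\tfrac12\{(v_1+v_2)^2+(v_1-v_2)^2\}=\norm{v}_2^2\) on \(D\); outside \(D\) the bound is trivial. For the quartic lower bound, the \(k=2\) term gives \(L(v)\ge\norm{v}_2^2+\tfrac1{12}\{(v_1+v_2)^4+(v_1-v_2)^4\}\). The bracket is at least \(\tfrac12\{(v_1+v_2)^2+(v_1-v_2)^2\}^2=2\norm{v}_2^4\), which gives \(c_0=1/6\). For the local expansion, the \(k=2\) terms equal \(\tfrac1{12}\{(v_1+v_2)^4+(v_1-v_2)^4\}=\tfrac16(v_1^4+v_2^4)+v_1^2v_2^2=Q_4(v)\). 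The tail is at most \(C\norm{v}_2^6\) uniformly for \(\norm{v}_2\le1/2\), because the series \(\sum_{k\ge3}z^{2k}\) is dominated geometrically there.

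The only step needing care is the boundary case of the entropy identity in (ii). I would handle it as above, or more directly by applying the decomposition and computing explicitly on each edge of \(D\).
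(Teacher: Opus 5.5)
Your proposal is correct and follows essentially the same route as the paper: the factorization \(\psi=\log\cosh p+\log\cosh q\) in diagonal coordinates, separation of the Legendre supremum to get \(L(v)=\phi(v_1+v_2)+\phi(v_1-v_2)\), the even power series of \(\phi\) for the scaling, quadratic, quartic (with the same \(c_0=1/6\)) and sixth-order remainder bounds, and the Gibbs variational formula for the entropy representation. The only point you gloss over is the endpoints \(z=\pm1\). There the supremum defining \(\phi\) is not attained, so ``the maximizer is \(\operatorname{arctanh}z\)'' does not apply; the paper checks \(s-\log\cosh s\uparrow\log2\) and uses monotone convergence to extend the series to \(z=\pm1\). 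Your lower-semicontinuity argument for the entropy formula on \(\partial D\) is valid, and it is a more explicit version of what the paper leaves to finite-dimensional convex duality.
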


\begin{proof}
\begin{hyperreplacement}
\emph{Strict duality.}
Put \(p=(\alpha+\beta)/2\) and \(q=(\alpha-\beta)/2\).  The identity
\(\cosh\alpha+\cosh\beta=2\cosh p\cosh q\) gives
\begin{equation}\label{eq:psi-cosh-factor}
  \begin{aligned}
  \psi(\alpha,\beta)&=\log\cosh p+\log\cosh q,\\
  \alpha v_1+\beta v_2&=p(v_1+v_2)+q(v_1-v_2).
  \end{aligned}
\end{equation}
The first expression is real analytic.  Its Hessian in the coordinates
\((p,q)\) is
\(\operatorname{diag}(\operatorname{sech}^2p,
\operatorname{sech}^2q)\), which is positive definite.  Since the change
of coordinates is invertible and linear, \(\psi\) is strictly convex.
Moreover, \(v=\nabla\psi(\alpha,\beta)\) is equivalent to
\[
  v_1+v_2=\tanh p,\qquad v_1-v_2=\tanh q.
\]
The identity
\(\max\{|v_1+v_2|,|v_1-v_2|\}=|v_1|+|v_2|\) shows that these equations
have a unique solution precisely when \(v\in\operatorname{int}D\).  The
solution is
\[
  p=\operatorname{arctanh}(v_1+v_2),\qquad
  q=\operatorname{arctanh}(v_1-v_2),\qquad
  (\alpha,\beta)=(p+q,p-q).
\]
Thus \(\nabla\psi\) is a diffeomorphism from \(\RR^2\) onto
\(\operatorname{int}D\), with the displayed smooth inverse.
\end{hyperreplacement}

\emph{Entropy representation.}
The effective domain of the Legendre transform is the closed convex hull \(D\).  The entropy representation \eqref{eq:entropy-L} follows from the Gibbs variational formula for
\(
\log\sum_{s\in S}\frac14\e^{\ip{\theta}{s}}
\)
and finite dimensional convex duality.  Since
\[
  \sum_sp_s\log(4p_s)=\log4-\Bigl(-\sum_sp_s\log p_s\Bigr),
\]
it lies between \(0\) and \(\log4\), proving \eqref{eq:L-range}.  Continuity on the compact polytope \(D\) follows either from \eqref{eq:entropy-L} or general properties of finite support Cram\'er transforms.

\begin{revisioncheck}
\emph{Diagonal decomposition and scaling.}
\hyperrewrite{Taking the supremum separately in \(p\) and \(q\) in
\eqref{eq:psi-cosh-factor} gives \eqref{eq:L-diagonal-decomposition}.}  For \(\abs z<1\), the scalar
supremum in \eqref{eq:phi-def} is attained at
\(s=\operatorname{arctanh}z\), which gives the displayed formula for
\(\phi\).  At \(z=1\),
\[
  s-\log\cosh s
  =\log2-\log(1+\e^{-2s})\uparrow\log2
  \qquad(s\to\infty),
\]
and the case \(z=-1\) follows by replacing \(s\) with \(-s\).  The supremum
is infinite for \(\abs z>1\).  Since
\(\phi'(z)=\operatorname{arctanh}z\) for \(\abs z<1\), integrating the
power series for \(\operatorname{arctanh}\), followed by monotone
convergence at \(z=\pm1\), proves \eqref{eq:phi-series}.

The function \(\phi\) is even, so \eqref{eq:L-diagonal-decomposition}
also proves the stated symmetries.  For \(v\in D\), substitution of
\eqref{eq:phi-series} gives
\[
  L(rv)
  =\sum_{k\ge1}r^{2k}
    \frac{(v_1+v_2)^{2k}+(v_1-v_2)^{2k}}
         {2k(2k-1)}
  \le r^2L(v).
\]
The second inequality in \eqref{eq:quadratic-scaling-L} follows from
\(r^2\le r\) and \(L\ge0\).

\emph{Quadratic bounds and expansion.}
The terms with \(k=1\) and \(k=2\) are, respectively,
\[
  \frac{(v_1+v_2)^2+(v_1-v_2)^2}{2}=\norm v_2^2,
  \qquad
  \frac{(v_1+v_2)^4+(v_1-v_2)^4}{12}=Q_4(v).
\]
All remaining terms are nonnegative.  Hence
\[
  L(v)\ge\norm v_2^2+Q_4(v)
  \ge\norm v_2^2+\frac16\norm v_2^4,
  \qquad v\in D,
\]
which proves \eqref{eq:L-energy-lower} and
\eqref{eq:L-global-quartic}; outside \(D\),
\eqref{eq:L-energy-lower} follows from \(L=+\infty\).  As \(v\to0\),
\(\abs{v_1\pm v_2}\le\sqrt2\norm v_2\), and for \(\abs u\le1/2\),
\[
  \sum_{k\ge3}\frac{\abs{u}^{2k}}{2k(2k-1)}
  \le\frac{\abs{u}^6}{1-u^2}.
\]
Hence the terms with \(k\ge3\) contribute \(O(\norm v_2^6)\), uniformly
in direction.
\end{revisioncheck}
\end{proof}

\begin{lemma}
\label{lem:support-reconstruction}
Let \(E\subset\RR^d\) be nonempty and compact, and set
\(K:=\conv(E)\).  Then \(K\) is compact,
\begin{equation}\label{eq:support-convex-hull-general}
  h_K=h_E,
\end{equation}
and
\begin{equation}\label{eq:support-halfspace-general}
  K=\bigcap_{\lambda\in\RR^d}
  \left\{z\in\RR^d:\ip{\lambda}{z}\le h_E(\lambda)\right\}.
\end{equation}
Thus a support function determines the closed convex hull of \(E\), but
not a nonconvex set \(E\) itself.
\end{lemma}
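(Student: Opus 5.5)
The argument is standard finite dimensional convex analysis, done in three steps.

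\emph{Compactness of \(K\).} By Carath\'eodory's theorem, every point of \(\conv(E)\) is a convex combination of at most \(d+1\) points of \(E\). Hence \(K\) is the image of the compact set \(E^{d+1}\times\Delta_d\) under the continuous map
\[
  (z_0,\dots,z_d,t)\mapsto\sum_i t_iz_i,
\]
where \(\Delta_d\) is the standard simplex. A continuous image of a compact set is compact, so \(K\) is compact. In particular \(K\) is closed, and no closure needs to be taken.

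\emph{The identity \(h_K=h_E\).} Since \(E\subset K\), we have \(h_E\le h_K\). Conversely, for \(z=\sum_it_iz_i\in K\) with \(z_i\in E\), linearity gives
\[
  \ip{\lambda}{z}=\sum_it_i\ip{\lambda}{z_i}\le h_E(\lambda).
\]
Taking the supremum over \(z\in K\) yields \(h_K\le h_E\).

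\emph{The half-space representation.} Let \(H\) denote the intersection of half-spaces on the right of \eqref{eq:support-halfspace-general}. Each half-space is convex and contains \(E\), so \(K\subset H\). For the reverse inclusion, which is the only substantive step, take \(z\notin K\). Since \(K\) is nonempty, compact and convex, there is a nearest point \(p\in K\) to \(z\). Set \(\lambda:=z-p\ne0\). The variational inequality for the projection gives \(\ip{\lambda}{k-p}\le0\) for all \(k\in K\). Therefore
\[
  h_E(\lambda)=h_K(\lambda)=\ip{\lambda}{p}<\ip{\lambda}{p}+\norm{\lambda}_2^2=\ip{\lambda}{z},
\]
so \(z\notin H\). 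This proves \(H\subset K\).

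The closing remark follows at once. Any nonconvex compact \(E\) has the same support function as \(\conv(E)\ne E\), so the support function cannot recover \(E\). No real obstacle is expected; the only care needed is to avoid ``closed convex hull'' subtleties, and compactness from Carath\'eodory handles this.
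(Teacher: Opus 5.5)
Your proposal is correct and follows essentially the same argument as the paper: Carath\'eodory's theorem for compactness, linearity of \(\ip{\lambda}{\cdot}\) for \(h_K=h_E\), and a nearest-point projection onto \(K\) with \(\lambda:=z-p\) to separate any \(z\notin K\). The only difference is that the paper derives the projection variational inequality \(\ip{z-p}{y-p}\le0\) explicitly by differentiating \(\norm{z-p-t(y-p)}_2^2\) at \(t=0\), whereas you cite it as standard.
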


\begin{proof}
By Carath\'eodory's theorem \cite{Rockafellar1970}, every point of \(K\) is a
convex combination of at most \(d+1\) points of \(E\).  Hence \(K\) is the continuous image
of a compact set and is compact.  The inclusion \(E\subset K\) gives
\(h_E\le h_K\).  Conversely, if
\(z=\sum_{j=1}^m\alpha_jz_j\in K\), where \(z_j\in E\),
\(\alpha_j\ge0\), and \(\sum_j\alpha_j=1\), then
\[
  \ip{\lambda}{z}
  =\sum_{j=1}^m\alpha_j\ip{\lambda}{z_j}
  \le h_E(\lambda).
\]
Taking the supremum over \(z\in K\) proves
\eqref{eq:support-convex-hull-general} and the left to right inclusion in
\eqref{eq:support-halfspace-general}.

For the reverse inclusion, let \(z\notin K\), and choose
\(p\in K\) minimizing \(\norm{z-p}_2\).  For \(y\in K\) and
\(t\in(0,1]\), convexity gives \(p+t(y-p)\in K\), so
\[
  0\le\norm{z-p-t(y-p)}_2^2-\norm{z-p}_2^2
  =-2t\ip{z-p}{y-p}+t^2\norm{y-p}_2^2.
\]
After division by \(t\) and passage to \(t\downarrow0\),
\(\ip{z-p}{y-p}\le0\).  Taking \(\lambda:=z-p\) yields
\[
  h_E(\lambda)=h_K(\lambda)=\ip{\lambda}{p}
  <\ip{\lambda}{z},
\]
because \(\ip{\lambda}{z-p}=\norm{z-p}_2^2>0\).  Thus \(z\) does not
belong to the intersection in \eqref{eq:support-halfspace-general}.
Finally, \eqref{eq:support-convex-hull-general} shows that \(E\) and
\(\conv(E)\) have the same support function.
\end{proof}

\subsection{Bounded variation and the area lift}

\begin{revisioncheck}
Let \((E,\norm{\cdot})\) be a normed finite dimensional space.  For a
continuous path \(f:[0,1]\to E\), define its total variation by
\[
  \Var(f):=\sup_{m\ge1}\ \sup_{0=t_0<t_1<\cdots<t_m=1}
  \sum_{i=1}^m\norm{f(t_i)-f(t_{i-1})}.
\]
Define the unit total variation path set by
\end{revisioncheck}
\begin{equation}\label{eq:V-support}
  \cV:=\left\{(x,y)\in C([0,1];\RR^2):
  (x(0),y(0))=0,\quad
  \Var(x)+\Var(y)\le1
  \right\}.
\end{equation}
The set \(\cV\) is closed in the uniform topology because total variation is lower semicontinuous under uniform convergence.

For \((x,y)\in\cV\), define its Riemann--Stieltjes area lift path by
\begin{revisioncheck}
\begin{equation}\label{eq:BV-area}
  \mathcal A_{\mathrm{BV}}(x,y)(t):=\int_0^t x\dd y.
\end{equation}
\end{revisioncheck}

\begin{lemma}\label{lem:area-continuity}
For \(\gamma=(x,y)\) and \(\widetilde\gamma=(\widetilde x,\widetilde y)\) in \(\cV\),
\begin{equation}\label{eq:area-lipschitz}
  \sup_{0\le t\le1}
  \abs{\int_0^t x\dd y-\int_0^t\widetilde x\dd\widetilde y}
  \le3\norm{\gamma-\widetilde\gamma}_\infty,
\end{equation}
where the norm on the right is the maximum of the two coordinate sup norms.  Hence the bounded variation area lift map
\begin{revisioncheck}
\begin{equation}\label{eq:Phi-lift}
  \Phi:\cV\to\cX,
  \qquad
  \Phi(x,y)=((x,y),\mathcal A_{\mathrm{BV}}(x,y)),
\end{equation}
\end{revisioncheck}
is continuous.
\end{lemma}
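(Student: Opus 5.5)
We prove \eqref{eq:area-lipschitz} by splitting the difference of Riemann--Stieltjes integrals as
\[
  \int_0^t x\dd y-\int_0^t\widetilde x\dd\widetilde y
  =\int_0^t (x-\widetilde x)\dd y+\int_0^t \widetilde x\dd(y-\widetilde y),
\]
which is legitimate because \(y,\widetilde y\) have bounded variation and \(x,\widetilde x\) are continuous, so all integrals exist. Write \(\varepsilon:=\norm{\gamma-\widetilde\gamma}_\infty\). The first term is bounded by \(\norm{x-\widetilde x}_\infty\Var(y)\le\varepsilon\), using \(\Var(y)\le1\).

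The second term is the main obstacle. Here \(y-\widetilde y\) is small in sup norm, but its variation is only bounded by \(2\), so the standard variation bound gives no smallness. We therefore integrate by parts. Both \(\widetilde x\) and \(y-\widetilde y\) are continuous and of bounded variation, and \(y(0)=\widetilde y(0)=0\). Hence
\[
  \int_0^t \widetilde x\dd(y-\widetilde y)
  =\widetilde x(t)\bigl(y(t)-\widetilde y(t)\bigr)
   -\int_0^t (y-\widetilde y)\dd\widetilde x .
\]
The boundary term is bounded by \(\abs{\widetilde x(t)}\,\varepsilon\). Since \(\widetilde x(0)=0\), we have \(\abs{\widetilde x(t)}\le\Var(\widetilde x)\le1\), so the boundary term is at most \(\varepsilon\). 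The remaining integral is bounded by \(\norm{y-\widetilde y}_\infty\Var(\widetilde x)\le\varepsilon\).

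Summing the three contributions gives the bound \(3\varepsilon\), uniformly in \(t\in[0,1]\). This proves \eqref{eq:area-lipschitz}.

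It remains to prove continuity of \(\Phi\). The area component \(\mathcal A_{\mathrm{BV}}(x,y)\) is itself a continuous path, since \(\abs{\int_s^t x\dd y}\le\norm{x}_\infty\Var(y|_{[s,t]})\) and the variation function of a continuous BV path is continuous. Hence \(\Phi\) takes values in \(\cX\). By \eqref{eq:area-lipschitz} and the definition \eqref{eq:path-metric} of \(d_\cX\),
\[
  d_\cX\bigl(\Phi(\gamma),\Phi(\widetilde\gamma)\bigr)\le3\norm{\gamma-\widetilde\gamma}_\infty .
\]
So \(\Phi\) is Lipschitz, and in particular continuous, on \(\cV\).
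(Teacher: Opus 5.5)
Your proposal is correct and follows the paper's proof: the same splitting into \(\int(x-\widetilde x)\dd y+\int\widetilde x\dd(y-\widetilde y)\), integration by parts on the second term using the vanishing initial values, and the same three bounds by \(\norm{\gamma-\widetilde\gamma}_\infty\). Your extra check that \(\mathcal A_{\mathrm{BV}}(x,y)\) is a continuous path, so that \(\Phi\) really takes values in \(\cX\), is a small detail the paper leaves implicit.
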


\begin{proof}
For every \(t\),
\begin{align*}
  \int_0^t x\dd y-\int_0^t\widetilde x\dd\widetilde y
  &=\int_0^t(x-\widetilde x)\dd y
    +\int_0^t\widetilde x\dd(y-\widetilde y).
\end{align*}
The first term is bounded by
\(
\norm{x-\widetilde x}_\infty\Var(y).
\)
Integration by parts in the second term gives
\begin{align*}
  \abs{\int_0^t\widetilde x\dd(y-\widetilde y)}
  &\le
  \norm{\widetilde x}_\infty\norm{y-\widetilde y}_\infty
  +\norm{y-\widetilde y}_\infty\Var(\widetilde x).
\end{align*}
The boundary term at zero vanishes because all paths in \(\cV\) start at zero; the endpoint term at \(t\) is the first term on the right.  All three variation or sup norm factors are at most one on \(\cV\), proving \eqref{eq:area-lipschitz}.
\end{proof}

The following exact identity is specific to the four horizontal generators and is the reason that no discretization correction appears in the enhanced LDP.

\begin{lemma}\label{lem:exact-area}
For the linearly interpolated single walk path \(Q_n=(\omega_n,\zeta_n)\),
\begin{equation}\label{eq:exact-area-identity}
  \zeta_n(t)=\int_0^t x_n\dd y_n,
  \qquad 0\le t\le1.
\end{equation}
Moreover, \(\omega_n\in\cV\) almost surely.
\end{lemma}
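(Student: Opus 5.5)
The proof checks the identity one step interval at a time, using the fact that every increment in \(S\) moves exactly one horizontal coordinate. Fix \(n\) and \(k\in\{0,\ldots,n-1\}\), and take \(t\in[k/n,(k+1)/n]\).

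On this interval \(\omega_n\) is affine with increment \(\xi_{k+1}/n\). There are two cases.

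\emph{Horizontal step.} If \(\xi_{k+1}\in\{(\pm1,0)\}\), then \(y_n\) is constant on the interval. Hence \(\int_{k/n}^t x_n\dd y_n=0\).

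\emph{Vertical step.} If \(\xi_{k+1}\in\{(0,\pm1)\}\), then \(x_n\equiv X_k/n\) is constant there. Hence
\[
  \int_{k/n}^t x_n\dd y_n=\frac{X_k}{n}\bigl(y_n(t)-y_n(k/n)\bigr)
  =\frac{X_k\,\xi_{k+1}^{(2)}}{n^2}\,(nt-k).
\]

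In both cases the increment is linear in \(t\) on the interval. At \(t=(k+1)/n\) it equals \(n^{-2}X_k\xi_{k+1}^{(2)}=n^{-2}(Z_{k+1}-Z_k)\), by the recursion \eqref{eq:coordinate-recursion} applied to the single walk. In the horizontal case this value is \(0=Z_{k+1}-Z_k\), since \(\xi_{k+1}^{(2)}=0\).

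Summing over the completed intervals and adding the partial one, the map \(t\mapsto\int_0^t x_n\dd y_n\) has three properties:
\begin{itemize}
\item it is continuous;
\item it is affine on each grid interval;
\item it agrees with \(n^{-2}Z_k\) at every grid point \(k/n\).
\end{itemize}
The same three properties define \(\zeta_n\) by \eqref{eq:Qn-grid-values}. Two continuous piecewise affine functions with the same grid values coincide, which proves \eqref{eq:exact-area-identity}. The Riemann--Stieltjes integral here is the ordinary integral \(\int x_n\dot y_n\dd s\), because \(y_n\) is Lipschitz.

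For the second claim, \(\omega_n(0)=0\) by definition. Since \(\omega_n\) is piecewise affine, the total variation of each coordinate is the sum of its absolute grid increments:
\[
  \Var(x_n)+\Var(y_n)=\frac1n\sum_{j=1}^n\bigl(\abs{\xi_j^{(1)}}+\abs{\xi_j^{(2)}}\bigr)=\frac1n\cdot n=1.
\]
Here we used \(\norm{\xi_j}_1=1\) for every \(\xi_j\in S\). Hence \(\omega_n\in\cV\), in fact surely and not merely almost surely.

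The only delicate point is the exact vanishing of the discretization error. For general step sets the linear interpolation would create a midpoint correction \(\tfrac12\xi^{(1)}\xi^{(2)}/n^2\) on each interval. Here it vanishes because \(\xi^{(1)}\xi^{(2)}=0\) for every element of \(S\), and this is exactly where the specific step set enters the argument.
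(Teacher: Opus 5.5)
Your proof is correct and follows the paper's argument. Both compute the integral on each step interval using the fact that every increment in \(S\) changes exactly one horizontal coordinate; the paper expresses this as the vanishing of the cross term \(r^2\Delta X_{k+1}\Delta Y_{k+1}/(2n^2)\), which you handle by the horizontal/vertical case split. Both then get \(\omega_n\in\cV\) from each step contributing exactly \(1/n\) to \(\Var(x_n)+\Var(y_n)\).
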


\begin{proof}
On step \(k+1\), write \(\Delta X_{k+1}=X_{k+1}-X_k\) and similarly for \(Y\).  Since each allowed increment changes exactly one horizontal coordinate,
\(
\Delta X_{k+1}\Delta Y_{k+1}=0.
\)
On the interval \([k/n,(k+r)/n]\), direct integration gives
\[
  \int_{k/n}^{(k+r)/n}x_n\dd y_n
  =\frac{rX_k\Delta Y_{k+1}}{n^2}
   +\frac{r^2\Delta X_{k+1}\Delta Y_{k+1}}{2n^2}
  =\frac{r(Z_{k+1}-Z_k)}{n^2}.
\]
Summing the completed intervals proves \eqref{eq:exact-area-identity}.  Each step contributes exactly \(1/n\) to \(\Var(x_n)+\Var(y_n)\), so the total is one.
\end{proof}

\subsection{Compactness and strict admissibility}

Define the deterministic ambient set of rooted coordinate paths with Lipschitz constant at most one by
\begin{equation}\label{eq:Kstar}
  \cK_*:=\left\{
  (x,y,\zeta)\in\cX:
  x(0)=y(0)=\zeta(0)=0,
  \quad x,y,\zeta\text{ are }1\text{-Lipschitz}
  \right\}.
\end{equation}

\begin{lemma}\label{lem:Kstar}
The set \(\cK_*\) is compact in \(\cX\).  Every discrete enhanced path \(Q_u^{(n)}\) belongs to \(\cK_*\).  Every \(q\in\cQ_\rho\) belongs to \(\cK_*\) as well.
\end{lemma}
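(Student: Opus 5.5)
The lemma has three parts, and I will handle them in turn: compactness of \(\cK_*\) by Arzel\`a--Ascoli, membership of discrete paths by direct computation on each grid interval, and membership of limit paths by bounding the velocity through the diamond \(D\).

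\emph{Compactness.} Every element of \(\cK_*\) has coordinates that vanish at \(0\) and are \(1\)-Lipschitz. The family is therefore uniformly bounded by \(1\) on \([0,1]\) and equicontinuous, so Arzel\`a--Ascoli gives relative compactness in the uniform metric \(d_\cX\). Both defining conditions, vanishing at zero and \(1\)-Lipschitz, are preserved under pointwise and hence uniform limits. Thus \(\cK_*\) is closed, and so compact.

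\emph{Discrete paths.} For \(|u|=n\), each component of \(Q_u^{(n)}\) is piecewise linear with grid values determined by \eqref{eq:coordinate-recursion}. It suffices to bound the slopes on each interval \([k/n,(k+1)/n]\).
\begin{itemize}
\item The horizontal grid values change by exactly one unit step of size \(1/n\) in one coordinate, so the slopes of \(x\) and \(y\) lie in \(\{-1,0,1\}\).
\item The central increment is \(n^{-2}X_{u_k}\xi^{(2)}_{u_{k+1}}\). Since \(|X_{u_k}|\le k\le n\), it has absolute value at most \(1/n\), so the slope of \(\zeta\) is at most \(1\).
\end{itemize}
All components vanish at \(0\), hence \(Q_u^{(n)}\in\cK_*\).

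\emph{Limit paths.} Let \(q=q_\omega\) with \(\omega\in\cC_\rho\).
\begin{itemize}
\item Since \(J_1(\omega)\le\log\rho<\infty\) and \(L=+\infty\) off \(D\) by \Cref{lem:convex-facts}(ii), we have \(\dot\omega(s)\in D\) for a.e. \(s\). Hence \(|\dot x|+|\dot y|\le1\) a.e.
\item Absolute continuity then makes \(x\) and \(y\) \(1\)-Lipschitz, with \(x(0)=y(0)=0\).
\item For the area, \(A_\cdot(\omega)\) is absolutely continuous with derivative \(x(s)\dot y(s)\). Here \(|x(s)|\le s\le1\) and \(|\dot y|\le1\), so \(|x\dot y|\le1\) a.e. Therefore \(A_\cdot(\omega)\) is \(1\)-Lipschitz with \(A_0=0\).
\end{itemize}
This gives \(q\in\cK_*\).

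The only step needing care is this last one: the time constraint must force the velocity into \(D\) almost everywhere, which rests on \(L\) being infinite outside \(D\) together with finiteness of the action. Once that is in place, the rest is routine.
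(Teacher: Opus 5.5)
Your proposal is correct and follows the paper's proof essentially step for step. Compactness comes from Arzel\`a--Ascoli; each grid interval has a central slope \(X_{u_k}\xi^{(2)}_{u_{k+1}}/n\) of absolute value at most \(k/n\le1\); and finiteness of the action forces \(\dot\omega\in D\) almost everywhere, which gives \(|x(t)|\le t\) and \(|x\dot y|\le1\). The only difference is that you spell out why \(\cK_*\) is closed under uniform limits, which the paper leaves implicit.
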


\begin{proof}
Compactness follows from Arzel\`a--Ascoli.  The slopes of each horizontal coordinate of \(Q_u^{(n)}\) have absolute value at most one.  On the interval corresponding to step \(k+1\), the slope of the central interpolation is
\[
  \frac{X_{u_k}\xi_{u_{k+1}}^{(2)}}{n},
\]
whose absolute value is at most \(k/n\le1\).  Hence \(Q_u^{(n)}\in\cK_*\).

If \(q=q_\omega\in\cQ_\rho\), finiteness of the action implies \(\dot\omega(t)\in D\) almost everywhere, so each horizontal coordinate is \(1\) Lipschitz and \(\abs{x(t)}\le t\le1\).  Therefore
\(
\abs{\dot A_t}=\abs{x(t)\dot y(t)}\le1
\)
almost everywhere, proving \(q\in\cK_*\).
\end{proof}

Let \(\omega=(x,y)\in\ac([0,1];\RR^2)\) satisfy \(\omega(0)=0\).  We call \(\omega\) \emph{strictly admissible at parameter \(\rho\)} if there exist \(\eta,\kappa>0\) such that
\begin{equation}\label{eq:strict-admissibility}
  J_t(\omega)\le t(\log\rho-\eta)
  \quad\text{for all }t\in[0,1],
  \qquad
  \abs{\dot x(t)}+\abs{\dot y(t)}\le1-\kappa
  \quad\text{a.e.}
\end{equation}
Let \(\cC_\rho^{<}\) be the set of such paths.

\begin{lemma}\label{lem:compact-density}
The set \(\cQ_\rho\) is nonempty and compact.  The lifted paths generated by \(\cC_\rho^{<}\) are dense in \(\cQ_\rho\).  If \(\omega\in\cC_\rho\) and \(0<r<1\), then \(r\omega\in\cC_\rho^{<}\) and
\begin{equation}\label{eq:scaled-lift}
  q_{r\omega}=(r\omega,r^2A_\cdot(\omega))\longrightarrow q_\omega
  \quad\text{in }\cX\quad(r\uparrow1).
\end{equation}
\end{lemma}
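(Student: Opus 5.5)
The plan is to establish the three assertions of \Cref{lem:compact-density} in the order: scaling claim, density, compactness. Nonemptiness is immediate, since the constant path \(\omega\equiv0\) has \(J_t(0)=tL(0)=0\le t\log\rho\) and so lies in \(\cC_\rho\).

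\emph{Scaling.} Fix \(\omega\in\cC_\rho\) and \(0<r<1\). Finiteness of the action forces \(\dot\omega\in D\) almost everywhere. Then \eqref{eq:quadratic-scaling-L} gives
\[
J_t(r\omega)\le r^2J_t(\omega)\le r^2t\log\rho=t(\log\rho-\eta)
\]
with \(\eta=(1-r^2)\log\rho>0\). Also \(\norm{r\dot\omega}_1\le r=1-\kappa\) with \(\kappa=1-r\). Hence \(r\omega\in\cC_\rho^{<}\). The area is quadratic in \(\omega\), so \(A_t(r\omega)=r^2A_t(\omega)\). Because \(\omega\) is \(1\)-Lipschitz and \(\abs{A_t}\le1\) by \Cref{lem:Kstar}, we get
\[
d_\cX(q_{r\omega},q_\omega)\le\max\{1-r,\,1-r^2\}\to0,
\]
which proves \eqref{eq:scaled-lift}. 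Density of the lifts of \(\cC_\rho^{<}\) in \(\cQ_\rho\) follows at once.

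\emph{Compactness.} By \Cref{lem:Kstar}, \(\cQ_\rho\subset\cK_*\), and \(\cK_*\) is compact. It therefore suffices to show \(\cQ_\rho\) is closed. Take \(q_{\omega_k}\to(\omega,\zeta)\) in \(\cX\) with \(\omega_k\in\cC_\rho\). The \(\omega_k\) are \(1\)-Lipschitz with \(\dot\omega_k\in D\), so \(\omega\) is \(1\)-Lipschitz, hence absolutely continuous, with \(\omega(0)=0\). Moreover \(\dot\omega_k\rightharpoonup\dot\omega\) weakly-\(*\) in \(L^\infty([0,t];\RR^2)\) for each \(t\).

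The step that needs care is lower semicontinuity of the action along this sequence. The functional \(v\mapsto\int_0^tL(v(s))\dd s\) is convex and strongly lower semicontinuous on \(L^1\), since \(L\) is convex, lower semicontinuous and nonnegative (Fatou). It is therefore weakly lower semicontinuous. This gives
\[
J_t(\omega)\le\liminf_k J_t(\omega_k)\le t\log\rho
\]
for every \(t\), so \(\omega\in\cC_\rho\).

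It remains to identify \(\zeta=A_\cdot(\omega)\). Each \(\omega_k\) lies in \(\cV\), because \(\Var(x_k)+\Var(y_k)=\int\norm{\dot\omega_k}_1\le1\), and \(\cV\) is closed. For absolutely continuous paths the Riemann--Stieltjes lift coincides with \(A_\cdot\). \Cref{lem:area-continuity} then yields \(A_\cdot(\omega_k)\to A_\cdot(\omega)\) uniformly, so \(\zeta=A_\cdot(\omega)\) and \((\omega,\zeta)=q_\omega\in\cQ_\rho\).
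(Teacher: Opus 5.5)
Your proof is correct. The scaling and density parts coincide with the paper's argument, including the choices \(\eta=(1-r^2)\log\rho\) and \(\kappa=1-r\).

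For compactness you take a genuinely different route. The paper does not argue with sequences. It writes
\[
  \cQ_\rho=\bigcap_{0<t\le1}\{q\in\cX:I_t(\pi_tq)\le t\log\rho\}
\]
and invokes the good rate property from \Cref{thm:enhanced-ldp}. Closedness then follows from lower semicontinuity of each \(I_t\circ\pi_t\), with the \(t=1\) constraint forcing \(q\) to be an area lift. Compactness follows from containment in the compact level set \(\{I\le\log\rho\}\).

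You instead place \(\cQ_\rho\) inside the Arzel\`a--Ascoli compact set \(\cK_*\) and prove closedness directly. You use weak-\(*\) convergence of the velocities and weak lower semicontinuity of the convex integral functional \(v\mapsto\int_0^tL(v)\). The limiting central component is then identified through the Lipschitz continuity of the area lift on \(\cV\) from \Cref{lem:area-continuity}.

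The paper's version is shorter because it recycles the large deviation principle. The price is that the lemma depends on Mogulskii's theorem and the contraction principle. Your version is self-contained and elementary. In effect, it re-proves by hand the lower semicontinuity of the rate function that the paper imports from the LDP.
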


\begin{proof}
By the good rate property in \Cref{thm:enhanced-ldp}, each \(I_t\) is lower
semicontinuous and \(I=I_1\) is good.  From the definitions,
\begin{equation}\label{eq:Q-intersection}
  \cQ_\rho
  =\bigcap_{0<t\le1}
  \left\{q\in\cX:I_t(\pi_tq)\le t\log\rho\right\}.
\end{equation}
\revisionmark{Indeed, the condition at \(t=1\) first forces \(q\) to be an
area lift, and the remaining inequalities impose the action bound at every
\(t\in(0,1)\).}  Thus \(\cQ_\rho\) is closed and lies in the compact level set \(\{I\le\log\rho\}\).  The zero path shows nonemptiness.

For \(0<r<1\), \eqref{eq:quadratic-scaling-L} gives
\[
  J_t(r\omega)\le r^2J_t(\omega)\le r^2t\log\rho
  =t\bigl(\log\rho-(1-r^2)\log\rho\bigr).
\]
Also \(\abs{r\dot x}+\abs{r\dot y}\le r\).  Hence \(r\omega\) is strict.  The area is quadratic under horizontal scaling, which yields \eqref{eq:scaled-lift}.
\end{proof}

\end{analystcheck}

\section{Path space convergence and finite generation corrections}
\label{sec:proof}

This section proves the probabilistic shape results and develops the finite
generation estimates for the central maximum.  The first four subsections
establish the enhanced Mogulskii principle, the path space shape theorem,
and its spatial shape and support function consequences.  The fifth subsection
states and proves a quantitative window below saturation.  The final
subsection proves the sharp corrections at and above saturation by a
separate retained subtree argument.

\subsection{The enhanced Mogulskii principle}\label{sec:ldp}

The horizontal walk satisfies the classical Mogulskii principle.  The
Heisenberg area map is not continuous on all of
\(C([0,1];\RR^2)\) under the uniform topology, but
\eqref{eq:area-lipschitz} makes it continuous on the closed set \(\cV\),
which contains the discrete interpolations and all finite rate paths.
Restricting the horizontal LDP to \(\cV\) and applying the contraction
principle yields the enhanced principle.

\begin{proof}[Proof of \Cref{thm:enhanced-ldp}]
Let \(\mathsf X_{\mathrm{bd}}\) denote the bounded paths with values in \(\RR^2\) on
\([0,1]\), with the uniform metric induced by \(\norm{\cdot}_\infty\).
This metric and the one induced by the Euclidean norm have the same topology.
Define the scaled partial sum path
\begin{equation}\label{eq:step-partial-sum-path}
  \overline\omega_n(s)
  :=\frac1n\sum_{j=1}^{\lfloor ns\rfloor}\xi_j
  =\frac1n\bigl(X_{\lfloor ns\rfloor},Y_{\lfloor ns\rfloor}\bigr),
  \qquad 0\le s\le1.
\end{equation}
Since \(\psi\) in \eqref{eq:psi-def} is finite on \(\RR^2\), Mogulskii's theorem
\cite{DemboZeitouni2010} gives a good LDP for
\((\overline\omega_n)\) on \(\mathsf X_{\mathrm{bd}}\), with speed \(n\).  Its
rate \(J\) equals \(J_1\) from \eqref{eq:action-area} on rooted absolutely
continuous paths and is \(+\infty\) otherwise.

For the linear interpolation \(\omega_n\) in \eqref{eq:Qn-grid-values},
\[
  \omega_n(s)-\overline\omega_n(s)
  =\left(s-\frac{k}{n}\right)\xi_{k+1},
  \qquad \frac{k}{n}\le s<\frac{k+1}{n}.
\]
Thus
\begin{equation}\label{eq:step-linear-distance}
  \sup_{0\le s\le1}
  \norm{\overline\omega_n(s)-\omega_n(s)}_\infty\le\frac1n.
\end{equation}
The bound is deterministic, so the two sequences are exponentially
equivalent at speed \(n\).  Therefore
\cite{DemboZeitouni2010} gives the same good LDP for
\((\omega_n)\) on \(\mathsf X_{\mathrm{bd}}\).

If \(J(\omega)<\infty\), then \(\dot\omega\in D\) almost everywhere by
\Cref{lem:convex-facts}; hence, writing \(\omega=(x,y)\),
\[
  \Var(x)+\Var(y)
  =\int_0^1\bigl(\abs{\dot x(s)}+\abs{\dot y(s)}\bigr)\dd s
  \le1.
\]
Thus the effective domain of \(J\) lies in \(\cV\), while
\(\omega_n\in\cV\) almost surely by \Cref{lem:exact-area}.  Since \(\cV\) is
closed, \cite{DemboZeitouni2010} restricts the LDP to \(\cV\),
with the same good rate.

By \Cref{lem:area-continuity}, the map \(\Phi:\cV\to\cX\) in
\eqref{eq:Phi-lift} is continuous.  The exact identity
\eqref{eq:exact-area-identity} gives \(Q_n=\Phi(\omega_n)\).  The contraction
principle \cite{DemboZeitouni2010} yields a good LDP for
\((Q_n)\), with speed \(n\) and rate
\[
  I^\Phi(q)
  :=\inf\{J(\omega):\omega\in\cV,\ \Phi(\omega)=q\}.
\]
The first component of \(\Phi(\omega)\) is \(\omega\), and for absolutely
continuous \(\omega=(x,y)\), its area component is
\(\int_0^\cdot x(s)\dot y(s)\dd s\).  Hence \(I^\Phi\) is exactly the rate
\(I=I_1\) in \eqref{eq:It-rate}.

Fix \(t\in(0,1]\).  The restriction map \(\pi_t\) in
\eqref{eq:restriction-map} and the contraction principle give a good LDP for
\((\pi_tQ_n)\), with speed \(n\) and rate
\[
  \widetilde I_t(q)=\inf\{I(\widetilde q):\pi_t\widetilde q=q\}.
\]
If \(I_t(q)<\infty\), write \(q=(\omega,\zeta)\), extend its horizontal
component by
\[
  \widehat\omega(s):=
  \begin{cases}
    \omega(s),&0\le s\le t,\\
    \omega(t),&t<s\le1,
  \end{cases}.
\]
This extension belongs to \(\cV\), because \(I_t(q)<\infty\) forces
\(\dot\omega\in D\) almost everywhere.  Then
\(\widehat q:=\Phi(\widehat\omega)\) satisfies
\(\pi_t\widehat q=q\), and \(L(0)=0\), from \eqref{eq:entropy-L}, gives
\(I(\widehat q)=I_t(q)\).  Conversely, every extension
\(\widetilde q\) of \(q\) has \(I(\widetilde q)\ge I_t(q)\), by
nonnegativity of \(L\) in \eqref{eq:L-range}.  If \(I_t(q)=+\infty\), no
finite rate extension exists.  Therefore \(\widetilde I_t=I_t\).
\end{proof}

\subsection{Tracking a strict path under bounded offspring}\label{sec:tilt}

We now prove the following uniform realization estimate needed for inner
coverage.

\begin{proposition}[Uniform positive probability under bounded offspring]\label{prop:bounded-success}
Consider a branching random walk with offspring number \(\widehat N\le K\)
almost surely and mean \(m=\EE\widehat N>1\).  Let
\(\omega=(x,y)\in\cC_m^{<}\), as defined through
\eqref{eq:strict-admissibility}, and choose \(\eta,\kappa>0\) such that
\begin{equation}\label{eq:m-strict}
  J_t(\omega)\le t(\log m-\eta)
  \quad(0\le t\le1),
  \qquad
  \abs{\dot x}+\abs{\dot y}\le1-\kappa\quad\text{a.e.}
\end{equation}
Then for every \(\varepsilon>0\) there exist \(p_\varepsilon>0\) and \(n_0\) such that
\begin{equation}\label{eq:positive-success}
  \PP\left(\exists \abs{u}=n:
  d_{\cX}(Q_u^{(n)},q_\omega)<\varepsilon\right)
  \ge p_\varepsilon,
  \qquad n\ge n_0.
\end{equation}
Here \(Q_u^{(n)}\) is the rescaled enhanced ancestral path whose components
are defined in \eqref{eq:horizontal-interp} and
\eqref{eq:central-interp}, \(q_\omega\) is the enhanced area lift in
\eqref{eq:lift-def}, and \(d_{\cX}\) is the path metric in
\eqref{eq:path-metric}.
\end{proposition}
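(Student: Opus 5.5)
We use a second moment argument for the number of tracking particles, with a time-inhomogeneous Cram\'er tilt controlling the single-path probability and the strict margin \(\eta\) controlling correlations. Fix \(\varepsilon>0\). By \Cref{lem:area-continuity}, if \(\omega_u^{(n)}\) is within \(\varepsilon/3\) of \(\omega\) in sup norm, then \(d_\cX(Q_u^{(n)},q_\omega)<\varepsilon\). Both paths lie in \(\cV\): \(\omega_u^{(n)}\) by \Cref{lem:exact-area}, and \(\omega\) since \(\abs{\dot x}+\abs{\dot y}\le1-\kappa\). Also \(\zeta_u^{(n)}\) is the area lift of \(\omega_u^{(n)}\). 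It therefore suffices to track the horizontal path.

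First we approximate \(\omega\) by a piecewise linear path \(\widetilde\omega\) on a grid \(0=t_0<\dots<t_m=1\). Its velocities \(v_i\) lie in \((1-\kappa)D\), and it satisfies \(J_t(\widetilde\omega)\le t(\log m-\eta/2)\) for all \(t\). This is possible because \(L\) is continuous on \(D\) and convex; by Jensen, averaging velocities on cells does not increase action, so the constraints persist up to a grid-dependent error.

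Next we define the tilts. Since \(v_i\in\operatorname{int}D\), \Cref{lem:convex-facts}(i) gives \(\theta_i\) with \(\nabla\psi(\theta_i)=v_i\). Define the additive martingale-type weight
\[
W_n:=\sum_{\abs u=n}\one\{\text{tube event }G_u\}\prod_{j=1}^n\frac{\e^{\ip{\theta(j/n)}{\xi_{u_j}}-\psi(\theta(j/n))}}{m},
\]
where \(G_u\) requires \(\norm{\omega_u^{(n)}-\widetilde\omega}_\infty<\varepsilon/6\) together with all intermediate grid constraints. Under the many to one formula, \(\EE W_n\) is the probability that a walk with independent tilted steps of mean \(v_i\) stays in the tube. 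By the law of large numbers this probability tends to one. On \(G_u\), the weight of a tracking particle is \(\exp\{-n J_{1}(\widetilde\omega)-n\log m+O(\varepsilon n)\}\) times \(m^{n}\)-type normalization, so positivity of \(W_n\) implies the existence of a tracking particle.

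The main step is the uniform bound \(\EE W_n^2\le C\). We expand over pairs \((u,u')\) with common ancestor at generation \(k\), using \(\widehat N\le K\) to bound \(\EE[\widehat N(\widehat N-1)]\le K^2\). The pair contribution with split time \(k=tn\) is at most \(K^2 m^{-k}\) times the second moment of the tilted weight of the shared segment restricted to the tube. That second moment equals \(\exp\{n(J_t(\widetilde\omega)+O(\varepsilon'))\}\), because on the tube the tilted density of a path is \(\exp\{n\int_0^t(\ip{\theta}{\dot\omega}-\psi(\theta))+O(\varepsilon' n)\}\), and \(\ip{\theta_i}{v_i}-\psi(\theta_i)=L(v_i)\). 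Hence the term is bounded by \(K^2\exp\{-n t(\log m)+nJ_t+O(\varepsilon'n)\}\le K^2\exp\{-nt\eta/4\}\) once the tube width is small relative to \(\eta\). Restricting to the tube is essential here. For small \(k\) the bound is simply \(K^2\) times a geometric factor. Summing over \(k\) gives \(\EE W_n^2\le C(\eta,K)\) uniformly in \(n\).

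Paley--Zygmund then yields \(\PP(W_n>0)\ge(\EE W_n)^2/\EE W_n^2\ge 1/(2C)=:p_\varepsilon\) for \(n\ge n_0\). The obstacle we expect is making the tube constraint compatible with the exact exponential identity. Over a cell of length \(\delta n\), the deviation \(\ip{\theta_i}{\sum\xi-\delta n v_i}\) is bounded by \(\abs{\theta_i}\varepsilon n\). The boundedness of \(\theta_i\), which follows from \(v_i\in(1-\kappa)D\), is what keeps this error below \(\eta t/8\) for all \(t\ge t_1\). For \(t<t_1\), we rely on \(\EE\widehat N^2\le K^2\) and the shortness of the shared segment.
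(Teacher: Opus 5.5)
Your reduction to the horizontal path, the grid approximation, and the first moment $\EE W_n=\QQ(\text{tube})\to1$ are fine. Your estimate for split times $k\ge t_1n$ is also right. The uniform bound $\EE W_n^2\le C(\eta,K)$ fails, however, in the regime you dismiss with ``for small $k$ the bound is simply $K^2$ times a geometric factor''.

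Your tube has width $\varepsilon' n$ in unscaled units, and each centred increment has sup norm at most $2$. So for split times $k\le\varepsilon' n/2$ the tube imposes nothing on the shared segment. There the shared-segment factor is the untruncated one,
\[
  m^{-k}\EE_{\QQ}\Bigl[\prod_{j\le k}R_j(\xi_j)\Bigr]
  =\prod_{j\le k}\exp\{\psi(2\theta_j)-2\psi(\theta_j)-\log m\}.
\]
The margin $\eta$ does not make this ratio smaller than one. Convexity gives only the lower bound $\psi(2\theta)-2\psi(\theta)>L(\nabla\psi(\theta))$, which is the wrong direction. For small $\theta$ one even has $\psi(2\theta)-2\psi(\theta)\approx2L(\nabla\psi(\theta))$.

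Concretely, for $v=(s,0)$ one computes $\psi(2\theta)-2\psi(\theta)=2\log(1+s^2)$. Take $m=1.1$ and the constant-velocity path $\omega(t)=t(0.28,0)$, which satisfies the hypotheses since $L(v)\approx0.079<\log m\approx0.095$. The one-generation ratio is then about $\e^{0.056}>1$.

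The unconstrained window has length proportional to $n$, so these pair terms reach size $\e^{c\varepsilon' n}$. Shrinking the tube only shrinks the window proportionally, and ``shortness of the shared segment'' would help only if the window had $O(1)$ generations. This is not an artefact of your upper bound. At split time $k\le\varepsilon' n/4$ the shared segment deviates by at most $\varepsilon' n/2$, and both continuations then stay in the tube with probability close to one. So the truncated second moment is exponentially large, and Paley--Zygmund with $L^2$ gives nothing.

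The missing ingredient is control acting at every generation, not only at macroscopic scale. The strict margin is a first-order statement about the mean log-weight, so it controls only moments of order close to one. There are two ways to supply this control.

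\begin{itemize}
\item \emph{A barrier within your framework.} Add to $G_u$ the condition $\sum_{j\le k}\ip{\theta_j}{\xi_{u_j}-v_j}\le\eta k/2+A$ for all $k\le n$. Under $\QQ_n$ this sum is a martingale with bounded increments. A union bound with Hoeffding shows the barrier fails with probability at most $C\e^{-cA}$, uniformly in $n$. On the barrier, the shared-segment factor is at most $\e^{A-\eta k/2+O(1)}$ for every $k$, so your sum over split times is bounded.
\item \emph{The paper's route, with no truncation.} Take $p\in(1,2)$ close to one and use $\psi(p\theta)-p\psi(\theta)\le(p-1)L(\nabla\psi(\theta))+C(p-1)^2$. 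Together with the prefix action bound this gives $\prod_{i\le k}m_{i,n}(p)\le\e^{-\gamma k}$ for every $k$. The von Bahr--Esseen inequality then bounds the whole additive martingale in $L^p$. H\"older replaces Paley--Zygmund: $\PP(Y>0)\ge(\EE Y)^{p/(p-1)}(\EE Y^p)^{-1/(p-1)}$.
\end{itemize}
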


The remainder of this subsection proves \Cref{prop:bounded-success} by
combining a time inhomogeneous Cram\'er tilt, a uniform \(L^p\) bound for the
associated additive branching martingale, and the weighted many to one
identity.

\subsubsection{Cram\'er tilting and a uniform \texorpdfstring{\(L^p\)}{Lp} bound}

We use the classical exponential tilt and additive branching martingale from
\cite{Shi2015}; compare the fixed parameter criterion in
\cite{Biggins1977Martingale}.  Because the tilt varies with \(i\) and
\(n\), \Cref{lem:Wp} derives the required uniform \(L^p\) bound from
\eqref{eq:m-strict}.

For \(k\ge0\), let \(\cF_k\) be the sigma field generated by the offspring
numbers of vertices in generations strictly below \(k\) and the increments
on edges ending in generations at most \(k\).  Thus neither the offspring
numbers nor the increments on edges leaving generation \(k\) have been
revealed.  For a truncated process, the same symbol denotes the filtration
generated by the truncated offspring numbers and the same revealed
increments.

For \(1\le i\le n\), define the averaged target velocity
\begin{equation}\label{eq:v-in}
  v_{i,n}:=n\left(\omega(i/n)-\omega((i-1)/n)\right).
\end{equation}
Jensen's inequality and \eqref{eq:m-strict} give, for every \(k\le n\),
\begin{equation}\label{eq:discrete-action-bound}
  \sum_{i=1}^kL(v_{i,n})
  \le nJ_{k/n}(\omega)
  \le k(\log m-\eta).
\end{equation}
Moreover, \(\norm{v_{i,n}}_1\le1-\kappa\), so \(v_{i,n}\) remains in a compact subset of \(\operatorname{int}D\).  By \Cref{lem:convex-facts}, there is a unique exponential tilt parameter \(\theta_{i,n}\) satisfying
\begin{equation}\label{eq:theta-in}
  \nabla\psi(\theta_{i,n})=v_{i,n},
\end{equation}
and all \(\theta_{i,n}\) lie in one compact subset of \(\RR^2\).

Recall that \(\mu\) is the original increment law on \(S\), as defined in
\Cref{sec:heisenberg-increments}.  Define the one step exponential factor \(R_{i,n}\) and
the edge weight \(a_{i,n}\), normalized by the offspring mean, by
\begin{equation}\label{eq:weights}
  R_{i,n}(\xi):=\exp\{\ip{\theta_{i,n}}{\xi}-\psi(\theta_{i,n})\},
  \qquad
  a_{i,n}(\xi):=m^{-1}R_{i,n}(\xi).
\end{equation}
Since \(\int_S R_{i,n}\,\mathrm d\mu=1\), the function \(R_{i,n}\) is the
Radon--Nikodym derivative of an increment law \(\nu_{i,n}\) with respect to
\(\mu\):
\begin{equation}\label{eq:tilted-law}
  \frac{\mathrm d\nu_{i,n}}{\mathrm d\mu}=R_{i,n}.
\end{equation}
These derivatives define the following additive branching martingale.
For a particle \(u\) in generation \(k\), let \(\xi_{u,i}\) be the increment on the \(i\)th ancestral edge.  Define its lineage weight \(H_{u,k}^{(n)}\) and the generation sum \(W_{k,n}\) by
\begin{equation}\label{eq:H-W}
  H_{u,k}^{(n)}:=\prod_{i=1}^ka_{i,n}(\xi_{u,i}),
  \qquad
  W_{k,n}:=\sum_{\abs{u}=k}H_{u,k}^{(n)}.
\end{equation}
For each fixed \(n\), \((W_{k,n})_{0\le k\le n}\) is a nonnegative mean one
time inhomogeneous additive branching martingale with respect to
\((\cF_k)_{0\le k\le n}\).  To verify this, fix \(1\le k\le n\).  For
\(\abs{v}=k-1\), let \(\widehat N_v\) be the number of children of \(v\) and
\(\xi_{v,j}\) the increment from \(v\) to its \(j\)th child.  Given
\(\cF_{k-1}\), one has \(\EE\widehat N_v=m\), and these increments are
independent with law \(\mu\) and independent of \(\widehat N_v\).  Moreover,
\(H_{v,k-1}^{(n)}\) is \(\cF_{k-1}\) measurable, and the \(j\)th child's
weight is
\(H_{v,k-1}^{(n)}a_{k,n}(\xi_{v,j})\),
\[
\begin{aligned}
  \EE[W_{k,n}\mid\cF_{k-1}]
  &=\sum_{\abs{v}=k-1}H_{v,k-1}^{(n)}
    \EE\left[
      \left.\sum_{j=1}^{\widehat N_v}a_{k,n}(\xi_{v,j})
      \,\right|\,\cF_{k-1}\right]\\
  &=\sum_{\abs{v}=k-1}H_{v,k-1}^{(n)}
    m\int_S a_{k,n}(\xi)\,\mathrm d\mu(\xi)
    =W_{k-1,n},
\end{aligned}
\]
where the last equality uses \(a_{k,n}=m^{-1}R_{k,n}\) and
\(\int_S R_{k,n}\,\mathrm d\mu=1\).  Since \(W_{0,n}=1\), the martingale has
mean one.

We now prove the uniform moment bound supplied by the strict action gap.

\begin{lemma}\label{lem:Wp}
Under \eqref{eq:m-strict}, there exists \(p\in(1,2)\) such that
\begin{equation}\label{eq:Wp-uniform}
  \sup_{n\ge1}\EE[W_{n,n}^p]<\infty.
\end{equation}
\end{lemma}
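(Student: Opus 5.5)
We will prove \eqref{eq:Wp-uniform} with the standard von Bahr--Esseen type recursion for additive martingales, keeping track of the time inhomogeneity. Fix \(p\in(1,2)\), to be taken close to \(1\). Decompose over generations:
\[
  W_{k,n}-W_{k-1,n}=\sum_{\abs{v}=k-1}H_{v,k-1}^{(n)}\bigl(Y_{v,k}-1\bigr),
  \qquad
  Y_{v,k}:=\sum_{j=1}^{\widehat N_v}a_{k,n}(\xi_{v,j}).
\]
Conditionally on \(\cF_{k-1}\), the variables \(Y_{v,k}-1\), \(\abs v=k-1\), are independent and centered. Since \(1<p\le2\), the von Bahr--Esseen inequality gives
\[
  \EE\bigl[\abs{W_{k,n}-W_{k-1,n}}^p\mid\cF_{k-1}\bigr]
  \le 2\sum_{\abs v=k-1}\bigl(H_{v,k-1}^{(n)}\bigr)^p\,\EE\abs{Y_{v,k}-1}^p .
\]
Because \(\widehat N\le K\) and all \(\theta_{i,n}\) lie in a fixed compact set, the weights \(a_{k,n}\) are bounded uniformly in \(k,n\). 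Hence \(\EE\abs{Y_{v,k}-1}^p\le C\), with \(C\) independent of \(k\) and \(n\).

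Next we take expectations using the many to one formula:
\[
  \EE\sum_{\abs v=k-1}\bigl(H_{v,k-1}^{(n)}\bigr)^p
  =\prod_{i=1}^{k-1}m\int_S a_{i,n}^p\dd\mu
  =\exp\Bigl\{\sum_{i=1}^{k-1}\bigl[(1-p)\log m+\psi(p\theta_{i,n})-p\psi(\theta_{i,n})\bigr]\Bigr\}.
\]
Put \(g_{i,n}(p):=\psi(p\theta_{i,n})-p\psi(\theta_{i,n})\). Then \(g_{i,n}(1)=0\) and \(g_{i,n}'(1)=\ip{\theta_{i,n}}{\nabla\psi(\theta_{i,n})}-\psi(\theta_{i,n})=L(v_{i,n})\) by \eqref{eq:theta-in} and Legendre duality. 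The second derivative \(\ip{\theta}{\nabla^2\psi(p\theta)\theta}\) is bounded uniformly over the compact set of tilts. Therefore
\[
  g_{i,n}(p)\le (p-1)L(v_{i,n})+C'(p-1)^2 .
\]
Summing and using \eqref{eq:discrete-action-bound}, the exponent is at most
\[
  (p-1)\bigl[(k-1)(\log m-\eta)-(k-1)\log m\bigr]+C'(k-1)(p-1)^2
  =-(k-1)(p-1)\bigl(\eta-C'(p-1)\bigr).
\]
Choosing \(p-1\le\eta/(2C')\) makes this at most \(-(k-1)(p-1)\eta/2\). This geometric decay is the crux of the argument. The step I expect to be the main obstacle is making sure the action gap is used for the partial sums up to \(k-1\) rather than only at the terminal time. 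That is exactly why the constraint in \eqref{eq:m-strict} is imposed for every \(t\), and why \eqref{eq:discrete-action-bound} holds for every \(k\le n\).

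Finally, by the elementary inequality \(\EE\abs{M_n}^p\le 2^{2-p}\cdots\), or more simply by iterating the von Bahr--Esseen bound for martingale differences, we obtain
\[
  \EE W_{n,n}^p\le 1+2\sum_{k=1}^n\EE\abs{W_{k,n}-W_{k-1,n}}^p
  \le 1+4C\sum_{k\ge1}\e^{-(k-1)(p-1)\eta/2}.
\]
The right-hand side is finite and independent of \(n\), which proves \eqref{eq:Wp-uniform}.
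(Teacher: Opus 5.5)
Your proposal is correct and follows the paper's proof essentially step for step. It uses the generation-wise martingale difference decomposition with a conditional von Bahr--Esseen bound, the many to one evaluation of $\EE\sum_{\abs v=k-1}(H_{v,k-1}^{(n)})^p$ as a product of moment factors, the Taylor bound $\psi(p\theta)-p\psi(\theta)\le(p-1)L(v_{i,n})+C(p-1)^2$ combined with the action gap at every intermediate $k$, and a final von Bahr--Esseen summation. The only differences are cosmetic: you apply the last inequality directly to $W_{n,n}=1+\sum_k(W_{k,n}-W_{k-1,n})$ rather than to $W_{n,n}-1$, which is legitimate because the one step inequality does not require the initial term to be centered, and the incomplete clause containing $2^{2-p}\cdots$ should simply be dropped.
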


\begin{proof}
For \(p>1\), define the one generation \(p\) moment factor
\begin{align}
  m_{i,n}(p)
  &:=m\int_S a_{i,n}(\xi)^p\,\mathrm d\mu(\xi)\notag\\
  &=\exp\left\{-(p-1)\log m
    +\psi(p\theta_{i,n})-p\psi(\theta_{i,n})\right\}.
  \label{eq:moment-factor}
\end{align}
Because all \(\theta_{i,n}\) range over a fixed compact set, Taylor expansion in \(p\), uniformly for \(p\) near one, yields, with a finite constant \(C\) independent of \(i,n,p\),
\begin{equation}\label{eq:p-taylor}
  \psi(p\theta)-p\psi(\theta)
  \le(p-1)\bigl(\ip{\theta}{\nabla\psi(\theta)}-\psi(\theta)\bigr)
  +C(p-1)^2.
\end{equation}
At \(\theta=\theta_{i,n}\), the bracket is \(L(v_{i,n})\).  Summing \eqref{eq:p-taylor} and using \eqref{eq:discrete-action-bound}, we obtain, for every \(k\le n\),
\begin{equation}\label{eq:moment-product}
  \prod_{i=1}^km_{i,n}(p)
  \le\exp\{-k[(p-1)\eta-C(p-1)^2]\}.
\end{equation}
Choose \(p\in(1,2)\) close enough to one that \(C(p-1)\le\eta/2\).  Then, with \(\gamma=(p-1)\eta/2\),
\begin{equation}\label{eq:moment-product-decay}
  \prod_{i=1}^km_{i,n}(p)\le\e^{-\gamma k}
  \qquad(k\le n).
\end{equation}

Let \(D_{k,n}:=W_{k,n}-W_{k-1,n}\) be the \(k\)th martingale difference.
Writing
\(\Xi_{v,k,n}:=\sum_{j=1}^{\widehat N_v}a_{k,n}(\xi_{v,j})\) for the
normalized total weight of the children of \(v\), the preceding
decomposition gives
\begin{equation}\label{eq:mart-diff}
  D_{k,n}=\sum_{\abs{v}=k-1}H_{v,k-1}^{(n)}(\Xi_{v,k,n}-1).
\end{equation}
Conditionally on \(\cF_{k-1}\), the centered factors in parentheses are
independent.  Bounded offspring and compactness of the tilts imply
\begin{equation}\label{eq:Xi-uniform}
  \sup_{v,k,n}\EE\abs{\Xi_{v,k,n}-1}^p<\infty.
\end{equation}
The conditional von Bahr--Esseen inequality
\cite{vonBahrEsseen1965} therefore gives, for a finite constant
\(C_p\) depending only on \(p\),
\begin{align}
  \EE\left[\abs{D_{k,n}}^p\mid\cF_{k-1}\right]
  &\le C_p\sum_{\abs{v}=k-1}\left(H_{v,k-1}^{(n)}\right)^p.
  \label{eq:conditional-vbe}
\end{align}
Taking expectations and using the branching property,
\begin{equation}\label{eq:D-moment}
  \EE\abs{D_{k,n}}^p
  \le C_p\prod_{i=1}^{k-1}m_{i,n}(p)
  \le C_p\e^{-\gamma(k-1)}.
\end{equation}
A second application of the same theorem to the martingale difference
sequence \((D_{k,n})_{1\le k\le n}\) gives
\begin{equation}\label{eq:martingale-p-ineq}
  \EE\abs{\sum_{k=1}^nD_{k,n}}^p
  \le C_p\sum_{k=1}^n\EE\abs{D_{k,n}}^p.
\end{equation}
Consequently,
\[
  \sup_n\EE\abs{W_{n,n}-1}^p
  \le C_p\sum_{k\ge0}\e^{-\gamma k}<\infty,
\]
and \(\abs{W_{n,n}}^p\le2^{p-1}(1+\abs{W_{n,n}-1}^p)\) proves \eqref{eq:Wp-uniform}.
\end{proof}

\subsubsection{Tilted concentration and return to the branching law}

Let \(\QQ_n\) be the product law under which the increments are independent
with respective laws \(\nu_{1,n},\ldots,\nu_{n,n}\).  Relative to the
original product law \(\mu^{\otimes n}\), its Radon--Nikodym derivative is
\begin{equation}\label{eq:path-density}
  \frac{\mathrm d\QQ_n}{\mathrm d\mu^{\otimes n}}
  (\xi_1,\ldots,\xi_n)
  =\prod_{i=1}^nR_{i,n}(\xi_i).
\end{equation}
Moreover,
\begin{equation}\label{eq:tilted-mean}
  \EE_{\QQ_n}\xi_i=\nabla\psi(\theta_{i,n})=v_{i,n}.
\end{equation}

\begin{lemma}\label{lem:tilted-concentration}
Under \(\QQ_n\), the enhanced interpolation \(Q_n\) converges in probability in \(\cX\) to \(q_\omega\).
\end{lemma}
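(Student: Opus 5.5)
The plan is to reduce the statement to a uniform law of large numbers for the horizontal path under \(\QQ_n\), and then transfer it to the area component through \Cref{lem:area-continuity}. Under \(\QQ_n\) the increments \(\xi_1,\dots,\xi_n\) are independent, with \(\EE_{\QQ_n}\xi_i=v_{i,n}\) by \eqref{eq:tilted-mean}, and they take values in \(S\), so \(\norm{\xi_i}_\infty\le1\). Write \(\omega_n\) for the interpolated horizontal path. At grid points,
\[
  \omega_n(k/n)-\omega(k/n)=\frac1n\sum_{i=1}^k(\xi_i-v_{i,n}),
\]
because \(\frac1n\sum_{i\le k}v_{i,n}=\omega(k/n)\) by \eqref{eq:v-in}. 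The right side is a martingale in \(k\) with bounded increments. Doob's \(L^2\) maximal inequality, or Azuma--Hoeffding with a union bound over \(k\le n\), gives
\[
  \QQ_n\Bigl(\max_{k\le n}\norm{\omega_n(k/n)-\omega(k/n)}_\infty>\varepsilon\Bigr)\le \frac{C}{n\varepsilon^2}\to0 .
\]

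Between grid points, both \(\omega_n\) and \(\omega\) move by at most \(1/n\) in each coordinate. For \(\omega\) this uses \(\norm{\dot\omega}_1\le1-\kappa\) from \eqref{eq:m-strict}; for \(\omega_n\) it holds because each step has unit length. Hence \(\norm{\omega_n-\omega}_\infty\le \max_k\norm{\omega_n(k/n)-\omega(k/n)}_\infty+2/n\), and \(\omega_n\to\omega\) uniformly in \(\QQ_n\)-probability.

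For the central component, note two facts:
\begin{enumerate}[label=\textup{(\roman*)},leftmargin=2.1em]
\item \(\omega_n\in\cV\) deterministically, by \Cref{lem:exact-area}, since every step sequence in \(S^n\) produces such a path, whatever the law.
\item \(\omega\in\cV\), because \(\int_0^1(\abs{\dot x}+\abs{\dot y})\le1-\kappa\le1\).
\end{enumerate}
By \eqref{eq:exact-area-identity}, \(Q_n=\Phi(\omega_n)\). Also \(q_\omega=\Phi(\omega)\), since for absolutely continuous paths the Riemann--Stieltjes integral \(\int_0^t x\dd y\) equals \(\int_0^tx\dot y\dd s\). The Lipschitz bound \eqref{eq:area-lipschitz} then gives
\[
  d_\cX(Q_n,q_\omega)\le 3\norm{\omega_n-\omega}_\infty,
\]
which tends to zero in \(\QQ_n\)-probability. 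This proves the lemma.

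The only point needing care is that the law \(\QQ_n\) varies with \(n\) and is time inhomogeneous, so one cannot simply cite a fixed-law LLN. The explicit martingale variance bound above is uniform, because the increments are bounded by one regardless of the tilt; no compactness of \(\theta_{i,n}\) is even needed here. The exactness of the area identity in \Cref{lem:exact-area} removes any discretization correction in the central coordinate, so that step poses no obstacle.
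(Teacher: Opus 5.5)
Your proof is correct and follows essentially the same route as the paper. Both arguments concentrate the centered martingale \(M_k=\sum_{i\le k}(\xi_i-v_{i,n})\) at the grid points, absorb an \(O(1/n)\) interpolation error, and apply \eqref{eq:area-lipschitz} to \(Q_n=\Phi(\omega_n)\) and \(q_\omega=\Phi(\omega)\); the paper uses Hoeffding with a union bound to get the exponential estimate \(4n\e^{-n\delta^2/2}\), whereas your Doob \(L^2\) bound is weaker but suffices for convergence in probability.
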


\begin{proof}
Let \(\omega_n\) denote the horizontal component of \(Q_n\), and set
\[
  M_k:=\sum_{i=1}^k(\xi_i-v_{i,n}),
  \qquad 0\le k\le n.
\]
By \eqref{eq:tilted-mean}, the two coordinates of \(M_k\) are sums of
independent centered random variables.  Each summand in either coordinate
lies in an interval of length \(2\).  Thus Hoeffding's inequality
\cite{Hoeffding1963} gives, for \(\ell\in\{1,2\}\),
\[
  \QQ_n\left(\abs{M_k^{(\ell)}}>n\delta\right)
  \le 2\exp\left\{-\frac{n^2\delta^2}{2k}\right\},
  \qquad 1\le k\le n.
\]
Taking the union over the two coordinates and \(1\le k\le n\) yields
\begin{equation}\label{eq:tilted-maximal-bound}
  \QQ_n\left(\max_{0\le k\le n}\norm{M_k}_\infty>n\delta\right)
  \le 4\sum_{k=1}^n
       \exp\left\{-\frac{n^2\delta^2}{2k}\right\}
  \le 4n\e^{-n\delta^2/2}.
\end{equation}

Let \(\omega^{[n]}\) be the piecewise linear interpolation of the values
\(\omega(k/n)\).  Thus, if \(t=(k+r)/n\), where \(0\le k<n\) and
\(0\le r\le1\), then
\(\omega^{[n]}(t)=(1-r)\omega(k/n)+r\omega((k+1)/n)\).
For the same \(t\), \eqref{eq:v-in} gives
\[
  \sum_{i=1}^kv_{i,n}=n\omega(k/n)
\]
and hence
\begin{equation}\label{eq:horizontal-tilted-error}
  \omega_n(t)-\omega^{[n]}(t)
  =\frac{(1-r)M_k+rM_{k+1}}{n}.
\end{equation}
The second inequality in \eqref{eq:m-strict} implies that \(\omega\) is
\(1\) Lipschitz in the maximum norm.  Therefore
\[
\begin{aligned}
  \norm{\omega(t)-\omega^{[n]}(t)}_\infty
  &\le (1-r)\norm{\omega(t)-\omega(k/n)}_\infty
     +r\norm{\omega(t)-\omega((k+1)/n)}_\infty\\
  &\le \frac{2r(1-r)}{n}\le\frac1n.
\end{aligned}
\]
Combining this estimate with \eqref{eq:horizontal-tilted-error} gives
\begin{equation}\label{eq:horizontal-uniform-error}
  \norm{\omega_n-\omega}_\infty
  \le \frac1n\max_{0\le k\le n}\norm{M_k}_\infty+\frac1n.
\end{equation}
Consequently, by \eqref{eq:tilted-maximal-bound},
\begin{equation}\label{eq:horizontal-tilted-concentration}
  \QQ_n\left(
    \norm{\omega_n-\omega}_\infty>\delta+\frac1n
  \right)
  \le4n\e^{-n\delta^2/2}.
\end{equation}

By \Cref{lem:exact-area}, \(Q_n=\Phi(\omega_n)\) and
\(\omega_n\in\cV\).  Moreover, \eqref{eq:m-strict} gives
\(\Var(x)+\Var(y)
=\int_0^1(\abs{\dot x(s)}+\abs{\dot y(s)})\dd s\le1-\kappa\), so
\(\omega\in\cV\) and \(q_\omega=\Phi(\omega)\).  Hence
\eqref{eq:area-lipschitz} and \eqref{eq:path-metric} give
\[
  d_{\cX}(Q_n,q_\omega)
  \le3\norm{\omega_n-\omega}_\infty.
\]
For any \(\varepsilon>0\), take \(\delta=\varepsilon/6\).  If
\(n\ge6/\varepsilon\), then \eqref{eq:horizontal-tilted-concentration}
implies
\[
  \QQ_n\left(d_{\cX}(Q_n,q_\omega)>\varepsilon\right)
  \le4n\exp\left\{-\frac{n\varepsilon^2}{72}\right\}
  \longrightarrow0.
\]
\end{proof}

The weighted many to one identity now transfers the preceding concentration
statement to a positive probability under the original branching law.  It is
the product tilt version of the time inhomogeneous many to one formula; compare
\cite{Mallein2015Inhomogeneous}.

\begin{proof}[Proof of \Cref{prop:bounded-success}]
Let \(G^\varepsilon\) be the open \(\varepsilon\) ball about \(q_\omega\), namely
\[
  G^\varepsilon:=\{q\in\cX:d_{\cX}(q,q_\omega)<\varepsilon\}.
\]
For an event \(E\), let \(\one_E\) denote its indicator.  Define the restricted weighted mass by
\begin{equation}\label{eq:Y-def}
  Y_n^\varepsilon
  :=\sum_{\abs{u}=n}H_{u,n}^{(n)}
  \one_{\{Q_u^{(n)}\in G^\varepsilon\}}.
\end{equation}
The weighted many to one identity gives
\begin{equation}\label{eq:weighted-m21}
  \EE Y_n^\varepsilon=\QQ_n(Q_n\in G^\varepsilon).
\end{equation}
Indeed, the expected number \(m^n\) cancels the product of the factors
\(m^{-1}\), while \eqref{eq:path-density} identifies the remaining product
of the \(R_{i,n}\)'s as the Radon--Nikodym derivative of \(\QQ_n\).  By
\Cref{lem:tilted-concentration}, the right hand side tends to one.

Since \(0\le Y_n^\varepsilon\le W_{n,n}\), \Cref{lem:Wp} gives a uniform \(L^p\) bound for \(Y_n^\varepsilon\).  H\"older's inequality yields
\[
  \EE Y_n^\varepsilon
  \le
  \bigl(\EE[(Y_n^\varepsilon)^p]\bigr)^{1/p}
  \PP(Y_n^\varepsilon>0)^{(p-1)/p}.
\]
Thus \(\PP(Y_n^\varepsilon>0)\) is bounded below by a positive constant for
all large \(n\).  All weights are strictly positive, so
\(Y_n^\varepsilon>0\) is equivalent to
\[
  \exists\,\abs{u}=n:\qquad
  d_{\cX}(Q_u^{(n)},q_\omega)<\varepsilon.
\]
\end{proof}

\subsection{Offspring truncation and amplification on survival}
\label{sec:amplification}

We now remove the bounded offspring restriction and strengthen
\Cref{prop:bounded-success} to almost sure eventual realization on the
survival event.

\begin{proposition}[Eventual realization of every strict path]\label{prop:eventual-strict}
Assume \eqref{eq:offspring-assumption}.  If
\(\omega\in\cC_\rho^{<}\), as defined through
\eqref{eq:strict-admissibility}, then for every \(\varepsilon>0\), on
\(\cS\), almost surely there exists \(n_0(\omega,\varepsilon)\) such that
for every \(n\ge n_0\) one can find \(\abs{u}=n\) with
\begin{equation}\label{eq:eventual-tube}
  d_{\cX}(Q_u^{(n)},q_\omega)<\varepsilon.
\end{equation}
\end{proposition}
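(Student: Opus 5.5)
The plan is to reduce to \Cref{prop:bounded-success} by truncating the offspring, and then amplify its positive success probability using the many independent subtrees that exist on \(\cS\). Fix \(\omega\in\cC_\rho^{<}\) with constants \(\eta,\kappa\) as in \eqref{eq:strict-admissibility}, and fix \(\varepsilon>0\).

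\emph{Truncation.} Couple the truncated law \(\widehat N:=\min(N,K)\) with the original tree by retaining only the first \(\min(N_v,K)\) children of each vertex \(v\), keeping the same edge increments. Every truncated subtree is then a subtree of \(\mathbb T\), and the increments on its edges are unchanged. By monotone convergence, \(m_K:=\EE\widehat N\uparrow\rho\). Choose \(K\) so that \(m_K>1\) and \(\log m_K\ge\log\rho-\eta/2\). Then
\[
  J_t(\omega)\le t(\log m_K-\eta/2)\qquad(0\le t\le1),
\]
so \eqref{eq:m-strict} holds with \(\eta/2\) and \(\kappa\). \Cref{prop:bounded-success}, applied with \(\varepsilon/2\), gives \(p>0\) and \(n_0\) such that, for every \(n'\ge n_0\), the truncated branching random walk started from one particle has some particle \(w\) in generation \(n'\) with \(d_\cX(Q_w^{(n')},q_\omega)<\varepsilon/2\).

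\emph{Amplification.} Let \(j_n:=\lceil C\log n\rceil\), with \(C\) to be chosen. Each vertex \(v\) with \(\abs v=j_n\) roots a truncated subtree built from the offspring numbers and increments below \(v\). Conditionally on \(\cF_{j_n}\), these subtrees are independent copies of the truncated process, and they start at the shifted positions \(V(v)\). Let \(B_n\) be the event that none of them succeeds at generation \(n-j_n\) in the sense above. Then
\[
  \PP(B_n\mid\cF_{j_n})\le(1-p)^{\mathcal N_{j_n}}.
\]
By the Kesten--Stigum theorem, \(\mathcal N_j/\rho^j\to W\) almost surely, with \(W>0\) almost surely on \(\cS\). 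Intersect with the event \(\mathcal N_{j_n}\ge\rho^{j_n/2}\ge n^{(C/2)\log\rho}\), whose complement eventually fails on \(\cS\). On this event the bound is at most \(\exp\{-p\,n^{(C/2)\log\rho}\}\), which is summable in \(n\). Borel--Cantelli then shows that, almost surely on \(\cS\), \(B_n\) fails for all large \(n\).

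\emph{Concatenation.} This step carries the real bookkeeping. Suppose the subtree at \(v\) succeeds with a descendant \(w\) of relative depth \(n':=n-j_n\), and let \(u\) be the corresponding particle of \(\mathbb T\) in generation \(n\). Along the lineage, the group law gives
\[
  Z_{u_k}=Z_v+Z'_{k-j_n}+X_v\,(Y_{u_k}-Y_v),
\]
where \(Z'\) is the area of the relative path from \(v\). Here \(\abs{X_v},\abs{Y_v}\le j_n\), \(\abs{Z_v}\le j_n^2\), and the cross term is at most \(j_n n\). After Carnot scaling these contribute \(O(j_n/n)\to0\) to \(d_\cX\). It remains to compare the relative path rescaled by \(n\) with the subtree path rescaled by \(n'\), evaluated with the time shift \(t\mapsto(t-j_n/n)n/n'\). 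Since all paths lie in the equi-Lipschitz set \(\cK_*\) of \Cref{lem:Kstar}, this comparison costs \(O(j_n/n)\) uniformly. Likewise \(q_\omega\) is \(1\)-Lipschitz, so shifting its time by \(O(j_n/n)\) also costs \(O(j_n/n)\). For large \(n\) these errors total less than \(\varepsilon/2\), which yields \eqref{eq:eventual-tube}.

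I expect the main obstacle to be this last step: keeping the noncommutative cross term and the rescaling error uniformly small. It works only because \(j_n=O(\log n)\) is \(o(n)\) and all paths are Lipschitz.
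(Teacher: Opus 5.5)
Your proposal is correct and follows the paper's proof essentially step for step. Both use canonical truncation with \(\log\rho_K\ge\log\rho-\eta/2\), amplification over \(\lceil C\log n\rceil\) generations via Kesten--Stigum, conditional independence and Borel--Cantelli, and the explicit splice \(Z_{u_k}=Z_v+Z'_{k-j_n}+X_v(Y_{u_k}-Y_v)\), with every correction term of order \(j_n/n\). The differences are cosmetic: you use all \(\mathcal N_{j_n}\) roots on the event \(\{\mathcal N_{j_n}\ge\rho^{j_n/2}\}\) instead of the first \(n^3\) roots, and you split the tolerance as \(\varepsilon/2\) instead of \(\varepsilon/4\).
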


The proof truncates the offspring law and uses the Kesten--Stigum theorem to apply
\Cref{prop:bounded-success} in one of many independent descendant trees,
following the amplification scheme in
\cite{HarrisRoberts2012}.  The final
splice uses the noncommutative Heisenberg group law.  The condition
\(\EE[N\log_+N]<\infty\) enters through the Kesten--Stigum limit
\eqref{eq:KS}; the large deviation, tilting, splicing and outer containment
arguments require only \(\rho<\infty\).

\begin{proof}
\emph{Truncation and a uniform descendant estimate.}
Choose \(\eta,\kappa>0\) as in \eqref{eq:strict-admissibility}, and put
\[
  N^{(K)}:=N\wedge K,
  \qquad
  \rho_K:=\EE N^{(K)}.
\]
Since \(\rho_K\uparrow\rho\), choose \(K\) so that \(\rho_K>1\) and
\begin{equation}\label{eq:rhoK}
  \log\rho_K\ge\log\rho-\eta/2.
\end{equation}
For every \(t\in[0,1]\),
\begin{align*}
  J_t(\omega)
  &\le t(\log\rho-\eta)
   \le t(\log\rho_K-\eta/2),\\
  \abs{\dot x}+\abs{\dot y}&\le1-\kappa\quad\text{a.e.}
\end{align*}
At a particle \(v\), retain
\(v1,\ldots,v(N_v\wedge K)\) and their original edge increments.  This
canonical pruning realizes the bounded offspring process as a subtree of the
original process.  Set \(\delta:=\varepsilon/4\).  Applied to the retained
process based at the identity, \Cref{prop:bounded-success} gives
\(p_0>0\) and \(m_0\) such that
\begin{equation}\label{eq:truncated-tube-probability}
  \PP\left(\exists\,\abs{w}=m:
    d_{\cX}(Q_w^{(m)},q_\omega)<\delta\right)\ge p_0,
  \qquad m\ge m_0.
\end{equation}

\emph{Kesten--Stigum growth.}
The Kesten--Stigum theorem
\cite{LyonsPemantlePeres1995} and
\eqref{eq:offspring-assumption} give a random variable
\(W_{\mathrm{GW}}\) such that
\begin{equation}\label{eq:KS}
  \rho^{-j}\mathcal N_j\longrightarrow W_{\mathrm{GW}}
  \quad\text{a.s.},
  \qquad
  \PP(W_{\mathrm{GW}}=0)=\PP(\cS^c).
\end{equation}
In particular, \(W_{\mathrm{GW}}>0\) almost surely on \(\cS\).
Choose \(C>3/\log\rho\), and define the initial amplification depth \(r_n\) and the remaining descendant depth \(m_n\) by
\begin{equation}\label{eq:rn}
  r_n:=\lceil C\log n\rceil,
  \qquad
  m_n:=n-r_n.
\end{equation}
Since \(C\log\rho>3\), \eqref{eq:KS} gives, almost surely on \(\cS\),
\begin{equation}\label{eq:many-roots}
  \frac{\mathcal N_{r_n}}{n^3}
  =\bigl(\rho^{-r_n}\mathcal N_{r_n}\bigr)
    \frac{\rho^{r_n}}{n^3}
  \longrightarrow\infty,
  \qquad
  \mathcal N_{r_n}\ge n^3
  \quad\text{eventually}.
\end{equation}
Since \(m_n\to\infty\), the probability bound \(p_0\) applies at depth
\(m_n\) for all sufficiently large \(n\).  On
\(\{\mathcal N_{r_n}\ge n^3\}\), choose the first \(n^3\) particles in
Ulam--Harris order in generation \(r_n\); this choice is measurable with respect to
\(\cF_{r_n}\).  Apply the canonical truncation above separately in the
descendant tree of each chosen particle.  By the branching property,
conditional on \(\cF_{r_n}\), the retained subtrees are independent copies
of the bounded offspring process.  By
\eqref{eq:truncated-tube-probability}, the conditional probability that none
of these \(n^3\) retained descendant processes satisfies the event in
\eqref{eq:truncated-tube-probability} with \(m=m_n\) is at most
\begin{equation}\label{eq:independent-subtree-bound}
  (1-p_0)^{n^3}\le\e^{-p_0n^3}.
\end{equation}
After intersecting with \(\{\mathcal N_{r_n}\ge n^3\}\) and taking
expectations,
\[
  \sum_{n=1}^\infty
  \PP\left(
    \substack{\mathcal N_{r_n}\ge n^3\ \text{and none of the selected trees}\\
    \text{satisfies \eqref{eq:truncated-tube-probability} with }m=m_n}
  \right)
  \le\sum_{n=1}^\infty\e^{-p_0n^3}<\infty.
\]
The displayed summability, \eqref{eq:many-roots}, and the first
Borel--Cantelli lemma imply that, almost surely on \(\cS\), for every
sufficiently large \(n\) there are a particle \(v_n\) in generation
\(r_n\) and a descendant \(u^{(n)}\) in generation \(n\), retained by the
truncation, such that \(v_n=u^{(n)}_{r_n}\) and
\eqref{eq:relative-tube} holds.

\emph{Heisenberg splicing and the noncommutative cross term.}
For \(0\le j\le m_n\), define
\[
  V(v_n)^{-1}V\bigl(u^{(n)}_{r_n+j}\bigr)
  =\bigl(\widetilde X_{j,n},\widetilde Y_{j,n},
    \widetilde Z_{j,n}\bigr).
\]
Let \((\widetilde\omega_n,\widetilde\zeta_n)\in\cX\) be linear on the
intervals with endpoints \(j/m_n\), with grid values
\[
  \widetilde\omega_n(j/m_n)
  :=\frac1{m_n}(\widetilde X_{j,n},\widetilde Y_{j,n}),
  \qquad
  \widetilde\zeta_n(j/m_n):=\frac1{m_n^2}\widetilde Z_{j,n}.
\]
By this choice,
\begin{equation}\label{eq:relative-tube}
  d_{\cX}\bigl((\widetilde\omega_n,\widetilde\zeta_n),
    q_\omega\bigr)<\delta.
\end{equation}
Write \(V(v_n)=(X_{v_n},Y_{v_n},Z_{v_n})\), let \(\widetilde y_n\) be the
second coordinate of \(\widetilde\omega_n\), and put
\begin{equation}\label{eq:time-change}
  a_n:=\frac{r_n}{n},
  \qquad
  b_n:=\frac{m_n}{n}=1-a_n,
  \qquad
  s_n(t):=\frac{nt-r_n}{m_n}.
\end{equation}
For \(a_n\le t\le1\),
\begin{equation}\label{eq:time-change-error}
  s_n(t)-t=-\frac{r_n(1-t)}{m_n},
  \qquad
  \sup_{a_n\le t\le1}\abs{s_n(t)-t}
  =a_n.
\end{equation}

At the interpolation points, the Heisenberg group law gives
\begin{align*}
  &X_{u^{(n)}_{r_n+j}}=X_{v_n}+\widetilde X_{j,n},\quad
   Y_{u^{(n)}_{r_n+j}}=Y_{v_n}+\widetilde Y_{j,n},\\
  &Z_{u^{(n)}_{r_n+j}}=Z_{v_n}+\widetilde Z_{j,n}+X_{v_n}\widetilde Y_{j,n}.
\end{align*}
The cross term \(X_{v_n}\widetilde Y_{j,n}\) is forced by the \(xb\) factor in
the group law and has no Euclidean analogue.  After Carnot scaling, these
identities and linear interpolation yield
\begin{align}
  \omega_{u^{(n)}}^{(n)}(t)
  &=\frac{(X_{v_n},Y_{v_n})}{n}
    +b_n\widetilde\omega_n(s_n(t)),
    \label{eq:splice-horizontal}\\
  \zeta_{u^{(n)}}^{(n)}(t)
  &=\frac{Z_{v_n}}{n^2}
    +b_n^2\widetilde\zeta_n(s_n(t))
    +\frac{X_{v_n}m_n}{n^2}\widetilde y_n(s_n(t)).
    \label{eq:splice-central}
\end{align}
Deterministically,
\begin{equation}\label{eq:root-bounds}
  \abs{X_{v_n}}+\abs{Y_{v_n}}\le r_n,
  \qquad
  \abs{Z_{v_n}}\le\sum_{k=1}^{r_n}(k-1)\le r_n^2,
  \qquad
  \norm{\widetilde y_n}_\infty\le1.
\end{equation}
By \eqref{eq:relative-tube}, \Cref{lem:Kstar}, and
\eqref{eq:time-change-error},
\begin{gather*}
  \norm{\widetilde\omega_n-\omega}_\infty<\delta,
  \qquad
  \norm{\widetilde\zeta_n-A_\cdot(\omega)}_\infty<\delta,\\
  \norm{\omega(s)-\omega(t)}_\infty\le\abs{s-t},
  \qquad
  \abs{A_s(\omega)-A_t(\omega)}\le\abs{s-t},\\
  \norm{\omega}_\infty\le1,
  \qquad
  \norm{A_\cdot(\omega)}_\infty\le1.
\end{gather*}
For \(a_n\le t\le1\), \eqref{eq:splice-horizontal} gives
\begin{align*}
  \norm{\omega_{u^{(n)}}^{(n)}(t)-\omega(t)}_\infty
  &\le \frac{\norm{(X_{v_n},Y_{v_n})}_\infty}{n}
    +b_n\norm{\widetilde\omega_n(s_n(t))
      -\omega(s_n(t))}_\infty\\
  &\quad +(1-b_n)\norm{\omega(s_n(t))}_\infty
    +\norm{\omega(s_n(t))-\omega(t)}_\infty\\
  &\le a_n+b_n\delta+a_n+a_n
   \le\delta+3a_n.
\end{align*}
The central coordinate in \eqref{eq:splice-central} satisfies
\begin{align*}
  \abs{\zeta_{u^{(n)}}^{(n)}(t)-A_t(\omega)}
  &\le \frac{\abs{Z_{v_n}}}{n^2}
    +b_n^2\abs{\widetilde\zeta_n(s_n(t))-A_{s_n(t)}(\omega)}\\
  &\quad +(1-b_n^2)\abs{A_{s_n(t)}(\omega)}
    +\abs{A_{s_n(t)}(\omega)-A_t(\omega)}\\
  &\quad
    +\frac{\abs{X_{v_n}}m_n}{n^2}\abs{\widetilde y_n(s_n(t))}\\
  &\le a_n^2+b_n^2\delta+(1-b_n^2)+a_n+a_nb_n\\
  &\le\delta+4a_n.
\end{align*}
Consequently,
\begin{equation}\label{eq:splicing-error}
\begin{aligned}
  &
  \sup_{a_n\le t\le1}\max\bigl\{
    \norm{\omega_{u^{(n)}}^{(n)}(t)-\omega(t)}_\infty,
    \abs{\zeta_{u^{(n)}}^{(n)}(t)-A_t(\omega)}\bigr\}\\
  &\hspace{8em}\le\delta+4a_n.
\end{aligned}
\end{equation}
On \([0,a_n]\), both \(Q_{u^{(n)}}^{(n)}\) and \(q_\omega\) start at zero
and belong to \(\cK_*\), so
\[
  \sup_{0\le t\le a_n}\max\bigl\{
    \norm{\omega_{u^{(n)}}^{(n)}(t)-\omega(t)}_\infty,
    \abs{\zeta_{u^{(n)}}^{(n)}(t)-A_t(\omega)}\bigr\}
  \le2a_n.
\]
Since \(a_n\to0\), \eqref{eq:splicing-error} yields
\[
  \limsup_{n\to\infty}
  d_{\cX}\bigl(Q_{u^{(n)}}^{(n)},q_\omega\bigr)
  \le\delta=\varepsilon/4<\varepsilon.
\]
Thus \eqref{eq:eventual-tube} holds for all sufficiently large \(n\).
\end{proof}

\subsection{Proof of the path space shape theorem and its corollaries}
\label{sec:shape-proof}

Hausdorff convergence in Theorem~\ref{thm:path-shape} has two directed
components:
\begin{equation}\label{eq:directed-Hausdorff-components}
  \begin{gathered}
  \sup_{q\in\cQ_n}d_{\cX}(q,\cQ_\rho)\longrightarrow0
  \qquad\text{(outer containment)},\\
  \sup_{q\in\cQ_\rho}d_{\cX}(q,\cQ_n)\longrightarrow0
  \qquad\text{(inner coverage)}.
  \end{gathered}
\end{equation}
The finite cover argument
below proves outer containment using the many to one identity, the
enhanced path large deviation principle and Borel--Cantelli.  Inner coverage
then follows from \Cref{prop:eventual-strict,lem:compact-density} and a finite
net of \(\cQ_\rho\).  The spatial shape and support function statements in
Corollaries~\ref{cor:endpoint-shape} and~\ref{cor:front-support} are
deterministic consequences of the path space limit.

\begin{lemma}\label{lem:finite-violation-cover}
Let \(K\subset\cX\) be compact and disjoint from \(\cQ_\rho\).  There exist
finitely many times \(t_1,\ldots,t_m\in(0,1]\), compact sets
\(F_i\subset\cX_{t_i}\), and a number \(\delta>0\) such that
\begin{equation}\label{eq:F-cost-gap}
  \inf_{q\in F_i}I_{t_i}(q)\ge t_i\log\rho+2\delta,
  \qquad 1\le i\le m,
\end{equation}
and every \(q\in K\) satisfies \(\pi_{t_i}q\in F_i\) for at least one \(i\).
\end{lemma}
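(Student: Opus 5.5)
The argument is a pointwise-to-finite compactness argument built on the representation \eqref{eq:Q-intersection}, \(\cQ_\rho=\bigcap_{0<t\le1}\{q:I_t(\pi_tq)\le t\log\rho\}\), and on the lower semicontinuity of each \(I_t\) supplied by the good rate property in \Cref{thm:enhanced-ldp}.

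\emph{Step 1: a strictly violated time for each point.} Fix \(q\in K\). Since \(q\notin\cQ_\rho\), the representation gives a time \(t_q\in(0,1]\) with \(I_{t_q}(\pi_{t_q}q)>t_q\log\rho\); this covers the case where the value is \(+\infty\). Choose \(\delta_q>0\) so that \(I_{t_q}(\pi_{t_q}q)>t_q\log\rho+3\delta_q\), using any finite margin if the value is infinite.

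\emph{Step 2: an open neighbourhood with a uniform margin.} The sublevel set \(\{r\in\cX_{t_q}:I_{t_q}(r)\le t_q\log\rho+2\delta_q\}\) is closed by lower semicontinuity and does not contain \(\pi_{t_q}q\). So there is an open ball \(B_q\subset\cX_{t_q}\) around \(\pi_{t_q}q\) whose closure \(\overline B_q\) is disjoint from that sublevel set. Hence \(I_{t_q}\ge t_q\log\rho+2\delta_q\) on \(\overline B_q\). The set \(U_q:=\pi_{t_q}^{-1}(B_q)\) is open in \(\cX\), because \(\pi_{t_q}\) is continuous, and it contains \(q\).

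\emph{Step 3: extract a finite subcover and fix the compact sets.} Compactness of \(K\) gives finitely many points \(q_1,\dots,q_m\) whose neighbourhoods \(U_{q_i}\) cover \(K\). Set \(t_i:=t_{q_i}\) and \(\delta:=\min_i\delta_{q_i}>0\). The sets \(F_i\) must be compact, and a closed ball in \(\cX_{t_i}\) need not be compact. So define
\[
  F_i:=\overline B_{q_i}\cap\pi_{t_i}(K).
\]
Here \(\pi_{t_i}(K)\) is compact as the continuous image of a compact set, so \(F_i\) is a closed subset of a compact set and hence compact. The bound \eqref{eq:F-cost-gap} holds on \(F_i\subset\overline B_{q_i}\) with the common \(\delta\). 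Finally, each \(q\in K\) lies in some \(U_{q_i}\), so \(\pi_{t_i}q\in B_{q_i}\cap\pi_{t_i}(K)\subset F_i\).

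The only delicate points are the compactness of \(F_i\), which is handled by intersecting with \(\pi_{t_i}(K)\), and the use of lower semicontinuity of \(I_t\) at every \(t\), not only at \(t=1\). Both are supplied by the preceding results.
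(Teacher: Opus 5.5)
Your proof is correct and follows essentially the same route as the paper. Both arguments pick, for each point of \(K\), a strictly violated time; use lower semicontinuity of \(I_t\) to get an open neighbourhood; extract a finite subcover; and make the \(F_i\) compact by intersecting closed sets with \(K\) or its projections. The only cosmetic difference is that the paper takes closures of shrunken neighbourhoods in \(\cX\), intersects them with \(K\) and then applies \(\pi_{t_i}\), whereas you take closed balls in \(\cX_{t_i}\), intersect them with \(\pi_{t_i}(K)\), and read the uniform bound directly off disjointness from a closed sublevel set.
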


\begin{proof}
For each \(q\in K\), \eqref{eq:Q-intersection} gives a time \(t(q)\in(0,1]\) with
\[
  I_{t(q)}(\pi_{t(q)}q)>t(q)\log\rho.
\]
Choose \(\delta(q)>0\) such that
\[
  I_{t(q)}(\pi_{t(q)}q)>t(q)\log\rho+4\delta(q).
\]
Lower semicontinuity of \(I_{t(q)}\) implies that
\[
  O_q:=\{r\in\cX_{t(q)}:
  I_{t(q)}(r)>t(q)\log\rho+3\delta(q)\}
\]
is an open neighborhood of \(\pi_{t(q)}q\).  Thus
\(
U_q:=\pi_{t(q)}^{-1}(O_q)
\)
is an open neighborhood of \(q\).

Because \(K\) is compact metric, for every \(q\) choose an open \(V_q\) with
\(
q\in V_q\subset\overline{V_q}\subset U_q.
\)
A finite subcollection \(V_{q_1},\ldots,V_{q_m}\) covers \(K\).  Set
\[
  C_i:=K\cap\overline{V_{q_i}},
  \qquad
  t_i:=t(q_i),
  \qquad
  F_i:=\pi_{t_i}(C_i).
\]
Each \(C_i\) and \(F_i\) is compact, the \(C_i\)'s cover \(K\), and
\(
F_i\subset O_{q_i}.
\)
Since a lower semicontinuous function attains its minimum on a compact set,
\[
  \inf_{F_i}I_{t_i}>t_i\log\rho+3\delta(q_i).
\]
Taking \(\delta:=\min_{1\le i\le m}\delta(q_i)>0\) gives \eqref{eq:F-cost-gap}.
\end{proof}

The next lemma combines this deterministic reduction with the many to one
identity and the enhanced path large deviation principle.

\begin{lemma}\label{lem:outer-summable}
For every \(\varepsilon>0\),
\begin{equation}\label{eq:outer-sum}
  \sum_{n=1}^\infty
  \PP\left(
  \exists \abs{u}=n:
  d_{\cX}(Q_u^{(n)},\cQ_\rho)\ge\varepsilon
  \right)<\infty.
\end{equation}
\end{lemma}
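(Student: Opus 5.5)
We reduce the bad event to finitely many large deviation events and control each one by a first moment. Set
\[
  K:=\{q\in\cK_*:d_{\cX}(q,\cQ_\rho)\ge\varepsilon\}.
\]
This is a closed subset of the compact set \(\cK_*\) from \Cref{lem:Kstar}, so it is compact. It is also disjoint from \(\cQ_\rho\). By \Cref{lem:Kstar}, every discrete enhanced path \(Q_u^{(n)}\) lies in \(\cK_*\). Hence the event in \eqref{eq:outer-sum} is contained in \(\{\exists\abs{u}=n:Q_u^{(n)}\in K\}\).

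Apply \Cref{lem:finite-violation-cover} to \(K\). This gives times \(t_1,\ldots,t_m\), compact sets \(F_i\subset\cX_{t_i}\), and a gap \(\delta>0\). The bad event is then contained in the union over \(i\) of
\[
  E_{n,i}:=\{\exists\abs{u}=n:\pi_{t_i}Q_u^{(n)}\in F_i\}.
\]
It therefore suffices to show that \(\sum_n\PP(E_{n,i})<\infty\) for each fixed \(i\).

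\emph{Reduction to an ancestor generation.} The restriction \(\pi_{t_i}Q_u^{(n)}\) depends only on the ancestral path of \(u\) up to generation \(k:=\lceil nt_i\rceil\), up to linear interpolation on the last partial step. So it suffices to count ancestors \(w\) with \(\abs{w}=k\) whose rescaled path, restricted to \([0,t_i]\) and still normalized by \(n\) and \(n^2\), lies in \(F_i\). Taking \(k=\lceil nt_i\rceil\) is harmless: by the \(1\)-Lipschitz bounds in \(\cK_*\), the portion of the path on \([k/n,\cdot]\) beyond \(nt_i\) is irrelevant to \(\pi_{t_i}\), since it is determined by steps through \(k\).

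\emph{First moment bound.} By Markov's inequality and the many to one formula,
\[
  \PP(E_{n,i})\le\rho^{k}\,\PP(\pi_{t_i}Q_n\in F_i),
\]
where \(Q_n\) is the single-walk enhanced path from \Cref{sec:single-walk}, built from \(n\) steps and rescaled by \(n\). By \Cref{thm:enhanced-ldp}, \(\pi_{t_i}Q_n\) satisfies an LDP on \(\cX_{t_i}\) with rate \(I_{t_i}\). The upper bound on the closed set \(F_i\), combined with \eqref{eq:F-cost-gap}, gives for large \(n\)
\[
  \PP(\pi_{t_i}Q_n\in F_i)\le\exp\{-n(t_i\log\rho+\delta)\}.
\]
Since \(\rho^{k}\le\rho\,\e^{nt_i\log\rho}\), we obtain \(\PP(E_{n,i})\le\rho\,\e^{-n\delta}\), which is summable in \(n\).

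The main technical point is to match the single walk of length \(n\) in the LDP with the branching ancestral paths at generation \(k\). This works because \(\pi_{t_i}Q_n\) is measurable with respect to the first \(k\) increments. Consequently the expected number of generation-\(k\) vertices whose path restriction lies in \(F_i\) equals \(\rho^k\PP(\pi_{t_i}Q_n\in F_i)\) exactly, using the same scaling. No extension to generation \(n\) is needed, and the event \(E_{n,i}\) requires such an ancestor to exist. Summing over \(i=1,\ldots,m\) then proves \eqref{eq:outer-sum}.
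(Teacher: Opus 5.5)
Your proposal is correct and follows the paper's proof essentially step by step: the compact set of paths in \(\cK_*\) at distance at least \(\varepsilon\) from \(\cQ_\rho\), the finite violation cover, passage to the ancestor generation \(\lceil nt_i\rceil\) with scaling still by \(n\), and the many to one bound combined with the enhanced LDP upper bound and the gap \(2\delta\). The appeal to the \(1\)-Lipschitz bounds in your reduction step is unnecessary; the only fact needed is the one you state at the end, namely that the restriction to \([0,t_i]\) is determined by the grid values through generation \(\lceil nt_i\rceil\).
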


\begin{proof}
By \Cref{lem:Kstar}, all discrete paths and all paths in \(\cQ_\rho\) lie in
the deterministic compact set \(\cK_*\).  Define its subset separated from
\(\cQ_\rho\) by at least \(\varepsilon\) as
\[
  K_\varepsilon
  :=\{q\in\cK_*:d_{\cX}(q,\cQ_\rho)\ge\varepsilon\}.
\]
It is compact and disjoint from \(\cQ_\rho\).  Apply
\Cref{lem:finite-violation-cover}, obtaining \(t_i,F_i,\delta\).

For each \(t_i\), define the corresponding discrete generation by
\begin{equation}\label{eq:mi}
  m_i(n):=\lceil nt_i\rceil.
\end{equation}
If a generation \(n\) path belongs to \(K_\varepsilon\), then for some \(i\)
its restriction to \([0,t_i]\) lies in \(F_i\).  That restriction is already
determined by its ancestor in generation \(m_i(n)\), with the path still
scaled by the total horizon \(n\).

Let \(A_{i,n}\) be the number of generation \(m_i(n)\) particles whose
ancestral paths, scaled by the total horizon \(n\) and restricted to
\([0,t_i]\), lie in \(F_i\).  The event on the left below implies
\(A_{i,n}\ge1\) for at least one \(i\).  Markov's inequality and the many to
one identity \cite{Shi2015} therefore give
\begin{align}
  &\PP\left(\exists \abs{u}=n:
       Q_u^{(n)}\in K_\varepsilon\right)\notag\\
  &\quad\le
  \sum_{i=1}^m\EE A_{i,n}
  =\sum_{i=1}^m
  \rho^{m_i(n)}
  \PP\left(\pi_{t_i}Q_n\in F_i\right).
  \label{eq:outer-m21}
\end{align}
By \Cref{thm:enhanced-ldp}, the LDP for \(\pi_{t_i}Q_n\) has speed \(n\),
so \eqref{eq:F-cost-gap} yields
\[
  \limsup_{n\to\infty}\frac1n
  \log\PP(\pi_{t_i}Q_n\in F_i)
  \le-(t_i\log\rho+2\delta).
\]
Since
\(
\rho^{m_i(n)}=\exp\{nt_i\log\rho+O(1)\},
\)
each summand in \eqref{eq:outer-m21} is at most \(\e^{-\delta n}\) for all sufficiently large \(n\).  The finite sum is summable, proving \eqref{eq:outer-sum}.
\end{proof}

The summable outer containment estimate~\eqref{eq:outer-sum} and eventual
realization of strict paths~\eqref{eq:eventual-tube} now give the two directed
Hausdorff estimates in
\eqref{eq:directed-Hausdorff-components}.

\begin{proof}[Proof of \Cref{thm:path-shape}]
Compactness and nonemptiness of \(\cQ_\rho\) were proved in \Cref{lem:compact-density}.

\emph{Outer containment: realized paths lie near the limit set.}
\Cref{lem:outer-summable} and Borel--Cantelli imply that, for every fixed \(\varepsilon>0\), almost surely every nonempty \(\cQ_n\) eventually satisfies
\begin{equation}\label{eq:outer-H}
  \sup_{q\in\cQ_n}d_{\cX}(q,\cQ_\rho)<\varepsilon.
\end{equation}
On \(\cS\), the random set is nonempty for every \(n\), so \eqref{eq:outer-H}
gives outer containment in \eqref{eq:directed-Hausdorff-components}.

\emph{Inner coverage: every limit path is approximated.}
Fix \(\varepsilon>0\).  Choose a finite \(\varepsilon/3\) net
\(
q_1,\ldots,q_M
\)
of the compact set \(\cQ_\rho\).  By the density part of
\Cref{lem:compact-density}, choose \(\omega_j^{<}\in\cC_\rho^{<}\) so that,
with \(q_j^{<}:=q_{\omega_j^{<}}\),
\[
  d_{\cX}(q_j,q_j^{<})<\varepsilon/3.
\]
By \Cref{prop:eventual-strict}, on \(\cS\), for each \(j\) and all
sufficiently large \(n\), a generation \(n\) particle has enhanced path
within \(\varepsilon/3\) of \(q_j^{<}\).  There are only finitely many \(j\),
so this holds simultaneously.  It follows that eventually
\begin{equation}\label{eq:inner-H}
  \sup_{q\in\cQ_\rho}d_{\cX}(q,\cQ_n)<\varepsilon.
\end{equation}
This proves the inner coverage assertion in
\eqref{eq:directed-Hausdorff-components}.
Taking a countable intersection over \(\varepsilon=1/k\), and combining
\eqref{eq:outer-H} and \eqref{eq:inner-H}, proves
\eqref{eq:path-shape-conv} on \(\cS\).
\end{proof}

The spatial shape and support function statements in
Corollaries~\ref{cor:endpoint-shape} and~\ref{cor:front-support} are
deterministic consequences of the path space convergence.

\begin{proof}[Proof of \Cref{cor:endpoint-shape}]
The endpoint map
\[
  e_1:\cX\to\RR^3,
  \qquad
  e_1(x,y,\zeta)=(x(1),y(1),\zeta(1)),
\]
is Lipschitz with constant \(\sqrt3\) from \(d_{\cX}\) to the Euclidean norm.  Moreover,
\(
e_1(\cQ_n)=\cR_n
\)
and
\(
e_1(\cQ_\rho)=\cR_\rho.
\)
Hausdorff convergence is preserved under a Lipschitz map, giving \eqref{eq:endpoint-shape-conv}.
\end{proof}

It remains to use the elementary stability of support functions under
Hausdorff convergence.

\begin{proof}[Proof of \Cref{cor:front-support}]
Work on the almost sure subset of \(\cS\) on which
\eqref{eq:endpoint-shape-conv} holds.  Every generation is then nonempty, so
\(\cR_n\) is a nonempty finite set, while \(\cR_\rho\) is nonempty and
compact.  Put
\[
  \delta_n:=\dH^{\RR^3}(\cR_n,\cR_\rho).
\]
Fix \(\lambda\in\RR^3\) with \(\norm{\lambda}_2\le1\).  For each
\(z\in\cR_n\), compactness of \(\cR_\rho\) gives a point
\(w_z\in\cR_\rho\) such that
\(\norm{z-w_z}_2\le\delta_n\).  Therefore Cauchy--Schwarz gives
\[
  \ip{\lambda}{z}
  =\ip{\lambda}{w_z}+\ip{\lambda}{z-w_z}
  \le h_\rho(\lambda)
     +\norm{\lambda}_2\norm{z-w_z}_2
  \le h_\rho(\lambda)+\delta_n.
\]
Taking the maximum over \(z\in\cR_n\) yields
\(
h_n(\lambda)\le h_\rho(\lambda)+\delta_n
\).

Conversely, for each \(w\in\cR_\rho\), choose \(z_w\in\cR_n\) with
\(\norm{w-z_w}_2\le\delta_n\).  The same calculation gives
\[
  \ip{\lambda}{w}
  \le\ip{\lambda}{z_w}
     +\norm{\lambda}_2\norm{w-z_w}_2
  \le h_n(\lambda)+\delta_n.
\]
Taking the supremum over \(w\in\cR_\rho\) gives
\(
h_\rho(\lambda)\le h_n(\lambda)+\delta_n
\).  Hence
\[
  \sup_{\norm{\lambda}_2\le1}
  \abs{h_n(\lambda)-h_\rho(\lambda)}
  \le\delta_n\longrightarrow0
\]
by \Cref{cor:endpoint-shape}, proving \eqref{eq:support-uniform}.

The central maximum is the support function in direction \(e_3=(0,0,1)\):
\[
  \frac{M_n^Z}{n^2}=h_n(e_3),
  \qquad
  \Gamma_\rho=h_\rho(e_3).
\]
Thus \eqref{eq:front-law} follows from \eqref{eq:support-uniform}.
\end{proof}

\subsection{A quantitative window below saturation}
\label{sec:below-saturation-window}

The central front law \eqref{eq:front-law} gives convergence on the
\(n^2\) scale.  The following result gives an explicit almost sure window
for each fixed offspring mean below saturation.

\begin{theorem}[Quantitative central maximum below saturation]
\label{thm:below-saturation-window}
Assume \eqref{eq:offspring-assumption} and fix \(1<\rho<4\).  There is a
finite deterministic constant \(C=C(\rho,\operatorname{Law}(N))\) such that,
on \(\cS\), almost surely,
\begin{equation}\label{eq:below-saturation-window}
  \abs{M_n^Z-\Gamma_\rho n^2}
  \le Cn^{3/2}\sqrt{\log n}
\end{equation}
for all sufficiently large \(n\).
\end{theorem}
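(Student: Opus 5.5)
The two inequalities in \eqref{eq:below-saturation-window} will be proved separately. The upper bound will be a quantitative form of the outer containment argument of \Cref{lem:outer-summable}. The lower bound will be a quantitative form of \Cref{prop:bounded-success,prop:eventual-strict} with a vanishing action margin. Throughout, put \(\varepsilon_n:=C_1n^{-1/2}\sqrt{\log n}\).

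\emph{Deterministic stability.} The first step is a stability property of the variational problem \eqref{eq:Gamma-def}. Since \(\rho\mapsto\Gamma_\rho\) is concave and finite by \Cref{thm:Gamma-global}, the map \(s\mapsto\Gamma_{\rho\e^{s}}\) has a finite right derivative at \(s=0\). Hence there is \(c=c(\rho)>0\) with the following property for all small \(\varepsilon\). If \(\omega\) satisfies \(A(\omega)\ge\Gamma_\rho+\varepsilon\), then \(\omega\notin\cC_{\rho\e^{c\varepsilon}}\), that is, \(J_t(\omega)>t\log\rho+c\varepsilon t\) for some \(t\).

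\emph{Upper bound.} I would discretize the time axis into blocks of length \(b=b_n\). On each block I record the empirical counts of the four generators, and I replace the discrete path by the block-linear path with those counts. Replacing a block by a straight segment changes the unscaled area by \(O(b^2)\). Over \(n/b\) blocks this changes \(Z\) by \(O(nb)\), that is, \(A\) by \(O(b/n)\). The block-linear path has discrete action equal to the sum of \(b\,L(\text{block velocity})\). By the method of types, the probability of a given sequence of block types up to generation \(k\) is at most \(\exp\{-\sum bL\}\). The number of such type sequences is at most \((b+1)^{4n/b}\). Apply the many to one formula at each generation \(k=jb\) to the event that the block path first violates \(J_{k/n}\le (k/n)\log\rho+c\varepsilon k/n\) there. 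By the stability step, this bounds the probability of \(M_n^Z\ge(\Gamma_\rho+\varepsilon)n^2\) by
\[
  n\exp\Bigl\{\tfrac{4n}{b}\log(b+1)-c\,n\varepsilon\,t_{\min}\Bigr\},
\]
after absorbing the coarsening error, which requires \(\varepsilon\gtrsim b/n\). Violation at small \(t\) is excluded because small-\(t\) violations cannot change the area much. Choosing \(b=\sqrt{n\log n}\) and \(\varepsilon=\varepsilon_n\) with \(C_1\) large makes the bound summable. Borel--Cantelli then gives the upper half of \eqref{eq:below-saturation-window}.

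\emph{Lower bound.} Let \(\omega_\rho\) be a maximizer in \(\cC_\rho\) and take \(\omega=r\omega_\rho\) with \(r=1-\varepsilon_n\). By \Cref{lem:compact-density}, \(\omega\) has action margin \(\eta\asymp\varepsilon_n\), velocity margin \(\kappa\ge\varepsilon_n\), and area \(\Gamma_\rho-O(\varepsilon_n)\). I would rerun the tilting argument of \Cref{sec:tilt} for the truncated process, now tracking the \(n\)-dependence. Hoeffding under \(\QQ_n\) keeps the path within \(O(\sqrt{\log n/n})\) of \(q_\omega\), hence the area within \(O(\varepsilon_n)\) by \eqref{eq:area-lipschitz}. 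In \Cref{lem:Wp} one may take \(p-1\asymp\eta\), so that \(\sup\EE W^p\le\exp\{O(1/\eta)\}\). Hölder then gives a success probability \(p_n\ge\exp\{-C_2/\eta\}=\exp\{-C_3\sqrt{n/\log n}\}\). The tilts \(\theta_{i,n}\) stay in a compact set because \(\omega_\rho\) has velocities bounded away from \(\partial D\) after scaling; if \(\omega_\rho\) touches \(\partial D\), the factor \(r\) still keeps the velocities inside \(D\) at distance \(\varepsilon_n\). The truncation level \(K\) depends only on \(\rho\), since \(\eta\) only needs to cover \(\log\rho-\log\rho_K\) up to a fixed fraction of \(\varepsilon_n\); alternatively, \(\log\rho_K\) can be absorbed by taking the margin \(\eta=\varepsilon_n/2\) with \(K=K_n\) growing slowly. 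For amplification, Kesten--Stigum at depth \(r_n=\lceil C_4\sqrt n\rceil\) provides \(\rho^{r_n}/2\) independent roots. This beats \(1/p_n\) by a factor \(\e^{\sqrt n}\), so failure is summable. The splice error from \eqref{eq:splice-central}, namely \(\abs{Z_{v_n}}+\abs{X_{v_n}}m_n\le r_n^2+r_nn\), is \(O(n^{3/2})\) on the unscaled scale. Altogether this gives \(M_n^Z\ge\Gamma_\rho n^2-Cn^{3/2}\sqrt{\log n}\) eventually on \(\cS\).

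\emph{Main obstacle.} I expect the hardest step to be the uniformity of the lower bound. Specifically, the \(L^p\) constant in \Cref{lem:Wp} must be controlled explicitly as \(\eta\downarrow0\), through the Taylor constant \(C\) in \eqref{eq:p-taylor} and the von Bahr--Esseen series \(\sum\e^{-\gamma k}\asymp1/\eta^2\). This control is needed so that \(p_n\) decays no faster than \(\exp\{-O(\sqrt n)\}\), which is what the \(\sqrt n\)-depth amplification can absorb.
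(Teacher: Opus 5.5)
The overall plan matches the paper. For the upper estimate you use a block first-moment bound. For the lower estimate you use a contracted maximizer, a time-inhomogeneous tilt, an $L^p$ bound with $p-1\asymp\varepsilon$, H\"older, Kesten--Stigum amplification and the Heisenberg splice. The lower estimate, however, does not close as written, because its exponent bookkeeping is wrong. Even granting uniform constants, $p-1\asymp\eta$ gives $\gamma\asymp\eta^2$ in \Cref{lem:Wp}. The von Bahr--Esseen series then yields $\EE W^p\lesssim\eta^{-2}$, and H\"older gives only
\[
  \PP(Y>0)\ge(\EE Y)^{p/(p-1)}(\EE Y^p)^{-1/(p-1)}
  \ge\exp\bigl\{-C\eta^{-1}\log(1/\eta)\bigr\}.
\]
For $\eta\asymp\sqrt{\log n/n}$ this is $\exp\{-C\sqrt{n\log n}\}$, not $\exp\{-C\sqrt{n/\log n}\}$. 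The bound $\EE W^p\le\e^{O(1/\eta)}$ that you actually state would give only $\exp\{-O(\eta^{-2})\}$. Since $\rho^{C_4\sqrt n}\exp\{-C\sqrt{n\log n}\}\to0$, amplification at depth $\lceil C_4\sqrt n\rceil$ yields nothing. The depth must be of order $\eta^{-1}\log(1/\eta)\asymp\sqrt{n\log n}$, which is the paper's $g_n$. The splice error $ng_n$ is then still $O(n^{3/2}\sqrt{\log n})$, so the window is reachable, but not with your parameters.

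The uniformity you defer to the ``main obstacle'' also rests on two claims that fail in general.
\begin{itemize}
\item \emph{Tilts.} You have no a priori information that $\dot\omega_\rho$ stays away from $\partial D$. After contraction by $1-\varepsilon_n$ the tilts can be of size $\log(1/\varepsilon_n)$, so the compactness behind \eqref{eq:p-taylor} is lost. What rescues the Taylor step is the bound $\frac{\dd^2}{\dd a^2}\psi(a\theta)\le4\sup_{x\ge0}x^2\e^{-x}$ for $a\ge1$, uniform in $\theta$, as in \eqref{eq:radial-hessian-bound}. For \eqref{eq:Xi-uniform} you also need $R_{i,n}\le4$.
\item \emph{Truncation.} A truncation level depending only on $\rho$ leaves a fixed loss $\log\rho-\log\rho_K>0$ that a margin $\eta\to0$ cannot absorb. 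Under only $\EE[N\log_+N]<\infty$, the estimate $\rho-\rho_K\le\EE[N\log_+N]/\log K$ is all you have, so $\log K$ must be of order $1/\varepsilon$; this is not slow growth. The offspring moment in the von Bahr--Esseen step then stays bounded only because $\EE[(N\wedge K)^p]\le K^{p-1}\rho$ with $(p-1)\log K=O(1)$. This coupling of $p-1\asymp\varepsilon$ with $K_\varepsilon=\lceil\e^{C/\varepsilon}\rceil$ is the dynamic truncation of \Cref{lem:vanishing-margin-tracking}.
\end{itemize}

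Your upper estimate is essentially the paper's block argument. Note that \eqref{eq:front-log-enlargement} already gives your stability step without concavity. Its displayed bound, however, does not close. The global count $(b+1)^{4n/b}=\e^{(2+o(1))\sqrt{n\log n}}$ is beaten by $\e^{-cn\varepsilon_nt}$ only for $t$ bounded below. Meanwhile, straightening the path on $[0,t]$ to remove an earlier violation can change the area by order $t^2$, which is not $O(\varepsilon_n)$ for such $t$. Count instead only the $(b+1)^{4j}$ type sequences up to block $j$ at generation $jb$. The first-moment bound becomes $\exp\{j(4\log(b+1)-cb\varepsilon_n)\}$, which is summable in $j$ and $n$once $C_1$ is large, and no $t_{\min}$ is needed.
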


The proof has two complementary parts.  A block large deviation estimate
simultaneously bounds the action of every lineage and gives the upper bound.
For the lower bound, we contract a maximizing path, track it in many
independent descendant trees, and let the offspring truncation vary with the
contraction parameter.  The next lemma supplies the required one tree
estimate with a vanishing action margin.

\begin{lemma}
\label{lem:vanishing-margin-tracking}
Fix \(1<\rho<4\), and let \(\omega^\star\in\cC_\rho\) satisfy
\(A(\omega^\star)=\Gamma_\rho\).  There are constants
\(\varepsilon_0\in(0,1)\) and
\(C_{\mathrm{tr}},H_0,C_0,C_1\in(0,\infty)\),
depending only on \(\rho\) and the offspring law, with the following
property.  For \(0<\varepsilon\le\varepsilon_0\), define
\begin{equation}\label{eq:dynamic-truncation}
  K_\varepsilon^{\mathrm{tr}}
  :=\left\lceil\e^{C_{\mathrm{tr}}/\varepsilon}\right\rceil,
  \qquad
  N^{(\varepsilon)}:=N\wedge K_\varepsilon^{\mathrm{tr}},
  \qquad
  m_\varepsilon:=\EE N^{(\varepsilon)},
  \qquad
  \omega_\varepsilon:=(1-\varepsilon)\omega^\star.
\end{equation}
At a vertex \(v\), retain its first
\(N_v\wedge K_\varepsilon^{\mathrm{tr}}\) children in Ulam--Harris order;
these retained offspring numbers have law \(N^{(\varepsilon)}\).
If \(h\ge1\) is an integer and \(h\varepsilon^2\ge H_0\), then in this
retained process
\begin{equation}\label{eq:shrinking-tube-probability}
  \PP\left(\exists\abs{u}=h:
    d_{\cX}(Q_u^{(h)},q_{\omega_\varepsilon})\le C_0\varepsilon\right)
  \ge\exp\left\{-C_1\varepsilon^{-1}
                       \log\frac{\e}{\varepsilon}\right\}.
\end{equation}
\end{lemma}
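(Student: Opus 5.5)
The plan is to rerun the proof of \Cref{prop:bounded-success} quantitatively, with every constant tracked in \(\varepsilon\). There are four ingredients: strict admissibility of \(\omega_\varepsilon\), a uniform \(L^p\) bound for the tilted martingale, tilted concentration at scale \(C_0\varepsilon\), and a Paley--Zygmund/H\"older step.

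\emph{Strict admissibility.} By \eqref{eq:quadratic-scaling-L},
\[
  J_t(\omega_\varepsilon)\le(1-\varepsilon)^2t\log\rho\le t(\log\rho-\varepsilon\log\rho),
\]
and \(\abs{\dot x_\varepsilon}+\abs{\dot y_\varepsilon}\le1-\varepsilon\). Thus the margins are \(\eta\asymp\varepsilon\) and \(\kappa=\varepsilon\). Choose \(C_{\mathrm{tr}}\) so that \(\log\rho-\log m_\varepsilon\le\varepsilon\log\rho/2\). This is possible because
\[
  \rho-m_\varepsilon=\EE(N-K)_+\le\EE[N\log_+N]/\log K,
\]
and \(\log K\ge C_{\mathrm{tr}}/\varepsilon\). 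The retained process then has gap \(\eta_\varepsilon=c\varepsilon\) relative to \(\log m_\varepsilon\).

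\emph{Uniform moment bound.} Redo \Cref{lem:Wp}. The tilts satisfy \(v_{i,h}\in(1-\varepsilon)D\), so \(\abs{\theta_{i,h}}\lesssim\log(1/\varepsilon)\). The second derivative of \(p\mapsto\psi(p\theta)\) equals \(\ip{\theta}{\nabla^2\psi(p\theta)\theta}\), which remains bounded by \(C\log^2(1/\varepsilon)\). This gives \eqref{eq:p-taylor} with constant \(C\log^2(e/\varepsilon)\). Choose
\[
  p-1\asymp\frac{\varepsilon}{\log^2(e/\varepsilon)}
\]
to get decay rate \(\gamma\asymp(p-1)\varepsilon\). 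In the von Bahr--Esseen step, \(\EE\abs{\Xi-1}^p\) is at most \(\EE(N^{(\varepsilon)})^p\,\max_\xi a^p\), which is \(O(\e^{C/\varepsilon})\) from the truncation level and \(\max R\le e^{2\abs\theta}\le\varepsilon^{-C}\). Summing the geometric series gives
\[
  \sup_h\EE W_{h,h}^p\le \e^{C/\varepsilon}\,\gamma^{-1}.
\]

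\emph{Tilted concentration.} Repeat \Cref{lem:tilted-concentration} with \(\delta=\varepsilon\). Its failure probability is at most \(4h\e^{-h\varepsilon^2/2}\), and the interpolation error is \(1/h\le\varepsilon\) once \(h\varepsilon^2\ge H_0\). Hence \(\QQ_h(d_\cX\le C_0\varepsilon)\ge1/2\) with \(C_0=6\), say, for \(H_0\) large.

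\emph{Lower bound on the probability.} By H\"older,
\[
  \PP(Y>0)\ge(\EE Y)^{p/(p-1)}(\EE Y^p)^{-1/(p-1)}
  \ge 2^{-p/(p-1)}\exp\Bigl\{-\frac{C/\varepsilon+\log(1/\gamma)}{p-1}\Bigr\}.
\]
With \(p-1\asymp\varepsilon/\log^2\), this produces an exponent of order \(\varepsilon^{-2}\log^2(1/\varepsilon)\), which is too weak. This is the main obstacle.

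To reach \(\varepsilon^{-1}\log(e/\varepsilon)\), the moment bound must be sharpened so that \(\EE W^p\le \e^{C(p-1)\varepsilon^{-1}\log(e/\varepsilon)}\) for a fixed \(p\). I would try three things, in this order:
\begin{itemize}
  \item Use \(L^2\) (\(p=2\)) via a truncated second moment, restricting the weighted mass to lineages that stay in the tube at all intermediate times. Inside the tube, the running sum of \(\psi(2\theta)-2\psi(\theta)-\log m\) is controlled by the action margin only up to \(O(\varepsilon^{-1}\log)\) excursions.
  \item Work with bounded \(p-1\) directly, letting the loss from the Taylor remainder be absorbed into a shorter effective horizon.
\end{itemize}
The idea is to split the horizon into \(O(1/\varepsilon)\) blocks of length \(\asymp\varepsilon h\). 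The per-block success probability is bounded below by a constant to the power \(\log(1/\varepsilon)\), coming from the tilt-size penalty \(\varepsilon^{-C}\). Chaining through the blocks, with one surviving particle per block, gives \(\exp\{-C\varepsilon^{-1}\log(e/\varepsilon)\}\). The remaining work is to make each block estimate uniform, conditioning on the block starting point lying in the \(C_0\varepsilon\) tube and using the Heisenberg splicing identities of \Cref{prop:eventual-strict} to keep the central coordinate within \(O(\varepsilon)\).
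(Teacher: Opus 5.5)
Your skeleton (action margin \(\asymp\varepsilon\) from \((1-\varepsilon)^2\le1-\varepsilon\) and \(\log K_\varepsilon^{\mathrm{tr}}\ge C_{\mathrm{tr}}/\varepsilon\), tilted concentration, H\"older) is the same as the paper's. As it stands, however, the proposal does not prove \eqref{eq:shrinking-tube-probability}. By your own count the main line gives only \(\exp\{-C\varepsilon^{-2}\log^2(\e/\varepsilon)\}\), and the remedies you list are not carried out. The block chaining, for instance, leaves open both the per-block estimate and how the splicing errors of \(1/\varepsilon\) blocks are kept from accumulating.

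None of those remedies is needed. The loss comes from two estimates that are too crude.

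\emph{The Taylor remainder.} The remainder in \eqref{eq:p-taylor} is \(O((p-1)^2)\) uniformly over all \(\theta\in\RR^2\), not with a constant \(\log^2(1/\varepsilon)\). Indeed, \(\frac{\dd^2}{\dd a^2}\psi(a\theta)\) is the variance of \(\ip{\theta}{\xi}\) under the tilt \(a\theta\). With \(d_s:=\max_{r\in S}\ip{\theta}{r}-\ip{\theta}{s}\ge0\), for \(a\ge1\) it is at most \(\sum_sd_s^2\e^{-ad_s}/\sum_s\e^{-ad_s}\le4\sup_{x\ge0}x^2\e^{-x}\). The denominator is at least one because a maximizing step has \(d_s=0\). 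Your bound \(\abs{\theta}^2\sup\norm{\nabla^2\psi}\) ignores that a large tilt concentrates the law on the maximizing steps. The uniform bound allows \(p-1=c_2\varepsilon\), and the moment factors of \eqref{eq:moment-factor} then satisfy \(\prod_{i\le k}m_{i,h}(p)\le\e^{-c_3\varepsilon^2k}\).

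\emph{The offspring factor.} This factor is \(O(1)\), not \(\e^{C/\varepsilon}\). Since \(\mu\) is uniform on four points, \(R_{i,h}(s)=4\nu_{i,h}(s)\le4\) however large \(\theta_{i,h}\) is. Moreover, \(\EE[(N\wedge K_\varepsilon^{\mathrm{tr}})^p]\le(K_\varepsilon^{\mathrm{tr}})^{p-1}\EE N\) stays bounded because \((p-1)\log K_\varepsilon^{\mathrm{tr}}\le c_2C_{\mathrm{tr}}+o(1)\). This matching is exactly why the truncation level is \(\e^{C_{\mathrm{tr}}/\varepsilon}\).

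\emph{Conclusion of the moment bound.} Keep the constants in the conditional and martingale \(L^p\) inequalities uniform for \(p\in(1,2]\); the paper uses \(\abs{x+y}^p\le\abs{x}^p+p\abs{x}^{p-2}xy+C\abs{y}^p\) for this. You then get \(\EE W_{h,h}^p\le C(1+\varepsilon^{-2})\). H\"older gives \(\PP(Y>0)\ge2^{-p/(p-1)}\{C(1+\varepsilon^{-2})\}^{-1/(p-1)}\ge\exp\{-C_1\varepsilon^{-1}\log(\e/\varepsilon)\}\). With the offspring fix alone, keeping your \(\log^2\) Taylor constant, you would still be stuck at \(\exp\{-C\varepsilon^{-1}\log^3(\e/\varepsilon)\}\).

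\emph{The concentration step.} This step also contains an error. The union bound behind \eqref{eq:tilted-maximal-bound} costs a factor \(h\), and \(4h\e^{-h\varepsilon^2/2}\) is not small uniformly under \(h\varepsilon^2\ge H_0\) with \(H_0\) fixed. At \(h=H_0\varepsilon^{-2}\) it equals \(4H_0\varepsilon^{-2}\e^{-H_0/2}\), which diverges as \(\varepsilon\downarrow0\); enlarging \(C_0\) does not help. Use instead the maximal inequality for the exponential supermartingales \(\exp\{\pm\lambda M_k^{(\ell)}-\lambda^2k/2\}\). It gives \(\QQ_h(\max_{k\le h}\norm{M_k}_\infty>r)\le4\e^{-r^2/(2h)}\) with no factor \(h\), and at \(r=\varepsilon h\) this is at most \(4\e^{-H_0/2}\le1/2\).
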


\begin{proof}
Throughout this proof, \(C\) denotes a finite constant depending only on
\(\rho\) and the offspring law and may change from line to line.

\emph{Truncation and the action gap.}
Put \(H_N:=\EE[N\log_+N]\) and \(m_*:=(1+\rho)/2\), and choose
\(C_{\mathrm{tr}}\) so that
\(H_N/(C_{\mathrm{tr}}m_*)\le(\log\rho)/4\).  After decreasing
\(\varepsilon_0\), one has \(m_\varepsilon\ge m_*>1\) and
\begin{align}
  0\le\log\rho-\log m_\varepsilon
  &\le\frac{\rho-m_\varepsilon}{m_*}
   \le\frac{H_N\varepsilon}{C_{\mathrm{tr}}m_*}
   \le\frac14\varepsilon\log\rho.
  \label{eq:dynamic-mean-loss}
\end{align}
Indeed,
\[
  \rho-m_\varepsilon
  =\EE[(N-K_\varepsilon^{\mathrm{tr}})_+]
  \le\EE[N;N>K_\varepsilon^{\mathrm{tr}}]
  \le\frac{H_N}{\log K_\varepsilon^{\mathrm{tr}}}.
\]
By \eqref{eq:quadratic-scaling-L}, for every \(t\in[0,1]\),
\begin{align}
  J_t(\omega_\varepsilon)
  &\le(1-\varepsilon)^2t\log\rho
   \le(1-\varepsilon)t\log\rho\notag\\
  &\le t\left(\log m_\varepsilon
             -\frac12\varepsilon\log\rho\right).
  \label{eq:shrinking-action-margin}
\end{align}
Moreover,
\(\norm{\dot\omega_\varepsilon(t)}_1\le1-\varepsilon\) almost everywhere
and \(A(\omega_\varepsilon)=(1-\varepsilon)^2\Gamma_\rho\).

\emph{Uniform moment bound.}
For \(1\le i\le h\), set
\[
  v_{i,h}:=h\{\omega_\varepsilon(i/h)
             -\omega_\varepsilon((i-1)/h)\}.
\]
Jensen's inequality and \eqref{eq:shrinking-action-margin} give, for every
\(k\le h\),
\begin{equation}\label{eq:dynamic-discrete-action}
  \sum_{i=1}^kL(v_{i,h})
  \le k\left(\log m_\varepsilon
             -\frac12\varepsilon\log\rho\right),
  \qquad
  \norm{v_{i,h}}_1\le1-\varepsilon.
\end{equation}
Define \(\theta_{i,h}\), \(a_{i,h}\), \(H_{u,k}^{(h)}\), and
\(W_{k,h}\) by \eqref{eq:theta-in}, \eqref{eq:weights}, and
\eqref{eq:H-W}, with \(n=h\), \(m=m_\varepsilon\), and target path
\(\omega_\varepsilon\).  The martingale calculation following
\eqref{eq:H-W} applies to the retained process, so
\((W_{k,h})_{k\le h}\) is a nonnegative mean one martingale.

For \(p=1+q\), its one generation moment factor satisfies
\[
\begin{aligned}
  \log b_{i,h}(p)
  &:=
  \log\left(m_\varepsilon\int_Sa_{i,h}(s)^p\dd\mu(s)\right)\\
  &=-q\log m_\varepsilon+\psi((1+q)\theta_{i,h})
       -(1+q)\psi(\theta_{i,h})\\
  &\le q\{L(v_{i,h})-\log m_\varepsilon\}+Cq^2.
\end{aligned}
\]
The last constant is uniform even though the tilts need not remain in a
fixed compact set.  To see this, put
\(d_s:=\max_{r\in S}\ip{\theta}{r}-\ip{\theta}{s}\).  For \(a\ge1\),
\begin{equation}\label{eq:radial-hessian-bound}
  \frac{\dd^2}{\dd a^2}\psi(a\theta)
  =\Var_{a\theta}(d_\xi)
  \le\frac{\sum_{s\in S}d_s^2\e^{-ad_s}}
           {\sum_{s\in S}\e^{-ad_s}}
  \le4\sup_{x\ge0}x^2\e^{-x}.
\end{equation}
Here the denominator is at least one because \(d_s=0\) for a maximizing
step.  Choose
\(\widehat p_\varepsilon:=1+c_2\varepsilon\), with \(c_2>0\) sufficiently
small, and decrease \(\varepsilon_0\) so that
\(\widehat p_\varepsilon\le2\).
Then \eqref{eq:dynamic-discrete-action} gives, for some \(c_3>0\),
\begin{equation}\label{eq:dynamic-moment-decay}
  \prod_{i=1}^kb_{i,h}(\widehat p_\varepsilon)
  \le\e^{-c_3\varepsilon^2k},
  \qquad k\le h.
\end{equation}

Since \(\mu\) is uniform on \(S\), the tilted mass at \(s\) is
\(R_{i,h}(s)/4\), and hence \(R_{i,h}(s)\le4\).  Moreover,
\(\EE[(N^{(\varepsilon)})^{\widehat p_\varepsilon}]
\le (K_\varepsilon^{\mathrm{tr}})^{\widehat p_\varepsilon-1}\EE N
\le C\), because
\((\widehat p_\varepsilon-1)\log K_\varepsilon^{\mathrm{tr}}
\le c_2C_{\mathrm{tr}}+o(1)\).
Thus the offspring sums \(\Xi_{v,k,h}\) in \eqref{eq:mart-diff} satisfy
\[
  \sup_{\substack{0<\varepsilon\le\varepsilon_0,\ h\ge1\\
                  1\le k\le h,\ \abs{v}=k-1}}
  \EE\abs{\Xi_{v,k,h}-1}^{\widehat p_\varepsilon}<\infty.
\]
For \(1<p\le2\), the elementary inequality
\[
  \abs{x+y}^p
  \le\abs{x}^p+p\abs{x}^{p-2}xy+C\abs{y}^p
\]
holds with a universal constant; the middle term is interpreted as zero at
\(x=0\).  Taylor's formula covers \(\abs y\le\abs x/2\), and the
complementary case follows by direct comparison.  Conditional iteration
over the centered summands in \eqref{eq:mart-diff}, as in the proof of
\Cref{lem:Wp}, now gives
\begin{equation}\label{eq:dynamic-Wp}
\begin{aligned}
  \EE\abs{W_{k,h}-W_{k-1,h}}^{\widehat p_\varepsilon}
  &\le C\prod_{i=1}^{k-1}b_{i,h}(\widehat p_\varepsilon)
    \le C\e^{-c_3\varepsilon^2(k-1)},\\
  \EE\abs{W_{h,h}-1}^{\widehat p_\varepsilon}
  &\le C\sum_{k=1}^h\prod_{i=1}^{k-1}b_{i,h}(\widehat p_\varepsilon)
    \le C(1+\varepsilon^{-2}),\\
  \EE W_{h,h}^{\widehat p_\varepsilon}
  &\le C(1+\varepsilon^{-2}).
\end{aligned}
\end{equation}
The linear terms vanish under conditional expectation.  The constants in
the iteration remain uniform as \(\widehat p_\varepsilon\downarrow1\),
while the geometric series is bounded by
\(C(1+\varepsilon^{-2})\).

\emph{Tilted concentration and return to the branching law.}
Under the product tilted law \(\QQ_{h,\varepsilon}\), the increments are
independent with means \(v_{i,h}\).  For
\(M_k:=\sum_{i=1}^k(\xi_i-v_{i,h})\), the exponential supermartingale
inequality, applied to both signs and both coordinates, gives
\begin{equation}\label{eq:dynamic-Azuma}
  \QQ_{h,\varepsilon}\left(
    \max_{k\le h}\norm{M_k}_\infty>r\right)
  \le4\e^{-r^2/(2h)}.
\end{equation}
The interpolation calculation in \eqref{eq:horizontal-uniform-error}, with
\(n=h\) and target \(\omega_\varepsilon\), together with
\Cref{lem:exact-area} and \eqref{eq:area-lipschitz}, yields
\[
  d_{\cX}(Q_h,q_{\omega_\varepsilon})
  \le3\left\{
    \frac1h\max_{k\le h}\norm{M_k}_\infty+\frac1h
  \right\}.
\]
Take \(r=\varepsilon h\).  Since \(h\varepsilon^2\ge H_0\), increasing
\(H_0\) and then fixing \(C_0\) gives
\[
  \QQ_{h,\varepsilon}\left(
  d_{\cX}(Q_h,q_{\omega_\varepsilon})\le C_0\varepsilon\right)
  \ge\frac12.
\]

Let \(Y_{h,\varepsilon}\) be the restricted weighted mass defined as in
\eqref{eq:Y-def}, with horizon \(h\) and the event in the preceding
display.  The weighted many to one identity, the bound
\(0\le Y_{h,\varepsilon}\le W_{h,h}\), and \eqref{eq:dynamic-Wp} imply
\[
  \EE Y_{h,\varepsilon}\ge\frac12,
  \qquad
  \EE Y_{h,\varepsilon}^{\widehat p_\varepsilon}
  \le C(1+\varepsilon^{-2}).
\]
H\"older's inequality therefore gives
\[
  \PP(Y_{h,\varepsilon}>0)
  \ge
  \frac{(\EE Y_{h,\varepsilon})^{
    \widehat p_\varepsilon/(\widehat p_\varepsilon-1)}}
       {(\EE Y_{h,\varepsilon}^{\widehat p_\varepsilon})^{
         1/(\widehat p_\varepsilon-1)}}
  \ge\exp\left\{-C_1\varepsilon^{-1}
                        \log\frac{\e}{\varepsilon}\right\}.
\]
All weights are positive, so this is
\eqref{eq:shrinking-tube-probability}.
\end{proof}

\begin{proof}[Proof of \Cref{thm:below-saturation-window}]
Fix \(1<\rho<4\).
Throughout this proof, finite deterministic constants denoted by \(C\) may
change from line to line and depend only on \(\rho\) and the offspring law.

\emph{Upper estimate: simultaneous block control.}
For \(n\ge3\), put
\[
  \ell_n:=\left\lceil\sqrt{n\log n}\right\rceil,\qquad
  k_n^{\mathrm{blk}}:=\left\lfloor\frac n{\ell_n}\right\rfloor,\qquad
  \Delta_n:=\frac{\ell_n}{n}.
\]
For a particle \(u\) and \(i\ell_n\le\abs u\), let \(s_i(u)\) be its
horizontal displacement between generations \((i-1)\ell_n\) and
\(i\ell_n\), and set \(v_i(u):=s_i(u)/\ell_n\in D\).  Exponential Markov
inequality and \eqref{eq:L-def} give
\[
  \PP\left(\sum_{k=1}^{\ell_n}\xi_k=s\right)
  \le\e^{-\ell_nL(s/\ell_n)},\qquad s\in\ZZ^2,
\]
and each block has at most \((2\ell_n+1)^2\le(3\ell_n)^2\) possible
displacements.

For \(1\le j\le k_n^{\mathrm{blk}}\), let \(U_{j,n}\) count the particles in generation
\(j\ell_n\) satisfying
\[
  \ell_n\sum_{i=1}^jL(v_i(u))
  >j\ell_n\log\rho+4j\log(3\ell_n)+4\log n.
\]
There are at most \((3\ell_n)^{2j}\) block displacement sequences.  Applying
the many to one identity \cite{Shi2015} to each sequence gives
\[
\begin{aligned}
  \EE U_{j,n}
  &\le \rho^{j\ell_n}(3\ell_n)^{2j}
    \exp\{-j\ell_n\log\rho-4j\log(3\ell_n)-4\log n\}\\
  &=n^{-4}(3\ell_n)^{-2j}.
\end{aligned}
\]
Consequently,
\[
  \sum_{n\ge3}\PP\left(
    \sum_{j=1}^{k_n^{\mathrm{blk}}}U_{j,n}\ge1\right)
  \le\sum_{n\ge3}n^{-4}
    \sum_{j=1}^{k_n^{\mathrm{blk}}}(3\ell_n)^{-2j}
  <\infty.
\]
Define
\(\alpha_n:=\{4\log(3\ell_n)+4\log n\}/\ell_n\).
Borel--Cantelli therefore gives a single almost sure event on which, for all
sufficiently large \(n\), every generation \(n\) particle \(u\), and every
\(j\le k_n^{\mathrm{blk}}\),
\begin{equation}\label{eq:block-action-window}
  \ell_n\sum_{i=1}^jL(v_i(u))
  \le j\ell_n(\log\rho+\alpha_n).
\end{equation}
Here the left hand side is evaluated along the ancestor \(u_{j\ell_n}\).

\emph{Conversion from block action to central area.}
Let \(\overline\omega_{u,n}\) join the values of \(\omega_u^{(n)}\) at the
times \(i\Delta_n\), \(0\le i\le k_n^{\mathrm{blk}}\), by linear segments and remain
constant afterwards.  Choose an integer \(r_0\) with
\(r_0\log\rho>\log4\).  We delay this block path by \(r_0\) intervals and
discard its last \(r_0\) blocks, so that the initial action slack absorbs
the possible cost \(\log4\) of a partial block.  For all large \(n\),
\(k_n^{\mathrm{blk}}>r_0\), and we define
\[
  \kappa_n(t):=\min\{(t-r_0\Delta_n)_+,
                     (k_n^{\mathrm{blk}}-r_0)\Delta_n\},\qquad
  \widetilde\omega_{u,n}(t):=\overline\omega_{u,n}(\kappa_n(t)).
\]
If
\(t=(r_0+j-1+\sigma)\Delta_n\), where
\(1\le j\le k_n^{\mathrm{blk}}-r_0\) and \(0\le\sigma\le1\), then
\eqref{eq:block-action-window} and \eqref{eq:L-range} give
\[
\begin{aligned}
  J_t(\widetilde\omega_{u,n})
  &\le\Delta_n\{(j-1)(\log\rho+\alpha_n)+\sigma\log4\},\\
  t(\log\rho+\alpha_n)-J_t(\widetilde\omega_{u,n})
  &\ge\Delta_n\{
     r_0(\log\rho+\alpha_n)
     +\sigma(\log\rho+\alpha_n-\log4)\}\\
  &\ge\Delta_n(r_0\log\rho-\log4)>0.
\end{aligned}
\]
The constant parts satisfy the same action inequality, so
\(\widetilde\omega_{u,n}\in\cC_{\rho\e^{\alpha_n}}\).

On a complete block, both the original path and its linear segment remain
within \(\Delta_n\) of their common initial point, and the same estimate
applies to the final incomplete block.  Since \(\overline\omega_{u,n}\) is
Lipschitz with constant one in the maximum norm,
\[
  \norm{\omega_u^{(n)}-\overline\omega_{u,n}}_\infty\le2\Delta_n,
  \qquad
  \norm{\overline\omega_{u,n}-\widetilde\omega_{u,n}}_\infty
  \le r_0\Delta_n.
\]
Thus
\(\norm{\omega_u^{(n)}-\widetilde\omega_{u,n}}_\infty
\le(r_0+2)\Delta_n\).  Both paths belong to \(\cV\).  Hence
\eqref{eq:exact-area-identity} and \eqref{eq:area-lipschitz} imply
\begin{equation}\label{eq:upper-area-window}
  \frac{Z_u}{n^2}
  \le\Gamma_{\rho\e^{\alpha_n}}+3(r_0+2)\Delta_n.
\end{equation}
To remove the small enlargement of the offspring mean, for \(\alpha\ge0\)
set
\(r_\alpha:=(\log\rho/(\log\rho+\alpha))^{1/2}\).  If
\(\omega\in\cC_{\rho\e^\alpha}\), then
\eqref{eq:quadratic-scaling-L} shows that \(r_\alpha\omega\in\cC_\rho\).
Since area is quadratic,
\begin{equation}\label{eq:front-log-enlargement}
  \Gamma_{\rho\e^\alpha}
  \le\left(1+\frac{\alpha}{\log\rho}\right)\Gamma_\rho.
\end{equation}
Since
\[
  n^2\alpha_n=O(n^{3/2}\sqrt{\log n}),
  \qquad
  n^2\Delta_n=O(n^{3/2}\sqrt{\log n}),
\]
\eqref{eq:upper-area-window} gives, almost surely for all sufficiently large
\(n\),
\begin{equation}\label{eq:below-saturation-upper}
  M_n^Z\le\Gamma_\rho n^2+C_\rho n^{3/2}\sqrt{\log n}.
\end{equation}

\emph{Lower estimate: amplification of the one tree bound.}
Choose \(\omega^\star\in\cC_\rho\) with
\(A(\omega^\star)=\Gamma_\rho\).  Such a maximizer exists because
\(\cQ_\rho\) is compact and endpoint evaluation is continuous.  Apply
\Cref{lem:vanishing-margin-tracking} to this path, and use the truncation
and constants supplied there.
Set
\[
  \varepsilon_n:=\sqrt{\frac{\log n}{n}},\qquad
  g_n:=\left\lceil C_{\mathrm{amp}}\varepsilon_n^{-1}
                    \log\frac{\e}{\varepsilon_n}\right\rceil,\qquad
  h_n:=n-g_n,
\]
where \(C_{\mathrm{amp}}>2C_1/\log\rho\).  Then
\(g_n=O(n\varepsilon_n)=o(n)\) and
\(h_n\varepsilon_n^2\to\infty\).  In particular, for all sufficiently
large \(n\), \(\varepsilon_n\le\varepsilon_0\) and
\(h_n\varepsilon_n^2\ge H_0\).  By \eqref{eq:KS}, almost surely on
\(\cS\),
\[
  \mathcal N_{g_n}\ge\e^{g_n\log\rho/2}
\]
for all large \(n\).  Put
\(\mathsf M_n:=\lfloor\e^{g_n\log\rho/2}\rfloor\).  On the event
\(\{\mathcal N_{g_n}\ge\mathsf M_n\}\), select the first
\(\mathsf M_n\) particles in Ulam--Harris order and apply the truncation
\(N^{(\varepsilon_n)}\) independently in their descendant trees.  Each
selected tree has success probability at least
\[
  q_n:=\exp\left\{-C_1\varepsilon_n^{-1}
                        \log\frac{\e}{\varepsilon_n}\right\}.
\]
The choice of \(C_{\mathrm{amp}}\) is precisely what makes the number of
trees dominate the inverse success probability:
\[
  \log(\mathsf M_nq_n)
  \ge\left(\frac{C_{\mathrm{amp}}\log\rho}{2}-C_1\right)
      \varepsilon_n^{-1}\log\frac{\e}{\varepsilon_n}+O(1).
\]
Thus, for some \(c>0\),
\[
  \PP(\text{all selected trees fail}\mid\cF_{g_n})
  \le\e^{-\mathsf M_nq_n}
  \le\exp\left\{-\exp\left(
    c\varepsilon_n^{-1}\log\frac{\e}{\varepsilon_n}\right)\right\}
\]
on \(\{\mathcal N_{g_n}\ge\mathsf M_n\}\).  The last expression is
summable.  Multiplication by the indicator of this event and integration
give the same bound for the joint failure event.  Borel--Cantelli shows that,
almost surely on \(\cS\), one selected tree succeeds for every sufficiently
large \(n\).

Let \(v_n\) be the first root in Ulam--Harris order whose selected tree
succeeds, and let \(u_n\) be the first corresponding generation \(n\)
descendant.  Write
\[
  V(v_n)^{-1}V(u_n)
  =(\widetilde X_n,\widetilde Y_n,\widetilde Z_n).
\]
The tube estimate at time one gives
\[
  \widetilde Z_n
  \ge h_n^2\{(1-\varepsilon_n)^2\Gamma_\rho-C_0\varepsilon_n\}.
\]
The Heisenberg group law gives the noncommutative splice
\[
  Z_{u_n}=Z_{v_n}+\widetilde Z_n+X_{v_n}\widetilde Y_n.
\]
Since
\(\abs{X_{v_n}}\le g_n\), \(\abs{Z_{v_n}}\le g_n^2\), and
\(\abs{\widetilde Y_n}\le h_n\),
\[
\begin{aligned}
  M_n^Z
  &\ge h_n^2\{(1-\varepsilon_n)^2\Gamma_\rho-C_0\varepsilon_n\}
       -g_nh_n-g_n^2\\
  &\ge\Gamma_\rho n^2
       -C\{n^2\varepsilon_n+ng_n+g_n^2\}.
\end{aligned}
\]
Finally,
\[
  n^2\varepsilon_n=n^{3/2}\sqrt{\log n},\qquad
  ng_n=O(n^{3/2}\sqrt{\log n}),\qquad
  g_n^2=O(n\log n).
\]
Therefore, almost surely on \(\cS\), for all sufficiently large \(n\),
\begin{equation}\label{eq:below-saturation-lower}
  M_n^Z\ge\Gamma_\rho n^2-Cn^{3/2}\sqrt{\log n}.
\end{equation}
Combining \eqref{eq:below-saturation-upper} and
\eqref{eq:below-saturation-lower} proves
\eqref{eq:below-saturation-window}.
\end{proof}

\subsection{Finite generation corrections in the saturated regimes}
\label{sec:saturated-corrections}

We now return to the random particle cloud and prove the two parts of
\Cref{thm:saturated-corrections-main}.  In these regimes
\(\Gamma_\rho=1/4\).  The exact deterministic maximum
\(\lfloor n^2/4\rfloor\) differs from \(\Gamma_\rho n^2\) by less than one,
so both reference values give the same limits in
\Cref{thm:saturated-corrections-main}.  Above saturation the correction is
controlled by the first lineage that can continue forever in an extremal
horizontal direction.  At the threshold it is governed by a critical
minimal displacement theorem.

The next lemma separates the endpoint already reached from the largest area
that can be accumulated during the remaining steps.

\begin{lemma}\label{lem:future-area}
Suppose a partial word has first horizontal coordinate \(x\), and
\(k\) steps remain.  If \(\abs{x}\le k\), the largest additional central
coordinate that a deterministic continuation can create is
\begin{equation}\label{eq:future-area}
  F_k(x)=\left\lfloor\frac{(k+\abs{x})^2}{4}\right\rfloor.
\end{equation}
\end{lemma}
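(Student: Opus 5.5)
We reduce \Cref{lem:future-area} to a finite combinatorial optimization over the remaining \(k\) letters. Write the continuation as a word in \(\R,\Lft,\U,\Dwn\). By \eqref{eq:coordinate-recursion}, the additional central coordinate is \(\sum_{j=1}^k x_{j-1}\Delta y_j\), where \(x_0=x\) and \(x_j\) is the first coordinate after \(j\) further steps. Only vertical steps contribute. Each such step contributes \(\pm x_{j-1}\), and the sign is ours to choose. The additional area is therefore at most \(\sum_{j\in V}\abs{x_{j-1}}\), where \(V\) is the set of vertical step times. Conversely, this value is attained by choosing \(\U\) when \(x_{j-1}\ge0\) and \(\Dwn\) otherwise. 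So the problem becomes: maximize \(\sum_{j\in V}\abs{x_{j-1}}\) over sequences in which each non-vertical step moves \(x\) by \(\pm1\) and each vertical step leaves \(x\) unchanged.

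By the reflection \(x\mapsto -x\) (swapping \(\R,\Lft\)), we may assume \(x\ge0\). Suppose \(h\) horizontal steps and \(k-h\) vertical steps are used. Then \(\abs{x_{j-1}}\le x+(\text{number of horizontal steps before }j)\le x+h\) at every vertical time. Moreover, the bound is improved by putting all horizontal steps first and all of them as \(\R\): an exchange argument shows that moving a horizontal step earlier, or turning an \(\Lft\) into an \(\R\), never decreases any \(\abs{x_{j-1}}\) at a vertical time. Hence the maximum for fixed \(h\) is \((x+h)(k-h)\). Maximizing over integers \(0\le h\le k\) gives \(\max_h (x+h)(k-h)\). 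The real maximizer is \(h=(k-x)/2\), which lies in \([0,k]\) because \(\abs{x}\le k\). At the nearest integer, the integer maximum equals \(\lfloor (k+x)^2/4\rfloor\). To see this, set \(a=x+h\), \(b=k-h\) with \(a+b=k+x\) fixed. Then \(ab\) is maximized at \(\{a,b\}=\{\lfloor (k+x)/2\rfloor,\lceil (k+x)/2\rceil\}\), and the product equals \(\lfloor (k+x)^2/4\rfloor\). Feasibility requires \(h=a-x\ge0\) and \(b\ge0\), which follow from \(0\le x\le k\).

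The main obstacle is making the exchange step rigorous, namely showing that the rearranged word dominates every word with the same \(h\). I would handle it directly with the pointwise bound \(\abs{x_{j-1}}\le x+h_j\), where \(h_j\le h\) counts the horizontal steps preceding \(j\). Summing over the vertical times gives at most \((x+h)(k-h)\). This avoids explicit swapping entirely. Attainment is by the explicit word \(\R^{h}\) followed by \(\U^{k-h}\), which yields exactly \((x+h)(k-h)\) since \(x_{j-1}=x+h\ge0\) throughout the vertical run.
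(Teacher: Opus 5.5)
Your proposal is correct and follows the paper's proof: with \(h\) horizontal steps you bound the first coordinate at each of the \(k-h\) vertical steps by \(\abs{x}+h\), you attain this with all horizontal steps first toward the sign of \(x\) followed by matching vertical steps, and you maximize \((\abs{x}+h)(k-h)\) over integers \(0\le h\le k\). Your argument via \(a+b=k+\abs{x}\) also correctly justifies the final integer-maximization identity, which the paper states without detail.
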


\begin{proof}
If the continuation uses \(h\) horizontal steps, it uses \(k-h\) vertical
steps, during which the absolute first coordinate is at most
\(\abs{x}+h\).  Conversely, this bound is attained by taking all horizontal
steps first in the direction of \(x\), followed by vertical steps with the
matching sign.  Therefore
\[
  F_k(x)
  =\max_{\substack{0\le h\le k\\ h\in\ZZ}}
    (\abs{x}+h)(k-h)
  =\left\lfloor\frac{(k+\abs{x})^2}{4}\right\rfloor.
\]
\end{proof}

Taking \(x=0\) and \(k=n\) in \Cref{lem:future-area} shows that the largest
central coordinate of a deterministic word of length \(n\) is
\(\lfloor n^2/4\rfloor\).  Reflecting every vertical increment changes the
sign of the central coordinate without changing the word length.  Thus, in
the saturated regimes, every such word satisfies
\[
  \abs{Z}\le\left\lfloor\frac{n^2}{4}\right\rfloor
  \le\Gamma_\rho n^2.
\]

\subsubsection{Above saturation}

When \(\rho>4\), the retained process in either extremal horizontal
direction is supercritical.  The corresponding minimal defects eventually
stabilize.

For \(m\ge0\), with the minimum of an empty set understood as \(+\infty\),
define the minimal horizontal defects at generation \(m\) by
\begin{equation}\label{eq:Dm-directional}
  \mathfrak d_m^+:=\min_{\abs{u}=m}(m-X_u),
  \qquad
  \mathfrak d_m^-:=\min_{\abs{u}=m}(m+X_u),
  \qquad
  \mathfrak d_m:=\mathfrak d_m^+\wedge\mathfrak d_m^-.
\end{equation}
On \(\cS\), these quantities are finite.  Each of \(\mathfrak d_m^+\) and \(\mathfrak d_m^-\)
is nondecreasing because every edge adds a nonnegative defect in the
corresponding direction.

\begin{proposition}[Eventual stabilization of horizontal defects]
\label{prop:oriented-defect}
If \(\rho>4\), then on \(\cS\), almost surely,
\[
  \mathfrak d_m^+=\mathfrak d^+,\qquad
  \mathfrak d_m^-=\mathfrak d^-
  \qquad\text{for all sufficiently large }m.
\]
\end{proposition}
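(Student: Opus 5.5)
By the reflection \(x\mapsto-x\) it suffices to treat the right defect. Write \(\mathfrak d_m:=\mathfrak d_m^+\) and \(\mathfrak d:=\mathfrak d^+\) below. The plan is to show \(\mathfrak d_m\le\mathfrak d\) for all \(m\), that \(\mathfrak d<\infty\) almost surely on \(\cS\), and that eventually \(\mathfrak d_m\ge\mathfrak d\). Monotonicity of \(\mathfrak d_m\) then forces equality for all large \(m\).

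\emph{Upper bound.} Let \(u\) attain \(\mathfrak d\), so \(\mathbb T_u^{\R}\) is infinite. For every \(m\ge\abs u\) there is a descendant \(w\) of \(u\) in generation \(m\) reached by \(\R\)-edges only. Then \(m-X_w=\abs u-X_u=\mathfrak d\), since each \(\R\) edge adds one to both the generation and \(X\). Hence \(\mathfrak d_m\le\mathfrak d\) for \(m\ge\abs u\), and monotonicity extends this to all \(m\).

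\emph{Finiteness.} For each vertex, the retained \(\R\)-subtree is a Galton--Watson tree with offspring law Binomial\((N,1/4)\) and mean \(\rho/4>1\), so it survives with probability \(p>0\). On \(\cS\), Kesten--Stigum \eqref{eq:KS} supplies infinitely many disjoint generations with many vertices. Taking well separated generations, each containing a vertex whose \(\R\)-subtrees in later generations are independent, a standard Borel--Cantelli or conditional independence argument shows that almost surely some vertex has infinite \(\mathbb T^{\R}\). Thus \(\mathfrak d<\infty\) on \(\cS\).

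\emph{Lower bound, the main step.} Since \(\mathfrak d_m\) is nondecreasing, integer valued and bounded by \(\mathfrak d\), it stabilizes at some value \(d_\infty\le\mathfrak d\). We must show \(d_\infty\ge\mathfrak d\). Suppose \(d_\infty=d<\mathfrak d\). Then for every large \(m\) there is \(w\) in generation \(m\) with \(m-X_w=d\).

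Along the ancestral line of \(w\), the defect \(k-X_{w_k}\) is nondecreasing and equals \(d\) at the end. So there is a last ancestor \(w_j\) at which the defect jumps to \(d\), and from \(w_j\) onward all edges are \(\R\). Every such \(w_j\) has defect exactly \(d\) and a finite \(\R\)-subtree (because \(d<\mathfrak d\)).

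Consider the set \(E_d\) of vertices \(v\) with defect \(d\) whose incoming edge is not \(\R\), or that are the root when \(d=0\). This set is finite almost surely: \(\mathbb E\#E_d<\infty\) by the many-to-one formula. Indeed, a vertex of generation \(k\) with defect \(d\) has at most \(d\) non-\(\R\) steps, so the count is at most \(\rho^k\binom{k}{\le d}4^{-k}\cdot 3^{d}\), which is summable because \(\rho<\infty\).

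If \(E_d\) is finite and every vertex of \(E_d\) has a finite \(\R\)-subtree, then those subtrees have bounded height. Hence no vertex of defect \(d\) exists in large generations, which contradicts stabilization at \(d\).

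The obstacle is the finiteness of \(E_d\). For \(\rho>4\) the first-moment sum diverges, since \(\rho^k4^{-k}\to\infty\). The plan is therefore to replace it by an almost sure statement. The number of vertices of defect exactly \(d\) in generation \(k\), say \(Z_k^{(d)}\), satisfies \(Z_k^{(d)}>0\) for all large \(k\) (by the contradiction hypothesis). Every such vertex descends through an \(\R\)-only path from some entry vertex in \(E_d\).

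If infinitely many entry vertices exist, each has an independent \(\R\)-subtree that survives with probability \(p\). By the strong Markov property, applied along the stopping sequence of new entries, almost surely one of them survives. That yields a vertex of defect \(d\) with infinite \(\mathbb T^{\R}\), contradicting \(d<\mathfrak d\). Thus \(E_d\) is finite almost surely on this event, which closes the argument.

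Applying the same argument with \(\Lft\) in place of \(\R\) gives stabilization of \(\mathfrak d_m^-\) at \(\mathfrak d^-\), and hence of \(\mathfrak d_m\) at \(\mathfrak d\).
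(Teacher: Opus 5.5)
Your proof is correct, but your lower bound takes a more probabilistic route than the paper's. Your upper bound matches the paper's. For finiteness of \(\mathfrak d^+\), the paper uses a single generation and needs no separated generations or Borel--Cantelli: conditionally on \(\cF_m\), the \(\mathcal N_m\) retained \(\R\)-subtrees are independent, so
\(\PP(\cS\cap\{\mathfrak d^+=\infty\})\le\EE[\one_{\{\mathcal N_m>0\}}(1-p_{\R})^{\mathcal N_m}]\to0\).

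For the identification of the limit, the paper argues deterministically. Fix a stabilization time \(m_0\) with \(\mathfrak d^+_m=d\) for \(m\ge m_0\). Any generation \(m\ge m_0\) vertex of defect \(d\) has its generation \(m_0\) ancestor's defect squeezed between \(\mathfrak d^+_{m_0}=d\) and \(d\). So the whole intervening segment has constant defect and consists of \(\R\)-edges. K\"onig's lemma, applied over the finitely many generation \(m_0\) roots, then gives an infinite \(\R\)-ray, so \(\mathfrak d^+\le d\).

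The same squeezing observation resolves your "main obstacle." Under your contradiction hypothesis, no entry vertex of defect \(d\) lies beyond generation \(m_0\), so \(E_d\) is finite deterministically. The strong Markov argument along the entry vertices can therefore be dropped. As written, that argument also needs more than a "stopping sequence": the failure of an earlier entry's subtree is not measurable at any finite generation. A rigorous version would use the stopping-line form of the branching property, valid because entry vertices form an antichain determined by their ancestral labels.

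Your route buys nothing beyond the proposition itself, since finiteness of the defect-\(d\) set for \(d<\mathfrak d^+\) follows from stabilization anyway. Delete the claim \(\EE\#E_d<\infty\), which is false for \(\rho>4\), as you noted.
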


\begin{proof}
\emph{Existence of an immortal directional ray.}
The descendants reached using only \(\R\) edges form a Galton--Watson
process obtained by independently retaining each child with probability
\(1/4\); its offspring mean is \(\rho/4>1\).  Denote its positive survival
probability by \(p_{\R}\).  Conditional on \(\cF_m\), the survival events
for these retained descendant trees from the
\(\mathcal N_m\) vertices in generation \(m\) are independent, so the
conditional probability that none of them occurs is
\((1-p_{\R})^{\mathcal N_m}\).  On \(\cS\),
  \eqref{eq:KS} gives \(\mathcal N_m\sim
  W_{\mathrm{GW}}\rho^m\to\infty\) almost surely, whereas the
population is eventually zero on \(\cS^c\).  Conditioning on \(\cF_m\)
therefore gives
\begin{align*}
  &\PP\bigl(\cS\cap
    \{\text{no vertex starts an infinite \(\R\) retained subtree}\}\bigr)\\
  &\qquad\le
  \EE\left[
    \one_{\{\mathcal N_m>0\}}
    (1-p_{\R})^{\mathcal N_m}
  \right]
  \longrightarrow0.
\end{align*}
The last limit follows by dominated convergence: the integrand tends to
zero on \(\cS\), while on extinction its indicator eventually vanishes.
Hence, almost surely on \(\cS\), some vertex starts an infinite \(\R\)
retained subtree, and therefore \(\mathfrak d^+<\infty\).  The argument for
\(\mathfrak d^-\) is identical.

\emph{Identification of the stabilized defect.}
The integer sequence \(\mathfrak d_m^+\) is nondecreasing and, after reaching a
vertex that realizes \(\mathfrak d^+\), is bounded above by \(\mathfrak d^+\).  It stabilizes
at an integer \(d\le \mathfrak d^+\).  Choose a generation \(m_0\) after
stabilization.  For every \(m\ge m_0\), there is a descendant of some
generation \(m_0\) vertex whose plus defect is \(d\); all vertices on the
intervening ancestral segment also have defect \(d\), because defects
cannot decrease.  One of the finitely many generation \(m_0\) vertices
therefore has descendants of defect \(d\) at arbitrarily large depths.
K\"onig's lemma supplies an infinite ray of constant defect \(d\), and
every edge on that ray is labelled \(\R\).  Hence \(\mathfrak d^+\le d\), proving
\(d=\mathfrak d^+\).  The minus direction is the same.
\end{proof}

\begin{proof}[Proof of \Cref{thm:saturated-corrections-main}\textup{(i)}]
\emph{Upper bound from stabilized defects.}
Fix a random \(m_0\) after which \Cref{prop:oriented-defect} holds.
Every generation \(m_0\) particle then satisfies
\[
  \abs{X_u}\le m_0-\mathfrak d.
\]
Its central coordinate is at most \(\Gamma_\rho m_0^2\).  For all
sufficiently large \(n\), \Cref{lem:future-area} gives
\[
  M_n^Z
  \le \Gamma_\rho m_0^2
     +\left\lfloor\frac{(n-\mathfrak d)^2}{4}\right\rfloor,
\]
and \(\Gamma_\rho=1/4\).  Consequently,
\[
  \Gamma_\rho n^2-M_n^Z
  \ge\frac{n^2-m_0^2}{4}
     -\left\lfloor\frac{(n-\mathfrak d)^2}{4}\right\rfloor
  =\frac{\mathfrak d}{2}n+O(1),
\]
where the random \(O(1)\) term is independent of \(n\).  Hence
\begin{equation}\label{eq:super-sat-liminf}
  \liminf_{n\to\infty}
  \frac{\Gamma_\rho n^2-M_n^Z}{n}\ge\frac{\mathfrak d}{2}.
\end{equation}

\emph{Construction of a matching \(\R/\U\) lineage.}
For the reverse inequality, suppose \(\mathfrak d=\mathfrak d^+\).  For each fixed
vertex \(v\), apply Kesten--Stigum to the descendant tree obtained by
retaining only \(\R\) edges.  Its retained offspring number \(N_{\R}\)
satisfies
\(0\le N_{\R}\le N\), so
\(\EE[N_{\R}\log_+N_{\R}]\le\EE[N\log_+N]<\infty\).  Fix
\(\varrho_{\R}\in(1,\rho/4)\).  Since the vertex set is countable, we may
work on a single event of probability one on which, for every
\(v\) whose retained tree survives, there is a random \(c_v>0\) such that,
with
\[
  \mathcal N_h^{\R}(v)
  :=\#\{w\in\mathbb T_v^{\R}:\abs{w}-\abs{v}=h\},
\]
the generation size satisfies
\[
  \mathcal N_h^{\R}(v)\ge c_v\varrho_{\R}^h
\]
for all sufficiently large \(h\).

Let \(\mathcal B_h(v)\) be the event that none of the vertices in that
generation starts an infinite subtree using only \(\U\) edges, and let
\(p_{\U}>0\) be the survival probability of such a tree.  For the fixed
vertex \(v\), let \(\cF_h^{\R}(v)\) contain its retained history through
generation \(h\), but not the offspring numbers or increments leaving
generation \(h\).  Conditional on this sigma field, the relevant
\(\U\) retained descendant trees are independent, and
\[
  \PP(\mathcal B_h(v)\mid\cF_h^{\R}(v))
  =(1-p_{\U})^{\mathcal N_h^{\R}(v)}.
\]
For integers \(j\ge1\), the event
\(\mathcal H_{j,h}(v):=
\{\mathcal N_h^{\R}(v)\ge j^{-1}\varrho_{\R}^h\}\) is
\(\cF_h^{\R}(v)\)-measurable, and therefore
\[
  \PP(\mathcal B_h(v)\cap\mathcal H_{j,h}(v))
  \le \exp\{-p_{\U}j^{-1}\varrho_{\R}^h\}.
\]
The right hand side is summable in \(h\).  Borel--Cantelli, followed by a
countable intersection over \(j\) and the vertices \(v\), shows that, on
survival of an \(\R\) retained tree, every sufficiently large retained
generation contains a vertex that starts an infinite \(\U\) retained
subtree.  We may now choose a vertex \(u_*\) realizing \(\mathfrak d^+\) whose
\(\R\) retained tree survives; the simultaneous almost sure statement
applies to it.

For each sufficiently large \(n\), choose nonnegative integers
\(n_{\R}^\star,n_{\U}^\star\) with
\[
  n_{\R}^\star+n_{\U}^\star=n-\mathfrak d,
  \qquad
  n_{\R}^\star n_{\U}^\star
  =\left\lfloor\frac{(n-\mathfrak d)^2}{4}\right\rfloor,
\]
and put \(h_n^+:=n_{\R}^\star-X_{u_*}\).  Since
  \(X_{u_*}=\abs{u_*}-\mathfrak d\), one has
\(h_n^++n_{\U}^\star=n-\abs{u_*}\), and \(h_n^+\ge0\) for all large \(n\).
The preceding
paragraph supplies a lineage whose remaining labels are
\(\R^{h_n^+}\U^{n_{\U}^\star}\).  Therefore
\[
\begin{aligned}
  Z_{\mathrm{final}}
  &=Z_{u_*}+X_{u_*}n_{\U}^\star
    +h_n^+n_{\U}^\star\\
  &=Z_{u_*}+n_{\R}^\star n_{\U}^\star
   =Z_{u_*}
    +\left\lfloor\frac{(n-\mathfrak d)^2}{4}\right\rfloor.
\end{aligned}
\]
The middle term in the first line is the cross term from the Heisenberg
group law.  Thus
\[
  M_n^Z\ge Z_{u_*}
       +\left\lfloor\frac{(n-\mathfrak d)^2}{4}\right\rfloor.
\]
Together with \eqref{eq:super-sat-liminf}, this proves
\eqref{eq:super-saturated-main} when
\(\mathfrak d=\mathfrak d^+\).

\emph{The symmetric direction.}
If \(\mathfrak d=\mathfrak d^-\), use the same construction with
\(\Lft\) and \(\Dwn\).  Choose
\(u_*\) realizing \(\mathfrak d^-\), set
\(h_n^-:=n_{\R}^\star+X_{u_*}\), and use a continuation
\(\Lft^{h_n^-}\Dwn^{n_{\U}^\star}\).  Since
\(X_{u_*}=\mathfrak d-\abs{u_*}\), one again has
\(h_n^-+n_{\U}^\star=n-\abs{u_*}\), while the group law gives
\[
  Z_{\mathrm{final}}
  =Z_{u_*}-X_{u_*}n_{\U}^\star
    +h_n^-n_{\U}^\star
  =Z_{u_*}+n_{\R}^\star n_{\U}^\star.
\]
This proves the reverse inequality in the remaining case.
\end{proof}

\subsubsection{The critical saturation point}

At \(\rho=4\), the extremal horizontal lineages are critical and their
minimal defect grows on the \(\log\log n\) scale.  We first state the
critical minimal displacement theorem in the precise form needed here.
For a branching random walk with bounded nonnegative integer displacements,
let \(K(j)\) be the number of children at displacement \(j\), let
\(K_*:=\sum_{j\ge0}K(j)\), and let \(\mathcal M_n\) be the minimum position
in generation \(n\).

\begin{proposition}[Critical minimal displacement theorem]
\label{prop:DH-theorem}
Assume that there is an integer \(j_{\max}<\infty\) such that \(K(j)=0\)
almost surely for \(j>j_{\max}\), that \(\EE K(j)>0\) for every
\(1\le j\le j_{\max}\), and
that
\[
  \EE K_*>1,\qquad \EE K_*^2<\infty,\qquad
  \EE K(0)=1,\qquad \PP(K(0)=1)<1.
\]
Then, on the nonextinction event of the branching random walk,
\begin{equation}\label{eq:DH-general}
  \frac{\mathcal M_n\log2}{\log\log n}\longrightarrow1
  \qquad\text{almost surely}.
\end{equation}
\end{proposition}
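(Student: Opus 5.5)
The plan is to deduce the statement from the theorem of Dekking and Host \cite{DekkingHost1991} for branching random walks with nonnegative integer displacements in the critical case \(\EE K(0)=1\). The work lies in matching hypotheses and in passing from their conditioning to the nonextinction event. Their setting is exactly a Galton--Watson tree with bounded integer displacements. The zero-displacement children form a critical Galton--Watson process, nondegenerate because \(\PP(K(0)=1)<1\) and of finite variance because \(\EE K_*^2<\infty\). The assumption \(\EE K(j)>0\) for \(1\le j\le j_{\max}\) together with \(\EE K_*>1\) makes the full process supercritical. I will first check that our hypotheses are precisely theirs, in particular finite second moment in place of any stronger condition, and that \(j_{\max}\) gives the boundedness they use.

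To show the mechanism is right and to make the citation robust, I will also outline their argument. Put \(u_k(n):=\PP(\mathcal M_n>k)\) and let \(f\) be the joint generating function of \((K(0),\dots,K(j_{\max}))\). The branching property gives the recursion
\[
  u_k(n+1)=f\bigl(u_k(n),u_{k-1}(n),\dots,u_{k-j_{\max}}(n)\bigr),
  \qquad u_{-1}\equiv1 .
\]
For \(k=0\) this is the critical recursion, so \(1-u_0(n)\sim 2/(\sigma^2n)\) by Kolmogorov's estimate. The level-\(k\) quantity is governed by a critical zero-displacement process fed by level-\(k-1\) survivors. An induction then shows \(1-u_k(n)\) decays like \(n^{-2^{-k}}\) up to constants. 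Equivalently, the generation at which displacement \(k\) first becomes typical is doubly exponential, of order \(\exp(c2^k)\). For the upper bound \(\mathcal M_n\le(1+\varepsilon)\log\log n/\log2\), I will use this estimate together with the exponential growth of the population: many independent subtrees, each started at bounded displacement, make it overwhelmingly likely that one of them achieves the required small displacement.

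For the lower bound \(\mathcal M_n\ge(1-\varepsilon)\log\log n/\log2\), a first moment bound is not enough, because the expected number of zero-displacement descendants is \(1\) in every generation. Instead I will use the same recursion quantitatively: the probability that displacement \(k\) is achieved at depth \(n\) by a single root is small whenever \(2^k\ll\log n\). This is then combined with monotonicity of \(\mathcal M_n\) in \(n\), since displacements are nonnegative. A Borel--Cantelli argument along a sparse subsequence \(n_j=\exp(2^{j})\) upgrades this to almost sure convergence. Monotonicity fills the gaps, because \(\log\log n\) changes by only \(O(1)\) between consecutive terms of the subsequence.

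The main obstacle is transferring the conclusion to the nonextinction event. Dekking--Host phrase the result under a conditioning or with a.s.\ convergence on survival, and I expect to need a zero--one argument for this. The event on which \eqref{eq:DH-general} holds is inherited: if it holds for one child's subtree it holds for the parent, since the extra displacement is bounded. A standard inheritance argument for Galton--Watson trees then shows that this event has probability \(0\) or \(1\) conditionally on nonextinction. Combined with their positive-probability statement, this gives \eqref{eq:DH-general} almost surely on nonextinction.
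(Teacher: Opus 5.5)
There is a genuine gap in the step you yourself single out: the passage to the nonextinction event.

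\textbf{The citation does not cover this process.} The theorem of Dekking and Host is proved under their standing convention $\PP(K_*=0)=0$. That convention is not among the present hypotheses, so the hypotheses are not ``precisely theirs.'' Their theorem by itself says nothing about a process that can die out; they only remark that the Galton--Watson decomposition extends it.

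\textbf{The zero--one argument cannot bridge this.} There are two problems.
\begin{enumerate}
\item The inheritance claim is false for the two-sided statement. Write $\mathcal M_n=\min_i\{d_i+\mathcal M^{(i)}_{n-1}\}$, where $d_i\le j_{\max}$ is the displacement of child $i$ and $\mathcal M^{(i)}_{n-1}$ is the minimum in its subtree. If one child satisfies \eqref{eq:DH-general}, this only gives $\limsup_n\mathcal M_n\log2/\log\log n\le1$ for the parent, because another child may pull the minimum down. The two halves inherit in opposite directions: $\limsup\le1$ passes up from any single surviving child, while $\liminf\ge1$ passes down to every child. So each half needs its own zero--one law.
\item More decisively, a zero--one law only tells you that each half has conditional probability $0$ or $1$ given nonextinction. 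You still need positive probability for \emph{this} process, and Dekking--Host supply no such statement to combine with. The natural way to produce one is a comparison with a process that never dies out, and such a comparison already gives the almost sure conclusion directly.
\end{enumerate}

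\textbf{The missing idea.} Sandwich the process between two branching random walks that never die out and still satisfy all the hypotheses.
\begin{itemize}
\item \emph{Upper bound.} The immortal skeleton (particles with infinite lines of descent) is contained in the tree, so $\mathcal M_n\le\mathcal M_n^{\mathrm{sk}}$. You must check three things:
\begin{itemize}
\item $\EE_{\mathrm{sk}}K^{\mathrm{sk}}(j)=\EE K(j)$, because each child independently has infinite descent with probability $p_{\mathrm{surv}}$;
\item $\EE_{\mathrm{sk}}[(K_*^{\mathrm{sk}})^2]\le p_{\mathrm{surv}}^{-1}\EE K_*^2$;
\item $\PP_{\mathrm{sk}}(K^{\mathrm{sk}}(0)=1)<1$, since otherwise the skeleton would carry an infinite zero-displacement ray, which the nondegenerate critical zero-displacement process forbids.
\end{itemize}
\item \emph{Lower bound.} Add an artificial child at displacement one whenever $K_*=0$. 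The enlarged process contains the original, never dies, satisfies the hypotheses, and has $\mathcal M_n^{\mathrm{enl}}\le\mathcal M_n$.
\end{itemize}
With these two comparisons no zero--one law is needed.

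Your sketch of the recursion for $u_k(n)$ would avoid the convention altogether. As written, however, it only asserts the decisive estimate: $1-u_k(n)\asymp n^{-2^{-k}}$, with control uniform in $k$ up to $k\approx\log_2\log n$. So it cannot stand in for the citation.
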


\begin{proof}
The critical minimal displacement theorem of Dekking and Host
\cite{DekkingHost1991}, restricted to bounded displacements, gives the result
under their standing convention \(\PP(K_*=0)=0\).  They state that the
Galton--Watson decomposition \cite{AthreyaNey1972} extends it to the case
\(\EE K_*>1\).

\emph{Upper comparison by the immortal skeleton.}
On nonextinction, consider the immortal skeleton formed by the particles
with infinite lines of descent.  For a skeleton particle, let
\(K^{\mathrm{sk}}(j)\) be the number of its children at displacement \(j\)
that also have infinite lines of descent, and let
\(p_{\mathrm{surv}}>0\) be the survival probability of the original
process.  Write \(\PP_{\mathrm{sk}}\) and \(\EE_{\mathrm{sk}}\) for
probability and expectation under the offspring law conditional on the
parent belonging to the skeleton.  Then
\[
  \EE_{\mathrm{sk}} K^{\mathrm{sk}}(j)
  =\frac{p_{\mathrm{surv}}\EE K(j)}{p_{\mathrm{surv}}}
  =\EE K(j),
\]
because each child has an independent probability \(p_{\mathrm{surv}}\)
of infinite descent.  Thus, with
\(K_*^{\mathrm{sk}}:=\sum_jK^{\mathrm{sk}}(j)\),
\[
  \EE_{\mathrm{sk}} K_*^{\mathrm{sk}}=\EE K_*>1,
  \qquad
  \EE_{\mathrm{sk}}[(K_*^{\mathrm{sk}})^2]
  \le p_{\mathrm{surv}}^{-1}\EE[K_*^2]<\infty.
\]
The skeleton also retains the critical zero displacement mean and the
positivity assumptions for \(1\le j\le j_{\max}\).  Moreover,
\(\PP_{\mathrm{sk}}(K^{\mathrm{sk}}(0)=1)<1\): otherwise the skeleton would contain an
infinite path of zero displacements, whereas the nondegenerate critical
Galton--Watson process with offspring law \(K(0)\) dies out almost surely.
Every skeleton particle has at least one skeleton child, so this process
satisfies the no extinction convention of the cited theorem.  If
\(\mathcal M_n^{\mathrm{sk}}\) denotes its generation \(n\) minimum, then
\(\mathcal M_n\le\mathcal M_n^{\mathrm{sk}}\).  The cited limit therefore
gives the required upper bound for \(\mathcal M_n\).

\emph{Lower comparison by enlargement.}
For the reverse bound, whenever \(K_*=0\), add one artificial child at
displacement one, and let artificial children reproduce independently
according to the same enlarged law.  The resulting branching random walk
contains the original process, never becomes extinct, and still satisfies
all the displayed critical, moment, and displacement assumptions.  Applying
the cited theorem to this enlarged process gives the matching lower bound:
if \(\mathcal M_n^{\mathrm{enl}}\) is its minimum, then
\(\mathcal M_n^{\mathrm{enl}}\le\mathcal M_n\).
The two comparisons prove \eqref{eq:DH-general} on nonextinction.  The two
applications below have displacement supports \(\{0,1,2\}\) and
\(\{0,1\}\), respectively.
\end{proof}

The next lemma identifies the conditional law of the first descendant tree
satisfying a given event.  In the construction below, the event is survival
of the retained process.

\begin{lemma}
\label{lem:first-success-tree}
Let \(\mathcal T_1,\ldots,\mathcal T_M\), conditionally on a sigma field
\(\mathcal G\), be independent copies of a marked random tree
\(\mathcal T\), where the finite nonnegative integer \(M\) is measurable with
respect
to \(\mathcal G\) and the descendant trees are independent of
\(\mathcal G\).  Let
\(\mathcal A\) be an event determined by one descendant tree with
\(p:=\PP(\mathcal T\in\mathcal A)>0\), and let \(i_*\) be the first index
for which \(\mathcal T_{i_*}\in\mathcal A\), with \(i_*:=\infty\) if no
such index exists.  Conditionally on
\(\{i_*\le M\}\) and \(\mathcal G\), the selected tree
\(\mathcal T_{i_*}\) has the law of \(\mathcal T\) conditioned on
\(\mathcal A\).
\end{lemma}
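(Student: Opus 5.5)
The plan is a direct computation: decompose on the value of \(i_*\) and use conditional independence. Fix \(\mathcal G\); throughout we work conditionally on it, so \(M\) behaves as a fixed finite integer \(m\). For a measurable set \(B\) of marked trees and an event \(G\in\mathcal G\), we aim to show
\[
  \PP\bigl(\mathcal T_{i_*}\in B,\ i_*\le M,\ G\bigr)
  =\PP(\mathcal T\in B\mid\mathcal A)\,\PP(i_*\le M,\ G).
\]
This identity gives the claim, since \(\{i_*\le M\}\) has conditional probability \(1-(1-p)^M\). That probability is positive on \(\{M\ge1\}\), and the conditioning is vacuous on \(\{M=0\}\).

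\emph{Step 1: split on the first success.} Decompose
\[
  \{\mathcal T_{i_*}\in B,\ i_*\le M\}
  =\bigcup_{i\ge1}\{i\le M\}\cap\{\mathcal T_j\notin\mathcal A\ \text{for } j<i\}\cap\{\mathcal T_i\in\mathcal A\cap B\}.
\]
The union is disjoint, and \(\{i\le M\}\in\mathcal G\).

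\emph{Step 2: factor each piece.} Use the conditional independence of \(\mathcal T_1,\dots,\mathcal T_M\) given \(\mathcal G\), together with the fact that each has the law of \(\mathcal T\). On \(\{i\le M\}\) this gives
\[
  \PP\bigl(\mathcal T_j\notin\mathcal A\ (j<i),\ \mathcal T_i\in\mathcal A\cap B\mid\mathcal G\bigr)
  =(1-p)^{i-1}\,\PP(\mathcal T\in\mathcal A\cap B).
\]
The same computation with \(B\) replaced by the whole space gives \((1-p)^{i-1}p\).

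\emph{Step 3: sum.} Summing over \(1\le i\le M\) yields
\[
  \PP\bigl(\mathcal T_{i_*}\in B,\ i_*\le M\mid\mathcal G\bigr)
  =\frac{\PP(\mathcal T\in\mathcal A\cap B)}{p}\,\PP(i_*\le M\mid\mathcal G).
\]
Integrating over \(G\in\mathcal G\) then proves the displayed identity.

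The only delicate point is measurability. The trees \(\mathcal T_i\) must be defined for all \(i\), or at least for \(i\le M\), and "independent of \(\mathcal G\)" must be read as regular conditional independence. The random index \(i_*\) is handled without any stopping-time argument, because each piece of the decomposition uses only a fixed index \(i\) intersected with a \(\mathcal G\)-measurable event. For that reason I expect no step here to be a genuine obstacle.
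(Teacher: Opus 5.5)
Your proposal is correct and follows the paper's own argument. The paper likewise decomposes on the first successful index and uses conditional independence to get \(\sum_{j=1}^M(1-p)^{j-1}\PP(\mathcal T\in E\cap\mathcal A)=\frac{1-(1-p)^M}{p}\PP(\mathcal T\in E\cap\mathcal A)\), then divides by \(\PP(i_*\le M\mid\mathcal G)=1-(1-p)^M\) on \(\{M\ge1\}\). Your additional step of integrating against \(G\in\mathcal G\) only makes the conditional statement explicit and changes nothing essential.
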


\begin{proof}
For any measurable set \(E\), conditional independence gives
\begin{align*}
  \PP(\mathcal T_{i_*}\in E,\ i_*\le M\mid\mathcal G)
  &=\sum_{j=1}^M(1-p)^{j-1}
    \PP(\mathcal T\in E\cap\mathcal A)\\
  &=\frac{1-(1-p)^M}{p}
    \PP(\mathcal T\in E\cap\mathcal A).
\end{align*}
Since
\(\PP(i_*\le M\mid\mathcal G)=1-(1-p)^M\), on \(\{M\ge1\}\) division by
this probability proves the asserted conditional law.  When \(M=0\), the
conditioning event is empty.
\end{proof}

At \(\rho=4\), define the plus defect of an edge as one minus the first
coordinate of its horizontal increment.  It takes the values \(0,1,2\)
with probabilities \(1/4,1/2,1/4\).  For \(j\ge0\), let \(N^+(j)\) be
the number of children of a particle whose edge has plus defect \(j\).
Then
\[
  \EE N^+(0)=1,
  \qquad \EE N^+(1)=2,
  \qquad \EE N^+(2)=1.
\]
Moreover, \(\EE N^2<\infty\) implies
\(\EE(\sum_jN^+(j))^2<\infty\), and
\(\PP(N^+(0)=1)<1\) because
\(\PP(N^+(0)=0)=\EE(3/4)^N>0\).  The total offspring mean is four, and
nonextinction is exactly \(\cS\).  Thus \Cref{prop:DH-theorem}, applied in
the plus and minus directions, gives simultaneously on \(\cS\),
\begin{equation}\label{eq:DH-defect-law}
  \frac{\mathfrak d_m^+}{\log\log m}\longrightarrow\frac1{\log2},
  \qquad
  \frac{\mathfrak d_m^-}{\log\log m}\longrightarrow\frac1{\log2}
  \quad\text{almost surely}.
\end{equation}
Hence
\[
  \frac{\mathfrak d_m}{\log\log m}\longrightarrow\frac1{\log2}
  \qquad\text{almost surely on }\cS.
\]

The following deterministic estimate converts the two block cost
into a bound on the loss of central area.

\begin{lemma}\label{lem:monotone-two-block}
Let \(n_{\mathrm{blk}}=k+\ell\), where \(\abs{k-\ell}\le1\).  Consider a
word of length \(n_{\mathrm{blk}}\) using only \(\R\) and \(\U\).  Let
\(d_{\U}\) be the number of \(\U\) steps among the first \(k\) letters,
let \(d_{\R}\) be the number of \(\R\) steps among the last \(\ell\)
letters, and put \(d_{\mathrm{blk}}:=d_{\U}+d_{\R}\).  If \(Z\) is its
central coordinate, then
\begin{equation}\label{eq:monotone-two-block-bound}
  k\ell-Z
  \le\frac{n_{\mathrm{blk}}}{2}d_{\mathrm{blk}}
     +2d_{\mathrm{blk}}^2+2d_{\mathrm{blk}}.
\end{equation}
\end{lemma}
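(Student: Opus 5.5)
We start from an exact formula for the central coordinate of an \(\R/\U\) word.  In such a word each \(\U\) step contributes the current first coordinate, and that coordinate equals the number of \(\R\) steps already taken.  Hence
\[
  Z=\#\{(i,j):i<j,\ \text{letter } i=\R,\ \text{letter } j=\U\}.
\]
Write \(r_1:=k-d_{\U}\) for the number of \(\R\) letters in the first block and \(u_2:=\ell-d_{\R}\) for the number of \(\U\) letters in the second block.  Every \(\R\) in the first block precedes every \(\U\) in the second block.  Keeping only these pairs gives
\[
  Z\ge r_1u_2=(k-d_{\U})(\ell-d_{\R}).
\]
The remaining pairs, which lie inside one block or pair a second-block \(\R\) with a later \(\U\), are nonnegative and can be discarded.

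Expanding the product, we get
\[
  k\ell-Z\le \ell d_{\U}+kd_{\R}-d_{\U}d_{\R}\le \max(k,\ell)\,d_{\mathrm{blk}}.
\]
Since \(\abs{k-\ell}\le1\), we have \(\max(k,\ell)\le (n_{\mathrm{blk}}+1)/2\).  This yields
\[
  k\ell-Z\le \frac{n_{\mathrm{blk}}}{2}d_{\mathrm{blk}}+\frac{d_{\mathrm{blk}}}{2},
\]
which is stronger than \eqref{eq:monotone-two-block-bound}, because the terms \(2d_{\mathrm{blk}}^2+2d_{\mathrm{blk}}\) dominate \(d_{\mathrm{blk}}/2\).

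This is the whole argument.  The only point needing care is the counting identity for \(Z\), which follows from \eqref{eq:coordinate-recursion} with \(\xi^{(2)}\in\{0,1\}\) and \(X\) equal to the running count of \(\R\) letters.  The remaining risk is a bookkeeping convention.  If \(k\) is instead meant to be the second block, or \(d_{\U}\) and \(d_{\R}\) refer to the other blocks, the same argument applies after exchanging roles, and the \(\max(k,\ell)\) bound absorbs the asymmetry.  The generous quadratic slack in the statement suggests the authors used a cruder bound, for instance bounding \(r_1u_2\) after substituting \(k,\ell=n_{\mathrm{blk}}/2\pm\tfrac12\).  Our estimate implies theirs in every case.
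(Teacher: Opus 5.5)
Your proof is correct, and it takes a more direct route than the paper's.

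\textbf{The paper's route.} It writes $Z=R_{\mathrm{tot}}U_{\mathrm{tot}}-\operatorname{Inv}$, where $R_{\mathrm{tot}}=k-d_{\U}+d_{\R}$ and $U_{\mathrm{tot}}=\ell+d_{\U}-d_{\R}$ are the total letter counts and $\operatorname{Inv}$ counts the pairs with a $\U$ before an $\R$. It then bounds two pieces separately:
\begin{itemize}
\item the product deficit $k\ell-R_{\mathrm{tot}}U_{\mathrm{tot}}=(\ell-k)(d_{\U}-d_{\R})+(d_{\U}-d_{\R})^2$, by $d_{\mathrm{blk}}+d_{\mathrm{blk}}^2$;
\item the inversions, by $\operatorname{Inv}\le d_{\U}k+d_{\U}d_{\R}+d_{\R}\ell$.
\end{itemize}
The terms $(d_{\U}-d_{\R})^2$ and $d_{\U}d_{\R}$ are where the quadratic slack in \eqref{eq:monotone-two-block-bound} comes from. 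It does not come from a crude substitution $k,\ell=n_{\mathrm{blk}}/2\pm\tfrac12$, as you guessed.

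\textbf{Your route.} You discard every pair except the cross-block pairs (a first-block $\R$ with a second-block $\U$), which are automatically correctly ordered. This gives $k\ell-Z\le\ell d_{\U}+kd_{\R}-d_{\U}d_{\R}\le\frac{n_{\mathrm{blk}}+1}{2}d_{\mathrm{blk}}$ in one line.

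\textbf{Comparison.} Only a lower bound on $Z$ is needed, so nothing is lost by discarding the within-block pairs. Your bound is in fact sharper, since it has no quadratic term. In the proof of \Cref{thm:saturated-corrections-main}\textup{(ii)}, this would let the error term $C\widehat{\mathfrak d}_{\bar n}^2$ be dropped. That term is harmless there anyway, because $\widehat{\mathfrak d}_{\bar n}=O_{\PP}(\log\log n)$. The paper's decomposition gives an exact accounting of $Z$, but that extra information is never used.
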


\begin{proof}
For an \(\R/\U\) word, \(Z\) is the number of ordered pairs in which an
\(\R\) precedes a \(\U\).  If
\[
  R_{\mathrm{tot}}=k-d_{\U}+d_{\R},
  \qquad
  U_{\mathrm{tot}}=\ell+d_{\U}-d_{\R}
\]
are the total numbers of the two letters, then
\(Z=R_{\mathrm{tot}}U_{\mathrm{tot}}-\operatorname{Inv}\), where
\(\operatorname{Inv}\) is the
number of inversions \(\U\) before \(\R\).  Since
\[
  k\ell-R_{\mathrm{tot}}U_{\mathrm{tot}}
  =(\ell-k)(d_{\U}-d_{\R})+(d_{\U}-d_{\R})^2,
\]
its positive part is at most
\(d_{\mathrm{blk}}+d_{\mathrm{blk}}^2\).

Every inversion either starts with one of the \(d_{\U}\) early
\(\U\) letters or, after those pairs are removed, ends with one of the
\(d_{\R}\) late \(\R\) letters.  Hence
\[
  \operatorname{Inv}
  \le d_{\U}k+d_{\U}d_{\R}+d_{\R}\ell
  \le\frac{n_{\mathrm{blk}}}{2}d_{\mathrm{blk}}
      +d_{\mathrm{blk}}+d_{\U}d_{\R}.
\]
Since
\(d_{\U}d_{\R}\le d_{\mathrm{blk}}^2/4\), adding the preceding two bounds
gives \eqref{eq:monotone-two-block-bound}.
\end{proof}

\begin{proof}[Proof of \Cref{thm:saturated-corrections-main}\textup{(ii)}]
\emph{Almost sure lower bound on the loss.}
First take \(m=\lfloor n^{1/3}\rfloor\).  Every generation \(m\)
particle satisfies
\(
m-\abs{X_u}\ge \mathfrak d_m
\).
By \Cref{lem:future-area},
\[
  M_n^Z\le \Gamma_\rho m^2
    +\left\lfloor\frac{(n-\mathfrak d_m)^2}{4}\right\rfloor.
\]
Since \(\Gamma_\rho=1/4\),
\[
\begin{aligned}
  \Gamma_\rho n^2-M_n^Z
  &\ge\frac{n^2-m^2}{4}
    -\left\lfloor\frac{(n-\mathfrak d_m)^2}{4}\right\rfloor\\
  &=\frac n2\mathfrak d_m-\frac14\mathfrak d_m^2
    -\frac14m^2+O(1).
\end{aligned}
\]
Now \(\log\log m\sim\log\log n\),
\(\mathfrak d_m\sim(\log\log m)/\log2\) almost surely on \(\cS\), and
\(m^2=o(n\log\log n)\).  Hence
\begin{equation}\label{eq:critical-liminf}
  \liminf_{n\to\infty}
  \frac{\Gamma_\rho n^2-M_n^Z}{n\log\log n}
  \ge\frac1{2\log2}
  \qquad\text{almost surely on }\cS.
\end{equation}

\emph{Selection of a surviving \(\R/\U\) subtree.}
For the matching bound, choose deterministic integers
\[
  j_n\uparrow\infty,\qquad j_n=o(\log\log n),
\]
and put
\[
  \bar n:=n-j_n,
  \qquad
  k_n^{(1)}:=\lfloor\bar n/2\rfloor,
  \qquad
  k_n^{(2)}:=\bar n-k_n^{(1)}.
\]
From each generation \(j_n\) particle retain only edges labelled \(\R\) or
\(\U\).  The retained offspring number is a binomial
thinning of \(N\) with parameter \(1/2\); it has mean two and finite
second moment, so its survival probability \(p_{\R\U}\) is positive.
Conditionally on \(\cF_{j_n}\), the retained descendant trees are
independent.  Let \(w_n\) be the first generation \(j_n\) particle in
Ulam--Harris order whose retained subtree survives, when such a particle
exists.  Hence
\[
  \PP(w_n\text{ does not exist}\mid\cF_{j_n})
  =(1-p_{\R\U})^{\mathcal N_{j_n}}.
\]
Moreover,
\[
  \PP(w_n\text{ does not exist}\mid\cS)
  \le\frac{1}{\PP(\cS)}
  \EE\left[
    \one_{\{\mathcal N_{j_n}>0\}}
    (1-p_{\R\U})^{\mathcal N_{j_n}}
  \right]
  \longrightarrow0.
\]
Indeed, the integrand tends almost surely to zero: on \(\cS\) the
population tends to infinity, while on \(\cS^c\) it is eventually zero.
On the event that \(w_n\) exists, \Cref{lem:first-success-tree} shows that
its marked retained subtree
has exactly the retained tree law conditioned on survival.

\emph{Reduction to a homogeneous critical displacement.}
Count generations in the selected subtree from \(w_n\).  For a descendant
label sequence \(\eta=(\eta_1,\ldots,\eta_{\bar n})\), define its two block
cost by
\[
  D_{\mathrm{blk}}(\eta)
  :=\sum_{i=1}^{k_n^{(1)}}\one_{\{\eta_i=\U\}}
    +\sum_{i=k_n^{(1)}+1}^{\bar n}\one_{\{\eta_i=\R\}}.
\]
Let \(\widehat{\mathfrak d}_{\bar n}\) be the minimum of
\(D_{\mathrm{blk}}(\eta)\) over all generation \(\bar n\) descendant label
sequences in the selected subtree.
On the event that \(w_n\) does not exist, set
\(\widehat{\mathfrak d}_{\bar n}:=0\).  The map that swaps the labels
\(\R\leftrightarrow\U\) on all edges after generation
\(k_n^{(1)}\) preserves
the marked retained tree law.  It also preserves its survival event,
which depends only on the unlabelled retained tree.  Under this
measure preserving map, the two block cost becomes the homogeneous cost
that assigns zero to \(\R\) and one to \(\U\) at every generation.
Thus the first successful tree lemma and the label swap give
\[
  \mathcal L\left(
    \widehat{\mathfrak d}_{\bar n}
    \,\middle|\,w_n\text{ exists},\cF_{j_n}\right)
  =
  \mathcal L\left(
    \mathcal M_{\bar n}^{\mathrm{hom}}
    \,\middle|\,\text{homogeneous survival}\right),
\]
where \(\mathcal M_{\bar n}^{\mathrm{hom}}\) is the minimum displacement
of the homogeneous process.

For the homogeneous process, the expected numbers of zero cost and one
cost children are both \(\rho/4=1\); there are no other displacements.
Its total offspring mean is two, and its total offspring has finite second
moment.  Its number of zero cost children has the same law as \(N^+(0)\),
so \(\PP(N^{+}(0)=1)<1\) by the verification above.  By
\Cref{prop:DH-theorem} and the conditional law in
\Cref{lem:first-success-tree},
\begin{equation}\label{eq:good-defect-law}
  \frac{\widehat{\mathfrak d}_{\bar n}}{\log\log\bar n}
  \xrightarrow{\PP(\,\cdot\mid w_n\text{ exists})}
  \frac1{\log2}.
\end{equation}
Together with the vanishing probability that \(w_n\) does not exist,
this convergence holds under \(\PP(\,\cdot\mid\cS)\) for the selected
subtree construction.

\emph{Conversion from block defect to central area.}
On the event that \(w_n\) exists, choose a minimizing \(\R/\U\) word in
the selected subtree.  Its
\(d_{\U},d_{\R},d_{\mathrm{blk}}\) in
\Cref{lem:monotone-two-block} satisfy
\(d_{\mathrm{blk}}=\widehat{\mathfrak d}_{\bar n}\).  Let
\((X_n^{\mathrm{rel}},Y_n^{\mathrm{rel}},Z_n^{\mathrm{rel}})\) be the
Heisenberg position obtained by multiplying the \(\bar n\) edge labels after
\(w_n\).  The lemma gives
\[
  k_n^{(1)}k_n^{(2)}-Z_n^{\mathrm{rel}}
  \le\frac{\bar n}{2}\widehat{\mathfrak d}_{\bar n}
     +2\widehat{\mathfrak d}_{\bar n}^2
     +2\widehat{\mathfrak d}_{\bar n}.
\]
The Heisenberg product formula writes the full central coordinate as
\[
  Z_{w_n}+X_{w_n}Y_n^{\mathrm{rel}}+Z_n^{\mathrm{rel}}.
\]
Here \(\abs{X_{w_n}}\le j_n\),
\(\abs{Y_n^{\mathrm{rel}}}\le\bar n\), and
\(\abs{Z_{w_n}}\le\Gamma_\rho j_n^2\).  Moreover
\(\abs{\Gamma_\rho n^2-k_n^{(1)}k_n^{(2)}}\le Cnj_n\) for a
deterministic constant \(C\).  Consequently, on the event that \(w_n\)
exists,
\[
  \Gamma_\rho n^2-M_n^Z
  \le\frac n2\widehat{\mathfrak d}_{\bar n}
    +C\{\widehat{\mathfrak d}_{\bar n}^2
       +\widehat{\mathfrak d}_{\bar n}+nj_n\}.
\]
Under the conditional law \(\PP(\,\cdot\mid\cS)\),
\eqref{eq:good-defect-law} says that the family
\(\widehat{\mathfrak d}_{\bar n}/\log\log n\) is tight, because
\(\log\log\bar n\sim\log\log n\).  With both \(O_{\PP}\) and
\(o_{\PP}\) understood relative to this conditional law, this is
equivalently
\[
  \widehat{\mathfrak d}_{\bar n}=O_{\PP}(\log\log n).
\]
Hence the quadratic defect error is \(o_{\PP}(n\log\log n)\), while the
initial generation error satisfies \(nj_n=o(n\log\log n)\).  Dividing the
preceding area estimate by \(n\log\log n\) gives
\[
\begin{aligned}
  \frac{\Gamma_\rho n^2-M_n^Z}{n\log\log n}
  &\le\frac12
    \frac{\widehat{\mathfrak d}_{\bar n}}{\log\log n}
    +o_{\PP}(1)\\
  &\xrightarrow{\PP}\frac1{2\log2}
\end{aligned}
\]
under \(\PP(\,\cdot\mid\cS)\).  Together with
\eqref{eq:critical-liminf}, this proves
\eqref{eq:critical-saturated-main}.
\end{proof}

\section{Geometry of the limit shape}\label{sec:geometry}

This section proves the geometric results for \(\cR_\rho\) stated in
Section~2.  We begin with its projection, fibers, symmetries, and interior.
We then determine the saturated shape and compare shapes at different
offspring means.  Finally, cyclic rooting and planar anisotropic isoperimetry
identify the origin fiber, bound every fiber below saturation, and yield
universal nonconvexity.

Throughout this and the next section, write \(B:=\log\rho\) for a fixed
offspring mean \(\rho>1\).  For
\(v=(f,g)\in L^2([0,1];\RR^2)\), define
\begin{equation}\label{eq:area-control}
  \mathfrak A(v)
  :=\int_0^1\left(\int_0^t f(s)\dd s\right)g(t)\dd t.
\end{equation}
Thus \(A(\omega)=\mathfrak A(\dot\omega)\) whenever
\(\omega\in\ac([0,1];\RR^2)\) starts at the origin and has square
integrable velocity.
Let \(\mathsf J:\RR^2\to\RR^2\) denote the quarter turn
\begin{equation}\label{eq:J-quarter-turn}
  \mathsf J(x,y):=(-y,x).
\end{equation}

\begin{analystcheck}

\subsection{\texorpdfstring{\additionmark{Basic structure of the limit
shape}}{Basic structure of the limit shape}}

\begin{revisioncheck}
We first determine the horizontal projection and the interval structure of
the vertical fibers.  We then prove that the limit shape has nonempty
interior, identify the fibers over the boundary of the horizontal
projection, and establish its symmetries and Carnot star shapedness.
\end{revisioncheck}

\begin{proposition}[Projection and interval fibers]\label{prop:fibers}
For every \(\rho>1\),
\begin{equation}\label{eq:horizontal-proj}
  \cD_\rho=\{(a,b)\in\RR^2:L(a,b)\le\log\rho\},
\end{equation}
and every nonempty fiber \revisionmark{\(\cI_\rho(a,b)\)} is a compact
interval.
\begin{analystadditioncheck}
In addition, the portion of the saddle sheet above \(\cD_\rho\) lies in the
limit shape:
\begin{equation}\label{eq:saddle-sheet-main}
  \left\{\left(a,b,\frac{ab}{2}\right):(a,b)\in\cD_\rho\right\}
  \subset\cR_\rho.
\end{equation}
\end{analystadditioncheck}
\end{proposition}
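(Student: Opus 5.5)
The plan has three steps, one for each claim of \Cref{prop:fibers}. Throughout, write \(B=\log\rho\).

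\emph{Projection.} For the inclusion \(\cD_\rho\subset\{L\le B\}\), take \(\omega\in\cC_\rho\) with \(\omega(1)=(a,b)\). Jensen's inequality applied to the convex function \(L\) gives
\[
  L(a,b)=L\Bigl(\int_0^1\dot\omega\Bigr)\le J_1(\omega)\le B .
\]
For the reverse inclusion, given \((a,b)\) with \(L(a,b)\le B\), use the straight path \(\omega(t)=t(a,b)\). Its action is \(J_t(\omega)=tL(a,b)\le tB\) for every \(t\), so \(\omega\in\cC_\rho\). Its area is
\[
  A(\omega)=\int_0^1 ta\,b\dd t=\frac{ab}{2}.
\]
This proves \eqref{eq:horizontal-proj} and, at the same time, the saddle sheet inclusion \eqref{eq:saddle-sheet-main}.

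\emph{Compactness of fibers.} Each fiber \(\cI_\rho(a,b)\) is the slice of the compact set \(\cR_\rho\) (\Cref{cor:endpoint-shape}) by a vertical line, so it is compact.

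\emph{Connectedness of fibers; setup.} Let \(\omega_0,\omega_1\in\cC_\rho\) both end at \((a,b)\). The aim is a continuous family \(\omega_s\in\cC_\rho\), \(s\in[0,1]\), ending at \((a,b)\), joining \(\omega_0\) to \(\omega_1\), with \(s\mapsto A(\omega_s)\) continuous. The intermediate value theorem then fills the interval between the two areas. Convex combinations \((1-s)\omega_0+s\omega_1\) do preserve the action constraints, since
\[
  J_t\bigl((1-s)\omega_0+s\omega_1\bigr)\le(1-s)J_t(\omega_0)+sJ_t(\omega_1)\le tB ,
\]
and they fix the endpoint. Their area is a quadratic polynomial in \(s\), hence continuous. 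So \(\cC_\rho\cap\{\omega(1)=(a,b)\}\) is convex, \(A\) restricted to it is continuous, and its image is an interval. Combined with compactness, each nonempty fiber is a compact interval. Continuity of \(A\) along the segment also follows from \Cref{lem:area-continuity}, because every path in \(\cC_\rho\) lies in \(\cV\).

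\emph{Main obstacle.} The step that needs the most care is the convexity of \(\cC_\rho\): the constraint at each fixed \(t\) must be preserved under convex combination. This is pointwise convexity of \(L\) integrated over \([0,t]\), which is exactly the inequality displayed above, so the step is routine once \(L\) is known to be convex. Convexity of \(L\) holds because it is a Legendre transform, by \Cref{lem:convex-facts}. I therefore expect no genuine difficulty. The subtle point is only to note that the area is not linear in \(\omega\), yet it is continuous along segments, and continuity is all the argument uses.
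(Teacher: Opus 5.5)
Your proposal is correct and matches the paper's proof step for step. The paper uses Jensen's inequality and the straight path \(t\mapsto t(a,b)\) for the projection and the saddle sheet, convexity of \(L\) to show that the admissible paths (velocity controls) ending at \((a,b)\) form a convex set, and compactness inherited from \(\cR_\rho\). The only cosmetic difference is that the paper proves \(\mathfrak A\) is \(2\)-Lipschitz in the \(L^1\) norm on \(D\)-valued controls, whereas you note that the area is a quadratic polynomial along each segment, which is all the interval property needs.
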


\begin{proof}
If \(\omega\in\cC_\rho\) ends at \((a,b)\), Jensen's inequality gives
\[
  L(a,b)
  =L\left(\int_0^1\dot\omega(t)\dd t\right)
  \le\int_0^1L(\dot\omega(t))\dd t
  \le\log\rho.
\]
Conversely, if \(L(a,b)\le\log\rho\), the constant velocity path \(\omega(t)=t(a,b)\) satisfies
\(
J_t(\omega)=tL(a,b)\le t\log\rho
\)
for every \(t\).  This proves \eqref{eq:horizontal-proj}.
\begin{analystadditioncheck}
The same constant velocity path has
\[
  A(\omega)=\int_0^1ta\,b\dd t=\frac{ab}{2}.
\]
It therefore proves \eqref{eq:saddle-sheet-main}.
\end{analystadditioncheck}

Fix \((a,b)\in\cD_\rho\).  Identify an admissible path with its velocity
control \(v=(v_1,v_2)\), and define the set \(\cU_{a,b}\) of admissible
velocity controls with endpoint \((a,b)\) by
\[
  \cU_{a,b}:=\left\{v:
  \begin{array}{l}
  v\in L^1([0,1];\RR^2),\\
  v(t)\in D\text{ for a.e. }t,\\
  \displaystyle\int_0^1v(t)\dd t=(a,b),\\[1mm]
  \displaystyle\int_0^tL(v(s))\dd s\le t\log\rho
  \text{ for every }t\in[0,1]
  \end{array}
  \right\}.
\]
Because \(D\) is bounded, every \(v\in\cU_{a,b}\) belongs to
\(L^\infty([0,1];\RR^2)\), so \(\mathfrak A(v)\) is defined by
\eqref{eq:area-control}.
\begin{revisioncheck}
The set \(\cU_{a,b}\) is convex.  Indeed, if
\(v,w\in\cU_{a,b}\) and \(0\le\lambda\le1\), convexity of \(L\) gives
\end{revisioncheck}
\[
  \int_0^tL(\lambda v+(1-\lambda)w)\dd s
  \le\lambda\int_0^tL(v(s))\dd s
     +(1-\lambda)\int_0^tL(w(s))\dd s
  \le t\log\rho.
\]
For \(v\in\cU_{a,b}\), set
\(x_v(t):=\int_0^t v_1(s)\dd s\).  The corresponding path has area
\(\mathfrak A(v)\).  Equip the controls with the vector \(L^1\) norm
\[
  \norm{v-w}_{L^1}:=\int_0^1\norm{v(t)-w(t)}_1\dd t.
\]
The functional \(\mathfrak A\) is continuous in this norm on
controls taking values in \(D\).  Indeed,
\begin{align*}
  \abs{\mathfrak A(v)-\mathfrak A(w)}
  &\le\norm{x_v-x_w}_\infty\norm{v_2}_{L^1}
      +\norm{x_w}_\infty\norm{v_2-w_2}_{L^1}\\
  &\le2\norm{v-w}_{L^1}.
\end{align*}
Thus \(\mathfrak A(\cU_{a,b})=\revisionmark{\cI_\rho(a,b)}\) is connected,
hence an interval.  It is closed and bounded because it is the fiber of the
compact set \(\cR_\rho\), so it is a compact interval.
\end{proof}

\begin{analystadditioncheck}
The next construction strengthens the fiber description by producing an
open set of admissible endpoints.

\begin{proposition}[Nonempty interior]\label{prop:nonempty-interior}
For every \(\rho>1\),
\begin{equation}\label{eq:nonempty-interior-main}
  \operatorname{int}(\cR_\rho)\ne\varnothing.
\end{equation}
\end{proposition}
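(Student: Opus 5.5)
The plan is to exhibit an open ball inside \(\cR_\rho\), built as a small-scale continuous family of admissible endpoints. By \Cref{prop:fibers} the projection \(\cD_\rho=\{L\le\log\rho\}\) contains a neighbourhood of the origin, because \(L\) is continuous with \(L(0)=0\). It remains to control the central coordinate. I would use the Carnot dilation structure. By \eqref{eq:quadratic-scaling-L}, if \(\omega\in\cC_\rho\) and \(0\le r\le1\) then \(r\omega\in\cC_\rho\), with endpoint \(\delta_r\) of the original endpoint. So it suffices to find a three-parameter family of admissible paths whose endpoint map has an interior point in its image.

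\emph{Construction.} Fix a small \(s>0\) such that every velocity \(v\) with \(\norm{v}_1\le 2s\) has \(L(v)\le\log\rho/2\). This is possible since \(L\) is continuous and vanishes at \(0\). For parameters \((a,b,c)\) near a base point, consider the concatenation on \([0,1]\) of the following three pieces:
\begin{itemize}
\item a constant-velocity horizontal segment to \((a,b)\), which by itself gives area \(ab/2\);
\item a small closed loop, for instance a circle of radius \(\sqrt{\abs{c'}/\pi}\) traversed with orientation \(\operatorname{sign}(c')\), with \(c':=c-ab/2\);
\item a return to the correct endpoint if needed.
\end{itemize}
A simpler choice is to translate the loop so that it sits at the origin at time zero. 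Then the signed area adds exactly, because a closed loop based at the origin contributes its enclosed signed area, and the later straight segment contributes \(ab/2\) regardless. I would therefore take \(\omega\) to traverse a circle loop of signed area \(c'\) on \([0,1/2]\) and the straight segment to \((a,b)\) on \([1/2,1]\). The endpoint is then \((a,b,ab/2+c')=(a,b,c)\).

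\emph{Admissibility.} For \(\abs a+\abs b\le s/2\) and \(\abs{c'}\le s^2/100\), all velocities are bounded by \(2s\) in \(\ell^1\). Hence \(L(\dot\omega)\le\log\rho/2\) everywhere, so \(J_t(\omega)\le t\log\rho\) for every \(t\) and \(\omega\in\cC_\rho\). The endpoint map then covers the set \(\{(a,b,c):\abs a+\abs b<s/2,\ \abs{c-ab/2}<s^2/100\}\). This set is open in \(\RR^3\) because \((a,b,c)\mapsto c-ab/2\) is continuous, and it lies in \(\cR_\rho\), which proves \eqref{eq:nonempty-interior-main}.

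The only point needing care is the area computation for the concatenation. Write \(\omega\) as the loop \(\gamma\) on \([0,1/2]\) followed by the straight segment. The area over the second half is \(\int x\dot y\) along the segment starting at \(x=0\), which equals \(ab/2\), since the loop returns to the origin. The area over the first half is \(\int_\gamma x\,dy\), the signed enclosed area \(c'\). I expect no real obstacle, only the choice of constants ensuring the velocity bound uniformly in the parameters. The circle of area \(\abs{c'}\) traversed in time \(1/2\) has speed \(4\pi\sqrt{\abs{c'}/\pi}\), which is small when \(\abs{c'}\) is small.
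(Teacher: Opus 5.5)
Your argument is correct, but it takes a different route from the paper. The paper uses the three-piece coordinate path with velocities \((\alpha,0)\), \((0,\gamma)\), \((\beta,0)\) on the thirds of \([0,1]\). It computes the endpoint map \(\mathcal F(\alpha,\beta,\gamma)=((\alpha+\beta)/3,\gamma/3,\alpha\gamma/9)\), whose Jacobian determinant is \(\gamma/81\), and applies the inverse function theorem at a small parameter triple with \(\gamma\ne0\).

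You instead write down, for each target \((a,b,c)\), an explicit admissible path. It traverses a loop based at the origin with signed area \(c-ab/2\), then the straight segment to \((a,b)\). Since \(x\) returns to \(0\) at time \(1/2\), the two areas add exactly. This avoids the inverse function theorem and gives a stronger, quantitative conclusion: \(\cR_\rho\) contains the explicit open set \(\{\abs a+\abs b<s/2,\ \abs{c-ab/2}<s^2/100\}\), so \(0\in\operatorname{int}\cR_\rho\). The paper's family cannot give this directly. Its Jacobian degenerates at \(\gamma=0\), and its image contains no point \((a,0,c)\) with \(c\ne0\).

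The price is only the loop construction and the speed bookkeeping. Your constants have enough slack even after the factor \(\sqrt2\) between the Euclidean and \(\ell^1\) speeds of the circle, which you did not spell out. Your construction is essentially the terminal-loop mechanism the paper later uses for the inner fiber bound \eqref{eq:fiber-inner-below-saturation}, with the loop placed first rather than last. The opening remarks about Carnot dilation and \Cref{prop:fibers} play no role in the proof and can be dropped.
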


\begin{proof}
For parameters \(\alpha,\beta,\gamma\), consider the piecewise constant
velocity
\[
  v_{\alpha,\beta,\gamma}(t)
  :=
  \begin{cases}
    (\alpha,0),&0\le t<1/3,\\
    (0,\gamma),&1/3\le t<2/3,\\
    (\beta,0),&2/3\le t\le1.
  \end{cases}
\]
Continuity of \(L\) at zero and \(L(0)=0\) give \(\varepsilon\in(0,1/2)\) such
that \(L(w)<\log\rho\) whenever
\(\norm{w}_\infty<\varepsilon\).  For
\(\abs{\alpha},\abs{\beta},\abs{\gamma}<\varepsilon\), all accumulated
action inequalities hold and are strict for \(t\in(0,1]\).  Hence the
corresponding paths belong to \(\cC_\rho\).  Their lifted endpoints are
\[
  \mathcal F(\alpha,\beta,\gamma)
  =
  \left(
  \frac{\alpha+\beta}{3},
  \frac{\gamma}{3},
  \frac{\alpha\gamma}{9}
  \right),
\]
and
\[
  \det D\mathcal F(\alpha,\beta,\gamma)=\frac{\gamma}{81}.
\]
Choose a sufficiently small parameter triple with \(\gamma\ne0\).  The
inverse function theorem shows that the image under \(\mathcal F\) of a
smaller open
neighborhood contains an open subset of \(\RR^3\).  This image lies in
\(\cR_\rho\), proving \eqref{eq:nonempty-interior-main}.
\end{proof}

Although interior fibers may have positive length, strict convexity forces
the fibers above the boundary of the horizontal projection to collapse.

\begin{proposition}[Fibers on the action level]\label{prop:action-level-fiber}
If \(1<\rho<4\) and \(L(a,b)=\log\rho\), then
\begin{equation}\label{eq:action-level-singleton-main}
  \cI_\rho(a,b)=\left\{\frac{ab}{2}\right\}.
\end{equation}
\end{proposition}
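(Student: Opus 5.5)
The plan is to show that every admissible path ending on the action level moves with constant velocity. The area is then forced to equal the saddle-sheet value.

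Fix \(\omega\in\cC_\rho\) with \(\omega(1)=(a,b)\) and \(L(a,b)=\log\rho\). The argument of \Cref{prop:fibers} gives
\[
  \log\rho=L\Bigl(\int_0^1\dot\omega\dd t\Bigr)\le\int_0^1L(\dot\omega(t))\dd t\le\log\rho ,
\]
so equality holds throughout Jensen's inequality. The goal is to deduce \(\dot\omega\equiv(a,b)\) almost everywhere. Granting this, \(A(\omega)=ab/2\) by the same computation as in \Cref{prop:fibers}. The reverse inclusion is already \eqref{eq:saddle-sheet-main}, so \eqref{eq:action-level-singleton-main} follows.

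For Jensen equality I would use strict convexity of \(L\) on \(D\). Rather than a general Legendre-duality statement, I would use the diagonal decomposition \eqref{eq:L-diagonal-decomposition}, \(L(v)=\phi(v_1+v_2)+\phi(v_1-v_2)\). The function \(\phi\) is strictly convex on \([-1,1]\), since \(\phi''(z)=(1-z^2)^{-1}>0\) on \((-1,1)\), \(\phi\) is continuous at \(\pm1\), and so no segment of the graph is affine. The map \(v\mapsto(v_1+v_2,v_1-v_2)\) is a linear bijection taking \(D\) onto \([-1,1]^2\). Jensen's inequality applies separately to each \(\phi\) term, and equality in the sum forces equality in both. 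Strict convexity of the scalar function then makes \(\dot x+\dot y\) and \(\dot x-\dot y\) almost everywhere constant, so \(\dot\omega\) is almost everywhere constant and equal to its mean \((a,b)\).

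The step needing most care is the boundary case, where the velocity may take values on \(\partial D\), the faces \(\abs{v_1\pm v_2}=1\). The scalar reduction settles it: strict convexity of \(\phi\) holds on the closed interval, because \(\phi(\pm1)=\log2\) is finite and \(\phi\) is continuous there, and Jensen equality for a strictly convex function on a closed interval still forces an almost surely constant argument. The hypothesis \(\rho<4\) is not strictly needed for the argument. It only excludes the degenerate situation at \(\log\rho=\log4\), where the level set is the boundary of \(D\) and the statement is covered separately by \Cref{thm:geometry-summary}. I would remark this rather than rely on it.
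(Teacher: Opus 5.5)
Your proof is correct, but it handles the Jensen equality case differently from the paper. The paper splits into cases:
\begin{itemize}
\item for \((a,b)\in\operatorname{int}D\) it integrates the Bregman divergence of \(L\) at \(\theta=\nabla L(a,b)\);
\item for \((a,b)\) in the relative interior of an edge it first uses an exposing linear functional to confine \(\dot\omega\) to that edge, then uses strict convexity of the one-dimensional entropy formula along it;
\item vertices are excluded because \(L=\log4>\log\rho\).
\end{itemize}

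Your route through \eqref{eq:L-diagonal-decomposition} amounts to proving that \(L\) is strictly convex on the closed diamond \(D\). In the coordinates \(v_1\pm v_2\), \(D\) is the square \([-1,1]^2\) and \(L\) is a sum of strictly convex functions of independent coordinates. So interior, edge and vertex points are treated uniformly, and no differentiability of \(L\) at \((a,b)\) is needed. It also shows, as you say, that \(\rho<4\) plays no role in the argument. The paper's case analysis uses only generic structure of a finite-support Cram\'er transform: Legendre duality in the interior and the entropy representation \eqref{eq:entropy-L} on faces. That would carry over to step laws without a product decomposition, which yours relies on.

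One correction to your closing remark. At \(\rho=4\) the level set \(\{L=\log4\}\) is not \(\partial D\); it is only the four vertices \(\pm(1,0),\pm(0,1)\). This is because \(\phi\le\log2\) on \([-1,1]\), with equality only at \(\pm1\). The distinction matters. Over a non-vertex boundary point such as \((1/2,1/2)\), the saturated fiber is \([0,1/4]\) by \eqref{eq:complete-saturated-shape-main}, so if the level set really were \(\partial D\), the singleton conclusion would fail at \(\rho=4\). At the vertices, your argument and the saturated formula (where \(\Delta_4=0\)) both give a singleton. So your claim that \(\rho<4\) is not needed stands; only the stated reason is wrong.
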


\begin{proof}
Let \(\omega\in\cC_\rho\) end at \((a,b)\), and write
\(v=\dot\omega\).  Jensen's inequality and the constraint at time one give
\[
  \log\rho
  =L\left(\int_0^1v(t)\dd t\right)
  \le\int_0^1L(v(t))\dd t
  \le\log\rho.
\]
Suppose first that \((a,b)\in\operatorname{int}D\), and put
\(\theta=\nabla L(a,b)\).  The Bregman divergence
\[
  L(w)-L(a,b)-\ip{\theta}{w-(a,b)}
\]
is nonnegative on \(D\) and vanishes only at \(w=(a,b)\).  Its integral at
\(w=v(t)\) is zero, because equality holds in the preceding display and
\(\int_0^1v(t)\dd t=(a,b)\).  Hence \(v(t)=(a,b)\) for almost every \(t\).

If \((a,b)\) lies in the relative interior of an edge of \(D\), an exposing
linear functional first forces \(v(t)\) to lie on the same edge almost
everywhere.  Writing the endpoints of that edge as \(s_1,s_2\), its points
have the form \(\lambda s_1+(1-\lambda)s_2\), and
\eqref{eq:entropy-L} reduces there to
\[
  L=\log4+\lambda\log\lambda+(1-\lambda)\log(1-\lambda).
\]
Strict convexity in \(\lambda\) again makes the velocity constant.  A vertex
cannot occur because \(L=\log4>\log\rho\).  Thus in every case
\(\omega(t)=t(a,b)\), and \(A(\omega)=ab/2\).  This proves
\eqref{eq:action-level-singleton-main}.
\end{proof}
\end{analystadditioncheck}

\begin{revisioncheck}
We next establish the transformations of the limit shape used in the
saturated calculation and the nonconvexity argument.
\end{revisioncheck}

\begin{proposition}[Symmetries and Carnot star shapedness]\label{prop:sym-star}
For every \(\rho>1\), the set \(\cR_\rho\) is invariant under
\begin{equation}\label{eq:symmetries}
  \begin{aligned}
  (a,b,c)&\mapsto(-a,b,-c),&
  (a,b,c)&\mapsto(a,-b,-c),\\
  (a,b,c)&\mapsto(b,a,ab-c).&&
  \end{aligned}
\end{equation}
Moreover, it is Carnot star shaped:
\[
  \delta_r\cR_\rho\subset\cR_\rho,
  \qquad 0\le r\le1.
\]
\end{proposition}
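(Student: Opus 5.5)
Each transformation in \eqref{eq:symmetries} will be realized at the level of admissible paths. For every map I exhibit a transformation $\omega\mapsto\widetilde\omega$ of $\cC_\rho$ into itself and compute how the terminal point and the area transform. The constraint $J_t(\widetilde\omega)\le t\log\rho$ must be preserved for every $t$, so the path transformations act pointwise on velocities and never reorder time. Reversal is avoided because it would break the intermediate constraints.

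For the first map, take $\widetilde\omega=(-x,y)$. By the sign invariance in \Cref{lem:convex-facts}(iii), $L(-v_1,v_2)=L(v_1,v_2)$, so $J_t(\widetilde\omega)=J_t(\omega)$ for all $t$. The area changes sign, $A(\widetilde\omega)=\int_0^1(-x)\dot y\dd s=-A(\omega)$, and the endpoint becomes $(-a,b)$. The second map is analogous with $\widetilde\omega=(x,-y)$.

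For the third map, take the coordinate exchange $\widetilde\omega=(y,x)$. Exchange invariance of $L$ again gives equal action profiles. Integration by parts gives $\int_0^1 y\dot x\dd s=x(1)y(1)-\int_0^1x\dot y\dd s$, which is valid because both coordinates are absolutely continuous and start at $0$. Hence the image of $(a,b,c)$ is $(b,a,ab-c)$. Each map is an involution, so set invariance (not merely inclusion) follows.

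For Carnot star shapedness, fix $0\le r\le1$ and $\omega\in\cC_\rho$, and take $r\omega$. By \eqref{eq:quadratic-scaling-L}, $J_t(r\omega)\le r^2J_t(\omega)\le t\log\rho$ for every $t$; the case $r=0$ is the zero path. The endpoint scales to $(ra,rb)$ and the area is quadratic, $A(r\omega)=r^2A(\omega)$, so $\delta_r(a,b,c)\in\cR_\rho$. This is the same computation as in \Cref{lem:compact-density}.

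No step is a real obstacle. The only point needing care is the third symmetry, where the naive guess $c\mapsto c$ is wrong and the integration-by-parts boundary term $ab$ must be tracked. Note also that this symmetry comes from exchanging coordinates, not from the time reversal used in \eqref{eq:conditional-saddle-symmetry}, since time reversal would not preserve $\cC_\rho$.
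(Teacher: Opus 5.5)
Your proposal is correct and follows essentially the same route as the paper: coordinate reflections and exchange, justified by the invariance of \(L\), with integration by parts supplying the \(ab-c\) term. Star shapedness then follows from \(J_t(r\omega)\le r^2J_t(\omega)\) via \eqref{eq:quadratic-scaling-L} together with \(A(r\omega)=r^2A(\omega)\).
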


\begin{proof}
The rate \(L\) is invariant under \((v_1,v_2)\mapsto(-v_1,v_2)\), \((v_1,-v_2)\), and \((v_2,v_1)\).  Replacing \((x,y)\) by \((-x,y)\) or \((x,-y)\) changes the area sign and gives the first two symmetries.  Exchanging \(x\) and \(y\), integration by parts gives
\[
  \int_0^1y\dd x=x(1)y(1)-\int_0^1x\dd y,
\]
which gives the third symmetry.

If \(\omega\in\cC_\rho\), then \eqref{eq:quadratic-scaling-L} gives
\[
  J_t(r\omega)\le r^2J_t(\omega)\le J_t(\omega)\le t\log\rho,
\]
so \(r\omega\in\cC_\rho\).  Moreover,
\(
A(r\omega)=r^2A(\omega).
\)
Taking endpoints proves Carnot star shapedness.
These symmetries also imply
\(\max\cI_\rho(s,s)-\max\cI_\rho(s,-s)=s^2\) whenever
\((s,s)\in\cD_\rho\): coordinate exchange centers
\(\cI_\rho(s,s)\) at \(s^2/2\), while reflection of the second coordinate
gives \(\cI_\rho(s,-s)=-\cI_\rho(s,s)\).
\end{proof}

\subsection{\texorpdfstring{\additionmark{Saturation and comparison across
offspring means}}{Saturation and comparison across offspring means}}

\begin{revisioncheck}
We first show that at \(\rho=4\) the action constraints impose no
restriction beyond \(\dot\omega\in D\) almost everywhere.  We then compute
the resulting limit shape and use the quadratic scaling of \(L\) to
compare different offspring means.
\end{revisioncheck}

\begin{proposition}[Saturation]\label{prop:saturation}
For every \(\rho>1\), \(0\le\Gamma_\rho\le1/4\).  For \(\rho\ge4\),
\revisionmark{\(\cC_\rho=\cC_4\), \eqref{eq:saturation} holds, and
\(\Gamma_\rho=1/4\).}
\end{proposition}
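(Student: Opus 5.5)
The proof splits into three parts: the bound \(0\le\Gamma_\rho\le1/4\), the identity \(\cC_\rho=\cC_4\) for \(\rho\ge4\), and then \eqref{eq:saturation} together with \(\Gamma_\rho=1/4\).

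\emph{The bound \(0\le\Gamma_\rho\le1/4\).} The lower bound comes from the zero path, which lies in \(\cC_\rho\) and has \(A=0\). For the upper bound, let \(\omega=(x,y)\in\cC_\rho\). Finiteness of the action forces \(\dot\omega\in D\) almost everywhere, so with \(h(t):=\int_0^t\abs{\dot x}\) we have \(\abs{x(t)}\le h(t)\) and \(\abs{\dot y}\le 1-\abs{\dot x}\). Hence
\[
  A(\omega)\le\int_0^1 h(t)\bigl(1-\dot h(t)\bigr)\dd t
  =\int_0^1 h\dd t-\frac{h(1)^2}{2}.
\]
We must show the right side is at most \(1/4\), using that \(h\) is nondecreasing and \(1\)-Lipschitz with \(h(0)=0\). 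Set \(H:=h(1)\). The Lipschitz bound gives \(h(t)\le t\) and monotonicity gives \(h(t)\le H\), so \(\int_0^1h\le H-H^2/2\). The right side is then at most \(H-H^2\le1/4\). This is the continuum analogue of \Cref{lem:future-area} with \(x=0\).

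\emph{The identity \(\cC_\rho=\cC_4\) for \(\rho\ge4\).} If \(\dot\omega\in D\) almost everywhere, then \eqref{eq:L-range} gives \(J_t(\omega)\le t\log4\le t\log\rho\). So both sets equal the set of rooted absolutely continuous paths with velocity in \(D\). This yields \(\cC_\rho=\cC_4\), and taking endpoints of area lifts gives \(\cR_\rho=\cR_4\), which is \eqref{eq:saturation}.

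\emph{The value \(\Gamma_\rho=1/4\) for \(\rho\ge4\).} Use the path with velocity \((1,0)\) on \([0,1/2]\) and \((0,1)\) on \([1/2,1]\). Its velocity lies in \(D\), so it is admissible, and its area is \(\int_{1/2}^1\tfrac12\dd t=1/4\). Combined with the upper bound, this gives \(\Gamma_\rho=1/4\).

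The only mildly delicate step is the monotone rearrangement bound \(A\le\int h(1-\dot h)\). The integration by parts \(\int h\dot h=h(1)^2/2\) is legitimate because \(h\) is Lipschitz. Everything else is direct from \Cref{lem:convex-facts}.
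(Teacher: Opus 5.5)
Your proof is correct and follows the paper's argument step for step: the zero path, a variation bound on the area, \eqref{eq:L-range} for \(\cC_\rho=\cC_4\), and the same two-segment coordinate path of area \(1/4\). The only difference is in the area bound, where the paper uses \(A(\omega)\le\norm{x}_\infty\Var(y)\le\Var(x)\Var(y)\le1/4\); your integrand bound gives this at once, since \(h\le H\) and \(1-\dot h\ge0\) yield \(\int_0^1h(1-\dot h)\dd t\le H(1-H)\), so the integration by parts and the estimate \(h(t)\le t\) are an unnecessary detour.
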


\begin{proof}
The zero path gives \(\Gamma_\rho\ge0\).  Let
\(\omega=(x,y)\in\cC_\rho\).  Since \(\dot\omega\in D\) almost everywhere,
\(\Var(x)+\Var(y)\le1\).  Also
\(\norm{x}_\infty\le\Var(x)\), so
\begin{equation}\label{eq:area-quarter}
  A(\omega)
  \le\int_0^1\abs{x\dot y}\dd t
  \le\Var(x)\Var(y)\le\frac14.
\end{equation}

By \eqref{eq:L-range}, every absolutely continuous path starting at zero and
having velocity in \(D\) satisfies
\(
J_t(\omega)\le t\log4.
\)
Thus \(\cC_4\) is precisely the set of all such horizontal paths, and the
same is true for every \(\rho\ge4\); hence \(\cC_\rho=\cC_4\).  The path that
travels at unit \(\ell^1\) speed from \((0,0)\) to \((1/2,0)\) and then to
\((1/2,1/2)\) has total duration one and area \(1/4\).  Therefore the upper
bound is attained.
\end{proof}

\begin{analystadditioncheck}
\begin{revisioncheck}
At saturation, \(\cR_4\) is the set reachable with horizontal
\(\ell^1\) length at most one.  In sub-Finsler terminology, this is the unit
ball, whose boundary has piecewise quadratic panels; see
\cite{DuchinMooney2014}.  The following calculation identifies every panel
in the coordinates used here.
\end{revisioncheck}

\begin{proposition}[Complete saturated limit shape]
\label{prop:complete-saturated-shape}
For every \(\rho\ge4\), \eqref{eq:complete-saturated-shape-main} holds.
In particular, for \(0\le s\le1/2\),
\begin{equation}\label{eq:diagonal-fiber}
  \max\cI_4(s,s)=\frac{(1+2s)^2}{16}.
\end{equation}
\end{proposition}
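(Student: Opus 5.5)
By \Cref{prop:saturation}, \(\cC_\rho=\cC_4\) for \(\rho\ge4\), and \(\cC_4\) consists of all rooted absolutely continuous paths with velocity in \(D\) almost everywhere. So \(\cR_4\) is the set of endpoints \((a,b,A(\omega))\) of curves with \(\ell^1\) length at most one, and we must compute its fibers. By \Cref{prop:fibers} the projection is \(D\) and each fiber \(\cI_4(a,b)\) is a compact interval. By \Cref{prop:sym-star}, the reflection \((a,b,c)\mapsto(a,-b,-c)\) together with the exchange symmetry \((a,b,c)\mapsto(b,a,ab-c)\) shows that every fiber is symmetric about \(ab/2\). Indeed, composing the exchange with both sign reflections gives \((a,b,c)\mapsto(b,a,ab-c)\mapsto\dots\); more directly, path reversal \(\omega(t)\mapsto(a,b)-\omega(1-t)\) preserves admissibility and sends \(A\) to \(ab-A\). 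It therefore suffices to compute the upper endpoint \(ab/2+\Delta_4(\abs a,\abs b)\). The sign reflections reduce this further to \(a,b\ge0\): under \(a\mapsto-a\) the upper endpoint over \((-a,b)\) is minus the lower endpoint over \((a,b)\), which equals \(-ab/2+\Delta_4\), consistent with the formula.

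For \(u,v\ge0\), \(u+v\le1\), I will maximize \(A(\omega)=\int x\,\mathrm dy\) over curves from \(0\) to \((u,v)\) with \(\ell^1\) length at most one. Close the curve by the straight segment back from \((u,v)\) to \(0\), whose contribution to \(\int x\,\mathrm dy\) is \(-uv/2\). Then \(A(\omega)-uv/2\) is the signed enclosed area of a closed curve containing the fixed chord. Maximizing it is an \(\ell^1\) isoperimetric problem with a prescribed chord: maximize area with total length \(1-\) (free part only) and a fixed straight side of length \(u+v\) in the \(\ell^1\) sense, which does not count against the budget.

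The \(\ell^1\) Wulff shape is the \(\ell^\infty\) unit ball, a square. The optimal free arc is therefore a piece of an axis-parallel rectangle-type polygon, and the candidates will be staircase paths with at most three segments: right, up, left, or up, right, down. This is the part I would verify via the structure of the sub-Finsler geodesics of \cite{DuchinMooney2014}, and I would treat it as a known input. I will then parametrize, for example, the path \(\R^{\alpha}\U^{\beta}\Lft^{\gamma}\) with \(\alpha-\gamma=u\), \(\beta=v\) and \(\alpha+\beta+\gamma\le1\), together with its variants that insert a downward leg. For each I compute the area as an explicit quadratic and optimize. In the central regime \(3u-v\le1\), \(3v-u\le1\), the optimum should be a four-sided path, either \(\Dwn\,\R\,\U\,\Lft\) or \(\R\,\U\,\Lft\) with an extra leg, whose leg lengths solve linear equations, giving \(\bigl((1+u+v)^2-8uv\bigr)/16\). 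When \(3u-v\ge1\), a leg length becomes negative, and the constrained optimum \(\R^{(1+u)/2}\) followed by vertical motion gives \(u(1-u)/2\). The case \(3v-u\ge1\) is symmetric. A check at \(u=v=s\) gives \(ab/2+\Delta_4=s^2/2+\bigl((1+2s)^2-8s^2\bigr)/16=(1+2s)^2/16\), which is \eqref{eq:diagonal-fiber}.

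The main obstacle is the optimality, or upper bound, step: showing that no admissible curve beats these polygonal candidates. I would first reduce to polygonal curves by density and continuity (\Cref{lem:area-continuity}). I would then argue by rearrangement: replacing the curve by the convex hull boundary of curve plus chord does not increase length and does not decrease area. Finally I would apply the anisotropic isoperimetric inequality in the half-plane form, where the best curve with a free arc of given \(\ell^1\) length attached to a segment is a portion of a square. Handling the region cases, where the square portion no longer fits and the boundary of \(D\) constraints bind, is where the piecewise formula arises and needs the most care.
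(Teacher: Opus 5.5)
Your reductions are sound: \(\cC_\rho=\cC_4\), the interval property from \Cref{prop:fibers}, symmetry of each fiber about \(ab/2\) by path reversal, the reduction to \(a,b\ge0\), and the diagonal check. The genuine gap is the upper bound \(A(\omega)\le ab/2+\Delta_4(a,b)\). This bound is the whole content of the proposition, and you do not prove it.

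You take the shape of optimal curves from \cite{DuchinMooney2014} as an unspecified input, and the route you sketch fails as written.
\begin{itemize}
\item Replacing curve plus chord by the boundary of its convex hull need not preserve the form ``free arc plus the fixed chord''. If the arc crosses the chord line, the chord lies inside the hull.
\item The same replacement can decrease the signed area \(\int x\,\mathrm dy\). That integral is the integral of the winding number, so it exceeds the hull area for a curve wound twice.
\item A half-plane (reflection) isoperimetric inequality is not available. Reflection in a chord of general slope is not an \(\ell^1\) isometry, and the chord is a fixed segment rather than a base sliding along a line.
\item For \(3a-b\ge1\) the extremal square is cut through by the chord. For \(b=0\), \(a=1/2\) the optimizer \(\Dwn^{1/4}\R^{1/2}\U^{1/4}\) runs along the boundary of \([0,1/2]\times[-1/4,1/4]\).
\end{itemize}
Your candidate list also needs correcting. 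The central optimizer has four legs, not at most three:
\[
  \Dwn^{(1+a-3b)/4}\,\R^{(1+a+b)/4}\,\U^{(1+a+b)/4}\,\Lft^{(1-3a+b)/4}.
\]
For \(3a-b\ge1\) the optimizer is \(\Dwn^{(1-a-b)/2}\R^{a}\U^{(1-a+b)/2}\). A first leg \(\R^{(1+a)/2}\) spends the whole budget on horizontal motion once the endpoint \(x=a\) is matched.

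The paper proves the bound without isoperimetry. Let \(H_x,H_y\) be the variations of \(x,y\) and \(d=M_x-m_x\). The order in which \(x\) visits its extreme values gives \(d\le(H_x+a)/2\). The Jordan decomposition of \(\mathrm dy\), with \(m_x\le0\le b\), then gives
\[
  A(\omega)\le M_xH_y^+-m_xH_y^-\le dH_y^+\le\frac{(H_x+a)(H_y+b)}{4}.
\]
Maximizing over \(H_x+H_y=1\), \(H_x\ge a\), \(H_y\ge b\) yields all three cases.

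If you want to keep your isoperimetric idea, close \(\omega\) with a fixed arc of an axis-parallel square instead of the chord. Take \(\beta:(a,b)\to(a,c)\to(0,c)\to(0,0)\) with \(c\ge b\). Its \(\ell^1\) length is \(a+2c-b\), and \(\int_\beta x\,\mathrm dy=a(c-b)\). Apply \Cref{lem:anisotropic-loop} with \(K=[-1,1]^2\), so \(h_K=\norm{\cdot}_1\) and \(|K|=4\); it covers self-intersecting curves. For the closed curve \(\omega*\beta\) it gives
\[
  A(\omega)\le\frac{(1+a+2c-b)^2}{16}-a(c-b).
\]
Then finish as follows.
\begin{itemize}
\item \(c=b\) gives \((1+a+b)^2/16\).
\item \(c=(3a+b-1)/2\), admissible exactly when \(3a-b\ge1\), gives \(a(1-a+b)/2\).
\item The case \(3b-a\ge1\) follows from the exchange symmetry combined with reversal.
\end{itemize}
The explicit paths above attain these values, which completes the argument.
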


\begin{proof}
\emph{Upper boundary in the first quadrant.}
By \Cref{prop:saturation}, it is enough to determine \(\cR_4\).  First fix
\(a,b\ge0\) with \(a+b\le1\), and let
\(\omega=(x,y)\in\cC_4\) end at \((a,b)\).  Write
\[
  H_x:=\int_0^1\abs{\dot x(t)}\dd t,
  \qquad
  H_y:=\int_0^1\abs{\dot y(t)}\dd t.
\]
Then \(H_x\ge a\), \(H_y\ge b\), and \(H_x+H_y\le1\).  Put
\[
  m_x:=\min_{0\le t\le1}x(t),
  \qquad
  M_x:=\max_{0\le t\le1}x(t),
  \qquad
  d:=M_x-m_x.
\]
Since \(x(0)=0\), one has \(m_x\le0\).  If the path visits \(m_x\) before
\(M_x\), its horizontal variation is at least
\[
  (0-m_x)+(M_x-m_x)+(M_x-a)=2d-a.
\]
If it visits \(M_x\) first, the corresponding lower bound is
\[
  M_x+(M_x-m_x)+(a-m_x)=2d+a.
\]
Thus in both cases
\begin{equation}\label{eq:saturated-width-bound}
  d\le\frac{H_x+a}{2}.
\end{equation}

Let \(H_y^+\) and \(H_y^-\) be the positive and negative variations of
\(y\).
Since the net vertical displacement is \(b\),
\[
  H_y^+=\frac{H_y+b}{2},
  \qquad
  H_y^-=\frac{H_y-b}{2}.
\]
The Jordan decomposition of \(\dd y\), together with \(H_y^+-H_y^-=b\),
gives
\begin{align}
  A(\omega)
  &\le M_xH_y^+-m_xH_y^-
   =dH_y^++m_x(H_y^+-H_y^-)\notag\\
  &\le dH_y^+
   \le\frac{(H_x+a)(H_y+b)}{4}.
  \label{eq:saturated-area-bound}
\end{align}
Here the second line uses \(m_x\le0\) and \(b\ge0\).
The last expression is increasing in each of \(H_x\) and \(H_y\).  Its
maximum under \(H_x\ge a\), \(H_y\ge b\), and \(H_x+H_y\le1\) therefore
occurs with \(H_x+H_y=1\).  It remains to maximize
\[
  \frac{(H_x+a)(1-H_x+b)}4,
  \qquad a\le H_x\le1-b.
\]
The unconstrained maximizer is \(H_{x,0}=(1+b-a)/2\).  Hence
\begin{equation}\label{eq:saturated-first-quadrant-max}
  \sup_{\substack{\omega\in\cC_4\\ \omega(1)=(a,b)}}A(\omega)
  =
  \begin{cases}
    \displaystyle\frac{(1+a+b)^2}{16},
      &3a-b\le1\ \text{and}\ 3b-a\le1,\\[2mm]
    \displaystyle\frac{a(1-a+b)}2,
      &3a-b\ge1,\\[2mm]
    \displaystyle\frac{b(1+a-b)}2,
      &3b-a\ge1.
  \end{cases}
\end{equation}

\emph{An extremizing coordinate path.}
Let \(H_x^*\) be a maximizing value in the preceding one dimensional
problem and set
\[
  H_y^*:=1-H_x^*,\qquad
  d^*:=\frac{H_x^*+a}{2},\qquad
  H_{y,+}^*:=\frac{H_y^*+b}{2},\qquad
  H_{y,-}^*:=\frac{H_y^*-b}{2}.
\]
Follow the \revisionmark{path consisting of four coordinate segments}
\[
  (0,0)\longrightarrow(0,-H_{y,-}^*)
  \longrightarrow(d^*,-H_{y,-}^*)
  \longrightarrow(d^*,b)\longrightarrow(a,b).
\]
All four segment lengths are nonnegative, their sum is
\[
  H_{y,-}^*+d^*+H_{y,+}^*+(d^*-a)=H_y^*+H_x^*=1,
\]
and the area is \(d^*H_{y,+}^*\).  Parametrizing at unit \(\ell^1\) speed
therefore
gives a path in \(\cC_4\) attaining
\eqref{eq:saturated-first-quadrant-max}.

\emph{Symmetry and completion of the fibers.}
For an endpoint \((a,b)\) of arbitrary sign, define the reversed rooted path
\[
  \omega^{\leftarrow}(t):=(a,b)-\omega(1-t).
\]
It has the same endpoint and horizontal length as \(\omega\), belongs to
\(\cC_4\), and satisfies
\begin{equation}\label{eq:saturated-reversal-area}
  A(\omega^{\leftarrow})=ab-A(\omega).
\end{equation}
Thus every saturated fiber is symmetric about \(ab/2\).  The transformations
in \eqref{eq:symmetries} show that its centered half width depends only on
\((\abs{a},\abs{b})\) and is symmetric in these two arguments.  Subtracting
\(ab/2\) from \eqref{eq:saturated-first-quadrant-max} gives exactly
\(\Delta_4(\abs{a},\abs{b})\).  Finally, \Cref{prop:fibers} fills every height
between the two fiber endpoints, proving
\eqref{eq:complete-saturated-shape-main}.

For \(a=b=s\) with \(0\le s\le1/2\), the first case in
\eqref{eq:saturated-first-quadrant-max} applies and gives
\eqref{eq:diagonal-fiber}.
\end{proof}

The exact saturated shape and the quadratic scaling of the rate function now
give comparisons between arbitrary offspring means.

\begin{corollary}[Scaling across offspring means and comparison with the
saturated shape]
\label{cor:saturation-sandwich}
For \(1<\rho_1\le\rho_2\),
\begin{equation}\label{eq:cross-rate-shape-main}
  \delta_{\sqrt{\log\rho_1/\log\rho_2}}\cR_{\rho_2}
  \subset\cR_{\rho_1}\subset\cR_{\rho_2}.
\end{equation}
For \(1<\rho<4\), this gives
\eqref{eq:saturation-sandwich-main} and
\eqref{eq:saturation-Hausdorff-main}.
\end{corollary}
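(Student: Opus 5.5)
The plan is to prove both inclusions in \eqref{eq:cross-rate-shape-main} directly from the definition \eqref{eq:C-rho} of the admissible path sets, and then derive the two saturation statements.

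\emph{Right inclusion.} Since \(t\log\rho_1\le t\log\rho_2\) for every \(t\), we have \(\cC_{\rho_1}\subset\cC_{\rho_2}\). Taking lifted endpoints gives \(\cR_{\rho_1}\subset\cR_{\rho_2}\).

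\emph{Left inclusion.} Put \(r:=\sqrt{\log\rho_1/\log\rho_2}\in(0,1]\), and let \(\omega\in\cC_{\rho_2}\). The quadratic scaling bound \eqref{eq:quadratic-scaling-L} from \Cref{lem:convex-facts} applies, since \(\dot\omega\in D\) almost everywhere. It gives
\[
J_t(r\omega)\le r^2J_t(\omega)\le r^2t\log\rho_2=t\log\rho_1,
\]
so \(r\omega\in\cC_{\rho_1}\). The area is quadratic under horizontal scaling, \(A(r\omega)=r^2A(\omega)\), and the endpoint scales by \(r\). Hence the lifted endpoint of \(r\omega\) is \(\delta_r\) applied to that of \(\omega\), which proves \(\delta_r\cR_{\rho_2}\subset\cR_{\rho_1}\).

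\emph{Specialization to \(1<\rho<4\).} Take \(\rho_1=\rho\) and \(\rho_2=4\). This is \eqref{eq:saturation-sandwich-main} with \(r=r_\rho\).

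\emph{Hausdorff bound.} Because \(\cR_\rho\subset\cR_4\), only \(\sup_{p\in\cR_4}d(p,\cR_\rho)\) needs to be controlled. For each \(p=(a,b,c)\in\cR_4\), I would use \(\delta_{r_\rho}p\in\cR_\rho\) as the comparison point. Its Euclidean distance to \(p\) satisfies
\[
\|p-\delta_{r_\rho}p\|_2^2=(1-r_\rho)^2(a^2+b^2)+(1-r_\rho^2)^2c^2 .
\]
From the explicit description \eqref{eq:complete-saturated-shape-main}, or from \Cref{prop:saturation} together with \(|a|+|b|\le1\), we get \(a^2+b^2\le(|a|+|b|)^2\le1\). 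We also have \(|c|\le\Gamma_4=1/4\), using the symmetry \(c\mapsto-c\) of \Cref{prop:sym-star} to bound the lower side as well. Substituting these gives exactly \eqref{eq:saturation-Hausdorff-main}.

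The only point needing care is the sign bound \(|c|\le1/4\) on all of \(\cR_4\). It follows from the reflection symmetries in \eqref{eq:symmetries}, which send the maximum to the minimum. Everything else is a direct substitution.
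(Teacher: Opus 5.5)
Your proposal is correct and follows the paper's proof essentially step for step. The left inclusion uses the quadratic scaling \eqref{eq:quadratic-scaling-L}, the right inclusion uses the nesting \(\cC_{\rho_1}\subset\cC_{\rho_2}\), and the Hausdorff estimate uses the comparison point \(\delta_{r_\rho}p\) with \(a^2+b^2\le1\) and \(\abs{c}\le1/4\). Your appeal to the reflection symmetry for the lower bound on \(c\) is a harmless variant: the paper's bound \eqref{eq:area-quarter} already gives \(\abs{A(\omega)}\le1/4\) directly.
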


\begin{proof}
Put
\[
  r:=\sqrt{\frac{\log\rho_1}{\log\rho_2}}.
\]
If \(\omega\in\cC_{\rho_2}\), then
\eqref{eq:quadratic-scaling-L} gives
\[
  J_t(r\omega)
  =\int_0^tL(r\dot\omega(s))\dd s
  \le r^2J_t(\omega)
  \le r^2t\log\rho_2
  =t\log\rho_1.
\]
Thus \(r\omega\in\cC_{\rho_1}\).  Since
\(A_\cdot(r\omega)=r^2A_\cdot(\omega)\), taking lifted endpoints gives the
first inclusion in \eqref{eq:cross-rate-shape-main}.  The second follows
from \(\cC_{\rho_1}\subset\cC_{\rho_2}\).

Now take \(\rho_1=\rho\) and \(\rho_2=4\).  Then
\(r=r_\rho=\sqrt{\log\rho/\log4}\), so
\eqref{eq:saturation-sandwich-main} follows.  For
\((a,b,c)\in\cR_4\), the point
\(\delta_{r_\rho}(a,b,c)\) belongs to \(\cR_\rho\).  Since
\(a^2+b^2\le1\) and \(\abs{c}\le1/4\),
\[
  \norm{(a,b,c)-\delta_{r_\rho}(a,b,c)}_2^2
  =(1-r_\rho)^2(a^2+b^2)+(1-r_\rho^2)^2c^2
  \le(1-r_\rho)^2+\frac{(1-r_\rho^2)^2}{16}.
\]
The other directed distance is zero because
\(\cR_\rho\subset\cR_4\).  This proves
\eqref{eq:saturation-Hausdorff-main}.
\end{proof}
\end{analystadditioncheck}

\subsection{Wulff duality, origin fibers, and nonconvexity}

\begin{revisioncheck}
At the horizontal origin, every admissible path is a closed loop.  Cyclic
rooting converts a terminal action constraint for such a loop into the full
family of constraints at intermediate times.  Planar anisotropic
isoperimetry then gives the exact origin fiber, bounds the remaining fibers,
and supplies the construction used for universal nonconvexity.
\end{revisioncheck}

\subsubsection{\texorpdfstring{\revisionmark{Cyclic rooting and anisotropic
isoperimetry}}{Cyclic rooting and anisotropic isoperimetry}}

\begin{lemma}\label{lem:cyclic-rooting}
\begin{revisioncheck}
Fix \(1<\rho<4\).  Let \(\gamma:[0,1]\to\RR^2\) be an absolutely continuous closed curve
such that
\[
  \int_0^1L(\dot\gamma(t))\dd t\le\log\rho.
\]
There is a cyclic shift of \(\gamma\), followed by a translation in
\(\RR^2\), that starts at the origin, has the same action and signed area,
and belongs to \(\cC_\rho\).  Consequently,
\begin{equation}\label{eq:Z0-total-action}
  Z_0(\rho)=\max\left\{
  \int_0^1x(t)\dot y(t)\dd t:
  \begin{array}{l}
  \gamma=(x,y)\text{ is closed},\\
  \displaystyle\int_0^1L(\dot\gamma(t))\dd t\le\log\rho
  \end{array}\right\}.
\end{equation}
\end{revisioncheck}
\end{lemma}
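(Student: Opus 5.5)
The plan is the classical cycle lemma, often called the rising sun or Raney argument, applied to the continuous excess function. Set \(B=\log\rho\) and \(T:=\int_0^1L(\dot\gamma)\dd t\le B\). Extend \(\gamma\) periodically with period one; this is legitimate because \(\gamma\) is closed. Define
\[
  F(t):=\int_0^tL(\dot\gamma(s))\dd s-tB,\qquad t\in\RR,
\]
using the periodic extension of \(\dot\gamma\). Then \(F\) is continuous and satisfies \(F(t+1)=F(t)+T-B\le F(t)\).

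The first step is to choose the shift. Let \(\tau\in[0,1]\) be a point where \(F\) attains its maximum over \([0,1]\), which exists by continuity. For \(\tau\le\tau+t\le1\), the definition of \(\tau\) gives \(F(\tau+t)\le F(\tau)\). For \(1<\tau+t\le1+\tau\), periodicity gives
\[
  F(\tau+t)=F(\tau+t-1)+T-B\le F(\tau).
\]
Hence \(F(\tau+t)-F(\tau)\le0\) for every \(t\in[0,1]\). Since
\[
  F(\tau+t)-F(\tau)=\int_\tau^{\tau+t}L(\dot\gamma)\dd s-tB,
\]
the shifted curve satisfies every intermediate constraint.

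Next define
\[
  \widetilde\gamma(t):=\gamma(\tau+t)-\gamma(\tau),
\]
again via the periodic extension. This curve is absolutely continuous, starts at the origin, and is closed. Its velocity is a cyclic rearrangement of \(\dot\gamma\), so its total action equals \(T\), and \(\widetilde\gamma\in\cC_\rho\) by the previous paragraph.

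The signed area of a closed curve, \(\int x\dd y\), is invariant under translation. For a translation by \((p,q)\), the change is \(p\int\dot y\dd t=0\). The area is also invariant under a cyclic change of starting point, since the integral of a periodic integrand over any period is the same. Hence \(A(\widetilde\gamma)=\int_0^1x\dot y\dd t\).

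For the consequence \eqref{eq:Z0-total-action}, I will prove two inequalities. For the first, any \(\omega\in\cC_\rho\) ending at \((0,0)\) is a closed curve with total action at most \(B\), by the constraint at \(t=1\); so \(Z_0(\rho)\) is at most the right-hand supremum. For the second, every closed competitor yields, by the shift above, an element of \(\cC_\rho\) ending at the origin with the same area; so the supremum is at most \(Z_0(\rho)\).

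The word ``max'' also needs justification. The right-hand supremum is attained because it equals \(Z_0(\rho)\), and \(Z_0(\rho)\) is attained by compactness of \(\cR_\rho\) (\Cref{cor:endpoint-shape}). The maximizing loop can be taken to be the corresponding element of \(\cC_\rho\) itself.

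The main point requiring care is the choice of \(\tau\) as an argmax of the drifted action \(F\), rather than of the raw action. This is exactly the continuous cycle lemma. The wrap-around estimate uses \(T\le B\) through \(F(t+1)\le F(t)\). The hypothesis \(\rho<4\) is not needed in the argument, only in the statement.
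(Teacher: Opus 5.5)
Your proposal is correct and follows the paper's proof essentially step for step. The paper also roots the loop at a maximizer of the drifted action \(\int_0^t\{L(\dot\gamma(s))-\log\rho\}\dd s\) and handles the wrap-around using the nonpositive total excess. It likewise obtains the formula for \(Z_0(\rho)\) from the two inclusions, with attainment coming from compactness of \(\cR_\rho\).
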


\begin{proof}
Put
\[
  \mathfrak b_\gamma(t):=\int_0^t\{L(\dot\gamma(s))-B\}\dd s
\]
and choose \(t_0\) at which \(\mathfrak b_\gamma\) attains its maximum.
Cyclically extend
the velocity and set
\[
  \widetilde\gamma(t):=\gamma((t_0+t)\bmod1)-\gamma(t_0),
\]
with the continuous interpretation at the wrap time.  If \(t_0+t\le1\),
then
\[
  \int_0^t\{L(\dot{\widetilde\gamma}(s))-B\}\dd s
  =\mathfrak b_\gamma(t_0+t)-\mathfrak b_\gamma(t_0)\le0.
\]
If \(t_0+t>1\), the same integral equals
\[
  \mathfrak b_\gamma(1)-\mathfrak b_\gamma(t_0)
    +\mathfrak b_\gamma(t_0+t-1)
  \le \mathfrak b_\gamma(1)\le0.
\]
Thus \revisionmark{\(J_t(\widetilde\gamma)\le tB\) for every
\(t\in[0,1]\)}.  Translation does not change the
line integral because the curve is closed, and cyclic shifting changes
neither the action nor the line integral.
\begin{revisioncheck}
Conversely, every \(\omega\in\cC_\rho\) ending at the horizontal origin is
a closed curve with total action at most \(B\), so it is included in the
class on the right hand side of \eqref{eq:Z0-total-action}.
\end{revisioncheck}
The maximum is attained because the origin fiber of the compact set
\revisionmark{\(\cR_\rho\)} is compact.
\end{proof}

\begin{revisioncheck}
We shall use the planar Wulff inequality in a form that also permits
self intersections.  For a centrally symmetric convex body
\(K\subset\RR^2\) containing the origin in its interior, let \(h_K\) be
its support function as defined in \eqref{eq:support-function-def}, and set
\[
  \mathcal P_K(\gamma)
  :=\int_0^1h_K(\dot\gamma(t))\dd t
\]
for an absolutely continuous closed curve \(\gamma\).  Because \(K\) is
centrally symmetric, \(h_K\) is a norm and \(\mathcal P_K\) is independent
of the orientation of the parametrization.
\end{revisioncheck}

\begin{lemma}
\label{lem:anisotropic-loop}
Let \(K\subset\RR^2\) be a centrally symmetric convex body with
\(0\in\operatorname{int}K\).  Every absolutely continuous closed curve
\(\gamma=(x,y)\) satisfies
\begin{equation}\label{eq:anisotropic-loop}
  \mathcal P_K(\gamma)^2
  \ge4|K|\left|\int_0^1x(t)\dot y(t)\dd t\right|.
\end{equation}
\end{lemma}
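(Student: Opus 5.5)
The plan is to reduce \eqref{eq:anisotropic-loop} to the classical Minkowski inequality for mixed areas of planar convex bodies. The reduction goes through a polygonal approximation followed by a \emph{convexification by rearrangement of edges}. The rearrangement keeps \(\mathcal P_K\) unchanged and does not decrease the absolute signed area. The orientation and self-intersection issues are absorbed into the rearrangement step.

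\emph{Step 1: reduction to closed polygons.} Given an absolutely continuous closed \(\gamma\), let \(\gamma_m\) be the closed polygon interpolating \(\gamma\) at the times \(k/m\). Jensen's inequality for the convex function \(h_K\) gives \(\mathcal P_K(\gamma_m)\le\mathcal P_K(\gamma)\). Also \(\gamma_m\to\gamma\) uniformly with \(\Var(\gamma_m)\le\Var(\gamma)\), so the Lipschitz estimate of \Cref{lem:area-continuity} (rescaled to a variation bound \(\Var(\gamma)\) in place of one) gives \(\int x_m\dd y_m\to\int x\dd y\). It therefore suffices to prove \eqref{eq:anisotropic-loop} for closed polygons. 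By replacing \(\gamma\) with its reversal if needed, which keeps \(\mathcal P_K\) because \(h_K\) is even, we may assume the signed area \(A\) is nonnegative.

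\emph{Step 2: convexification.} Let the polygon have edge vectors \(e_1,\dots,e_m\) with \(\sum e_i=0\). Its signed area is
\[
  A=\tfrac12\sum_{i<j}e_i\times e_j,
\]
where \(\times\) is the planar cross product, and \(\mathcal P_K=\sum_ih_K(e_i)\) is invariant under any permutation of the edges. We reorder the edges by increasing argument, starting the cyclic order at any cut. The resulting closed polygon \(C\) is convex and positively oriented, so its signed area is \(|C|\). The claim is that \(|C|\ge A\). I would prove this by a bubble-sort argument: swapping two adjacent edges \(e_i,e_{i+1}\) changes \(A\) by \(\pm\tfrac12\,e_i\times e_{i+1}\), and the swap increases \(A\) exactly when the pair is out of counterclockwise order. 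The subtle point is the cyclicity of angles. To handle it, I would use cyclic shifts, which preserve area, to cut the angular order at a gap, and then argue that the sorted sequence maximizes \(\sum_{i<j}e_i\times e_j\) over all permutations. An alternative is to compute \(A=\int_{\RR^2}w_\gamma\) via the winding number and compare it with the support function of \(C\). I expect this step to be the main obstacle, since the bookkeeping for ties and for the cyclic cut needs care.

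\emph{Step 3: Minkowski's inequality.} For the convex polygon \(C\), each edge \(e_i\) satisfies \(h_K(e_i)=h_{\mathsf JK}(n_i)\,|e_i|\) after rotation, where \(n_i\) is the outward unit normal and \(\mathsf J\) is the quarter turn \eqref{eq:J-quarter-turn}; the choice of rotation direction is immaterial because \(K\) is centrally symmetric. Hence
\[
  \mathcal P_K(C)=\sum_i h_{\mathsf JK}(n_i)|e_i|=2V(C,\mathsf JK),
\]
the mixed area. Minkowski's inequality \(V(C,\mathsf JK)^2\ge|C|\,|\mathsf JK|\), together with \(|\mathsf JK|=|K|\), gives
\[
  \mathcal P_K(\gamma)^2\ge\mathcal P_K(C)^2\ge4|K|\,|C|\ge4|K|\,A.
\]
Passing to the limit \(m\to\infty\) from Step 1 completes the proof of \eqref{eq:anisotropic-loop}.
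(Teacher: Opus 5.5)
Your Steps 1 and 3 are fine. Step 2, however, is a genuine gap, and it is not bookkeeping: it carries the whole non-convex, self-intersecting content of the lemma.

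The bubble-sort mechanism cannot work as stated.
\begin{itemize}
\item Swapping adjacent edges changes \(A\) by \(-e_k\times e_{k+1}\) (not \(\pm\frac12 e_k\times e_{k+1}\)). This is positive only when the pair is out of counterclockwise order by an angle less than \(\pi\).
\item Because \(\sum_ie_i=0\), the edges cannot lie in an open half-plane. So in any cut of the angular order some pairs are more than \(\pi\) apart, and each time bubble sort exchanges such an inverted pair the area drops.
\item Cyclic shifts do not rescue this. Take the five unit edges of a regular pentagram, at angles \(0^\circ,144^\circ,288^\circ,72^\circ,216^\circ\) in that order. Every cyclically adjacent pair has \(e_k\times e_{k+1}=\sin144^\circ>0\), so every adjacent swap strictly lowers the area \(A\approx0.41\). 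Yet the sorted order, the regular pentagon, has area \(\approx1.72\).
\end{itemize}
So there is no area-monotone sorting path, and maximality of the sorted order is a global statement that you assert but do not prove.

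In fact \(A\le|C|\) is itself an anisotropic isoperimetric inequality for self-intersecting polygons. With Wulff shape \(C\), the anisotropic length \(\sum_ih_C(-\mathsf Je_i)\) equals \(2|C|\) for every ordering of the edges, and the general Wulff inequality then gives exactly \(A\le|C|\). The winding-number alternative you mention would likewise need an isoperimetric inequality for the superlevel sets of \(w_\gamma\).

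The paper never rearranges edges globally. After a small perturbation it subdivides the polygon at its transverse crossings and decomposes it into finitely many oriented simple cycles \(\gamma_j\). Line integrals and \(h_K\)-lengths are additive under this decomposition. The Wulff inequality for Jordan curves \cite{EspositoFuscoTrombetti2005} bounds each \(|A_j|\) by \(P_j^2/(4|K|)\), and \(\sum_jP_j^2\le(\sum_jP_j)^2\) combines the cycles.

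If you want to keep your Minkowski argument, apply it cycle by cycle. Replace each simple cycle by the boundary of the convex hull of the region it bounds. This does not decrease the area, and it does not increase the \(h_K\)-length, because each pocket is replaced by a chord and \(h_K\) is a norm. Your Step 3 then applies to each hull. Combined with your Step 1 and the same summation over cycles, this gives a complete proof.
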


\begin{proof}
For a positively oriented Jordan curve, the planar Wulff inequality
applied to \(-\mathsf J K\) gives \eqref{eq:anisotropic-loop}.  Indeed, if
\(\tau\) and \(\nu=-\mathsf J\tau\) are the unit tangent and outer normal,
then
\[
  h_{-\mathsf J K}(\nu)=h_K(\mathsf J\nu)=h_K(\tau),
\]
so its anisotropic perimeter is exactly \(\mathcal P_K(\gamma)\), while
\(|-\mathsf J K|=|K|\).
\begin{revisioncheck}
In the normalization used here, the squared anisotropic perimeter is at
least four times the product of the Euclidean areas of the Wulff shape and
the enclosed set; see \cite{EspositoFuscoTrombetti2005}.
\end{revisioncheck}

We next pass from Jordan curves to arbitrary polygonal loops.  After an
arbitrarily small perturbation of the vertices, one may assume that the
polygon has only finitely many transverse crossings and no overlapping
edges.  Subdivide its oriented edges at the crossings.  The incoming and
outgoing edge multiplicities then balance at every vertex.  Starting from
an unused edge and following unused outgoing edges until returning to the
initial vertex produces an oriented cycle.  Splitting at any repeated
vertex and iterating this procedure decomposes the polygon into finitely
many oriented simple cycles \(\gamma_j\), with
edge multiplicities partitioning those of the original loop.  Line
integrals and anisotropic lengths are additive under this decomposition.
Writing \(A_j\) for the signed Euclidean area and \(P_j\) for the
anisotropic length of \(\gamma_j\), we obtain
\[
  \left|\int x\dd y\right|
  =\left|\sum_jA_j\right|
  \le\sum_j|A_j|
  \le\frac1{4|K|}\sum_jP_j^2
  \le\frac1{4|K|}\left(\sum_jP_j\right)^2.
\]
The same conclusion follows for a nongeneric polygon by perturbation and
continuity.

Finally, choose refining partitions of a rectifiable closed curve for
which the inscribed polygonal lengths in the norm \(h_K\) converge to
\(\mathcal P_K(\gamma)\).  The polygons converge uniformly to \(\gamma\)
and have a common total variation bound.  Riemann--Stieltjes integration
by parts, exactly as in \Cref{lem:area-continuity}, shows that their
algebraic areas converge to \(\int x\dd y\).  Passing to the limit proves
\eqref{eq:anisotropic-loop}.
\end{proof}

\subsubsection{\texorpdfstring{\revisionmark{The Wulff parameter and the exact
origin fiber}}{The Wulff parameter and the exact origin fiber}}

\begin{revisioncheck}
To specialize the preceding inequality to the Cram\'er transform, we need
the area of its log Laplace sublevel sets and the unique scalar parameter at
which the resulting Wulff bound is minimized.
\end{revisioncheck}

\begin{hyperaddition}
\emph{Hyperbolic representation of the dual potential.}
Use the coordinates \(p=(\alpha+\beta)/2\), \(q=(\alpha-\beta)/2\) from
\eqref{eq:psi-cosh-factor}.  The map
\[
  X(p,q):=(\cosh p\cosh q,\sinh p\cosh q,\sinh q)
\]
parametrizes the hyperboloid model
\[
  \mathbf H^2:=\{X\in\RR^3:-X_0^2+X_1^2+X_2^2=-1,\ X_0>0\}
\]
of the hyperbolic plane of curvature \(-1\).  The induced metric in these
Fermi coordinates is \(\cosh^2q\,\mathrm dp^2+\mathrm dq^2\).  If
\(o=(1,0,0)\), the hyperboloid distance formula gives
\[
  \cosh d(o,X(p,q))=X_0=\cosh p\cosh q,
  \qquad
  \psi(\alpha,\beta)=\log\cosh d(o,X(p,q)).
\]
Consequently, the image of \(K_h\) is the closed hyperbolic disk with
center \(o\) and radius \(r_h:=\operatorname{arcosh}(\e^h)\).
The area forms satisfy
\[
  \mathrm dA_\theta=2\,\mathrm dp\,\mathrm dq,
  \qquad
  \mathrm dA_{\mathrm{hyp}}=\cosh q\,\mathrm dp\,\mathrm dq.
\]
Hence the Euclidean area required by the Wulff inequality is
\[
  a(h)=2\int_{\{X:\,d(o,X)\le r_h\}}
              \operatorname{sech}q\,\mathrm dA_{\mathrm{hyp}}.
\]

This auxiliary metric is distinct from the Hessian metric of \(\psi\).
In the same coordinates, the latter is
\(\operatorname{sech}^2p\,\mathrm dp^2+
\operatorname{sech}^2q\,\mathrm dq^2\), which becomes
\(\mathrm d\xi^2+\mathrm d\eta^2\) under
\(\xi=\arcsin(\tanh p)\), \(\eta=\arcsin(\tanh q)\).
\end{hyperaddition}

\begin{hyperreplacement}
\begin{lemma}\label{lem:ah-asymptotics}
The function \(a\) in \eqref{eq:Kh-ah-main} is smooth and strictly
increasing on \((0,\infty)\).  In terms of the complete elliptic integral
\(\mathbf K\),
\[
  a'(h)=8\mathbf K\!\left(\sqrt{1-\e^{-2h}}\right),
  \qquad
  a(h)=8\int_0^h\mathbf K\!\left(\sqrt{1-\e^{-2s}}\right)\dd s.
\]
In particular,
\begin{align}
  a(h)&=4\pi h+\pi h^2+O(h^3),&&h\downarrow0,\label{eq:ah-small}\\
  a'(h)&=4\pi+2\pi h+O(h^2),&&h\downarrow0,\label{eq:ah-derivative-small}\\
  a(h)&=4(h+\log4)^2+O(1),&&h\to\infty.\label{eq:ah-large}
\end{align}
Also, \(a'(h)=8(h+\log4)+O(h\e^{-2h})\) as \(h\to\infty\).
\end{lemma}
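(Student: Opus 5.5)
We work in the coordinates \(p=(\alpha+\beta)/2\), \(q=(\alpha-\beta)/2\) from \eqref{eq:psi-cosh-factor}. In these coordinates \(\psi=\log\cosh p+\log\cosh q\) and \(\mathrm dA_\theta=2\,\mathrm dp\,\mathrm dq\). Let \(r_h=\operatorname{arcosh}(\e^h)\). The set \(K_h\) is then described in \((p,q)\) by \(\abs q\le r_h\) and \(\abs p\le\operatorname{arcosh}(\e^h\operatorname{sech}q)\). By the fourfold symmetry in \((p,q)\) this yields the explicit one dimensional formula
\[
  a(h)=8\int_0^{r_h}\operatorname{arcosh}\bigl(\e^h\operatorname{sech}q\bigr)\dd q .
\]
This is just the hyperbolic disk representation stated before the lemma, computed in Fermi coordinates. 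We do not need the weighted hyperbolic area form itself; it only motivates the substitution below.

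\emph{Derivative.} We differentiate under the integral sign. The boundary term vanishes because \(\operatorname{arcosh}(\e^h\operatorname{sech}r_h)=\operatorname{arcosh}1=0\). Since \(\frac{\dd}{\dd h}\operatorname{arcosh}(u\e^h)=u\e^h/\sqrt{u^2\e^{2h}-1}\), we obtain
\[
  a'(h)=8\int_0^{r_h}\frac{\dd q}{\sqrt{1-\e^{-2h}\cosh^2q}} .
\]
The singularity at \(q=r_h\) is an inverse square root, so it is integrable. Differentiation under the integral is justified by truncating near \(r_h\), or equivalently by performing the substitution below first. Using \(\cosh r_h=\e^h\), the radicand equals \(\e^{-2h}(\sinh^2r_h-\sinh^2q)\). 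We substitute \(\sinh q=\sinh r_h\sin\varphi\), so that \(\cosh q\,\dd q=\sinh r_h\cos\varphi\,\dd\varphi\). The integrand then becomes
\[
  \frac{\e^h}{\sqrt{1+(\e^{2h}-1)\sin^2\varphi}}\,\dd\varphi .
\]
Replacing \(\varphi\) by \(\pi/2-\varphi\) and dividing numerator and denominator by \(\e^h\) turns this into \(\bigl(1-k^2\sin^2\varphi\bigr)^{-1/2}\dd\varphi\) with \(k^2=1-\e^{-2h}\). Hence \(a'(h)=8\mathbf K(\sqrt{1-\e^{-2h}})\).

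\emph{Consequences.} The map \(h\mapsto k\) is smooth into \([0,1)\), and \(\mathbf K\) is real analytic on \([0,1)\). It follows that \(a'\) is smooth and positive, so \(a\) is smooth and strictly increasing. Moreover \(a(h)\to0\) as \(h\downarrow0\), because \(K_h\) shrinks to the origin, and this gives the integral formula for \(a\).

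\emph{Asymptotics.} For small \(h\) we use \(\mathbf K(k)=\frac\pi2\bigl(1+k^2/4+O(k^4)\bigr)\) together with \(k^2=2h+O(h^2)\). This gives \(a'(h)=4\pi+2\pi h+O(h^2)\), and integrating from \(0\) gives \eqref{eq:ah-small}. For large \(h\) we use the classical expansion \(\mathbf K(k)=\log(4/k')+O(k'^2\log(1/k'))\) with complementary modulus \(k'=\e^{-h}\). This gives \(a'(h)=8(h+\log4)+O(h\e^{-2h})\). Integrating, we get \(a(h)=4(h+\log4)^2+c+O(h\e^{-2h})\) for some constant \(c\). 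The constant is finite because the error term is integrable at infinity and \(a\) is bounded on compact sets, which yields \eqref{eq:ah-large}.

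The main obstacle is the substitution that reduces \(a'\) to the standard complete elliptic integral, and keeping the modulus correct. The remaining steps are routine: the justification of differentiation under the integral with the endpoint singularity, and the citation of the logarithmic expansion of \(\mathbf K\) near \(k=1\).
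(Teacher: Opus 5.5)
Your proposal is correct. The slice formula \(a(h)=8\int_0^{r_h}\operatorname{arcosh}(\e^h\operatorname{sech}q)\dd q\), the Leibniz derivative, the substitution \(\sinh q=\sinh r_h\sin\varphi\), the modulus \(k^2=1-\e^{-2h}\), and both asymptotic regimes all check out.

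Your route to the derivative differs from the paper's. The paper writes \(a(h)\) in geodesic polar coordinates about \(o\), with weight \(\operatorname{sech}q=(1+\sinh^2r\sin^2\varphi)^{-1/2}\). There \(h\) enters only through the radial upper limit \(r_h\). So \(a'(h)\) follows from the fundamental theorem of calculus and \(r_h'=\coth r_h\), with no singular integrand, and \(\int_0^{2\pi}(1-\tanh^2r_h\cos^2\varphi)^{-1/2}\dd\varphi=4\mathbf K(\tanh r_h)\) appears at once.

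You instead slice \(K_h\) along the Fermi coordinate \(q\). This needs no hyperbolic area form and gives an explicit one dimensional formula for \(a(h)\) itself. The cost is that you must differentiate an integral whose \(h\)-derivative has an inverse square root singularity at the moving endpoint. Your substitution is exactly the paper's polar angle on the boundary circle \(r=r_h\), so the two computations meet there, and the asymptotic part is the same.

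The one step you only sketch is this Leibniz step. Substituting first is not literally equivalent, since the product rule then yields two terms that must be recombined. A clean justification runs as follows. The extended integrand \(\operatorname{arcosh}\max\{1,\e^h\operatorname{sech}q\}\) is nondecreasing and absolutely continuous in \(h\), with derivative \(\mathbf 1_{\{q<r_h\}}(1-\e^{-2h}\cosh^2q)^{-1/2}\). Tonelli then gives \(a(h_2)-a(h_1)=\int_{h_1}^{h_2}8\mathbf K(\sqrt{1-\e^{-2s}})\dd s\). Continuity of this integrand yields the derivative formula.
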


\begin{proof}
In hyperbolic polar coordinates about \(o\),
\[
  X=(\cosh r,\sinh r\cos\varphi,\sinh r\sin\varphi),
  \qquad
  \mathrm dA_{\mathrm{hyp}}=\sinh r\,\mathrm dr\,\mathrm d\varphi.
\]
Since \(\sinh q=X_2\), the weighted area formula becomes
\[
  a(h)=2\int_0^{r_h}\int_0^{2\pi}
       \frac{\sinh r}{\sqrt{1+\sinh^2r\sin^2\varphi}}
       \dd\varphi\dd r.
\]
For \(h>0\), the integrand is smooth and
\(r_h'=\cosh r_h/\sinh r_h\).  Therefore
\begin{align*}
  a'(h)
  &=2\cosh r_h\int_0^{2\pi}
       \frac{\dd\varphi}{\sqrt{1+\sinh^2r_h\sin^2\varphi}}\\
  &=2\int_0^{2\pi}
       \frac{\dd\varphi}{\sqrt{1-\tanh^2r_h\cos^2\varphi}}
   =8\mathbf K(\tanh r_h).
\end{align*}
Now \(\tanh r_h=\sqrt{1-\e^{-2h}}\), which proves the formula for
\(a'\), its positivity, and its smoothness on \((0,\infty)\).  Since
\(a(h)\to0\) as \(h\downarrow0\), integration gives the formula for
\(a\).  For later use, the coarea formula also gives
\[
  a'(h)=\int_{\partial K_h}\frac{1}{\norm{\nabla\psi(\theta)}_2}
                    \dd\mathcal H^1(\theta),
\]
where \(\mathcal H^1\) is one dimensional Hausdorff measure.  The level is
regular because \(\nabla\psi\) vanishes only at the origin.

As \(k\to0\), expansion of the defining integrand gives
\(\mathbf K(k)=\frac\pi2(1+k^2/4+O(k^4))\).  Since
\(k^2=1-\e^{-2h}=2h+O(h^2)\), substitution proves
\eqref{eq:ah-derivative-small}; integration proves \eqref{eq:ah-small}.

For large \(h\), the complementary modulus is
\(k':=\sqrt{1-k^2}=\e^{-h}\).  The standard expansion is
\[
  \mathbf K(k)=\log\frac4{k'}
       +O\!\left(k'^2\log\frac1{k'}\right),
  \qquad k'\downarrow0.
\]
It gives \(a'(h)=8(h+\log4)+O(h\e^{-2h})\).  The error is integrable
at infinity, so \(a(h)-4(h+\log4)^2\) has a finite limit.  This proves
\eqref{eq:ah-large}.
\end{proof}
\end{hyperreplacement}

\begin{analystadditioncheck}
\begin{lemma}
\label{lem:unique-Wulff-parameter}
For \(h>0\), set
\begin{equation}\label{eq:Wulff-parameter-map}
  \mathfrak m(h):=\frac{2a(h)}{a'(h)}-h.
\end{equation}
Then \(\mathfrak m\) is a strictly increasing bijection from
\((0,\infty)\) onto
\((0,\log4)\).  Consequently, for every \(B\in(0,\log4)\), the function
\begin{equation}\label{eq:Wulff-quotient}
  \Phi_B(h):=\frac{(B+h)^2}{4a(h)},\qquad h>0,
\end{equation}
has a unique minimizer \(h_B=\mathfrak m^{-1}(B)\), and
\begin{equation}\label{eq:Wulff-minimum-value}
  \Phi_B(h_B)=\frac{a(h_B)}{a'(h_B)^2}.
\end{equation}
\end{lemma}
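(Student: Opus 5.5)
The plan is to reduce everything to two properties of \(\mathfrak m\): monotonicity and its boundary limits. The link with \(\Phi_B\) comes first. Differentiating,
\[
  \Phi_B'(h)=\frac{(B+h)}{4a(h)^2}\bigl(2a(h)-(B+h)a'(h)\bigr)
  =\frac{(B+h)a'(h)}{4a(h)^2}\bigl(\mathfrak m(h)-B\bigr).
\]
Since \(a>0\) and \(a'>0\) on \((0,\infty)\) by \Cref{lem:ah-asymptotics}, the sign of \(\Phi_B'\) equals the sign of \(\mathfrak m(h)-B\). So once \(\mathfrak m\) is a strictly increasing bijection onto \((0,\log4)\), \(\Phi_B\) decreases on \((0,h_B)\) and increases afterwards, with \(h_B=\mathfrak m^{-1}(B)\) the unique minimizer. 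At \(h_B\) one has \(B+h_B=2a/a'\), and substitution gives \(\Phi_B(h_B)=(2a/a')^2/(4a)=a/a'^2\), which is \eqref{eq:Wulff-minimum-value}.

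\textbf{Boundary limits.} Both come from \Cref{lem:ah-asymptotics}.
\begin{itemize}
\item As \(h\downarrow0\), \eqref{eq:ah-small} and \eqref{eq:ah-derivative-small} give \(2a/a'=2h(4\pi+\pi h)/(4\pi+2\pi h)+O(h^3)=2h-h^2/2+O(h^3)\). Hence \(\mathfrak m(h)=h+O(h^2)\to0\), and \(\mathfrak m>0\) near zero.
\item As \(h\to\infty\), write \(u=h+\log4\). Then \(a=4u^2+O(1)\) and \(a'=8u+O(h\e^{-2h})\), so \(2a/a'=u+O(1/u)\) and \(\mathfrak m(h)\to\log4\).
\end{itemize}

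\textbf{Strict monotonicity (main obstacle).} We have \(\mathfrak m'=1-2aa''/a'^2\), so we need \(aa''<a'^2/2\). This is a quantitative log-concavity of \(a\), stronger than the log-concavity \(aa''\le a'^2\). Set \(F(h):=a'(h)^2-2a(h)a''(h)\). Its derivative is \(F'=-2aa'''\), and \(F(0^+)=16\pi^2>0\). The hardest step is therefore a sign statement for \(a'''=8\,\tfrac{\dd^2}{\dd h^2}\mathbf K(\sqrt{1-\e^{-2h}})\).

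I would try first to show that \(g(h):=\mathbf K(\sqrt{1-\e^{-2h}})\) is concave in \(h\). The expansions suggest this: the slope is \(\pi/4\) near zero and tends to \(1\) at infinity, which would contradict concavity. So \(g\) is convex, \(a'''\ge0\), and \(F\) is decreasing. This route then needs \(F>0\) via the limit at infinity: \(F=64u^2-2\cdot4u^2\cdot8+O(u)=O(u)\). That leading cancellation makes the route delicate.

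Instead I would argue directly with the hyperbolic representation. Write \(a'(h)=2\int_0^{2\pi}(1-t\cos^2\varphi)^{-1/2}\dd\varphi\) with \(t=\tanh^2 r_h=1-\e^{-2h}\). The claim \(\mathfrak m'>0\) is equivalent to \(\tfrac{\dd}{\dd h}(a/a')<1\). Since \(a/a'\) starts at \(0\) with slope \(1\), the goal is to show \(\tfrac{\dd}{\dd h}(a/a')<1\) on all of \((0,\infty)\).

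A convenient route uses the Legendre-type identities for \(\mathbf K\) and \(\mathbf E\). These express \(\dd\mathbf K/\dd h\) through \(\mathbf E/(k^2)-\mathbf K\), reducing the inequality to \(2a\,(\mathbf E-k'^2\mathbf K)<k^2\cdot 8\mathbf K^2\cdot(\text{explicit})\). One then verifies this by comparing both sides as functions of \(h\) via a further derivative, using classical bounds such as \(\mathbf E\ge\pi/2\) and \(\mathbf K\le\log(4/k')+\) const.

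If this becomes unwieldy, the fallback is to prove only that \(\Phi_B\) has a unique critical point. Any critical point solves \(\mathfrak m(h)=B\), and a second-derivative computation at such a point, \(\Phi_B''\propto-\mathfrak m'\cdot(\ldots)\), reduces everything to the same inequality only at critical points. That pointwise inequality may follow more easily from convexity of \(\mathbf K\) in \(h\).
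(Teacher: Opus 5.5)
\textbf{Gap: strict monotonicity of $\mathfrak m$ is not proved.} Several parts of your plan are correct and follow the paper: the sign computation $\Phi_B'(h)=\frac{(B+h)a'(h)}{4a(h)^2}\{\mathfrak m(h)-B\}$, the resulting identification of the minimizer and of the minimum value, and both boundary limits. The missing step is strict monotonicity of $\mathfrak m$, which is the substance of the lemma. None of your routes establishes it, and some steps are wrong.

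\begin{itemize}
\item Since $\mathfrak m'=2(a/a')'-1$, the condition $\mathfrak m'>0$ is $(a/a')'>1/2$, not $(a/a')'<1$. The latter just says $a''>0$, which is true but useless.
\item In your first route, the endpoint slopes $\pi/4$ and $1$ of $g(h)=\mathbf K(\sqrt{1-\e^{-2h}})$ do not imply that $g$ is convex.
\item Your estimate $F=O(u)$ is too crude. The cancellation leaves $F\to-16c$, where $c:=\lim_{h\to\infty}\{a(h)-4(h+\log4)^2\}$. That route therefore needs both a convexity proof for $g$ and the sign $c\le0$, and you supply neither.
\item The Legendre-identity route is not specified precisely enough to check. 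The target inequality involves $a$ itself, which is an integral of $\mathbf K$, so pointwise bounds on $\mathbf K$ and $\mathbf E$ cannot close it without some global argument.
\item The fallback is circular. Every $h$ with $\mathfrak m(h)\in(0,\log4)$ is a critical point of $\Phi_{\mathfrak m(h)}$, and there $\Phi_B''=\frac{(B+h)a'}{4a^2}\,\mathfrak m'(h)$ (with a plus sign). So you need the same inequality at essentially every $h$, and the fallback reduces nothing.
\end{itemize}

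\textbf{The missing idea.} Note that $a'^2-2aa''=-4a^{3/2}(\sqrt a)''$. What you need is therefore strict concavity of $\mathfrak s:=\sqrt a=|K_h|^{1/2}$, and this is geometric rather than special-function.
\begin{itemize}
\item Convexity of $\psi$ gives $(1-\lambda)K_{h_0}+\lambda K_{h_1}\subset K_{(1-\lambda)h_0+\lambda h_1}$. The planar Brunn--Minkowski inequality then makes $\mathfrak s$ concave.
\item Strict convexity of $\psi$ makes the inclusion proper. Fix a direction $e$ and let $\theta_0,\theta_1$ maximize $\langle e,\cdot\rangle$ on $K_{h_0},K_{h_1}$. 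These points differ, since $\psi(\theta_i)=h_i$. Their combination satisfies $\psi<h_\lambda$, so $K_{h_\lambda}$ contains a nonempty open set beyond that support line which misses the Minkowski combination. This gives strict area inequality, hence strict concavity of $\mathfrak s$.
\item Writing $\mathfrak m=\mathfrak s/\mathfrak s'-h$ gives $\mathfrak m'=-\mathfrak s\mathfrak s''/\mathfrak s'^2\ge0$.
\item $\mathfrak m$ cannot be constant on an interval, since $\mathfrak s$ would then be affine there. Hence $\mathfrak m$ is strictly increasing.
\end{itemize}
This also yields the sign $c\le0$ your first route would have needed: a concave function lies below its asymptote $2(h+\log4)$. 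No elliptic-integral estimates are required.
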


\begin{proof}
\emph{Strict concavity of \(\sqrt a\).}
By \Cref{lem:ah-asymptotics}, \(a\) is smooth and satisfies
\(a(h)>0\) and \(a'(h)>0\) for \(h>0\).  Put
\(\mathfrak s(h):=\sqrt{a(h)}\).  For \(0<h_0<h_1\), \(0<\lambda<1\), and
\(h_\lambda=(1-\lambda)h_0+\lambda h_1\), convexity of \(\psi\) gives
\begin{equation}\label{eq:Kh-Minkowski-inclusion}
  (1-\lambda)K_{h_0}+\lambda K_{h_1}\subset K_{h_\lambda}.
\end{equation}
This inclusion is proper.  Indeed, strict convexity of \(\psi\) makes each
\(K_h\) strictly convex.  Fix a nonzero direction \(e\), and let
\(\theta_i\) be the unique point of \(K_{h_i}\) maximizing
\(\ip{e}{\theta}\), \(i=0,1\).  Each \(\theta_i\) lies on
\(\partial K_{h_i}\), and hence \(\psi(\theta_i)=h_i\).  In particular,
\(\theta_0\ne\theta_1\).  The point
\[
  \theta_\lambda:=(1-\lambda)\theta_0+\lambda\theta_1
\]
maximizes \(\ip{e}{\theta}\) over the set on the left of
\eqref{eq:Kh-Minkowski-inclusion}, whereas strict convexity gives
\[
  \psi(\theta_\lambda)
  <(1-\lambda)h_0+\lambda h_1=h_\lambda.
\]
Thus \(\theta_\lambda\) is an interior point of \(K_{h_\lambda}\), and
\[
  \ip{e}{\theta_\lambda}
  <\sup_{\theta\in K_{h_\lambda}}\ip{e}{\theta}.
\]
The set
\[
  \left\{\theta\in\operatorname{int}K_{h_\lambda}:
  \ip{e}{\theta}>\ip{e}{\theta_\lambda}\right\}
\]
is nonempty, open, and disjoint from the Minkowski sum in
\eqref{eq:Kh-Minkowski-inclusion}.  Therefore
\[
  \left|(1-\lambda)K_{h_0}+\lambda K_{h_1}\right|
  <|K_{h_\lambda}|=a(h_\lambda).
\]
The planar Brunn--Minkowski inequality now yields
\[
  \mathfrak s(h_\lambda)
  >\left|(1-\lambda)K_{h_0}+\lambda K_{h_1}\right|^{1/2}
  \ge(1-\lambda)\mathfrak s(h_0)+\lambda\mathfrak s(h_1).
\]
Hence \(\mathfrak s\) is strictly concave.

\emph{Range and monotonicity of the parameter map.}
Since
\(a=\mathfrak s^2\) and \(\mathfrak s'>0\),
\begin{equation}\label{eq:Wulff-map-derivative}
  \mathfrak m(h)=\frac{\mathfrak s(h)}{\mathfrak s'(h)}-h,
  \qquad
  \mathfrak m'(h)=-\frac{\mathfrak s(h)\mathfrak s''(h)}
              {\mathfrak s'(h)^2}\ge0.
\end{equation}
If \(\mathfrak m\) took the same value at two distinct points, it would be
constant between them.  Equation~\eqref{eq:Wulff-map-derivative} would then
make \(\mathfrak s\) affine on that interval, contrary to strict concavity.
Thus \(\mathfrak m\) is strictly increasing.

The expansions \eqref{eq:ah-small} and
\eqref{eq:ah-derivative-small} give
\[
  \mathfrak m(h)=h+O(h^2)\longrightarrow0
  \qquad(h\downarrow0).
\]
\begin{hyperreplacement}
For large \(h\), \Cref{lem:ah-asymptotics} gives both
\(a(h)=4(h+\log4)^2+O(1)\) and
\(a'(h)=8(h+\log4)+O(h\e^{-2h})\).  Substitution yields
\[
  \mathfrak m(h)=\frac{2a(h)}{a'(h)}-h
  =\log4+O(h^{-1})\longrightarrow\log4
  \qquad(h\to\infty).
\]
\end{hyperreplacement}
This proves the asserted bijection.

\emph{The unique minimizer.}
Differentiation of \eqref{eq:Wulff-quotient} gives
\[
  \Phi_B'(h)
  =\frac{(B+h)a'(h)}{4a(h)^2}\{\mathfrak m(h)-B\}.
\]
Thus \(\Phi_B\) decreases before \(h_B=\mathfrak m^{-1}(B)\) and increases after
\(h_B\), proving uniqueness.  At \(h=h_B\),
\(B+h_B=2a(h_B)/a'(h_B)\), which gives
\eqref{eq:Wulff-minimum-value}.
\end{proof}
\end{analystadditioncheck}

\begin{revisioncheck}
The preceding Wulff inequality and parameter analysis now give the exact
origin fiber.
\end{revisioncheck}

\begin{proof}[\revisionmark{Proof of \Cref{thm:origin-fiber-main}}]
\emph{Wulff upper bound.}
For \(v\in\RR^2\), Fenchel duality gives
\begin{equation}\label{eq:support-Fenchel}
  h_{K_h}(v)\le L(v)+h.
\end{equation}
Let \(\gamma\) be any closed curve with total action at most \(B\).
If necessary, reverse its orientation so that \(A(\gamma)\ge0\); the
action is unchanged because \(L\) is even.  Integrating
\eqref{eq:support-Fenchel} gives
\[
  \mathcal P_{K_h}(\gamma)
  =\int_0^1h_{K_h}(\dot\gamma(t))\dd t
  \le B+h.
\]
Since \(K_h\) is centrally symmetric, \Cref{lem:anisotropic-loop} gives
\[
  A(\gamma)\le\frac{(B+h)^2}{4a(h)}.
\]
Taking the infimum over \(h\), followed by
\Cref{lem:cyclic-rooting}, proves the upper bound in
\eqref{eq:origin-Wulff-main}.

By \Cref{lem:unique-Wulff-parameter}, the quotient has the unique
minimizer \(h_B\in(0,\infty)\), characterized by
\[
  B=\frac{2a(h_B)}{a'(h_B)}-h_B.
\]
For \(B=\log\rho\), this point is \(h_\rho^\star\) in
\eqref{eq:origin-critical-main}.

\emph{The matching Hamiltonian loop.}
The Hamiltonian flow
\begin{equation}\label{eq:Hamilton-level-flow}
  \dot\theta_h(t)=a'(h)\mathsf J\nabla\psi(\theta_h(t)),
  \qquad \theta_h(t)\in\partial K_h,
\end{equation}
has period one; indeed, its period is
\[
  \frac1{a'(h)}
  \int_{\partial K_h}\frac{1}{\norm{\nabla\psi}_2}
       \dd\mathcal H^1=1
\]
by the coarea formula.
\begin{hyperaddition}
In the Fermi coordinates above, this flow becomes
\[
  \dot p=\frac{a'(h)}2\tanh q,\qquad
  \dot q=-\frac{a'(h)}2\tanh p.
\]
Its image under \(X\) is the hyperbolic circle of radius \(r_h\).
The time variable is fixed by \eqref{eq:Hamilton-level-flow}; it is this
parametrization that gives the Legendre equality used below.
\end{hyperaddition}
Define
\[
  \gamma_h(t):=-\frac1{a'(h)}\mathsf J\theta_h(t).
\]
Then \(\dot\gamma_h=\nabla\psi(\theta_h)\), so Legendre equality and the
\begin{revisioncheck}
divergence theorem give the following identity, where \(\nu(\theta)\)
is the outward unit normal to \(K_h\):
\end{revisioncheck}
\begin{align*}
  \int_0^1L(\dot\gamma_h(t))\dd t
  &=\int_0^1\{\ip{\theta_h(t)}{\nabla\psi(\theta_h(t))}-h\}\dd t\\
  &=\frac1{a'(h)}
    \int_{\partial K_h}\ip{\theta}{\nu(\theta)}
       \dd\mathcal H^1(\theta)-h\\
  &=\frac{2a(h)}{a'(h)}-h.
\end{align*}
At \(h=h_B\), this action equals \(B\).  The curve bounds the rotated
dilation \(-\mathsf J K_{h_B}/a'(h_B)\), and hence has area
\(a(h_B)/a'(h_B)^2\).  By \eqref{eq:Wulff-minimum-value}, this is the
upper bound in \eqref{eq:origin-Wulff-main}.
\begin{revisioncheck}
\emph{Filling the origin fiber.}
Take \(B=\log\rho\), set \(\theta_\rho:=\theta_{h_B}\), and let
\(t_\rho\) be the cyclic root supplied by \Cref{lem:cyclic-rooting}.  The
translated rooted loop is
\[
  -\frac1{a'(h_B)}\mathsf J
  \bigl(\theta_\rho(t_\rho+t)-\theta_\rho(t_\rho)\bigr).
\]
The resulting cyclically rooted and translated Wulff loop belongs to \(\cC_\rho\)
and has the same area as \(\gamma_{h_B}\).
\end{revisioncheck}
\begin{analystadditioncheck}
Coordinate reflection changes the area sign while preserving the horizontal
origin.  The symmetry in \Cref{prop:sym-star}, together with the compact
interval property in \Cref{prop:fibers}, therefore gives
\eqref{eq:origin-fiber-interval-main}.
\end{analystadditioncheck}
\end{proof}

\begin{analystadditioncheck}
\subsubsection{Fiber bounds and universal nonconvexity}

\begin{revisioncheck}
The exact origin fiber controls all other fibers by rotation, dilation, and
concatenation.  We first derive quantitative inner and outer bounds and then
use the Wulff optimizer to construct the nonconvexity witness.
\end{revisioncheck}

For \(p\in\cD_\rho\), define the Minkowski functional of the horizontal
projection by
\begin{equation}\label{eq:fiber-gauge}
  \tau_\rho(p):=\inf\{r>0:p\in r\cD_\rho\}.
\end{equation}
By \Cref{prop:fibers}, \(0\in\operatorname{int}(\cD_\rho)\).  Hence
\(0\le\tau_\rho(p)\le1\) for \(p\in\cD_\rho\), and \(\tau_\rho(0)=0\).

\begin{proposition}[Bounds for fibers below saturation]
\label{prop:fiber-bounds-below-saturation}
For every \(1<\rho<4\), every \(p=(a,b)\in\cD_\rho\), and every
\(c\in\cI_\rho(a,b)\),
\begin{equation}\label{eq:fiber-outer-below-saturation}
  \left|c-\frac{ab}{2}\right|
  \le
  \min\left\{2Z_0(\rho),
  4Z_0(\rho)-\frac{a^2+b^2}{4}\right\}.
\end{equation}
Moreover,
\begin{equation}\label{eq:fiber-inner-below-saturation}
  \left[
  \frac{ab}{2}-(1-\tau_\rho(p))^2Z_0(\rho),
  \frac{ab}{2}+(1-\tau_\rho(p))^2Z_0(\rho)
  \right]
  \subset\cI_\rho(a,b).
\end{equation}
\end{proposition}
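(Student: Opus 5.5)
The proof has three parts: the first outer bound comes from closing the path into a doubled loop, the inner bound from superposing a linear path and a dilated Wulff loop, and the second outer bound I expect to be the main difficulty. Write \(B=\log\rho\) and fix \(\omega=(x,y)\in\cC_\rho\) with \(\omega(1)=p=(a,b)\) and \(A(\omega)=c\).

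\emph{First outer bound.} Concatenate \(\omega\) with the reversed rooted path, run backwards. This second piece is \(t\mapsto p-\omega(t)\), which goes from \(p\) back to \(0\). A direct computation gives its area \(\int(a-x)\dd(-y)=c-ab\), so the closed curve \(\Lambda\) has signed area \(2c-ab\). For every \(h>0\), \eqref{eq:support-Fenchel} bounds the anisotropic length of each piece by \(B+h\), since both pieces have action at most \(B\). Hence \(\mathcal P_{K_h}(\Lambda)\le2(B+h)\), and this bound is invariant under reparametrization. \Cref{lem:anisotropic-loop} then gives \(|2c-ab|\le(2(B+h))^2/(4a(h))\). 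Minimizing over \(h\) via \eqref{eq:origin-Wulff-main} yields \(|c-ab/2|\le2Z_0(\rho)\).

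\emph{Second outer bound.} For \(4Z_0-(a^2+b^2)/4\), the plan is to decompose \(\dot\omega=p+w\) with \(\int_0^1w=0\), so that \(\omega(t)-tp\) is a closed loop. Its signed area is \(c-ab/2\) plus the cross term \(b\int_0^1x-a\int_0^1y\), up to the explicit correction one computes. The horizontal drift should then cost a definite amount of the action budget. Here I would use \eqref{eq:L-energy-lower} together with the quadratic part of \(L\), splitting the budget between the drift and the loop. The Wulff bound, applied to the loop with the doubled budget as above, should then lose exactly the Euclidean term \(\norm{p}_2^2/4\). I expect this step to be the main obstacle: the correct accounting of the cross term, possibly after again symmetrizing with the reversed path, is not yet clear to me. My first attempt would apply the Wulff inequality to the doubled loop \(\Lambda\) and look for a deficit coming from its central symmetry about \(p/2\).

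\emph{Inner bound.} The case \(\tau:=\tau_\rho(p)=0\) is the origin fiber itself, given by \Cref{thm:origin-fiber-main}, so assume \(\tau>0\). Since \(\cD_\rho\) is closed and convex, \(q:=p/\tau\in\cD_\rho\), and hence \(L(q)\le B\) by \eqref{eq:horizontal-proj}. Let \(\gamma\) be the cyclically rooted Wulff loop from \Cref{thm:origin-fiber-main}, so \(\gamma\in\cC_\rho\), \(\gamma(1)=0\) and \(A(\gamma)=Z_0(\rho)\). Set
\[
  \omega_\pm(t):=tp\pm(1-\tau)\gamma(t),
  \qquad
  \dot\omega_\pm=\tau q\pm(1-\tau)\dot\gamma .
\]
Convexity and evenness of \(L\) give \(J_t(\omega_\pm)\le\tau tL(q)+(1-\tau)J_t(\gamma)\le tB\), so \(\omega_\pm\in\cC_\rho\) and both end at \(p\).

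Expanding the area, and using \(\int_0^1t\dot\gamma_2=-\int_0^1\gamma_2\) because \(\gamma\) is closed, gives
\[
  A(\omega_\pm)=\frac{ab}{2}+(1-\tau)^2Z_0(\rho)\pm X,
  \qquad
  X:=(1-\tau)\Bigl(b\int_0^1\gamma_1-a\int_0^1\gamma_2\Bigr).
\]
One of the two signs makes the area at least \(ab/2+(1-\tau)^2Z_0\). Replacing \(\gamma\) by its reflection \((\gamma_1,-\gamma_2)\), which keeps the action and the intermediate constraints and flips the loop area, produces two further paths with areas \(ab/2-(1-\tau)^2Z_0\pm X'\). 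One of them has area at most \(ab/2-(1-\tau)^2Z_0\). Since \(\cI_\rho(a,b)\) is an interval by \Cref{prop:fibers}, it contains \eqref{eq:fiber-inner-below-saturation}.
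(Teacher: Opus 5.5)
Your first outer bound is correct: the loop $\Lambda$ is, after halving space and time, the paper's concatenation $\omega^{\mathsf I,1/2}*\omega^{-\mathsf I,1/2}$, and applying \Cref{lem:anisotropic-loop} to it directly is equivalent to going through the origin fiber. Your inner bound is also correct, by a different route. You superpose $tp$ and $\pm(1-\tau)\gamma(t)$ and use convexity and evenness of $L$, whereas the paper concatenates in time: the segment $(t/\tau)p$ on $[0,\tau]$, followed by the time-rescaled loop $p+(1-\tau)\gamma^\pm((t-\tau)/(1-\tau))$. The paper's version gives areas exactly $ab/2\pm(1-\tau)^2Z_0(\rho)$. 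Yours has to absorb the cross term $X$, which you do correctly with the sign choice and the reflected loop.

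The genuine gap is the second outer bound $\abs{c-ab/2}\le4Z_0(\rho)-\norm{p}_2^2/4$, for which you give no argument. It is not redundant. As $B\downarrow0$ one has $Z_0(\rho)=B/(4\pi)+O(B^2)$, while $\cD_\rho$ contains points with $\norm{p}_2^2=B+O(B^2)$, so near $\partial\cD_\rho$ it is the smaller of the two bounds. The routes you sketch will not produce it. The Wulff inequality applied to $\Lambda$ does not see $p$ and returns exactly $2Z_0(\rho)$. Any budget split through $L\ge\norm{\cdot}_2^2$ loses the exact constant $Z_0(\rho)$, which is determined by the full sublevel sets $K_h$ rather than by the quadratic part of $L$.

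The missing idea is a fourfold rotated concatenation. Because $L$ is invariant under the quarter turn $\mathsf J$, you can take the pieces $\omega,\mathsf J\omega,-\omega,-\mathsf J\omega$, each translated to begin where the previous one ends. They form a closed curve with displacements $p,\mathsf Jp,-p,-\mathsf Jp$. Each piece has centered area $\mathfrak a_\omega:=c-ab/2$, since $\det(G\alpha,G\dot\alpha)=\det(\alpha,\dot\alpha)$ for a rotation $G$. Splitting the line integral gives total signed area
\[
  4\mathfrak a_\omega+\tfrac12\textstyle\sum_{i<j}\det(p_i,p_j)=4\mathfrak a_\omega+\norm{p}_2^2 .
\]
The order $\omega,-\mathsf J\omega,-\omega,\mathsf J\omega$ gives $4\mathfrak a_\omega-\norm{p}_2^2$ instead. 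Each of these curves has $K_h$ perimeter at most $4(B+h)$, so exactly as in your first step
\[
  \abs{\mathfrak a_\omega\pm\tfrac14\norm{p}_2^2}\le\frac{(B+h)^2}{a(h)}
  \qquad\text{for every }h>0 .
\]
Minimizing over $h$ turns the right side into $4Z_0(\rho)$. Combining the two signs then gives $\abs{\mathfrak a_\omega}+\norm{p}_2^2/4\le4Z_0(\rho)$. The paper runs the same construction with pieces $\omega^{G,1/4}$; this keeps the loop in $\cC_\rho$, so the areas $\mathfrak a_\omega/4\pm\norm{p}_2^2/16$ are bounded by $Z_0(\rho)$ directly.
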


\begin{proof}
\emph{Outer bounds by symmetric concatenation.}
Fix \(\omega=(x,y)\in\cC_\rho\) with endpoint \((a,b,c)\), and put
\(p=(a,b)\).  Then \(c=A(\omega)\).  If a path
\(\alpha=(\alpha_1,\alpha_2)\) starts at zero and ends at
\((\bar a,\bar b)\), define
\[
  A^\circ(\alpha):=\frac12\int
  (\alpha_1\dot\alpha_2-\alpha_2\dot\alpha_1)\dd t
  =A(\alpha)-\frac{\bar a\bar b}{2},
  \qquad
  \mathfrak a_\omega:=A^\circ(\omega)=c-\frac{ab}{2}.
\]
For paths \(\beta_1,\ldots,\beta_m\), rooted at zero and with
endpoints \(p_1,\ldots,p_m\in\RR^2\), let
\(\beta_1*\cdots*\beta_m\) denote their time ordered concatenation, with
each piece translated to begin where the preceding piece ends.  Direct
splitting of the line integral gives
\begin{equation}\label{eq:centered-area-concatenation}
  A^\circ(\beta_1*\cdots*\beta_m)
  =\sum_{j=1}^m A^\circ(\beta_j)
   +\frac12\sum_{1\le i<j\le m}
     \det(p_i,p_j).
\end{equation}

Let \(\mathsf I\) be the identity map on \(\RR^2\).  For
\(G\in\{\mathsf I,\mathsf J,-\mathsf I,-\mathsf J\}\) and
\(0<\lambda\le1\), set
\[
  \omega^{G,\lambda}(r):=\lambda G\omega(r/\lambda),
  \qquad 0\le r\le\lambda.
\]
The symmetries of \(L\) give, for \(0\le r\le\lambda\),
\begin{align}
  \int_0^r L(\dot\omega^{G,\lambda}(s))\dd s
  &=\lambda J_{r/\lambda}(\omega)\le rB,
    \label{eq:scaled-piece-action}\\
  \omega^{G,\lambda}(\lambda)&=\lambda Gp,
  &A^\circ(\omega^{G,\lambda})&=\lambda^2\mathfrak a_\omega.
    \label{eq:scaled-piece-data}
\end{align}
Thus a concatenation of equal length pieces remains in \(\cC_\rho\):
after \(k\) complete pieces, its action at time \(k\lambda+r\) is at most
\(B(k\lambda+r)\).

\revisionmark{The two pieces with scale \(1/2\) and
\(G=\mathsf I,-\mathsf I\) form a closed path of area
\(\mathfrak a_\omega/2\).}  The exact origin fiber gives
\begin{equation}\label{eq:fiber-first-outer-proof}
  |\mathfrak a_\omega|\le2Z_0(\rho).
\end{equation}

Define the two closed concatenations
\begin{align*}
  \sigma_+
  &:={}\omega^{\mathsf I,1/4}*\omega^{\mathsf J,1/4}
       *\omega^{-\mathsf I,1/4}*\omega^{-\mathsf J,1/4},\\
  \sigma_-
  &:={}\omega^{\mathsf I,1/4}*\omega^{-\mathsf J,1/4}
       *\omega^{-\mathsf I,1/4}*\omega^{\mathsf J,1/4}.
\end{align*}
Their respective piece displacements are
\[
  \frac14(p,\mathsf Jp,-p,-\mathsf Jp)
  \quad\text{and}\quad
  \frac14(p,-\mathsf Jp,-p,\mathsf Jp).
\]
Substitution in \eqref{eq:centered-area-concatenation} gives
\[
  A(\sigma_\pm)
  =\frac{\mathfrak a_\omega}{4}
   \mathbin{\pm}\frac{a^2+b^2}{16}.
\]
Both absolute values are at most \(Z_0(\rho)\), and hence
\[
  \frac{|\mathfrak a_\omega|}{4}
  +\frac{a^2+b^2}{16}\le Z_0(\rho).
\]
Together with \eqref{eq:fiber-first-outer-proof}, this proves
\eqref{eq:fiber-outer-below-saturation}.

\emph{Inner bound by a terminal loop.}
If \(p=0\), the maximizing loop from
\Cref{thm:origin-fiber-main}, its reflection in one coordinate, and
\Cref{prop:fibers} give \([-Z_0(\rho),Z_0(\rho)]\).

Suppose that \(p\ne0\), put \(\tau:=\tau_\rho(p)\in(0,1]\), and set
\(s:=1-\tau\).  Since \(\cD_\rho\) is compact and convex and contains zero
in its interior, the infimum in \eqref{eq:fiber-gauge} is attained.  Hence
\(p/\tau\in\cD_\rho\), so
\(L(p/\tau)\le B\).  Let \(\gamma_\rho^+\in\cC_\rho\) be a maximizing closed
origin loop, and let
\(\gamma_\rho^-:=(\gamma_{\rho,1}^+,-\gamma_{\rho,2}^+)\).  Coordinate reflection
preserves the action inequalities and gives
\(A(\gamma_\rho^\pm)=\pm Z_0(\rho)\).  For \(s>0\), define
\[
  \omega_\pm^{\mathrm{fib}}(t):=
  \begin{cases}
    (t/\tau)p,&0\le t\le\tau,\\
    p+s\gamma_\rho^\pm((t-\tau)/s),&\tau\le t\le1.
  \end{cases}
\]
When \(s=0\), retain only the first line.  If \(s>0\), then for
\(0\le r\le s\),
\[
  J_{\tau+r}(\omega_\pm^{\mathrm{fib}})
  =\tau L(p/\tau)+sJ_{r/s}(\gamma_\rho^\pm)
  \le B(\tau+r),
\]
and before time \(\tau\) the bound follows from \(L(p/\tau)\le B\).
Thus \(\omega_\pm^{\mathrm{fib}}\in\cC_\rho\), and
\[
  A(\omega_\pm^{\mathrm{fib}})
  =\frac{ab}{2}\mathbin{\pm}s^2Z_0(\rho).
\]
The interval property in \Cref{prop:fibers} now proves
\eqref{eq:fiber-inner-below-saturation}.
\end{proof}
\end{analystadditioncheck}

\begin{analystadditioncheck}
\begin{proof}[Proof of \Cref{thm:geometry-summary}]
Item~\textup{(i)} follows from \Cref{prop:saturation} and
\Cref{prop:complete-saturated-shape}.  Item~\textup{(ii)} is
\Cref{cor:saturation-sandwich} with \(\rho_1=\rho\) and \(\rho_2=4\).
\end{proof}
\end{analystadditioncheck}

\begin{revisioncheck}
We conclude with the nonconvexity argument, first below saturation and then at
and above the threshold.
\end{revisioncheck}

\begin{proof}[Proof of \Cref{thm:nonconvex-main}]
\emph{Below saturation.}
\revisionmark{Let \(1<\rho<4\) and let \(h_B\) be the unique Wulff
parameter from \Cref{lem:unique-Wulff-parameter}.}  Let
\(\theta_{h_B}\) be a period one solution of
\eqref{eq:Hamilton-level-flow}, extend it periodically, and define the
unrooted maximizing loop
\[
  \gamma_0(t):=-\frac1{a'(h_B)}\mathsf J\theta_{h_B}(t),
  \qquad t\in\RR.
\]
The construction in the proof of \Cref{thm:origin-fiber-main} gives
\[
  \int_0^1L(\dot\gamma_0(t))\dd t=B,
  \qquad
  A(\gamma_0)=Z_0(\rho).
\]
Since
\(\psi(\mathsf J\theta)=\psi(\theta)\) and
\(\nabla\psi(\mathsf J\theta)=\mathsf J\nabla\psi(\theta)\), the Hamiltonian
vector field
\(\mathsf V(\theta):=a'(h_B)\mathsf J\nabla\psi(\theta)\) in
\eqref{eq:Hamilton-level-flow} satisfies
\(\mathsf V(\mathsf J\theta)=\mathsf J\mathsf V(\theta)\).  Hence a quarter
turn maps each orbit segment between consecutive coordinate axes to the
next segment and preserves its traversal time.  These four times sum to the
period one, so each equals \(1/4\).  Choose the phase at an axis crossing.
Then
\(\theta_{h_B}(1/4)=\mathsf J\theta_{h_B}(0)\), and uniqueness of the
Hamiltonian flow gives
\[
  \theta_{h_B}(t+1/4)=\mathsf J\theta_{h_B}(t).
\]
Thus the action density
\(
\ell_0(t):=L(\dot\gamma_0(t))
\)
has period \(1/4\), and every quarter has action \(B/4\).  The function
\[
  \mathfrak g(t):=\int_0^t\{\ell_0(s)-B\}\dd s
\]
is consequently quarter periodic.  Choose \(t_0\) at which
\(\mathfrak g\) is maximal.
\revisionmark{Rooting the loop at any of the four times \(t_0+k/4\),
\(0\le k\le3\), produces a shifted path satisfying
the action constraints.}
\begin{revisioncheck}
For \(k\in\{0,1,2,3\}\), let \(\chi^{(k)}\) be the chord from the point at
time \(t_0+k/4\) to its antipode.  Then
\(\chi^{(k+1)}=\mathsf J\chi^{(k)}\), with the superscript read modulo four,
so successive products of the two coordinates have opposite signs.  Choose
\(k_0\) so that \(\chi:=\chi^{(k_0)}=(\chi_1,\chi_2)\) satisfies
\begin{equation}\label{eq:chord-positive-product}
  \chi_1\chi_2\ge0.
\end{equation}
Set \(s_0:=t_0+k_0/4\).

Translate this root to the origin by setting
\[
  \gamma(t):=\gamma_0(s_0+t)-\gamma_0(s_0),
  \qquad 0\le t\le1.
\]
The maximality of \(\mathfrak g(s_0)\), as in
\Cref{lem:cyclic-rooting}, gives
\(J_t(\gamma)\le tB\) for every \(t\in[0,1]\).
Central symmetry gives
\[
  \chi=\gamma(1/2),
  \qquad
  \gamma(t+1/2)=\chi-\gamma(t)
  \quad(0\le t\le1/2).
\]
Define the line integrals over the two halves by
\[
  A_1:=\int_0^{1/2}\gamma_1(t)\dot\gamma_2(t)\dd t,
  \qquad
  A_2:=\int_{1/2}^1\gamma_1(t)\dot\gamma_2(t)\dd t.
\]
Changing variables in the second integral gives
\[
  A_2=A_1-\chi_1\chi_2,
  \qquad
  \revisionmark{Z_0(\rho)}=2A_1-\chi_1\chi_2.
\]
Define the open path
\[
  \omega^\sharp(t):=2\gamma(t/2),\qquad0\le t\le1.
\]
Its velocity is \(\dot\gamma(t/2)\), and hence
\[
  J_t(\omega^\sharp)=2J_{t/2}(\gamma)\le Bt.
\]
Moreover,
\begin{equation}\label{eq:sharp-area-above-origin}
  A(\omega^\sharp)=4A_1
  =2\revisionmark{Z_0(\rho)}+2\chi_1\chi_2
  >\revisionmark{Z_0(\rho)}.
\end{equation}
Here \revisionmark{\(Z_0(\rho)>0\)}, for example by a positively oriented rectangular loop
scaled so that its velocity lies in a neighborhood of zero on which
\(L\le B\), and \eqref{eq:chord-positive-product} gives the strict inequality.
Both \(\omega^\sharp\) and \(-\omega^\sharp\) are admissible and have
the same area.  Their endpoints
\[
  (2\chi_1,2\chi_2,A(\omega^\sharp))
  \quad\text{and}\quad
  (-2\chi_1,-2\chi_2,A(\omega^\sharp))
\]
belong to \revisionmark{\(\cR_\rho\)}, but their midpoint lies strictly
above the origin fiber by \eqref{eq:sharp-area-above-origin}.  Thus
\revisionmark{\(\cR_\rho\)} is nonconvex.
\end{revisioncheck}

\begin{analystadditioncheck}
\emph{At and above saturation.}
At \(\rho=4\), \eqref{eq:diagonal-fiber} gives the two points
\[
  P_0:=\left(0,0,\frac1{16}\right),
  \qquad
  P_1:=\left(\frac14,\frac14,\frac9{64}\right)
\]
belong to \(\cR_4\).  Their Euclidean midpoint is
\[
  M:=\left(\frac18,\frac18,\frac{13}{128}\right).
\]
But
\[
  \max\cI_4(1/8,1/8)=\frac{25}{256}
  <\frac{26}{256}=\frac{13}{128},
\]
so \(M\notin\cR_4\).  Saturation gives the same conclusion for every
\(\rho\ge4\), completing the proof.
\end{analystadditioncheck}
\end{proof}

\end{analystcheck}

\section{Variational analysis of the central front}
\label{sec:central-front}

\begin{analystcheck}

\begin{revisioncheck}
This section develops the deterministic analysis of \(\Gamma_\rho\).  We
first establish global concavity, then derive the sharp weak branching
expansion and the rigidity of maximizing paths.  Along the way we evaluate
the total action relaxation exactly, and we conclude with the approach to
saturation from below.
\end{revisioncheck}

\subsection{\texorpdfstring{\revisionmark{Global concavity}}
{Global concavity}}

\begin{revisioncheck}
We prove \Cref{thm:Gamma-global} in two stages.  On each finite grid, a
calibration in square variables turns the first order optimality condition
into global maximality and yields an affine upper support for the finite
grid value.  Concavity then passes to continuous time through dyadic
approximation.  Using the extension \(\Gamma_1=0\) in
\Cref{thm:Gamma-global}, for \(B\ge0\) set
\end{revisioncheck}
\[
  \mathfrak G(B):=\Gamma_{\e^B}.
\]
For \(B>0\), the definitions of \(\cC_{\e^B}\) and \(\mathfrak A\) give
\begin{equation}\label{eq:G-control-form}
  \mathfrak G(B)=\sup\left\{
  \mathfrak A(v):
  \begin{array}{l}
  v\in L^\infty([0,1];D),\\
  \displaystyle\int_0^tL(v(s))\dd s\le Bt
  \quad\text{for every }t\in[0,1]
  \end{array}
  \right\}.
\end{equation}

\subsubsection{\texorpdfstring{\revisionmark{Hidden convexity on a finite
grid}}{Hidden convexity on a finite grid}}

Fix an integer \(m\ge1\), put \(\Delta:=1/m\), and fix
\(\alpha\in(0,1)\).  For \(1\le i\le m\), let
\(\mathsf C_i:=((i-1)\Delta,i\Delta]\), with zero included in
\(\mathsf C_1\), and identify
\(w=(w_1,\ldots,w_m)\), where
\(w_i=(w_i^{(1)},w_i^{(2)})\), with the piecewise constant control equal to
\(w_i\) on \(\mathsf C_i\).  Set
\[
  r_i:=w_i^{(1)}+w_i^{(2)},
  \qquad s_i:=w_i^{(1)}-w_i^{(2)}.
\]
Write \(w^{(1)}(t)\) and \(w^{(2)}(t)\) for the two component functions of
this control.
The diagonal velocity box, viewed in \((r,s)\) coordinates, is
\[
  \mathfrak B_{m,\alpha}
  :=\left\{w:\abs{r_i}\le\alpha,\ \abs{s_i}\le\alpha,
  \ 1\le i\le m\right\}.
\]
When writing membership in this box or its normal cone, we identify \(w\)
with the diagonal coordinate vector
\((r_1,s_1,\ldots,r_m,s_m)\).
For \(x_i:=\Delta\sum_{j=1}^iw_j^{(1)}\), with \(x_0:=0\), the exact area of
the polygonal path is
\begin{equation}\label{eq:grid-area}
  \mathfrak A_m(w)
  :=\Delta\sum_{i=1}^m
  \left(x_{i-1}+\frac{\Delta}{2}w_i^{(1)}\right)w_i^{(2)}.
\end{equation}
Let \(\mathfrak G_{m,\alpha}(B)\) be the maximum of
\(\mathfrak A_m\) over \(\mathfrak B_{m,\alpha}\) subject to
\begin{equation}\label{eq:grid-action-constraints}
  \Delta\sum_{i=1}^kL(w_i)\le Bk\Delta,
  \qquad 1\le k\le m.
\end{equation}

For \(\vartheta\ge0\), define the extended convex function
\[
  f_{\vartheta,\alpha}(u)
  :=\begin{cases}
  \vartheta\phi(u),&\abs{u}\le\alpha,\\
  +\infty,&\abs{u}>\alpha.
  \end{cases}
\]
Its convex conjugate is
\begin{equation}\label{eq:grid-conjugate}
  \Pi_\alpha(\vartheta,\ell)
  :=f_{\vartheta,\alpha}^*(\ell)
  =\sup_{\abs{u}\le\alpha}\{\ell u-\vartheta\phi(u)\}.
\end{equation}
For a closed convex set \(C\) and \(z\in C\), we use the normal cone
convention
\[
  \mathrm N_C(z):=\{\eta:\ip{\eta}{y-z}\le0\text{ for every }y\in C\}.
\]

The next lemma is the key finite dimensional step: it promotes a first order
condition to a global conclusion.

\begin{lemma}
\label{lem:grid-calibration}
Let \((r_1,s_1,\ldots,r_m,s_m)\) be the full coordinate vector, and write
\(\nabla_{r,s}\) for its gradient.
Let \(\vartheta_i\ge0\), \(1\le i\le m\), and suppose that
\(w^*\in\mathfrak B_{m,\alpha}\) has nonnegative Cartesian components,
\(\mathfrak A_m(w^*)>0\), and
\begin{equation}\label{eq:grid-normal-stationarity}
  \left.\nabla_{r,s}\left\{
  \mathfrak A_m(w)-\Delta\sum_{i=1}^m\vartheta_iL(w_i)
  \right\}\right|_{w=w^*}
  \in\mathrm N_{\mathfrak B_{m,\alpha}}(w^*).
\end{equation}
Then \(w^*\) is a global maximizer on \(\mathfrak B_{m,\alpha}\) of the
functional in braces.
\end{lemma}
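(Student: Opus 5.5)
We would first reduce to controls with nonnegative Cartesian components, and then show that on this orthant the functional
\[
  \mathcal F(w):=\mathfrak A_m(w)-\Delta\sum_{i=1}^m\vartheta_iL(w_i)
\]
becomes concave after a change of variables. Its first order condition at \(w^*\) then forces global maximality, in the spirit of a calibration.

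\emph{Step 1: reduction to the positive orthant.} Expanding \eqref{eq:grid-area},
\[
  \mathfrak A_m(w)=\Delta^2\sum_{j<i}w_j^{(1)}w_i^{(2)}
  +\frac{\Delta^2}{2}\sum_iw_i^{(1)}w_i^{(2)}.
\]
This is a quadratic form with nonnegative coefficients, so \(\mathfrak A_m(w)\le\mathfrak A_m(\abs w)\), where \(\abs w\) takes absolute values componentwise. By \eqref{eq:L-diagonal-decomposition}, \(L(\abs{w_i})=L(w_i)\). The box is also preserved, because for \(\abs{w_i}\) one has \(\max\{\abs{r_i},\abs{s_i}\}=\abs{w_i^{(1)}}+\abs{w_i^{(2)}}\), which is the same quantity for \(w_i\). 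Hence \(\mathcal F(w)\le\mathcal F(\abs w)\), and it suffices to compare \(w^*\) with nonnegative \(w\in\mathfrak B_{m,\alpha}\).

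\emph{Step 2: the square variable substitution.} On the orthant, write \(w_i^{(k)}=(\sigma_i^{(k)})^2\) with \(\sigma\ge0\). The goal is to show that \(\sigma\mapsto\mathcal F(\sigma^2)\) is concave on the image of the orthant intersected with the box, or at least that it lies below its tangent at \(\sigma^*\). The area part becomes a quartic form with nonnegative coefficients. The first thing to try is to bound each monomial \(w_j^{(1)}w_i^{(2)}\) above, using AM--GM with weights fixed by \(w^*\). Concretely,
\[
  w_j^{(1)}w_i^{(2)}
  \le\frac12\Bigl(\tfrac{w_i^{*(2)}}{w_j^{*(1)}}\bigl(w_j^{(1)}\bigr)^2
  +\tfrac{w_j^{*(1)}}{w_i^{*(2)}}\bigl(w_i^{(2)}\bigr)^2\Bigr),
\]
with equality at \(w^*\). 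This produces a separable majorant \(\sum_i g_i(w_i)\) of \(\mathfrak A_m\), touching it at \(w^*\). The problem then splits into independent planar problems
\[
  \max_{w_i}\bigl\{g_i(w_i)-\Delta\vartheta_iL(w_i)\bigr\}
  \quad\text{on }\{\abs{r_i}\le\alpha,\ \abs{s_i}\le\alpha\}.
\]
Each of these must be shown to be concave in suitable coordinates. By \eqref{eq:L-diagonal-decomposition}, \(L\) splits as \(\phi(r_i)+\phi(s_i)\), and the series \eqref{eq:phi-series} shows that \(u\mapsto\phi(\sqrt u)\) is convex on \([0,1]\). This is the mechanism by which squared variables should turn the quadratic majorant into something linear against a convex penalty.

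\emph{Step 3: matching the first order condition.} The separable majorant has the same gradient at \(w^*\) as \(\mathfrak A_m\), because the AM--GM step is tight to first order. Therefore \eqref{eq:grid-normal-stationarity} is exactly the normal cone stationarity condition for the concave separable problem. For a concave function on a convex set, stationarity in the normal cone gives a global maximum. Chaining the inequalities gives
\[
  \mathcal F(w)\le\mathcal F(\abs w)\le\text{(majorant at }\abs w)
  \le\text{(majorant at }w^*)=\mathcal F(w^*).
\]
The hypothesis \(\mathfrak A_m(w^*)>0\) is used to guarantee that the relevant components of \(w^*\) are positive, so that the AM--GM weights are finite. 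Components of \(w^*\) that vanish must be handled by a limiting argument or by dropping the corresponding monomials.

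\emph{Main obstacle.} The hard step is Step 2. The AM--GM majorant has to be chosen so that the resulting per-cell function is genuinely concave in the square or diagonal variables on the whole box. This requires the curvature of \(\phi\) near \(\pm\alpha\) to dominate the curvature of the majorant, and it requires exploiting that the box is a square in \((r,s)\) coordinates rather than the diamond. If the naive AM--GM weights fail, the fallback is to pick the weights using the multipliers \(\vartheta_i\) and the stationarity relations themselves. Those relations express \(\partial\mathfrak A_m/\partial w_i^{(k)}\) through \(\vartheta_i\nabla L(w_i^*)\) plus normal cone terms, and substituting them should make the per-cell inequality reduce to a one dimensional convexity statement about \(\phi\).
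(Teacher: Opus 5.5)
Your Step 1 is exactly the paper's first step. Steps 2--3, however, contain a genuine gap: the separable AM--GM majorant is not maximized at \(w^*\), so the third inequality in your chain is false in general, and no choice of coordinates can repair it. Tangency at \(w^*\) fixes your weights. The \(i\)th cell of the majorant is
\[
  \Phi_i(w_i)=\tfrac12\bigl(c_i^{(1)}(w_i^{(1)})^2+c_i^{(2)}(w_i^{(2)})^2\bigr)
  +\tfrac{\Delta^2}{2}w_i^{(1)}w_i^{(2)}-\Delta\vartheta_iL(w_i),
\]
with \(c_i^{(1)}w_i^{*(1)}=\Delta^2\sum_{k>i}w_k^{*(2)}\) and \(c_i^{(2)}w_i^{*(2)}=\Delta^2\sum_{j<i}w_j^{*(1)}\). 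Take a cell where \(w_i^*=(a,b)\) lies inside the box with \(a,b>0\). There stationarity reads \(\nabla_{w_i}\mathfrak A_m(w^*)=\Delta\vartheta_i\nabla L(a,b)\), so \(c_i^{(1)}=\Delta\vartheta_i\partial_1L(a,b)/a+O(\Delta^2)\), and similarly for \(c_i^{(2)}\). Inserting \(L(v)=\norm v_2^2+Q_4(v)+O(\norm v_2^6)\), the Hessian of \(\Phi_i\) at \(w_i^*\) is
\[
  \Delta\vartheta_i
  \begin{pmatrix}-\frac43a^2&-4ab\\-4ab&-\frac43b^2\end{pmatrix}
  +O(\Delta^2)+O\bigl(\Delta\vartheta_i\norm{w_i^*}_2^4\bigr).
\]
The leading matrix has determinant \(-\frac{128}{9}a^2b^2<0\).

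Hence, whenever \(a\) and \(b\) are comparable and small with \(ab\gg\Delta/\vartheta_i\), \(w_i^*\) is a strict saddle of \(\Phi_i\). This occurs, for instance, for the grid maximizers at small \(B\) and large \(m\) to which the lemma is applied; their velocities are close to \(\sqrt B\,v_0\). Changing cell \(i\) alone then gives a nonnegative \(w\) in the box whose majorant exceeds its value at \(w^*\).

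A nondegenerate saddle remains a saddle in any smooth coordinates, so ``concave in suitable coordinates'' cannot hold. Your squared variables fail concretely. Write \(\phi(r_i)\) through \(r_i^2=(\sqrt{A}+\sqrt{A'})^2\), where \(A=(w_i^{(1)})^2\) and \(A'=(w_i^{(2)})^2\). This composes the increasing convex map \(u\mapsto\phi(\sqrt u)\) with a \emph{concave} function of the squares. Indeed \(Q_4\) becomes \(\frac{A^2+A'^2}{6}+AA'\), which is not convex.

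Your fallback of choosing weights from the multipliers is not an argument. Any separable tangent majorant would need a block diagonal Hessian lying between \(\nabla^2\mathfrak A_m(w^*)\) and \(\Delta\operatorname{diag}_i(\vartheta_i\nabla^2L(w_i^*))\), and nothing suggests that such a matrix exists. Vanishing components of \(w^*\) also remain unhandled.

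The missing idea is that the hidden convexity lives in squared \emph{potentials}, with the cells kept coupled, not in squared velocities cell by cell. The paper sets \(p(t)=\int_t^1w^{(2)}\), \(P=(p+x)/2\) and \(Q=(p-x)/2\). Then \(s_i=2\dot P\), \(r_i=-2\dot Q\), and \(\mathfrak A_m=\int_0^1(Q\dot P-P\dot Q)\dd t\). An integration by parts gives the exact identity \eqref{eq:grid-penalty-identity}. Fenchel's inequality in the \(r_i\) then eliminates \(Q\), leaving a functional \(\mathcal J_\vartheta(P)\) of the nodal values with nearest-neighbour coupling.

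This functional is convex in \(U_i=P_i^2\). The kinetic term is an increasing convex function of the convex quantity \(\abs{\sqrt{U_i}-\sqrt{U_{i-1}}}^2\). The potential term is \(-\Pi_\alpha(\vartheta_i,\sqrt z)\), with \(z\mapsto\Pi_\alpha(\vartheta_i,\sqrt z)\) increasing and concave, evaluated at the concave quantity \(z=(\sqrt{U_{i-1}}+\sqrt{U_i})^2/4\). Stationarity yields a nonnegative one-sided derivative of \(\widehat{\mathcal J}_\vartheta\) at \(U^*\); the telescoping needs \(P_i^*>0\), which is where \(\mathfrak A_m(w^*)>0\) is used. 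Fenchel equality at \(w^*\) closes the chain.
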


\begin{proof}
\emph{Reduction to nonnegative controls.}
We first reduce arbitrary competitors to controls with nonnegative Cartesian
components.  For a piecewise constant control \(w\), put
\(\widehat w_i:=(\abs{w_i^{(1)}},\abs{w_i^{(2)}})\) in every time cell.
The symmetries of \(L\) preserve both its value and the diagonal box.  Define
\[
  x(t):=\int_0^tw^{(1)}(u)\dd u,
  \qquad
  \widehat x(t):=\int_0^t\abs{w^{(1)}(u)}\dd u.
\]
Then
\[
  x(t)w^{(2)}(t)
  \le\abs{x(t)}\abs{w^{(2)}(t)}
  \le\widehat x(t)\abs{w^{(2)}(t)}.
\]
Thus \(\mathfrak A_m(\widehat w)\ge\mathfrak A_m(w)\), while the penalizing
term is unchanged.  It is enough to consider nonnegative controls.

\emph{The calibration functional.}
For such a control, let
\[
  p(t):=\int_t^1w^{(2)}(u)\dd u
\]
denote the remaining vertical displacement, and set
\[
  P:=\frac{p+x}{2},\qquad Q:=\frac{p-x}{2}.
\]
They satisfy
\begin{equation}\label{eq:grid-PQ-relations}
  \begin{aligned}
  Q(0)&=P(0),\qquad Q(1)=-P(1),\\
  s_i&=2\dot P(t),\qquad r_i=-2\dot Q(t),
  \qquad\text{for almost every }t\in\mathsf C_i.
  \end{aligned}
\end{equation}
Moreover, Fubini's theorem and integration by parts give
\begin{equation}\label{eq:grid-PQ-area}
  \mathfrak A_m(w)
  =\int_0^1p(t)w^{(1)}(t)\dd t
  =\int_0^1x(t)w^{(2)}(t)\dd t
  =\int_0^1\{Q\dot P-P\dot Q\}\dd t.
\end{equation}
Write
\[
  P_i:=P(i\Delta),\quad Q_i:=Q(i\Delta),\quad
  \overline P_i:=\frac{P_{i-1}+P_i}{2},\quad
  \overline Q_i:=\frac{Q_{i-1}+Q_i}{2},
\]
and let \(D_\Delta P_i:=(P_i-P_{i-1})/\Delta\), with the analogous
definition for \(Q\).  Direct differentiation of \eqref{eq:grid-area} gives
\begin{equation}\label{eq:grid-area-gradients}
  \frac{\partial\mathfrak A_m}{\partial r_i}=\Delta\overline P_i,
  \qquad
  \frac{\partial\mathfrak A_m}{\partial s_i}=\Delta\overline Q_i.
\end{equation}

Using \eqref{eq:grid-PQ-area}, integrating the term
\(-Q\dot P\) by parts and then using the boundary identities in
\eqref{eq:grid-PQ-relations}, we obtain
\begin{equation}\label{eq:grid-penalty-identity}
  \Delta\sum_{i=1}^m\vartheta_iL(w_i)-\mathfrak A_m(w)
  =\Delta\sum_{i=1}^m
  \left\{
  f_{\vartheta_i,\alpha}(2D_\Delta P_i)
  +f_{\vartheta_i,\alpha}(r_i)-\overline P_i r_i
  \right\}
  +P_0^2+P_m^2.
\end{equation}
Fenchel's inequality applied to the \(r_i\) terms gives
\begin{equation}\label{eq:grid-calibration-lower}
  \Delta\sum_{i=1}^m\vartheta_iL(w_i)-\mathfrak A_m(w)
  \ge\mathcal J_\vartheta(P),
\end{equation}
where
\begin{equation}\label{eq:grid-J-functional}
  \mathcal J_\vartheta(P)
  :=\Delta\sum_{i=1}^m
  \left\{
  f_{\vartheta_i,\alpha}(2D_\Delta P_i)
  -\Pi_\alpha(\vartheta_i,\overline P_i)
  \right\}
  +P_0^2+P_m^2.
\end{equation}

\emph{Convexity in the square variables.}
For the nonnegative controls under consideration, \(x,p\), and hence every
\(P_i\), are nonnegative.  We show that \(\mathcal J_\vartheta\) is convex
in the nodal variables \(U_i:=P_i^2\).  For two nonnegative nodal vectors
\(P^0,P^1\), set
\[
  P_i^\lambda
  :=\sqrt{(1-\lambda)(P_i^0)^2+\lambda(P_i^1)^2},
  \qquad 0\le\lambda\le1.
\]
The reverse triangle inequality in \(\RR^2\) gives
\begin{equation}\label{eq:grid-square-convexity}
  \abs{P_i^\lambda-P_{i-1}^\lambda}^2
  \le(1-\lambda)\abs{P_i^0-P_{i-1}^0}^2
    +\lambda\abs{P_i^1-P_{i-1}^1}^2.
\end{equation}
The effective domain of the \(i\)th kinetic term in the square variables is
\[
  \left\{(U_{i-1},U_i):
  \abs{\sqrt{U_i}-\sqrt{U_{i-1}}}^2
  \le(\alpha\Delta/2)^2\right\},
\]
which is convex by \eqref{eq:grid-square-convexity}.  For
\(\vartheta_i>0\), the series \eqref{eq:phi-series} shows that
\[
  z\longmapsto
  \vartheta_i\phi\left(\frac{2\sqrt z}{\Delta}\right)
\]
is increasing and convex.  When \(\vartheta_i=0\), its finite part vanishes.
Combining monotonicity and convexity with
\eqref{eq:grid-square-convexity} proves convexity of every extended kinetic
term in \((U_{i-1},U_i)\).

For the potential, \eqref{eq:grid-conjugate} and scalar Fenchel duality give,
for \(z\ge0\),
\begin{equation}\label{eq:grid-Pi-square}
  \Pi_\alpha(\vartheta,\sqrt z)
  =\begin{cases}
  \displaystyle
  \vartheta\log\cosh\left(\frac{\sqrt z}{\vartheta}\right),
    &\vartheta>0,\quad
     \sqrt z\le\vartheta\operatorname{arctanh}\alpha,\\[2mm]
  \alpha\sqrt z-\vartheta\phi(\alpha),
    &\vartheta>0,\quad
     \sqrt z\ge\vartheta\operatorname{arctanh}\alpha,\\[1mm]
  \alpha\sqrt z,&\vartheta=0.
  \end{cases}
\end{equation}
This function is increasing and concave.  Indeed, on the first branch its
derivative is
\[
  \frac{1}{2\vartheta}
  \frac{\tanh(\sqrt z/\vartheta)}{\sqrt z/\vartheta},
\]
and \(u\mapsto\tanh(u)/u\) is decreasing on \((0,\infty)\), because
\(\sinh u\cosh u>u\).  At zero the quotient is understood by continuity.
The second branch is concave, and
\revisionmark{its derivative agrees with that of the first branch at the
junction}.  The assertion for \(\vartheta=0\) is
immediate.  Finally,
\[
  (U,V)\longmapsto
  \left(\frac{\sqrt U+\sqrt V}{2}\right)^2
  =\frac{U+V+2\sqrt{UV}}{4}
\]
is concave.  Composition with the increasing concave function in
\eqref{eq:grid-Pi-square} shows that the negative potential in
\eqref{eq:grid-J-functional} is convex in \((U_{i-1},U_i)\).  Hence
\begin{equation}\label{eq:grid-hidden-convexity}
  \widehat{\mathcal J}_\vartheta(U)
  :=\mathcal J_\vartheta(\sqrt{U_0},\ldots,\sqrt{U_m})
\end{equation}
is an extended convex function on \([0,\infty)^{m+1}\).

\emph{Stationarity and global minimality.}
Let \(x^*,p^*,P^*,Q^*\) be the functions associated with \(w^*\), and set
\(U_i^*:=(P_i^*)^2\).  In the next display, \(\partial\) denotes the convex
subdifferential, and \(\partial_\ell\) is taken in the second variable.  The
normal cone condition
\eqref{eq:grid-normal-stationarity}, together with
\eqref{eq:grid-area-gradients} and conjugacy, is equivalent to
\begin{equation}\label{eq:grid-Fenchel-relations}
  r_i^*=-2D_\Delta Q_i^*
    \in\partial_\ell\Pi_\alpha(\vartheta_i,\overline P_i^*),
  \qquad
  \overline Q_i^*\in
    \partial f_{\vartheta_i,\alpha}(2D_\Delta P_i^*).
\end{equation}
The first relation gives equality in
\eqref{eq:grid-calibration-lower} at \(w^*\).

We also have \(P_i^*>0\) at every node.  Indeed,
\(2P_i^*=x^*(i\Delta)+p^*(i\Delta)\).  If this vanished, nonnegativity
would force the first component of \(w^*\) to vanish before that node and
the second component to vanish after it.  The area integrand would then
vanish on both sides of the node, contrary to
\(\mathfrak A_m(w^*)>0\).

Let \(U\) be any point in the effective domain of
\(\widehat{\mathcal J}_\vartheta\), set \(P_i:=\sqrt{U_i}\), and put
\[
  U^\epsilon:=(1-\epsilon)U^*+\epsilon U,
  \qquad
  P_i^\epsilon:=\sqrt{U_i^\epsilon},
  \qquad
  \delta P_i:=\left.\frac{\dd}{\dd\epsilon}P_i^\epsilon
  \right|_{\epsilon=0+}
  =\frac{U_i-U_i^*}{2P_i^*}.
\]
The second relation in \eqref{eq:grid-Fenchel-relations} shows that the right
directional derivative of the \(i\)th kinetic term is at least
\begin{equation}\label{eq:grid-kinetic-direction}
  2\overline Q_i^*(\delta P_i-\delta P_{i-1}).
\end{equation}
Since \(\overline P_i^*>0\), the maximizer in
\eqref{eq:grid-conjugate} is unique, also when \(\vartheta_i=0\).
Thus the derivative of \(\Pi_\alpha\) with respect to its second variable is
\(r_i^*=-2D_\Delta Q_i^*\), and the derivative of the negative potential is
\begin{equation}\label{eq:grid-potential-direction}
  (Q_i^*-Q_{i-1}^*)(\delta P_i+\delta P_{i-1}).
\end{equation}
The sum of \eqref{eq:grid-kinetic-direction} and
\eqref{eq:grid-potential-direction} equals
\[
  2Q_i^*\delta P_i-2Q_{i-1}^*\delta P_{i-1}.
\]
Summation over \(i\) telescopes.  The derivative of the endpoint term in
\eqref{eq:grid-J-functional} cancels the two remaining terms by
\eqref{eq:grid-PQ-relations}.  Therefore
\[
  \left.\frac{\dd}{\dd\epsilon}
  \widehat{\mathcal J}_\vartheta(U^\epsilon)
  \right|_{\epsilon=0+}\ge0.
\]
Convexity now gives
\(\mathcal J_\vartheta(P)\ge\mathcal J_\vartheta(P^*)\).  For every
nonnegative competitor \(w\), we consequently have
\[
  \Delta\sum_{i=1}^m\vartheta_iL(w_i)-\mathfrak A_m(w)
  \ge\mathcal J_\vartheta(P)
  \ge\mathcal J_\vartheta(P^*)
  =\Delta\sum_{i=1}^m\vartheta_iL(w_i^*)-\mathfrak A_m(w^*).
\]
Here the first inequality is \eqref{eq:grid-calibration-lower}, and the last
equality is Fenchel equality at \(w^*\).  The initial reduction to
nonnegative controls proves the global maximality claimed in the lemma.
\end{proof}

\begin{lemma}
\label{lem:grid-concavity}
For every \(m\ge1\) and \(\alpha\in(0,1)\), the function
\(B\mapsto\mathfrak G_{m,\alpha}(B)\) is concave on \((0,\infty)\).
\end{lemma}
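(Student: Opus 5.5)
The plan is to prove concavity by exhibiting, at every \(B>0\), an affine function of \(B'\) that lies above \(\mathfrak G_{m,\alpha}(B')\) and touches it at \(B'=B\). The pointwise infimum of these affine functions is then \(\mathfrak G_{m,\alpha}\), which is therefore concave. The support lines will come from Lagrange multipliers for the constraints \eqref{eq:grid-action-constraints}, with \Cref{lem:grid-calibration} upgrading first order stationarity to global maximality.

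\emph{Step 1: a good maximizer.} Fix \(B>0\). The feasible set is compact and \(\mathfrak A_m\) is continuous, so a maximizer exists. Replacing it by \(\widehat w_i=(\abs{w_i^{(1)}},\abs{w_i^{(2)}})\), as in the first step of the proof of \Cref{lem:grid-calibration}, keeps the box membership and every action sum, and does not decrease the area. Hence there is a maximizer \(w^*\) with nonnegative components. It has \(\mathfrak A_m(w^*)>0\), because a small constant control with positive components is feasible (using \(L(0)=0<B\) and continuity of \(L\)) and has positive area. Since \(\alpha<1\), the box \(\mathfrak B_{m,\alpha}\) lies in a compact subset of \(\operatorname{int}D\). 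There \(L\) is smooth by \Cref{lem:convex-facts}, so all constraint functions \(g_k(w):=\Delta\sum_{i\le k}L(w_i)-Bk\Delta\) are smooth and convex.

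\emph{Step 2: KKT multipliers.} At the local maximizer \(w^*\) of the smooth objective \(\mathfrak A_m\) over \(\{w\in\mathfrak B_{m,\alpha}:g_k(w)\le0\}\), I would invoke the Karush--Kuhn--Tucker theorem. The constraint qualification comes from Slater's condition. The point \(w=0\) lies in the box and has \(g_k(0)<0\) for every \(k\). By convexity, \(\nabla g_k(w^*)\cdot(0-w^*)\le g_k(0)-g_k(w^*)<0\) for every active \(k\). The direction \(-w^*\) is also feasible for the box, so the Mangasarian--Fromovitz condition holds relative to the polyhedral box. This yields multipliers \(\lambda_k\ge0\) with \(\lambda_kg_k(w^*)=0\) and
\[
  \nabla_{r,s}\Bigl\{\mathfrak A_m(w)-\sum_k\lambda_kg_k(w)\Bigr\}\Big|_{w^*}\in\mathrm N_{\mathfrak B_{m,\alpha}}(w^*).
\]
Putting \(\vartheta_i:=\sum_{k\ge i}\lambda_k\ge0\), so that \(\sum_k\lambda_kg_k=\Delta\sum_i\vartheta_iL(w_i)-\mathrm{const}\), this is exactly \eqref{eq:grid-normal-stationarity}. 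Then \Cref{lem:grid-calibration} shows that \(w^*\) globally maximizes \(\mathfrak A_m-\Delta\sum_i\vartheta_iL(w_i)\) on the whole box.

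\emph{Step 3: the affine support.} Let \(B'>0\) and let \(w\) be feasible at level \(B'\). Global maximality, rewritten through the \(\lambda_k\), gives
\[
  \mathfrak A_m(w)\le\mathfrak A_m(w^*)+\sum_k\lambda_k\Delta\Bigl(\sum_{i\le k}L(w_i)-\sum_{i\le k}L(w_i^*)\Bigr).
\]
Complementary slackness replaces \(\Delta\sum_{i\le k}L(w_i^*)\) by \(Bk\Delta\) whenever \(\lambda_k>0\). Feasibility at level \(B'\) bounds \(\Delta\sum_{i\le k}L(w_i)\) by \(B'k\Delta\). Together these give
\[
  \mathfrak G_{m,\alpha}(B')\le\mathfrak G_{m,\alpha}(B)+\Bigl(\sum_k\lambda_kk\Delta\Bigr)(B'-B),
\]
with equality at \(B'=B\), which completes the argument.

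I expect the main obstacle to be Step 2: the objective is not concave, so one must check carefully that KKT applies and that the multipliers, after the change \(\vartheta_i=\sum_{k\ge i}\lambda_k\), give precisely the normal cone condition in the diagonal \((r,s)\) coordinates used in \Cref{lem:grid-calibration}. Since the box is polyhedral and Slater holds for the convex constraints, the standard KKT theorem for smooth problems with MFCQ should suffice.
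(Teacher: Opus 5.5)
Your proposal is correct and follows essentially the same route as the paper. Both obtain a nonnegative maximizer with positive area, produce multipliers \(\lambda_k\ge0\) with tail sums \(\vartheta_i\), use \Cref{lem:grid-calibration} for global maximality of the penalized functional, and pair the multipliers with the constraints to get the affine support \(\mathfrak G_{m,\alpha}(B')\le\mathfrak G_{m,\alpha}(B)+\beta(B'-B)\). The only difference is that you reach the multipliers through the nonlinear KKT theorem under MFCQ. The paper instead uses convexity of the feasible set together with the Slater normal cone rule; the resulting multipliers are the same.
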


\begin{proof}
The feasible set is compact and convex, so a maximizer exists.  Since
the zero control lies in the interior of \(\mathfrak B_{m,\alpha}\) and
satisfies every inequality in \eqref{eq:grid-action-constraints} strictly
when \(B>0\), the Slater condition holds.  Taking absolute values of both
Cartesian components preserves feasibility and cannot decrease the area, so
we may choose a global maximizer \(w^*\) with nonnegative components.  A
sufficiently small constant control with both components positive is feasible
and has positive area; hence \(\mathfrak A_m(w^*)>0\).

For any feasible \(\widetilde w\), the segment from \(w^*\) to
\(\widetilde w\) is feasible.  Differentiating maximality along this segment
shows that \(\nabla_{r,s}\mathfrak A_m(w^*)\) belongs to the normal cone of
the feasible set in diagonal coordinates.
The finite dimensional Slater normal cone rule
\cite{Rockafellar1970} therefore supplies multipliers
\(\lambda_k\ge0\), \(1\le k\le m\), such that, with
\begin{equation}\label{eq:grid-tail-weights}
  \vartheta_i:=\sum_{k=i}^m\lambda_k,
  \qquad
  \beta:=\Delta\sum_{k=1}^mk\lambda_k,
\end{equation}
the normal cone condition \eqref{eq:grid-normal-stationarity} holds and
\begin{equation}\label{eq:grid-complementarity}
  \lambda_k\left\{
  \Delta\sum_{i=1}^kL(w_i^*)-Bk\Delta
  \right\}=0,
  \qquad 1\le k\le m.
\end{equation}
By \Cref{lem:grid-calibration}, \(w^*\) globally maximizes the corresponding
penalized functional on \(\mathfrak B_{m,\alpha}\).

If \(\widetilde w\) is feasible at an action level \(B'>0\), finite
summation gives
\begin{equation}\label{eq:grid-multiplier-pairing}
  \Delta\sum_{i=1}^m\vartheta_iL(\widetilde w_i)
  =\sum_{k=1}^m\lambda_k
    \left(\Delta\sum_{i=1}^kL(\widetilde w_i)\right)
  \le B'\beta.
\end{equation}
By \eqref{eq:grid-complementarity}, equality holds with \(B\beta\) at
\(w^*\).  The global maximality supplied by
\Cref{lem:grid-calibration} now yields
\begin{equation}\label{eq:grid-affine-support}
  \mathfrak G_{m,\alpha}(B')
  \le\mathfrak G_{m,\alpha}(B)+\beta(B'-B),
  \qquad B'>0.
\end{equation}
To see concavity directly, fix \(B_0,B_1>0\) and
\(\theta\in[0,1]\), and apply \eqref{eq:grid-affine-support} at
\(B=(1-\theta)B_0+\theta B_1\), first with \(B'=B_0\) and then with
\(B'=B_1\).  The weighted slope terms cancel, giving
\[
  \mathfrak G_{m,\alpha}((1-\theta)B_0+\theta B_1)
  \ge(1-\theta)\mathfrak G_{m,\alpha}(B_0)
    +\theta\mathfrak G_{m,\alpha}(B_1).
\]
\end{proof}

\subsubsection{\texorpdfstring{\revisionmark{Passage to continuous time}}
{Passage to continuous time}}

\begin{proof}[Proof of \Cref{thm:Gamma-global}]
Fix \(\alpha\in(0,1)\) and use the dyadic meshes \(m=2^n\).  Repeating
each cell velocity on its two child cells preserves the polygonal path and
its exact area.  On each original cell, the difference between \(Bt\) and
the accumulated action is affine and nonnegative at both endpoints.  The
repeated control therefore satisfies every constraint on the refined grid,
so
\[
  \mathfrak G_{2^n,\alpha}(B)
  \le\mathfrak G_{2^{n+1},\alpha}(B).
\]
\revisionmark{The finite increasing limit}
\[
  \mathfrak G_\alpha(B)
  :=\lim_{n\to\infty}\mathfrak G_{2^n,\alpha}(B)
\]
is concave by \Cref{lem:grid-concavity}.  Choose any increasing sequence
\(\alpha_j\uparrow1\).  Since the velocity boxes are nested,
\revisionmark{the finite increasing limit}
\[
  \mathfrak G_*(B):=\lim_{j\to\infty}\mathfrak G_{\alpha_j}(B)
\]
is also concave and is independent of the chosen sequence.

\begin{revisioncheck}
Every control admissible for \(\mathfrak G_{m,\alpha}(B)\) is feasible in
\eqref{eq:G-control-form}, because the accumulated action difference is
affine on each cell.  Hence \(\mathfrak G_*(B)\le\mathfrak G(B)\).
\end{revisioncheck}

For the reverse inequality, let \(v=(f,g)\in L^\infty([0,1];D)\) be
feasible in \eqref{eq:G-control-form}.  For
\(1\le k\le2^n\), set
\(\mathsf C_{n,k}:=((k-1)2^{-n},k2^{-n}]\), again including zero in the
first cell.  Define the dyadic cell average
\(v^{(n)}=(f^{(n)},g^{(n)})\) by
\[
  v^{(n)}(t)
  :=\frac1{\abs{\mathsf C_{n,k}}}
    \int_{\mathsf C_{n,k}}v(s)\dd s,
  \qquad t\in\mathsf C_{n,k}.
\]
Jensen's inequality gives
\eqref{eq:grid-action-constraints} at every grid node, and the accumulated
action difference is affine between successive nodes.  Thus \(v^{(n)}\) is
feasible on that grid.  Dyadic cell averaging converges strongly in
\(L^2([0,1];\RR^2)\), so \(v^{(n)}\to v\) in \(L^2\).  Moreover,
\begin{align*}
  \abs{\mathfrak A(v^{(n)})-\mathfrak A(v)}
  &\le\norm{f^{(n)}-f}_{L^2}\norm{g^{(n)}}_{L^2}
    +\norm{f}_{L^2}\norm{g^{(n)}-g}_{L^2}\longrightarrow0,
\end{align*}
where Cauchy--Schwarz gives
\(\norm{\int_0^\cdot f(s)\dd s}_{L^2}\le\norm f_{L^2}\).

For each \(j\), the scaled control \(\alpha_jv^{(n)}\) lies in
\(\mathfrak B_{2^n,\alpha_j}\).  By \eqref{eq:quadratic-scaling-L}, it
remains feasible at level \(B\), and its area is
\(\alpha_j^2\mathfrak A(v^{(n)})\).  Letting \(n\to\infty\) first gives
\[
  \mathfrak G_{\alpha_j}(B)\ge\alpha_j^2\mathfrak A(v).
\]
Now let \(j\to\infty\) to obtain
\[
  \mathfrak G_*(B)\ge\mathfrak A(v).
\]
Taking the supremum over feasible \(v\) proves
\(\mathfrak G_*(B)=\mathfrak G(B)\), and hence concavity on
\((0,\infty)\).

The lower bound \eqref{eq:L-energy-lower} and Cauchy--Schwarz give, for every
control feasible at level \(B\),
\[
  \abs{\mathfrak A(v)}
  \le\norm{f}_{L^2}\norm{g}_{L^2}
  \le\frac12\int_0^1\norm{v(t)}_2^2\dd t
  \le\frac B2.
\]
\begin{revisioncheck}
The zero control is feasible, so \(0\le\mathfrak G(B)\le B/2\).  Thus
\(\mathfrak G(B)\downarrow0=\mathfrak G(0)\) as \(B\downarrow0\), which
extends concavity to \([0,\infty)\) and gives continuity at zero.  On
\((0,\infty)\), continuity follows from finiteness and concavity.
Monotonicity follows from the nesting of the feasible sets in
\eqref{eq:G-control-form}.

Finally, \(\rho\mapsto\log\rho\) is increasing and concave.  The composition
of a nondecreasing concave function with an increasing concave function is
concave.  Since \(\Gamma_\rho=\mathfrak G(\log\rho)\), the asserted
monotonicity, concavity, and continuity in \(\rho\) follow.  The saturation
assertion follows from \Cref{prop:saturation}.
\end{revisioncheck}
\end{proof}

\subsection{Weak branching asymptotics}

\begin{revisioncheck}
Near \(\rho=1\), a quadratic variational problem determines the sharp
expansion and the limiting velocity profile.  For
\(v\in L^2([0,1];\RR^2)\), define its energy by
\[
  \mathcal E(v):=\int_0^1\norm{v(t)}_2^2\dd t.
\]
Recall the rotating quadratic extremal \(v_0\) from \eqref{eq:v0-main}.
\end{revisioncheck}

\subsubsection{Quadratic bound and stability}

The quadratic approximation begins with the sharp relation between energy
and Heisenberg area.

\begin{lemma}\label{lem:energy-area}
For every absolutely continuous \(\omega=(x,y)\) with \(\omega(0)=0\),
\begin{equation}\label{eq:area-energy}
  \abs{A(\omega)}
  \le\frac1\pi\int_0^1\norm{\dot\omega(t)}_2^2\dd t.
\end{equation}
\revisionmark{Consequently, for every \(\rho>1\),
\(\Gamma_\rho\le(\log\rho)/\pi\).}
\end{lemma}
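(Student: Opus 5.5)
The plan is to reduce \eqref{eq:area-energy} to the sharp Wirtinger-type bound for the Volterra (integration) operator on \(L^2(0,1)\).

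Write \(v=\dot\omega=(f,g)\). If \(\mathcal E(v)=\infty\) there is nothing to prove, so assume \(v\in L^2\). Then \(A(\omega)=\mathfrak A(v)=\ip{Vf}{g}_{L^2}\), where \((Vf)(t):=\int_0^tf(s)\dd s\).

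Cauchy--Schwarz and the AM--GM inequality give
\[
\abs{A(\omega)}\le\norm{V}\,\norm f_{L^2}\norm g_{L^2}\le\frac{\norm V}{2}\bigl(\norm f_{L^2}^2+\norm g_{L^2}^2\bigr)=\frac{\norm V}{2}\,\mathcal E(v).
\]
It therefore suffices to show \(\norm V=2/\pi\).

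For the norm, I would compute the spectrum of \(V^*V\). Here \((V^*g)(t)=\int_t^1g\), so \(u=V^*Vf\) satisfies \(u''=-f\) with boundary conditions \(u(1)=0\) and \(u'(0)=0\). The eigenvalue problem \(V^*Vf=\lambda f\) becomes \(\lambda f''=-f\) with \(f'(0)=0\) and \(f(1)=0\). Its solutions are \(f=\cos((k+\tfrac12)\pi t)\) with \(\lambda=((k+\tfrac12)\pi)^{-2}\). Since \(V^*V\) is compact and self-adjoint, its largest eigenvalue is \(4/\pi^2\), which gives \(\norm V=2/\pi\).

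This yields the inequality in \eqref{eq:area-energy}. Equality forces \(f\) to be a multiple of \(\cos(\pi t/2)\) and \(g\) to be proportional to \(Vf\), which is proportional to \(\sin(\pi t/2)\). This matches \(v_0\), which is useful later for the rigidity statement.

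For the consequence, take \(\omega\in\cC_\rho\). By \eqref{eq:L-energy-lower},
\[
\mathcal E(\dot\omega)\le\int_0^1L(\dot\omega)\dd t=J_1(\omega)\le\log\rho.
\]
Then \(A(\omega)\le(\log\rho)/\pi\), and taking the supremum gives \(\Gamma_\rho\le(\log\rho)/\pi\).

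The only step needing care is the spectral computation, namely justifying the boundary conditions for the integral operator and confirming that the top eigenvalue comes from \(k=0\). I would do this by verifying directly that the cosine family is an orthonormal basis diagonalizing \(V^*V\).
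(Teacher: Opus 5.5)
Your proof is correct and follows the paper's argument. The paper invokes the mixed-boundary Wirtinger inequality $\norm{x}_{L^2}\le\frac2\pi\norm{\dot x}_{L^2}$ for $x(0)=0$, which is exactly your identity $\norm{V}=2/\pi$; it then applies Cauchy--Schwarz, $2ab\le a^2+b^2$, and \eqref{eq:L-energy-lower} as you do, and your derivation of the norm from the spectrum of $V^*V$ (via $u''=-f$, $u'(0)=0$, $u(1)=0$) is the same computation the paper carries out later in Lemma~\ref{lem:quadratic-stability}.
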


\begin{proof}
\begin{revisioncheck}
If the energy on the right of \eqref{eq:area-energy} is infinite, the
inequality is automatic.  We may therefore assume that both derivatives
belong to \(L^2\).
\end{revisioncheck}
For \(x(0)=0\), the mixed boundary Wirtinger inequality gives
\[
  \norm{x}_{L^2}\le\frac2\pi\norm{\dot x}_{L^2}.
\]
Consequently, Cauchy--Schwarz and \(2ab\le a^2+b^2\) give
\begin{align*}
  \abs{A(\omega)}
  &\le\norm{x}_{L^2}\norm{\dot y}_{L^2}
  \le\frac2\pi\norm{\dot x}_{L^2}\norm{\dot y}_{L^2}\\
  &\le\frac1\pi
  \left(\norm{\dot x}_{L^2}^2+\norm{\dot y}_{L^2}^2\right).
\end{align*}
If \(\omega\in\revisionmark{\cC_\rho}\), then \Cref{lem:convex-facts} gives
\[
  \int_0^1\norm{\dot\omega}_2^2\dd t
  \le\int_0^1L(\dot\omega)\dd t\le\revisionmark{\log\rho}.
\]
Taking the supremum proves the claim.
\end{proof}

The leading inequality has a one dimensional family of extremizers.  The
next lemma quantifies the spectral gap away from that family.

\begin{lemma}\label{lem:quadratic-stability}
There is \(c_*>0\) such that, for every
\(v\in L^2([0,1];\RR^2)\),
\begin{equation}\label{eq:quadratic-stability}
  \frac{\mathcal E(v)}{\pi}-\revisionmark{\mathfrak A(v)}
  \ge c_*\inf_{\lambda\in\RR}\norm{v-\lambda v_0}_{L^2}^2.
\end{equation}
The optimal constant is \(c_*=2/(3\pi)\).
\end{lemma}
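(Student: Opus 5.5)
The plan is to write the quadratic form \(\mathfrak A\) as \(\ip{v}{Tv}_{L^2}\) for a bounded self-adjoint operator \(T\) and diagonalize it. With \(v=(f,g)\), the Volterra operator \((Vf)(t)=\int_0^tf\) gives \(\mathfrak A(v)=\ip{Vf}{g}\). Symmetrizing, \(\mathfrak A(v)=\ip{v}{Tv}\) with
\[
  T=\frac12\begin{pmatrix}0&V^*\\ V&0\end{pmatrix},
\]
which is compact and self-adjoint on \(L^2([0,1];\RR^2)\).

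The spectrum of \(T\) comes from the singular values of \(V\). Recall that \(V^*V\) has eigenfunctions \(\cos((k-\tfrac12)\pi t)\) with eigenvalues \(\sigma_k^2\), where \(\sigma_k=1/((k-\tfrac12)\pi)\). The corresponding left singular vectors of \(V\) are \(\sin((k-\tfrac12)\pi t)\), both families being orthonormal after the factor \(\sqrt2\). The eigenvalues of \(T\) are therefore \(\pm\sigma_k/2\), that is, \(\pm1/\pi,\ \pm1/(3\pi),\ldots\). The eigenvector for \(+1/\pi\) is \((\cos(\pi t/2),\sin(\pi t/2))\), which is exactly \(v_0\) up to normalization, and it is simple.

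Next I would expand \(v=\lambda v_0+w\) with \(w\perp v_0\). Since \(\norm{v_0}_{L^2}^2=1\), this gives
\[
  \frac{\mathcal E(v)}{\pi}-\mathfrak A(v)=\ip{w}{(\pi^{-1}-T)w}\ge\Bigl(\frac1\pi-\frac1{3\pi}\Bigr)\norm w^2=\frac{2}{3\pi}\norm{w}^2.
\]
Here \(1/(3\pi)\) is the second largest eigenvalue of \(T\). The negative eigenvalues do even better, since \(\pi^{-1}-(-\sigma_k/2)\ge\pi^{-1}\). Finally, \(\norm w^2=\inf_\lambda\norm{v-\lambda v_0}^2\) by orthogonal projection. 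Sharpness of \(c_*=2/(3\pi)\) follows by taking \(v\) to be the eigenvector for \(1/(3\pi)\), which is orthogonal to \(v_0\).

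The main obstacle is verifying carefully that the spectral decomposition of \(T\) is complete and that the eigenvalues are exactly as claimed. I would do this through the SVD of \(V\). For singular pairs \(Ve_k=\sigma_ku_k\) and \(V^*u_k=\sigma_ke_k\), the vectors \((e_k,\pm u_k)/\sqrt2\) are eigenvectors of \(T\) with eigenvalues \(\pm\sigma_k/2\). Since \(\{e_k\}\) and \(\{u_k\}\) are each complete in \(L^2(0,1)\), these vectors form an orthonormal basis. The identification of \(V^*V\) is checked by solving \(u''=-\mu u\) with \(u(1)=0\) and \(u'(0)=0\), which is the mixed-boundary problem already implicit in the Wirtinger inequality of \Cref{lem:energy-area}.
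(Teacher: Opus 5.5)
Your proposal is correct and follows essentially the same route as the paper. Both arguments write \(\mathfrak A\) via the block operator built from the Volterra operator, obtain its singular values \(2/((2k-1)\pi)\) from the mixed-boundary problem \(u(1)=0\), \(u'(0)=0\), identify the simple top eigenvalue \(1/\pi\) with eigenvector \(v_0\) and the gap down to \(1/(3\pi)\), and prove sharpness with a \(1/(3\pi)\) eigenvector; your explicit orthonormal eigenbasis \((e_k,\pm u_k)/\sqrt2\) simply spells out the completeness step the paper leaves implicit.
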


\begin{proof}
Let \(T:L^2(0,1)\to L^2(0,1)\) be the Volterra operator
\[
  (Tf)(t):=\int_0^tf(s)\dd s
\]
\begin{revisioncheck}
and let \(T^*\) denote its Hilbert space adjoint.  On
\(L^2\oplus L^2\), define
\end{revisioncheck}
\[
  \mathsf S:=\frac12
  \begin{pmatrix}0&T^*\\ T&0\end{pmatrix}.
\]
Then
\(\revisionmark{\mathfrak A((f,g))}=\langle\mathsf S(f,g),(f,g)\rangle\).
To identify the singular values, set \(u:=T^*Tf\).  Then
\(u''=-f\), \(u'(0)=0\), and \(u(1)=0\).  The eigenvalue equation for
\(T^*T\) therefore gives the singular values
\[
  \frac{2}{(2j+1)\pi},\qquad j=0,1,2,\ldots.
\]
The top eigenvalue of \(\mathsf S\) is \(1/\pi\), it is simple, and its
unit eigenvector is \(v_0\).  The largest spectral value on
\(v_0^\perp\) is \(1/(3\pi)\).
Expanding \(v\) in an orthonormal eigenbasis yields
\[
  \left\langle\left(
    \frac1\pi\operatorname{Id}_{L^2\oplus L^2}-\mathsf S
  \right)v,v\right\rangle
  \ge\frac{2}{3\pi}
       \norm{v-\langle v,v_0\rangle v_0}_{L^2}^2,
\]
which is \eqref{eq:quadratic-stability}.  Equality holds for an eigenvector
of \(\mathsf S\) with eigenvalue \(1/(3\pi)\), so the constant cannot be
increased.
\end{proof}

\subsubsection{\texorpdfstring{\additionmark{Exact total action relaxation and
strict separation}}{Exact total action relaxation and strict separation}}

\begin{additioncheck}
Retaining only the action constraint at time one gives the relaxed value
\begin{equation}\label{eq:relaxed-Gamma}
  \overline\Gamma_\rho
  :=\sup\left\{A(\omega):
  \begin{array}{l}
    \omega\in\ac([0,1];\RR^2),\quad \omega(0)=0,\\
    \displaystyle\int_0^1L(\dot\omega(t))\dd t\le\log\rho
  \end{array}
  \right\},
  \qquad 1<\rho<4.
\end{equation}
\end{additioncheck}

\begin{proposition}[Total action relaxation and strict separation]
\label{prop:relaxation-main}
\begin{analystadditioncheck}
For every \(1<\rho<4\),
\begin{equation}\label{eq:relaxed-exact-main}
  \overline\Gamma_\rho
  =\min_{h>0}\frac{(\log\rho+h)^2}{a(h)}
  =4Z_0(\rho),
\end{equation}
and
\begin{equation}\label{eq:strict-relaxation-gap-main}
  \Gamma_\rho<\overline\Gamma_\rho.
\end{equation}
\end{analystadditioncheck}
\end{proposition}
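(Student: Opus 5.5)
The plan is to prove the identity \(\overline\Gamma_\rho=\min_{h>0}(\log\rho+h)^2/a(h)\) by a fourfold reflection argument that reduces open paths to closed loops, to which \Cref{lem:anisotropic-loop} applies. The second equality \(\min_h(B+h)^2/a(h)=4Z_0(\rho)\), with \(B=\log\rho\), is then immediate from \eqref{eq:origin-Wulff-main}. The strict gap \eqref{eq:strict-relaxation-gap-main} will follow from the equality case of the Wulff inequality.

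\emph{Upper bound.} Let \(\omega=(x,y)\) start at \(0\), end at \((a,b)\), and have total action at most \(B\). Since \(L\) is invariant under sign changes, I may first replace \(\omega\) by a path with nonnegative components. This does not decrease \(\int x\dd y\), as in the first step of \Cref{lem:grid-calibration}, so I may assume \(a,b\ge0\). I then build a closed curve \(\Sigma\) from four pieces. The first piece is \(\omega\). The second is the reflection of \(\omega\) in the line \(y=b\), traversed backwards from \((a,b)\) to \((0,2b)\). The third and fourth pieces are the mirror images of the first two in the line \(x=0\), traversed so that the loop closes. Reflections preserve \(L\), so \(\mathcal P_{K_h}(\Sigma)\le4(B+h)\) by \eqref{eq:support-Fenchel}. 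A direct splitting of the line integral, where each reflection together with reversal of traversal preserves the sign of \(\int x\dd y\) measured from the axis \(x=0\), gives \(\int_\Sigma x\dd y=4A(\omega)\). \Cref{lem:anisotropic-loop} then gives \(4A(\omega)\le16(B+h)^2/(4a(h))\), that is, \(A(\omega)\le(B+h)^2/a(h)\) for every \(h>0\). Taking the infimum over \(h\) proves \(\le\).

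\emph{Lower bound.} At \(h=h_B\), take the period-one Wulff loop \(\gamma_{h_B}\) from the proof of \Cref{thm:origin-fiber-main}. Its action is \(B\) and its area is \(Z_0(\rho)\). Since \(\psi\) is invariant under coordinate reflections, \(K_{h_B}\) and the loop are symmetric in both axes. The flow of \eqref{eq:Hamilton-level-flow} is \(\mathsf J\)-equivariant, so each quarter between consecutive axis crossings takes time \(1/4\) and carries action \(B/4\). I take the quarter arc from the top point, which lies on the vertical symmetry axis, to the rightmost point, oriented so that its area is positive. I translate it to start at the origin and dilate it by \(4\) in both space and time. Velocities are unchanged, so the total action is \(4\cdot B/4=B\). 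The quantity \(\int x\dd y\) of the original quarter, measured from the symmetry axis, is one quarter of the enclosed area, namely \(Z_0/4\). After the dilation by \(4\) it becomes \(16\cdot Z_0/4=4Z_0\). This matches the upper bound via \eqref{eq:Wulff-minimum-value}.

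\emph{Strict gap.} Since \(\cC_\rho\) is contained in the relaxed class, \(\Gamma_\rho\le\overline\Gamma_\rho\). Suppose equality held, and let \(\omega^\star\in\cC_\rho\) attain \(\Gamma_\rho\), which is possible by compactness. Then every inequality in the upper bound chain is an equality. In particular \(h=h_B\), and the reflected loop \(\Sigma\) attains equality in the Wulff inequality. Using the equality case of the anisotropic isoperimetric inequality \cite{EspositoFuscoTrombetti2005}, \(\Sigma\) must be a traversal of a translate of \(-\mathsf JK_{h_B}/a'(h_B)\), and the Fenchel equality in \eqref{eq:support-Fenchel} fixes the parametrization as the Hamiltonian one. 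Hence \(\omega^\star\) is, up to the sign symmetries, exactly the quarter arc built above, starting at an axis point of the Wulff loop. It remains to show that this arc violates some intermediate constraint \(J_t\le tB\). Its action density is \(\ell(t)=\ip{\theta}{\nabla\psi(\theta)}-h_B\) along \(\partial K_{h_B}\), and its average over the quarter is \(B\). It therefore suffices to show that \(\ell\) exceeds its average near the starting axis point, since then \(J_t>tB\) for small \(t\). This is the step I expect to be the main obstacle. Concretely, one must show that \(\theta\mapsto\ip{\theta}{\nabla\psi(\theta)}\) restricted to \(\partial K_h\) is strictly larger at the axis points than its \(\mathcal H^1/\norm{\nabla\psi}\)-weighted mean, and it is not yet clear that this holds for every \(h\). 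I would first try the Fermi coordinates, in which \(\ip{\theta}{\nabla\psi}=p\tanh p+q\tanh q\) and the level set is \(\cosh p\cosh q=\e^h\), and check that this expression is maximized where \(\theta\) lies on an axis (\(\alpha=0\) or \(\beta=0\)), that is, where \(p=\pm q\). If that fails, the fallback is an indirect argument: compare with the saddle symmetry \(\max\cI_\rho(s,s)-\max\cI_\rho(s,-s)=s^2\) from \Cref{prop:sym-star} to rule out the endpoint of the arc.
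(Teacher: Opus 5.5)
Your exact-value argument is correct and is the paper's. It uses the same folding to nonnegative velocities and the same pair of reflections. The paper's curve \(\varpi=(y_+-\widehat y,\widehat x)\) is your construction after a quarter turn and a translation, and its coordinate axes are the images of your lines \(y=b\) and \(x=0\). The lower bound uses the same quarter Wulff arc dilated by four.

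\textbf{The gap.} The strict separation is left unproved at exactly the step you flag, and that step carries the whole contradiction. It does hold for every \(h>0\), and your Fermi-coordinate plan is precisely the paper's proof. On \(\partial K_h\) one has \(\ip{\theta}{\nabla\psi(\theta)}=p\tanh p+q\tanh q\). By the symmetries of \(\psi\) it suffices to consider the arc of \(\log\cosh p+\log\cosh q=h\) with \(0\le q\le p\). This arc runs from the diagonal \(\alpha=\beta\) (\(q=0\)) to the axis \(\beta=0\) (\(q=p\)). Along it \(\dd p/\dd q=-\tanh q/\tanh p\), and since \((r\tanh r)'=\tanh r+r/\cosh^2r\),
\[
  \frac{\dd}{\dd q}\bigl(p\tanh p+q\tanh q\bigr)
  =\tanh q\,\bigl\{\mathfrak f(q)-\mathfrak f(p)\bigr\},
  \qquad
  \mathfrak f(r):=\frac{2r}{\sinh(2r)}.
\]
This is positive for \(0<q<p\) because \(\mathfrak f\) is strictly decreasing. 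Indeed, \(\mathfrak f'\) has the sign of \(\sinh(2r)-2r\cosh(2r)\), which vanishes at \(0\) and has derivative \(-4r\sinh(2r)<0\).

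Hence the action density \(\Lambda(\theta)=\ip{\theta}{\nabla\psi(\theta)}-h_B\) is nonconstant on \(\partial K_{h_B}\) and strictly maximal at the axis points. Its time average over the quarter arc is \(B\), so its value at the starting axis point exceeds \(B\). Thus \(L(\dot\omega_B(0))>B\) and \(J_t(\omega_B)>tB\) for small \(t>0\), which is the required contradiction. The saddle-symmetry fallback is unnecessary.

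\textbf{A smaller point in the equality step.} \Cref{lem:anisotropic-loop} is proved by polygonal approximation and cycle decomposition, so it gives no equality characterization for arbitrary closed curves. The rigidity in \cite{EspositoFuscoTrombetti2005} concerns sets of finite perimeter. Before invoking it, you must identify \(\Sigma\) as the boundary of the reflected region with multiplicity one.

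Monotonicity of the folded path gives this, except that \(\Sigma\) may retrace segments on the reflection lines. An initial vertical run along \(x=0\), or a final horizontal run along \(y=b\), is traversed twice. Such portions add positive anisotropic length without changing the region or its area, so equality excludes them, as the paper observes.

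Finally, the region is then a translate of \(4K_{h_B}/a'(h_B)\), not of \(-\mathsf JK_{h_B}/a'(h_B)\), because \(\Sigma\) consists of four copies of the arc dilated by four. This affects only the bookkeeping.
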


\begin{analystadditioncheck}
The exact origin loop also determines the relaxation in
\eqref{eq:relaxed-Gamma}.  The proof exhibits an optimizer, identifies the
equality case after taking coordinatewise absolute values, and explains why
that optimizer violates an intermediate action constraint below saturation.

\begin{proof}[\revisionmark{Proof of \Cref{prop:relaxation-main}}]
\emph{Exact value.}
Let \(\omega\) be feasible in \eqref{eq:relaxed-Gamma}, put
\(v=(v_1,v_2):=\dot\omega\), and write
\[
  \widehat v(t):=(|v_1(t)|,|v_2(t)|),
  \qquad
  \widehat\omega(t):=\int_0^t\widehat v(s)\dd s
  =:(\widehat x(t),\widehat y(t)).
\]
The coordinate reflection symmetries of \(L\) give
\(L(\widehat v)=L(v)\), while
\begin{equation}\label{eq:positive-folding-area}
  \mathfrak A(v)
  \le\int_0^1\left(\int_0^t|v_1(s)|\dd s\right)|v_2(t)|\dd t
  =A(\widehat\omega).
\end{equation}
It is therefore enough to consider velocities with nonnegative coordinates.
Put \((x_+,y_+):=\widehat\omega(1)\) and
\[
  \varpi(t):=(y_+-\widehat y(t),\widehat x(t)).
\]
Then \(\varpi\) runs monotonically from \((y_+,0)\) to \((0,x_+)\),
\(\dot\varpi=\mathsf J\widehat v\), and
\[
  A(\widehat\omega)
  =-\int_0^1\varpi_2(t)\dot\varpi_1(t)\dd t.
\]
Reflect the curve and the region between it and the coordinate axes in
both axes.  The resulting closed curve has signed area
\(4A(\widehat\omega)\) and \(K_h\) anisotropic perimeter
\[
  4\int_0^1h_{K_h}(\dot\varpi(t))\dd t
  =4\int_0^1h_{K_h}(\widehat v(t))\dd t,
\]
where the equality uses the quarter turn symmetry of \(K_h\).
\Cref{lem:anisotropic-loop} and \eqref{eq:support-Fenchel} imply
\begin{align}
  A(\widehat\omega)
  &\le\frac1{a(h)}
    \left\{\int_0^1h_{K_h}(\widehat v(t))\dd t\right\}^2\notag\\
  &\le\frac1{a(h)}
    \left\{\int_0^1(L(\widehat v(t))+h)\dd t\right\}^2
  \le\frac{(B+h)^2}{a(h)}.
  \label{eq:relaxed-Wulff-upper}
\end{align}
Taking the infimum over \(h>0\) and using
\eqref{eq:origin-Wulff-main} gives
\(\overline\Gamma_\rho\le4Z_0(\rho)\).

We next use the period one Hamiltonian orbit from the proof of
\Cref{thm:origin-fiber-main}.  Let \(h_B\) be the unique minimizer from
\Cref{lem:unique-Wulff-parameter}, and phase \(\theta_{h_B}\) so that
\(\theta_{h_B}(0)\) lies on the positive horizontal axis and
\(\theta_{h_B}(1/4)\) lies on the positive vertical axis.  Define
\begin{equation}\label{eq:relaxed-equality-curve}
  \varpi_B(t):=\frac4{a'(h_B)}\theta_{h_B}(t/4),
  \qquad
  \omega_B(t):=
  \bigl(\varpi_{B,2}(t),
        \varpi_{B,1}(0)-\varpi_{B,1}(t)\bigr).
\end{equation}
The flow equation gives
\[
  \dot\varpi_B(t)=\mathsf J\nabla\psi(\theta_{h_B}(t/4)),
  \qquad
  \dot\omega_B(t)=\nabla\psi(\theta_{h_B}(t/4)).
\]
The four quarter arcs have equal traversal time under the flow and equal
action.  Since
the complete loop in the proof of \Cref{thm:origin-fiber-main} has total
action \(B\),
\[
  \int_0^1L(\dot\omega_B(t))\dd t
  =4\int_0^{1/4}
    L(\nabla\psi(\theta_{h_B}(s)))\dd s
  =B.
\]
The region between \(\varpi_B\) and the coordinate axes is one quarter of the
dilation \(4K_{h_B}/a'(h_B)\).  Therefore
\begin{equation}\label{eq:relaxed-equality-area}
  A(\omega_B)
  =\frac14\left\{\frac4{a'(h_B)}\right\}^2a(h_B)
  =\frac{4a(h_B)}{a'(h_B)^2}
  =4Z_0(\rho).
\end{equation}
This proves \eqref{eq:relaxed-exact-main}.

\emph{Strict separation: equality forces the Wulff arc.}
Suppose, to the contrary, that
\(\Gamma_\rho=\overline\Gamma_\rho\).  Compactness of \(\cQ_\rho\) gives a
maximizing path \(\omega\in\cC_\rho\).  Replacing the two coordinates of
its velocity by their absolute values preserves every action integral
\(J_t\) and cannot decrease the area by
\eqref{eq:positive-folding-area}.  The resulting path is consequently both
admissible and a maximizer of the relaxed problem.  At the unique minimizer
\(h_B\), equality must hold at every step of
\eqref{eq:relaxed-Wulff-upper}.  In particular, the total action is \(B\),
Fenchel equality holds almost everywhere, and the reflected region is an
equality case of the planar Wulff inequality.  Indeed, monotonicity of the
two coordinates of \(\varpi\) makes the four reflected arcs the boundary
of a finite perimeter set, apart from possible duplicated portions on the
coordinate axes.  Fenchel equality gives
\(\widehat v=\nabla\psi(\theta)\) with
\(\theta\in\partial K_{h_B}\); since \(h_B>0\), this velocity is nonzero
almost everywhere.  Moreover, any nonconstant duplicated portion adds
positive anisotropic length without changing the set or its area, so
equality rules it out.  The curve is therefore the boundary of the reflected
region with multiplicity one.  The equality
characterization in \cite{EspositoFuscoTrombetti2005} makes this region a
translate and a positive dilation of \(-\mathsf J K_{h_B}=K_{h_B}\), where the last
equality follows from quarter turn symmetry.  Symmetry about both coordinate
axes forces the translation to vanish.  Thus the curve \(\varpi\) above has the form
\[
  \varpi(t)=\lambda\theta(t),
  \qquad \theta(t)\in\partial K_{h_B},
  \qquad \lambda>0,
\]
on the first quadrant arc.  Strict convexity of \(\psi\) and Fenchel
equality give
\[
  \widehat v(t)=\nabla\psi(\theta(t)),
  \qquad
  \lambda\dot\theta(t)=\mathsf J\nabla\psi(\theta(t)).
\]
The flow in \eqref{eq:Hamilton-level-flow} takes time \(1/4\) to traverse
one quadrant, whereas the present path takes time one.  Hence
\(\lambda=4/a'(h_B)\), and the folded path is exactly the curve in
\eqref{eq:relaxed-equality-curve}, up to the coordinate symmetries.

\emph{Strict separation: violation of an intermediate constraint.}
It remains to compare the action density at the initial axis endpoint with
its average.  On
\(\partial K_{h_B}\), put
\[
  \Lambda(\alpha,\beta)
  :=\alpha\,\partial_1\psi(\alpha,\beta)
    +\beta\,\partial_2\psi(\alpha,\beta)-\psi(\alpha,\beta).
\]
Thus \(\Lambda(\theta)=L(\nabla\psi(\theta))\).
\hyperadd{Although \(\partial K_{h_B}\) corresponds to a hyperbolic circle,
the action density \(\Lambda\) varies along it.  The calculation below
locates its maxima and shows how they violate an intermediate constraint.}
In the first quadrant set
\(u=(\alpha+\beta)/2\), \(w=(\alpha-\beta)/2\), and
\[
  \mathfrak e(r):=r\tanh r-\log\cosh r,
  \qquad
  \mathfrak f(r):=\frac{2r}{\sinh(2r)}.
\]
Then \(\psi=\log\cosh u+\log\cosh w\) and
\(\Lambda=\mathfrak e(u)+\mathfrak e(w)\).  Along a level set, moving from
the diagonal toward
an axis gives
\begin{equation}\label{eq:action-density-monotonicity}
  \frac{\dd\Lambda}{\dd w}
  =\tanh w\{\mathfrak f(w)-\mathfrak f(u)\}>0,
  \qquad 0<w<u,
\end{equation}
because \(\mathfrak f\) is strictly decreasing on \((0,\infty)\).  Indeed, the
numerator in
\[
  \mathfrak f'(r)=
  \frac{2\{\sinh(2r)-2r\cosh(2r)\}}{\sinh^2(2r)}
\]
vanishes at zero and has derivative \(-4r\sinh(2r)<0\) for \(r>0\).
Coordinate
exchange gives the same conclusion on the other half of the quadrant.
Thus \(\Lambda\) is nonconstant and is strictly largest at the two axis
endpoints.  Its average along the equality curve is \(B\), so the curve
starting at an axis satisfies
\[
  L(\dot\omega_B(0))>B.
\]
Continuity gives \(J_t(\omega_B)>Bt\) for all sufficiently small
\(t>0\), contradicting its assumed membership in \(\cC_\rho\).  This proves
\eqref{eq:strict-relaxation-gap-main}.
\end{proof}

\begin{corollary}[Weak branching asymptotics of the relaxed front]
\label{cor:relaxed-weak-branching}
As \(B=\log\rho\downarrow0\),
\begin{equation}\label{eq:weak-upper-second}
  \overline\Gamma_\rho
  =\frac B\pi-\frac{B^2}{4\pi}+O(B^3).
\end{equation}
\end{corollary}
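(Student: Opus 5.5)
The plan is to start from the exact formula \(\overline\Gamma_\rho=\min_{h>0}(B+h)^2/a(h)=4Z_0(\rho)\) in \Cref{prop:relaxation-main}, combined with the characterization in \Cref{lem:unique-Wulff-parameter}. The minimizer is \(h_B=\mathfrak m^{-1}(B)\), and the minimum value is \(4a(h_B)/a'(h_B)^2\). The whole argument is then an asymptotic inversion of \(\mathfrak m(h)=2a(h)/a'(h)-h\) as \(h\downarrow0\), followed by substitution. All the input comes from the small-\(h\) expansions \eqref{eq:ah-small} and \eqref{eq:ah-derivative-small} in \Cref{lem:ah-asymptotics}.

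The first step is to expand \(\mathfrak m\). From \(a(h)=4\pi h+\pi h^2+O(h^3)\) and \(a'(h)=4\pi+2\pi h+O(h^2)\),
\[
  \frac{2a(h)}{a'(h)}=\frac{2h(4+h+O(h^2))}{4+2h+O(h^2)}
  =2h\Bigl(1-\frac h4+O(h^2)\Bigr),
\]
so \(\mathfrak m(h)=h-h^2/2+O(h^3)\). The \(O(h^3)\) remainders in \(a\) and \(O(h^2)\) in \(a'\) are exactly of the order needed here. Since \(\mathfrak m\) is a strictly increasing bijection with \(\mathfrak m(h)\to0\) as \(h\downarrow0\), we have \(h_B\to0\) as \(B\downarrow0\). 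Inverting gives \(h_B=B+B^2/2+O(B^3)\), but only \(h_B=B+O(B^2)\) will actually be needed.

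The second step is to compute the value. Rather than substituting into \(4a/a'^2\), which would need \(a'\) to order \(h^2\), I would use the quotient form \(\Phi(h)=(B+h)^2/a(h)\) at its minimizer. Since \(\Phi'(h_B)=0\), an \(O(B^2)\) error in \(h\) only perturbs the value by \(O(B^2)\cdot\Phi''\cdot O(B^2)\). A cleaner route is to note that the value is stationary in \(h\), so \(\overline\Gamma_\rho=\Phi(h)+O(\Phi''(h)(h-h_B)^2)\) for any trial \(h\) near \(h_B\). Here \(\Phi''=O(1/B)\), because \(\Phi\approx (B+h)^2/(4\pi h)\).

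Take the trial point \(h=B\). Then
\[
  \Phi(B)=\frac{4B^2}{4\pi B+\pi B^2+O(B^3)}=\frac B\pi\Bigl(1-\frac B4+O(B^2)\Bigr).
\]
The error term is \(O(B^{-1}\cdot B^4)=O(B^3)\). This gives \(B/\pi-B^2/(4\pi)+O(B^3)\).

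The main obstacle is justifying the stationarity error estimate uniformly, namely that \(\Phi''=O(1/B)\) on the interval between \(B\) and \(h_B\). To handle it, I would write \(\Phi''\) explicitly in terms of \(a,a',a''\). I would use smoothness of \(a\) on \([0,\infty)\), which follows from \(a'=8\mathbf K(\sqrt{1-\e^{-2h}})\) being analytic in \(h\) near \(0\) because \(\mathbf K(k)\) is analytic in \(k^2\). From this, \(a''\) is bounded near \(0\), and every term of \(\Phi''\) is bounded by \(C/h\) for \(h\asymp B\).

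As a backup, I would substitute \(h_B=B+B^2/2+O(B^3)\) directly into \(4a/a'^2\). That route needs \(a'\) to one more order, which the analyticity of \(\mathbf K\) supplies.
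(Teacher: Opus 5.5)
Your proof is correct, but it is organized differently from the paper's.

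The paper inverts the critical point identity \eqref{eq:origin-critical-main} to second order. After checking \(h_B\to0\) by comparing the quotient at \(h=B\) with its values for \(h\) bounded away from zero, it obtains \(h_B=B+B^2/2+O(B^3)\). It then substitutes this into the closed form \(4a(h_B)/a'(h_B)^2=h_B/\pi-3h_B^2/(4\pi)+O(h_B^3)\).

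You instead exploit that \(\overline\Gamma_\rho\) is a stationary value. You evaluate \(\Phi\) at the trial point \(h=B\) and pay only \(\tfrac12\Phi''(\xi)(B-h_B)^2=O(B^{-1}\cdot B^4)\). So a first order inversion \(h_B=B+O(B^2)\) suffices, and the upper bound \(\overline\Gamma_\rho\le\Phi(B)\) is immediate from the minimum. Your derivation of \(h_B\to0\) from continuity of the monotone inverse \(\mathfrak m^{-1}\) is also fine.

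The price of your route is an extra input: boundedness of \(a''\) near \(0\). This is not among the stated expansions. You correctly extract it from the analyticity of \(\mathbf K\) in \(k^2\), composed with \(k^2=1-\e^{-2h}\). The paper's route needs nothing beyond \eqref{eq:ah-small} and \eqref{eq:ah-derivative-small}.

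Your stated reason for avoiding the direct substitution is mistaken. In \(4a/a'^2\), the \(O(h^2)\) error of \(a'\) is multiplied by \(a=O(h)\), so it contributes only \(O(h^3)\). This is exactly the mechanism in your own expansion of \(\mathfrak m\). Your ``backup'' route is precisely the paper's proof and needs no further order of \(a'\).
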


\begin{proof}
First,
\(h_B\to0\) as \(B\downarrow0\): evaluating the quotient in
\eqref{eq:origin-Wulff-main} at \(h=B\) gives \(Z_0(\e^B)=O(B)\), whereas
the quotient is bounded away from zero when \(h\) stays away from zero.
Using \eqref{eq:ah-small} and \eqref{eq:ah-derivative-small} in the critical
point identity \eqref{eq:origin-critical-main} gives
\[
  B=h_B-\frac{h_B^2}{2}+O(h_B^3),
  \qquad
  h_B=B+\frac{B^2}{2}+O(B^3).
\]
Using \eqref{eq:relaxed-exact-main},
\[
  \overline\Gamma_\rho
  =\frac{4a(h_B)}{a'(h_B)^2}
  =\frac{h_B}{\pi}-\frac{3h_B^2}{4\pi}+O(h_B^3)
\]
and substitution of the preceding expansion for \(h_B\) proves
\eqref{eq:weak-upper-second}.
\end{proof}

\end{analystadditioncheck}

\subsubsection{\texorpdfstring{\revisionmark{A matching lower bound with
constant action density}}{A matching lower bound with constant action density}}

The relaxed problem provides the upper estimate.  To obtain the matching
lower estimate, we adjust the quadratic extremal pointwise so that its action
density equals \(B\) at every time.

\begin{proposition}[Second order admissible test path]\label{prop:weak-lower-second}
\begin{revisioncheck}
As \(\rho\downarrow1\),
\begin{equation}\label{eq:weak-lower-second}
  \Gamma_\rho
  \ge\frac{\log\rho}{\pi}-\frac{(\log\rho)^2}{4\pi}
    +O\bigl((\log\rho)^3\bigr).
\end{equation}
\end{revisioncheck}
\end{proposition}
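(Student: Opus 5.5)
The plan is to build an explicit admissible path whose action density equals \(B=\log\rho\) at every time. Such a path satisfies \(J_t=Bt\) for all \(t\), so every intermediate constraint in \eqref{eq:C-rho} holds automatically. We then compute its area to second order in \(B\).

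\emph{The test path.} Take the velocity in the form \(v_B(t)=r_B(t)v_0(t)\), with \(v_0\) the rotating extremal \eqref{eq:v0-main}. The scalar \(r_B(t)>0\) is defined by
\[
  L\bigl(r_B(t)v_0(t)\bigr)=B .
\]
For fixed \(t\), the map \(r\mapsto L(rv_0(t))\) is continuous and strictly increasing on \([0,r_{\max}(t)]\), by the series in \Cref{lem:convex-facts}(iii). It vanishes at \(0\) and is bounded below by \(r^2\). Hence for small \(B\) a unique root exists, satisfies \(r_B(t)\le\sqrt B\), and lies well inside \(D\). It is measurable in \(t\), so \(\omega_B:=\int_0^\cdot v_B\in\cC_\rho\).

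\emph{Expansion of the radius.} Since \(\norm{v_0(t)}_2=1\), the local expansion \eqref{eq:L-local-expansion}, with remainder uniform in direction, gives
\[
  r_B^2+Q_4(v_0(t))\,r_B^4+O(r_B^6)=B .
\]
Inverting this relation yields, uniformly in \(t\),
\[
  r_B(t)=\sqrt B\,\bigl(1-\tfrac12 Q_4(v_0(t))B+O(B^2)\bigr).
\]

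\emph{Area expansion.} The functional \(\mathfrak A\) in \eqref{eq:area-control} is a bounded quadratic form, \(\mathfrak A(v)=\ip{\mathsf Sv}{v}\), with \(\mathsf S\) the self-adjoint operator from the proof of \Cref{lem:quadratic-stability}. Write \(v_B=\sqrt B\,(v_0+Bw)\), where
\[
  w=-\tfrac12 Q_4(v_0)v_0+O(B)
\]
in \(L^\infty\). Then
\[
  \mathfrak A(v_B)=B\bigl\{\ip{\mathsf Sv_0}{v_0}+2B\ip{\mathsf Sv_0}{w}+B^2\ip{\mathsf Sw}{w}\bigr\}.
\]
Because \(\mathsf Sv_0=v_0/\pi\) and \(\norm{v_0}_{L^2}=1\), this equals
\[
  \frac B\pi-\frac{B^2}{\pi}\int_0^1Q_4(v_0(t))\dd t+O(B^3),
\]
where boundedness of \(\mathsf S\) controls the remaining terms.

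\emph{The integral.} With \(c=\cos(\pi t/2)\) and \(s=\sin(\pi t/2)\), we have \(c^4+s^4=1-2c^2s^2\). Therefore
\[
  Q_4(v_0)=\tfrac16+\tfrac23c^2s^2=\tfrac16+\tfrac16\sin^2(\pi t),
\]
and its integral over \([0,1]\) is \(\tfrac16+\tfrac1{12}=\tfrac14\). This gives \(\Gamma_\rho\ge A(\omega_B)=B/\pi-B^2/(4\pi)+O(B^3)\), which is \eqref{eq:weak-lower-second}.

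\emph{Main obstacle.} The only delicate point is bookkeeping the remainders uniformly in \(t\). Specifically, we must pass from the \(O(\norm v_2^6)\) term in \eqref{eq:L-local-expansion} to an \(O(B^{5/2})\) correction in \(r_B\), uniform in \(t\). This follows by applying the implicit function theorem to \(\sigma\mapsto L(\sqrt\sigma\,e)\) uniformly over unit directions \(e\), using the even power series in \eqref{eq:phi-series}. The series is analytic in \(\sigma=r^2\), so the inversion \(\sigma=B-Q_4(e)B^2+O(B^3)\) is uniform. Once this holds, the quadratic structure of \(\mathfrak A\) makes the rest exact.
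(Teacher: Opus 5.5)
Your proposal is correct and follows essentially the same route as the paper. You use the constant-action-density test velocity \(s_B(t)v_0(t)\) obtained from a uniform implicit function argument in the variable \(s^2\), the expansion \(s_B=\sqrt B\,(1-\tfrac B2Q_4(v_0)+O(B^2))\), and a second-order expansion of the quadratic functional \(\mathfrak A\) via \(\mathsf Sv_0=v_0/\pi\); your explicit evaluation \(\int_0^1Q_4(v_0(t))\,\mathrm dt=1/4\) supplies the one computation the paper leaves to the reader.
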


\begin{proof}
\begin{revisioncheck}
For all sufficiently small \(B\), the implicit function theorem applied
uniformly to \(L(sv_0(t))\) as a function of \(s^2\) gives a unique
positive continuous function \(s_B\) such that
\[
  L(s_B(t)v_0(t))=B,
  \qquad 0\le t\le1,
\]
and \(\norm{s_B}_\infty\to0\) as \(B\downarrow0\).
\end{revisioncheck}
The expansion \eqref{eq:L-local-expansion}, applied after putting
\(r=s^2\), gives uniformly in \(t\),
\begin{equation}\label{eq:sB-pointwise-expansion}
  s_B(t)=\sqrt B\left(1-\frac B2Q_4(v_0(t))+O(B^2)\right).
\end{equation}
Set \(v_B^{\mathrm{test}}(t)=s_B(t)v_0(t)\).
\revisionmark{Define
\(\omega_B^{\mathrm{test}}(t):=\int_0^t v_B^{\mathrm{test}}(s)\dd s\).
Its action density equals \(B\) pointwise, so
\(J_t(\omega_B^{\mathrm{test}})=tB\) for every \(t\in[0,1]\), and hence
\(\omega_B^{\mathrm{test}}\in\cC_{\e^B}\).}

The Fr\'echet derivative of the quadratic functional
\revisionmark{\(\mathfrak A\)} at
\(v_0\) is
\begin{equation}\label{eq:area-derivative-v0}
  D\revisionmark{\mathfrak A}(v_0)[w]
  =\frac2\pi\int_0^1v_0(t)\cdot w(t)\dd t,
\end{equation}
because \(\mathsf Sv_0=\pi^{-1}v_0\).  From
\eqref{eq:sB-pointwise-expansion},
\[
  v_B^{\mathrm{test}}
  =\sqrt B\,v_0+B^{3/2}w_0+O_{L^\infty}(B^{5/2}),
  \qquad
  w_0=-\frac12Q_4(v_0)v_0.
\]
Since \revisionmark{\(\mathfrak A\)} is quadratic,
\begin{align*}
  \revisionmark{\mathfrak A}(v_B^{\mathrm{test}})
  &=B\revisionmark{\mathfrak A}(v_0)
    +B^2D\revisionmark{\mathfrak A}(v_0)[w_0]+O(B^3)\\
  &=\frac B\pi-\frac{B^2}\pi
    \int_0^1Q_4(v_0(t))\dd t+O(B^3)\\
  &=\frac B\pi-\frac{B^2}{4\pi}+O(B^3).
\end{align*}
\end{proof}

\begin{proof}[Proof of \Cref{thm:weak-branching}]
\begin{revisioncheck}
The upper estimate is \eqref{eq:weak-upper-second}, and the matching lower
estimate is \Cref{prop:weak-lower-second}.  Together with
\(\Gamma_\rho\le\overline\Gamma_\rho\), they give the
\additionmark{third order remainder} in \eqref{eq:weak-two-sided}.
A maximizer \(\omega_\rho\) in the definition of \(\Gamma_\rho\)
exists because \(\cQ_\rho\) is compact and endpoint evaluation is
continuous.
\begin{analystadditioncheck}
  Put \(v_B:=\dot\omega_\rho\), and
  \(E_B:=\mathcal E(v_B)\).  Since
  \(\mathfrak A(v_B)=\Gamma_\rho\),
  \Cref{lem:convex-facts,lem:energy-area} and the expansion just proved give
\[
  B-\frac{B^2}{4}+O(B^3)\le E_B\le B
\]
and
\begin{equation}\label{eq:quadratic-deficit-small}
  0\le\frac{E_B}{\pi}-\mathfrak A(v_B)\le CB^2,
\end{equation}
where \(C\) is independent of all sufficiently small \(B\).
In particular, \(E_B=B+O(B^2)\), while
\Cref{lem:quadratic-stability} yields
\[
  \inf_{\lambda\in\RR}\norm{v_B-\lambda v_0}_{L^2}=O(B).
\]
Write \(v_B=c_Bv_0+w_B\), where \(w_B\perp v_0\).  Then
\(\norm{w_B}_{L^2}=O(B)\) and \(c_B^2=B+O(B^2)\), so
\[
  \inf_{\sigma\in\{-1,1\}}
  \norm{B^{-1/2}v_B-\sigma v_0}_{L^2}\longrightarrow0.
\]
This is \eqref{eq:weak-rigidity-main}.
\end{analystadditioncheck}
\end{revisioncheck}
\end{proof}

\begin{analystadditioncheck}
\begin{revisioncheck}
Consequently,
\(0<\overline\Gamma_\rho-\Gamma_\rho
=4Z_0(\rho)-\Gamma_\rho=O((\log\rho)^3)\) as \(\rho\downarrow1\).
\end{revisioncheck}
The identity and strict positivity follow from
\Cref{prop:relaxation-main}; the order follows by subtracting
\eqref{eq:weak-two-sided} from \eqref{eq:weak-upper-second}.
\end{analystadditioncheck}

\subsection{Approach to saturation from below}

The weak branching analysis above describes the central front near
\(\rho=1\).  A coordinate path gives the estimate near \(\rho=4\).

\begin{proposition}[Approach to saturation from below]\label{prop:near-four}
\begin{revisioncheck}
For \(1<\rho<4\), set \(\varepsilon:=\log(4/\rho)\).  As
\(\rho\uparrow4\), equivalently \(\varepsilon\downarrow0\),
\begin{equation}\label{eq:near-four-bound}
  0\le\frac14-\Gamma_\rho
  \le\frac{\varepsilon}{2\log(1/\varepsilon)}(1+o(1)).
\end{equation}
\end{revisioncheck}
\end{proposition}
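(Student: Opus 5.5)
The lower inequality $\frac14-\Gamma_\rho\ge0$ is already contained in \Cref{prop:saturation}, which gives $0\le\Gamma_\rho\le1/4$ for every $\rho>1$. The work is therefore the upper inequality. I will prove it with one explicit two-segment coordinate path, an ``almost saturated'' version of the extremal $\R$-then-$\U$ path from \Cref{prop:saturation}.

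\emph{Construction.} The saturated extremal path moves right at unit speed for time $1/2$ and then up at unit speed for time $1/2$. Its velocity is a vertex of $D$, where $L=\log4>\log\rho$, so it is not admissible below saturation. I slow it down uniformly. For $\delta\in(0,1)$, use velocity $((1-\delta),0)$ on $[0,1/2]$ and $(0,1-\delta)$ on $(1/2,1]$. By the symmetries in \Cref{lem:convex-facts}, both velocities have the same action density. If that density is at most $B=\log\rho$ pointwise, then $J_t\le tB$ for every $t$ and the path lies in $\cC_\rho$. Its area is $(1-\delta)^2\cdot\frac12\cdot\frac12=\frac{(1-\delta)^2}{4}$. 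Hence
\[
  \tfrac14-\Gamma_\rho\le\tfrac14\bigl(1-(1-\delta)^2\bigr)\le\tfrac{\delta}{2}.
\]

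\emph{Choosing $\delta$.} By \eqref{eq:L-diagonal-decomposition} and \eqref{eq:phi-def},
\[
  L((1-\delta,0))=2\phi(1-\delta)=(2-\delta)\log(2-\delta)+\delta\log\delta .
\]
This is continuous and strictly increasing in $1-\delta\in[0,1]$, and it equals $\log4$ at $\delta=0$. Its expansion is
\[
  L((1-\delta,0))=\log4-\delta\bigl(\log(1/\delta)+\log2+1\bigr)+O(\delta^2).
\]
Let $\delta_\varepsilon$ be the unique solution of $L((1-\delta,0))=\log4-\varepsilon$. Then
\[
  \delta_\varepsilon\bigl(\log(1/\delta_\varepsilon)+O(1)\bigr)=\varepsilon .
\]
Inverting, $\log(1/\delta_\varepsilon)=\log(1/\varepsilon)+O(\log\log(1/\varepsilon))$, and so
\[
  \delta_\varepsilon=\frac{\varepsilon}{\log(1/\varepsilon)}(1+o(1)).
\]
Substituting into $\tfrac14-\Gamma_\rho\le\delta_\varepsilon/2$ gives \eqref{eq:near-four-bound}.

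\emph{Main obstacle.} The only delicate step is the asymptotic inversion of $\delta\log(1/\delta)\approx\varepsilon$ and checking that the $O(\delta)$ corrections only affect the $1+o(1)$ factor. Since $\log(1/\delta_\varepsilon)\sim\log(1/\varepsilon)$, this is routine. No intermediate-time issue arises, because the action density is constant and at most $B$ throughout.
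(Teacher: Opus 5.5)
Your proposal is correct and is essentially the paper's proof: the same $\R$-then-$\U$ path with velocity $(1-\delta,0)$ followed by $(0,1-\delta)$, chosen so that the action density equals $\log\rho$ pointwise, giving area $(1-\delta)^2/4$. You then invert $\varepsilon=\delta\bigl(\log(2/\delta)+1\bigr)+O(\delta^2)$ in the same way, and your explicit bound $\tfrac14\bigl(1-(1-\delta)^2\bigr)\le\delta/2$ is a slightly tidier form of the paper's final substitution step.
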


\begin{proof}[Proof of \Cref{prop:near-four}]
\begin{revisioncheck}
Along a coordinate axis, direct Legendre transformation of
\(\psi(\theta,0)=2\log\cosh(\theta/2)\) gives
\[
  L(s,0)=(1+s)\log(1+s)+(1-s)\log(1-s),
  \qquad0\le s\le1.
\]
Let \(s_\rho\in(0,1)\) be the unique solution of
\(L(s_\rho,0)=\log\rho\).  The path with velocity \((s_\rho,0)\) on
\([0,1/2]\) and \((0,s_\rho)\) on \([1/2,1]\) has action density
\(\log\rho\) pointwise and therefore belongs to \(\cC_\rho\).  Its area is
\(s_\rho^2/4\), so
\[
  \frac14-\Gamma_\rho\le\frac{1-s_\rho^2}{4}.
\]
Write \(\upsilon_\rho:=1-s_\rho\).  Recalling
\(\varepsilon=\log(4/\rho)\) from \Cref{prop:near-four}, expansion of the
displayed formula for \(L\) gives
\[
  \varepsilon
  =\upsilon_\rho
    \left(\log\frac2{\upsilon_\rho}+1\right)+O(\upsilon_\rho^2).
\]
Consequently,
\(
\upsilon_\rho\sim\varepsilon/\log(1/\varepsilon)
\),
and substitution proves \eqref{eq:near-four-bound}.  The lower bound
\(1/4-\Gamma_\rho\ge0\) follows from \Cref{prop:saturation}.
\end{revisioncheck}
\end{proof}

\begin{analystadditioncheck}
\begin{remark}[Open variational questions below saturation]
For \(1<\rho<4\), it remains open to classify the paths maximizing
\(\Gamma_\rho\), determine their contact sets
\[
  \left\{t\in[0,1]:J_t(\omega)=t\log\rho\right\},
\]
and establish uniqueness modulo the symmetries in \eqref{eq:symmetries}.
Closed form formulas for \(\Gamma_\rho\) and the endpoints of a general
fiber \(\cI_\rho(a,b)\) are also unknown.  The strict inequality
\(\Gamma_\rho<4Z_0(\rho)\) in \Cref{prop:relaxation-main} shows that the
intermediate action constraints cannot be recovered from the constraint at
time one.
\end{remark}
\end{analystadditioncheck}

\end{analystcheck}

\section*{Data availability and conflict of interest}
\begin{additioncheck}
The data and code for Figure~\ref{fig:rho4-cloud-envelopes} are available
as ancillary files.
\end{additioncheck}
The authors declare no conflict of interest.

\section*{Declaration on the use of AI tools}
AI tools were used to assist with numerical simulations and language
polishing of the manuscript.
The authors take full responsibility for the mathematical content and the final text.

\section*{Acknowledgments}

W.S. is supported by the National Natural
Science Foundation of China (No.~12401171) and by the National
Key R\&D Program of China (No.~2022YFA1006500). J.L. is supported by the National Natural
Science Foundation of China (No.~12571127). 

\bibliographystyle{amsplain}
\bibliography{ref}

\end{document}